\font\TenEns=msbm10 \font\SevenEns=msbm7 \font\FiveEns=msbm5
\newcommand{\ba}{{\bar a}}
\newcommand{\bpd}{{\bar \psi^2}}
\newcommand{\bpq}{{\bar \psi^4}}
\newcommand{\bpsi}{{\bar \psi}}
\newcommand{\bpt}{{\bar \psi^3}}
\newcommand{\bt}{{\bar T}}
\newcommand{\bv}{{\bar v}}
\newcommand{\f}{\frac}
\newcommand{\N}{\mathbb{N}}
\newcommand{\R}{\mathbb{R}}
\newcommand{{\vpz}}{{\varphi^0}}
\newcommand{\vq}{v}
\newtheorem{cor}{Corollary}[section]
\newtheorem{defi}[cor]{Definition}
\newtheorem{lem}[cor]{Lemma}
\newtheorem{prop}[cor]{Proposition}
\newtheorem{propo}{Proposition}
\newtheorem{theorem}[propo]{Theorem}
\newtheorem{coro}[propo]{Corollary}
\newtheorem{rem}[cor]{Remark}
\title[Blow-up profile for the heat equation with a nonlinear
gradient term]{Existence of a Stable Blow-up profile for the nonlinear heat equation with a critical power nonlinear gradient term}
\author[S. Tayachi and H. Zaag]{Slim Tayachi and Hatem
Zaag$^1$}
\address{\hspace{-0.5cm} Universit\'e de Tunis El Manar, Facult\'e des Sciences de Tunis, D\'epartement de
Math\'ematiques, Laboratoire  \'Equations aux D\'eriv\'ees
Partielles LR03ES04,  2092 Tunis, Tunisie.  e-mail: {\tt
slim.tayachi@fst.rnu.tn} \vspace{0.5cm}\newline Universit\'e Paris
13, Sorbonne Paris cit\'e, Institut Galil\'ee, CNRS UMR 7539 LAGA,
99 Avenue Jean-Baptiste Cl\'ement 93430 Villetaneuse, France.
e-mail: {\tt Hatem.Zaag@univ-paris13.fr}} \subjclass[2010]{Primary
35K55, 35B44; Secondary 35K57.} \keywords{Nonlinear heat equation,
blow-up, nonlinear gradient term, blow-up profile, final profile,
single point blow-up, Hamilton-Jacobi equation, asymptotic
behavior.}
\date{\today}
\thanks{$^1$ This author is supported by the ERC Advanced Grant no. 291214, BLOWDISOL and by the ANR project ANA\'E ref. ANR-13-BS01-0010-03}
\begin{document}

\begin{abstract}
We consider the nonlinear heat equation with  a nonlinear gradient term:
$\partial_t u =\Delta u+\mu|\nabla u|^q+|u|^{p-1}u,\; \mu>0,\;
q=2p/(p+1),\; p>3,\; t\in (0,T),\; x\in \R^N.$ We construct a
solution which blows up in finite time $T>0.$ We
also give a sharp description of its blow-up profile and show that it is stable with respect to perturbations in initial data. The proof
relies on the reduction of the problem to a finite dimensional one,
and uses the index theory to conclude.  The blow-up profile does not
scale as $(T-t)^{1/2}|\log(T-t)|^{1/2},$ like in the standard nonlinear heat equation, i.e. $\mu=0,$ but as
$(T-t)^{1/2}|\log(T-t)|^{\beta}$ with
$\beta=(p+1)/[2(p-1)]>1/2.$  We also show that $u$ and $\nabla u$ blow up simultaneously and at a single point, and give the final
profile. In particular, the final profile is more singular than the case of the standard nonlinear heat equation.
\end{abstract}

\maketitle

\section{Introduction and statement of the results}
\setcounter{equation}{0}
 We consider the problem
\begin{align}\label{eq:uequation}
\partial_t u & = \Delta u +\mu|\nabla u|^q+|u|^{p-1}u, \\
& u(\cdot,0) = u_0\in W^{1,\infty}(\mathbb{R}^N),\notag
\end{align}
 where $u=u(x,t)\in \R,\; t\in [0,T),\; x\in \R^N$, and the parameters $\mu,\;
 p$ and $q$ are such that
\begin{align}\label{condition:h}
     \mu>0,\; p>3,~  q=\frac{2p}{p+1}.
     \end{align}

Equation \eqref{eq:uequation} is wellposed in  $ W^{1,\infty}(\mathbb{R}^N)$.
See \cite[Proposition 4.1, p. 18]{AW92}, \cite[Corollary 3, p. 67]{SW} and \cite{SW1} for $\mu<0.$
The proof for the case $\mu>0$ follows similarly, using a fixed point argument.   Precisely, there exists a unique maximal solution on $[0,T)$ of \eqref{eq:uequation} with $T\leq \infty$.
Moreover, according to \cite{AW92}, since $p>1$ and $1<q<2,$ nonglobal solutions, i.e. solutions with $T<\infty,$ blow up in the $L^\infty$-norm.

The value $q=2p/(p+1)$ is a critical exponent for the equation \eqref{eq:uequation} for different reasons. One reason is that, when $q = 2p/(p+1),$
equation \eqref{eq:uequation} is  invariant under the transformation: $u_\lambda(t, x) = \lambda^{2/(p-1)}u(\lambda^2t, \lambda x),$ as
for the equation without the gradient term, that is $\mu=0$.  Another reason is related to the behavior of global or blowing up solutions.
Indeed, it is proved in \cite{SnTW,SnT} that when $\mu\not=0,$ the large time behavior of global solutions of \eqref{eq:uequation} depends on the position of $q$ with respect to $2p/(p+1).$
On the other hand, it is pointed out in some previous works that the
position of $q$ with respect to  $2p/(p+1)$ has an influence on the
behavior of blowing up solutions for $\mu<0$. See
\cite{CWbngsj89,CFQ,Sejde2001,{Snepp2005},ST,STWesbugiumj96,SW} and references therein.

Many works has been devoted to the blow-up profiles for  the
equation without the gradient term, i.e.  \eqref{eq:uequation} with
$\mu=0.$ See \cite{BK,MZsbupdmj97, BergerKhon,HViph93} and
references therein.  But, few results are known for the equation
with $\mu\not=0.$ For self-similar blow-up profiles, see
\cite{STWesbugiumj96} when $q=2p/(p+1),\; \mu<0$ and  \cite{GV1,
GV2} when $q=2,\; \mu>0.$
A blow-up profile is derived in  \cite{EZ} for the equation \eqref{eq:uequation}  when $q<2p/(p+1)$.
This blow-up profile is the same as for the standard nonlinear heat
equation obtained in \cite{MZsbupdmj97}. In this paper, we are
interested in the construction of a blowing up solution, with a
prescribed blow-up profile for the equation \eqref{eq:uequation},
when $q=2p/(p+1)$. We have obtained the following result.

\begin{theorem}[Blow-up profile for Equation \eqref{eq:uequation}]
\label{th1}
Let $\mu>0$ and $p,\; q$ be two real numbers such that
\begin{equation}
q=\frac{2p}{p+1}\;\mbox{and}\; p>3.
\end{equation}
Then, for any $\varepsilon>0,$ Equation \eqref{eq:uequation} has a
solution $u(x,t)$ such that $u$ and $\nabla u$ blow up in finite
time $T>0$ simultaneously. Moreover:
\begin{itemize}
\item[(i)] For all $t\in [0,T),$
\begin{eqnarray}
\nonumber
&&\left\|(T-t)^{{1\over p-1}}u(y\sqrt{T-t},t)-\vpz\left(\frac{y}{|\log(T-t)|^\beta}\right)
\right\|_{W^{1,\infty}(\R^N)}\\ \label{bupprof1}&&\hspace{2cm}\leq {C\over 1+|\log(T-t)|^{\min\left({2\over p-1},{p-3\over 2(p-1)}\right)-\varepsilon}},
\end{eqnarray}
where
\begin{equation}
 \label{145}
 \vpz(z)=\left(p-1+b|z|^2\right)^{-{1\over p-1}},\; z\in \R^N,
\end{equation}
\begin{equation}
\label{bbupprof1}
\beta={p+1\over 2(p-1)},\; \quad \; b={1\over 2}(p-1)^{{p-2\over p-1}}\left((4\pi)^{N\over 2}(p+1)^2N\over p
\int_{\R^N}|y|^qe^{-{|y|^2/4}}dy\right)^{{p+1\over p-1}}\mu^{-{p+1\over p-1}}>0.\;
\end{equation}
and $C$ is a positive constant.
\item[(ii)] The functions $u$ and $\nabla u$ blow up at the origin and only there.
\item[(iii)] For all $x\not=0,$ $u(x,t)\to u^*(x)$ as $t\to T$ in $C^1\left(\frac 1R <|x|<R\right)$ for any $R>0$,
with
\begin{equation}
\label{bupprof2}
u^*(x)\sim \left({b|x|^2\over \left[2\left|\log|x|\right|\right]^{{p+1\over p-1}}}\right)^{-{1\over p-1}},\; \mbox{ as } \; x\to\; 0,
\end{equation}
and for $|x|$ small,
\begin{equation}
\label{bupprof2na}
|\nabla u^*(x)|\leq  C {|x|^{-{p+1\over
p-1}}\over |\log|x||^{{1-3p\over (p-1)^2}-\epsilon}},\;
\mbox{ if }\; 3<p\leq 7,
\end{equation}

\begin{equation}
\label{bupprof2nb}
|\nabla u^*(x)|\leq  C {|x|^{-{p+1\over
p-1}}\over |\log|x||^{{-p^2+2p-5\over 2(p-1)^2}-\epsilon}},\;
\mbox{ if }\; p>7,
\end{equation}
where $C$ is a positive constant.
\end{itemize}
\end{theorem}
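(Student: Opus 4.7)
My plan is to follow the Merle--Zaag construction, adapted to handle the critical gradient nonlinearity. First I would pass to self-similar variables $y = x/\sqrt{T-t}$, $s = -\log(T-t)$, $w(y,s) = (T-t)^{1/(p-1)} u(x,t)$. Since $q = 2p/(p+1)$ is precisely the exponent for which the gradient term is invariant under the self-similar scaling, the rescaled equation is autonomous:
\begin{equation*}
\partial_s w = \Delta w - \tfrac{1}{2} y \cdot \nabla w - \tfrac{w}{p-1} + |w|^{p-1}w + \mu |\nabla w|^q.
\end{equation*}
Part (i) then reduces to constructing a solution $w$ converging (on compacta) to $\kappa := (p-1)^{-1/(p-1)}$ with inner profile $\vpz(y/s^{\beta})$.

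The novelty compared with the $\mu = 0$ case is the exponent $\beta$. I would begin with a formal derivation: plug the ansatz $w(y,s) \approx \vpz(z)$ with $z = y/s^\beta$ into the similarity equation and balance the leading terms. The drift $-\tfrac12 y\cdot\nabla w$ produces a term of order $s^{-1}$ in the $m=2$ Hermite direction, while the gradient contribution $\mu|\nabla w|^q$, being at critical scaling, produces an $\mathcal{O}(s^{-\beta q - q/2})$ correction on the same direction when one projects against the Gaussian weight $e^{-|y|^2/4}$; this is precisely where the integral $\int_{\R^N}|y|^q e^{-|y|^2/4}dy$ in \eqref{bbupprof1} enters. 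Matching these two contributions fixes $\beta = (p+1)/(2(p-1))$ and the explicit value of $b$. One checks $\beta > 1/2$, so the profile is wider than in the standard case.

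Next I would write $w = \varphi_{\rm app}(y,s) + v(y,s)$ for a refined approximate profile (including lower-order corrections killing the $m=2$ resonance) and derive
\begin{equation*}
\partial_s v = (\mathcal{L} + V(y,s))v + B(v,\nabla v) + R(y,s),
\end{equation*}
where $\mathcal{L} = \Delta - \tfrac12 y\cdot\nabla + 1$, $V$ is a small potential coming from the linearization, $B$ gathers quadratic and gradient nonlinear contributions, and $R$ is the residual error whose size controls the right-hand side of \eqref{bupprof1}. In the Gaussian-weighted $L^2$ space, $\mathcal{L}$ has eigenvalues $\{1-m/2\}_{m\in\N}$ with Hermite eigenfunctions: the $N+1$ modes $m = 0, 1$ are unstable, and the $m = 2$ null modes are absorbed by the modulation of $\varphi_{\rm app}$. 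Decomposing $v$ along these modes plus inner and outer parts, I would introduce a shrinking set $\mathcal{V}(s)$ built on the expected decay rates, derive differential inequalities showing that as soon as a trajectory touches the boundary of $\mathcal{V}$ it must do so via the unstable components, and then apply a Brouwer-type topological degree argument on the $(N+1)$-parameter family of initial data to produce one for which $v(s) \in \mathcal{V}(s)$ for all $s \geq s_0$.

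The principal obstacle, in my view, is the analytic treatment of $\mu |\nabla w|^q$: since $q$ is generally non-integer and $\nabla \vpz$ vanishes at $z=0$, this term is not Fr\'echet-differentiable and admits no polynomial Taylor expansion at the critical point of $\vpz$. This forces one to propagate estimates for $\nabla v$ alongside $v$ (explaining the $W^{1,\infty}$ norm in \eqref{bupprof1}) and to insert Schauder or parabolic-regularity steps into the fixed-point argument. Once (i) is established, parts (ii) and (iii) follow by now-classical arguments: a Giga--Kohn type no-blow-up criterion applied outside any ball around the origin, together with the profile information inside, yields single-point blow-up for both $u$ and $\nabla u$; and passing to the limit $t \to T$ along the matched rays $|y| = |x|/\sqrt{T-t} \to \infty$ at the critical rate yields the final profile \eqref{bupprof2}. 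The distinction between $3 < p \leq 7$ and $p > 7$ in \eqref{bupprof2na}--\eqref{bupprof2nb} reflects which error term in the matched asymptotics dominates when one differentiates the expansion of $u^*$, and thus reflects where $2/(p-1)$ crosses $(p-3)/(2(p-1))$ in the error rate of \eqref{bupprof1}.
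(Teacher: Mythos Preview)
Your overall architecture is the paper's, but there is a concrete gap in how you dispose of the null mode. You write that ``the $m=2$ null modes are absorbed by the modulation of $\varphi_{\rm app}$''; in fact there is no free modulation parameter left. The constants $\beta$, $b$ and the additive correction $a/s^{2\beta}$ in the approximate profile are all \emph{uniquely} fixed, and their role is only to make the rest term $R$ small on $h_2$ (this is where the integral in \eqref{bbupprof1} appears, and your formal balance involving the drift $-\tfrac12 y\cdot\nabla w$ is not the right one: the cancellation is between $\partial_s\varphi$ and $\mu|\nabla\varphi|^q$). What actually controls the dynamics of $v_2$ is a different mechanism: projecting the genuinely nonlinear gradient term $G(v)=\mu|\nabla\varphi+\nabla v|^q-\mu|\nabla\varphi|^q$ onto $h_2$ produces, after a delicate expansion of $|\,\cdot\,|^q$ near the zero of $\nabla\varphi$ and using the \emph{same} value of $b$, a linear damping
\[
v_2'(s)=-\frac{2\beta+1}{s}\,v_2(s)+O(s^{-4\beta}),
\]
and it is this ODE, not any modulation, that forces $|v_2(s)|\le \sqrt{A}\,s^{-(4\beta-1)}$. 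If you try to modulate instead, you will find nothing to modulate with, and the null direction will drift.

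Second, the no-blow-up step for (ii) is not ``now-classical''. The Giga--Kohn argument does not survive the term $\mu|\nabla u|^q$ as stated; the paper proves a new threshold result (Proposition~\ref{similar to Giga-Kohn}) by a four-pass Duhamel iteration that alternates between truncations of $u$ and of $\nabla u$, each pass feeding improved bounds into the next. Without this, neither single-point blow-up nor the final-profile computation in (iii) goes through.
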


\begin{rem}{\rm From the previous Theorem, we have
for all $t\in [0,T),$
\begin{eqnarray*}
&&\left\|(T-t)^{{1/(p-1)}}u(x,t)-\left(p-1+\frac{b|x|^2}{(T-t)|\log(T-t)|^{{(p+1)/(p-1)}}}\right)^{-{1/(p-1)}}
\right\|_{L^{\infty}(\R^N)}\\ &&\hspace{2cm}\leq {C\over 1+|\log(T-t)|^{\min\left({2\over p-1},{p-3\over 2(p-1)}\right)-\varepsilon}},
\end{eqnarray*}
where $C$ is a positive constant.}
\end{rem}

\begin{rem}{\rm
 To have a flavor of the appearance of the particular shape for our profile $\vpz$ in \eqref{145} together with the scaling factor $\beta$ and the parameter $b$ in \eqref{bbupprof1},
 see the formal approach in Section 2 below. However, we would like to emphasize the fact that
 in the actual proof, those particular values are crucially needed in various algebraic identities. See the Remark \ref{4losange} following Lemma \ref{p52} below and Proposition \ref{prop:projet:q} below.}
\end{rem}

\begin{rem}{\rm
 The initial data giving rise to the constructed solution is given in Proposition \ref{initial-data} below.}
\end{rem}

\begin{rem}{\rm
Note that the  solution constructed in the above theorem does not exist in the case of the standard nonlinear heat equation, i.e. when $\mu=0$ in \eqref{eq:uequation}. Indeed, our solution has a profile depending on the reduced
variable
$$z={x\over \sqrt{T-t}\left|\log(T-t)\right|^\beta}$$ whereas, we know from the results in \cite{HViph93,Vtams93} that the blow-up profiles in the case $\mu=0$ depend on the reduced variables
$$z={x\over \sqrt{T-t}\left|\log(T-t)\right|^{{1\over 2}}}\; \mbox{or}\; z={x\over (T-t)^{{1\over 2m}}},\; \mbox{where}\; m\geq 2\; \mbox{is an integer}.$$}
\end{rem}

\begin{rem}{\rm We conjecture that identity \eqref{bupprof2} holds also after differentiation. Unfortunately, we have been able to derive only the weaker results given in \eqref{bupprof2na}  and \eqref{bupprof2nb}.}
\end{rem}

\begin{rem}{\rm
In the case $\mu=0,$ the final profile of the standard nonlinear heat equation is given by
 \begin{equation}
  u_0^*(x)\sim C\left({|x|^2\over \left|\log|x|\right|}\right)^{-{1\over p-1}},\; \mbox{ as } \; x\to\; 0,
\end{equation}
where $C$ is a positive constant (see \cite{Zihp98}). In our case, ( $\mu>0$), the final profile is given by \eqref{bupprof2}. Let us denote it by $u_\mu^*.$ Since $1<q<2,$  $u_\mu^*$ is more singular than $u_0^*$
for $x$ close to $0,$  in the sense that
$$u_0^*(x) \ll  u_\mu^*(x),\mbox{ as } x\to 0.$$
This shows the effect of the forcing gradient term in equation \eqref{eq:uequation} with $\mu>0$ in the equation.\\
On the other hand,  heuristically, $u_\mu^*$ and  $u_0^*$ have the same singularity near $x=0$ for the limiting case $q=2$ (that is when $p\to \infty$).\\
Let us note that in
\cite{STWesbugiumj96} and with $\mu<0$ in the equation \eqref{eq:uequation},  that is with a damping gradient term in the equation,  a less singular finale profile is obtained: $v_{\mu}^*(x) \sim |x|^{-{2\over p-1}}$ as $x\to 0$.}
\end{rem}
\begin{rem}{\rm We strongly believe that our strategy breaks down when $1<p\leq 3.$ See the remark following Lemma \ref{p52} below.}
\end{rem}

As a consequence of our techniques, we show the stability of the constructed solution, with respect to perturbations in initial data. More precisely, we have obtained the following result.

\begin{theorem}[Stability of the blow-up profile \eqref{bupprof1}]
\label{th2}
Let $\mu>0$ and $p,\; q$ be two real numbers such that
\begin{equation}
p>3\;\mbox{ and } q={2p\over p+1}.
\end{equation}
Let $\hat{u}$ be the solution of \eqref{eq:uequation} given by Theorem \ref{th1} with initial data $\hat{u}_0$ and which blows up at time
$\hat{T}.$ Then, there exists a neighborhood $V_0$ of $\hat{u}_0$ in $W^{1,\infty}(\R^N)$ such that for any $u_0\in V_0,$
Equation  \eqref{eq:uequation} has a unique solution $u$ with initial data $u_0,$ $u$ blows up in finite time $T(u_0)$ and at a single point $a(u_0).$
Moreover, items (i)-(iii) of Theorem \ref{th1}, are satisfied
by $u(x-a,t)$ and
$$T(u_0)\to \hat{T},\; a(u_0)\to 0,\; \mbox{ as }\; u_0\to \hat{u_0}\; \mbox{ in } W^{1,\infty}(\R^N).$$
\end{theorem}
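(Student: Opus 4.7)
The plan is to adapt the strategy developed by Merle and Zaag for the classical semilinear heat equation ($\mu=0$) to the present setting, exploiting the fact that the solution $\hat u$ is constructed in Theorem \ref{th1} by a finite-dimensional reduction, with only two families of unstable modes for the linearized operator in similarity variables. Given a perturbation $u_0\in W^{1,\infty}(\R^N)$ of $\hat u_0$, I first let $u$ be the corresponding solution of \eqref{eq:uequation}, which exists on a common time interval by the local well-posedness. The aim is to find candidate parameters $T=T(u_0)$ and $a=a(u_0)\in\R^N$ for the blow-up time and point such that, in similarity variables centered at $(a,T)$, the function $u$ falls into the shrinking set $V_A(s)$ used in the proof of Theorem \ref{th1}.

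More precisely, for parameters $(T,a)$ close to $(\hat T,0)$, I introduce
$$w_{T,a}(y,s)=(T-t)^{\frac{1}{p-1}}u\bigl(a+y\sqrt{T-t},\,t\bigr),\qquad s=-\log(T-t),$$
and decompose $q:=w_{T,a}-\vpz$ on the eigenfunctions of the linearized operator around $\vpz$. The $N+1$ unstable directions are spanned by the Hermite polynomials of degrees $0$ and $1$, associated with the eigenvalues $1$ and $1/2$. The key observation is that modulating $T$ and $a$ acts precisely on those unstable components: varying $T$ controls the degree-$0$ mode, while varying $a$ controls the $N$ degree-$1$ modes. Using continuous dependence of the flow on initial data together with a topological degree / implicit function argument similar to the one already used in the construction of $\hat u$, I would show that for $u_0$ sufficiently close to $\hat u_0$ in $W^{1,\infty}(\R^N)$, there is a choice of $(T,a)$ for which the projections of $q(\cdot,s_0)$ onto the unstable eigendirections vanish and $q(\cdot,s_0)\in V_A(s_0)$ at a well-chosen initial time $s_0=-\log(T-t_0)$.

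Once the solution has entered $V_A$, the reduction-to-finite-dimensions argument of Theorem \ref{th1} applies verbatim and yields $q(\cdot,s)\in V_A(s)$ for all $s\geq s_0$. This gives the profile convergence (i) for $u(\cdot-a,\cdot)$. Items (ii) and (iii) then follow exactly as in Theorem \ref{th1}, since the proofs of single-point blow-up and of the final profile asymptotics only use the information contained in (i) and do not depend on special properties of $\hat u_0$. Continuity of $T(u_0)$ and $a(u_0)$ with respect to $u_0$ in $W^{1,\infty}(\R^N)$ follows from the continuity of the flow together with the implicit function theorem applied to the modulation equations that define $(T,a)$.

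The main obstacle is the modulation step: one needs a transversality estimate showing that the differential with respect to $(T,a)$ of the unstable projections of $w_{T,a}(\cdot,s_0)-\vpz$ is a nondegenerate $(N+1)\times(N+1)$ matrix to leading order as $s_0\to+\infty$, which is what makes the topological argument go through. A second delicacy, specific to our equation, is to handle the nonlinear gradient term $\mu|\nabla u|^q$ in the continuous-dependence estimates, which is precisely the reason we work in $W^{1,\infty}(\R^N)$ rather than $L^\infty(\R^N)$ and must carry a gradient estimate all along the fixed-point machinery inherited from the proof of Theorem \ref{th1}.
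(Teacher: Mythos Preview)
Your overall framework coincides with the paper's: replace the free parameters $(d_0,d_1)$ of the existence proof by the modulation parameters $(T,a)$, establish the transversality of the map $(T,a)\mapsto(\bar\psi_0,\bar\psi_1)$ (this is Lemma~\ref{lemexp} and Proposition~\ref{prop:diddc}' in the paper), and rerun the finite-dimensional machinery. The paper also works in $W^{1,\infty}$ and carries gradient estimates throughout, exactly as you anticipate.

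There is, however, a logical slip in how you connect the pieces. You propose to choose $(T,a)$ so that the unstable projections of $q(\cdot,s_0)$ \emph{vanish}, and then claim that ``the reduction-to-finite-dimensions argument of Theorem~\ref{th1} applies verbatim and yields $q(\cdot,s)\in V_A(s)$ for all $s\ge s_0$.'' This is not how the argument works. Having $(v_0,v_1)(s_0)=0$ does not prevent these components from growing later: the ODEs $v_m'=(1-\tfrac m2)v_m+O(s^{-2\beta-1})$ have nonzero forcing, so the solution may still exit $V_A$ through the unstable face. The reduction (Proposition~\ref{prop:rt2dimen}) is a \emph{conditional} statement: \emph{if} the solution leaves $V_A$, it does so through $(v_0,v_1)$ with a transverse crossing. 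The topological argument is then used exactly as in the existence proof: one shows that $(T,a)\mapsto s_0^{2\beta+1}(\bar\psi_0,\bar\psi_1)$ is a degree~$\pm1$ map from a small domain $\bar{\mathcal D}$ in $(T,a)$-space onto the \emph{full} cube $[-A_0,A_0]^{N+1}$, so that every $(T,a)\in\bar{\mathcal D}$ gives initial data in $V_A(s_0)$; the index-theory contradiction then rules out the possibility that \emph{all} these trajectories exit. The good $(T,a)$ is produced by this contradiction, not by annihilating the unstable modes at a fixed time. Once you rephrase the modulation step this way, your sketch matches the paper's proof; the continuity of $(T(u_0),a(u_0))$ then follows simply from the shrinking of $\bar{\mathcal D}$ as $\sigma_0\to\infty$, without any implicit function theorem.
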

\begin{rem}
{\rm In fact, we have a stronger version of the stability theorem, valid for single-point blow-up solutions of equation \eqref{eq:uequation}, enjoying the profile \eqref{145} only for a sequence of time. See Theorem \ref{th2}' in page \pageref{th2'} below.}
\end{rem}

Let us give an idea of the methods used to prove the results. We construct the blow-up solution with the profile in Theorem \ref{th1}, by following the methods of  \cite{BK} and \cite{MZsbupdmj97}, though we are far from a simple adaptation, since the gradient term needs genuine new ideas as we explain shortly below.
This kind of methods has been applied for various nonlinear evolution equations.
 For hyperbolic equations, it has been successfully used for the construction of multi-solitons for the semilinear wave equation in one space dimension (see \cite{CZ}). For parabolic equations, it has been used in \cite{MZ08} and \cite{Zihp98}
 for the complex Ginzburg-Landau equation with no gradient structure. See also the cases of the wave maps in \cite{RD}, the Schr\"odinguer maps in \cite{MRR}, the critical harmonic heat follow in \cite{RS1}, the two-dimensional Keller-Segel equation in \cite{RS2} and the nonlinear heat equation involving a subcritical nonlinear gradient term in \cite{EZ}.
 Recently, this method has been applied for a non variational parabolic system in \cite{NZ} and for a logarithmically perturbed nonlinear heat equation in \cite{NguyenZ}.

 Unlike in the subcritical case in \cite{EZ}, the gradient term in the critical case induces substantial changes in the blow-up profile as we pointed-out in the comments following Theorem \ref{th1}. Accordingly, its control requires special arguments.
So, working in the framework
 of \cite{MZsbupdmj97}, some crucial modifications are needed. In particular, we have to overcome the following challenges:
 \begin{itemize}
  \item[-] The prescribed profile is not known and not  obvious to find. See Section \ref{secform} for a formal approach to justify such a profile, and the introduction of the parameter $\beta$ given by \eqref{equbeta} below.
  \item[-] The profile is different from the profile in \cite{MZsbupdmj97}, hence also from all the previous studies in the parabolic case (\cite{MZsbupdmj97,EZ,NguyenZ,NZ}).
Therefore, brand new estimates are needed. See Section \ref{secexis} below.
  \item[-] In order to handle the new parameter $\beta$ in the profile, we introduce a new shirking set to trap the solution. See Definition \ref{prop:V-def} below. Finding such a set is not trivial,
  in particular the limitation $p>3$ in related to the choice of such a  set.
  \item[-] A good understanding of the dynamics of the linearized operator of equation \eqref{eq:wequation} below around the new profile is needed, taking into account the new shrinking set.
  \item[-] Some crucial global and pointwise estimates of the gradient of the solution as well as fine parabolic regularity results are needed (see Section \ref{subsecpr} below).
 \end{itemize}
 Then, following \cite{MZsbupdmj97}, the proof is divided in two steps. First, we reduce the problem to a finite dimensional one. Second, we solve the finite dimensional problem and conclude by contradiction, using index theory.

 To prove the single point blow-up result for the constructed solution, we establish a new ``no blow-up under some threshold'' criterion  for a parabolic inequality with a nonlinear gradient term.
See Proposition \ref{similar to Giga-Kohn} below. The final blow-up profile is determined then using the method of \cite{Zihp98}, Proposition  \ref{similar to Giga-Kohn} and \cite{MerleCPAM92}.

The stability result, Theorem \ref{th2}, is proved similarly as in
\cite{MZsbupdmj97} by interpreting the finite dimensional problem in
terms of the blow-up time and the blow-up point.

\bigskip

Let us remark that if $u$ is a solution of equation \eqref{eq:uequation}, then
$${\overline{u}}(x,t)=u\left(\mu^{-{2\over 2- q}}t,\mu^{-{1\over 2-q}}x\right),\; t\in \left[0,\mu^{{2\over 2- q}}T\right),\; x\in \R^N,$$ is a solution of the
equation
$$\partial_t {\overline{u}}  = \Delta {\overline{u}} +|\nabla {\overline{u}}|^q+\mu^{-{2\over 2-q}}|{\overline{u}}|^{p-1}{\overline{u}}.$$
Also, $${\tilde{u}}(x,t)=u\left(\mu^{-1}t,x\right),\; t\in \left[0,\mu T\right),\; x\in \R^N,$$ is solution of the equation
$$\partial_t {\tilde{u}}  =\mu^{-1} \Delta {\tilde{u}} +|\nabla {\tilde{u}}|^q+\mu^{-1}|{\tilde{u}}|^{p-1}{\tilde{u}}.$$
And for $\delta\not=1,$
$$\underline{u}(x,t)=\lambda^{2/(p-1)}u(\lambda^\delta x,\lambda^2 t),\; \lambda=\mu^{{1\over q(\delta-1)}}, \; t\in \left[0,\mu^{-{2\over q(\delta- 1)}}T\right),\; x\in \R^N,$$ is solution of the equation
$$\partial_t {\underline{u}}  =\mu^{-2/q}\Delta {\underline{u}} +|\nabla {\underline{u}}|^q+|{\underline{u}}|^{p-1}{\underline{u}}.$$
Then, since Theorem \ref{th1} is valid for all $\mu>0,$ we obtain the blow-up profile for a perturbed Viscous Hamilton-Jacobi (VHJ) equation, as well as for a perturbed Hamilton-Jacobi (HJ) equation, and nonlinear Hamilton-Jacobi (NHJ) equation.
More precisely, this is our statement:
\begin{coro}[Blow-up in the Hamilton-Jacobi style] Theorems \ref{th1} and \ref{th2} yield stable blow-up solutions:
\begin{itemize}
 \item[(i)] For the perturbed VHJ equation:
$$\partial_t u  = \Delta u +|\nabla u|^q+\nu|u|^{p-1}u,\;
\mbox{ with } \nu>0, \; 3/2<q<2,\; p={q\over 2-q}.$$
 \item[(ii)] For the perturbed HJ equation:
$$\partial_t u  = |\nabla u|^q+\nu'\Delta u+\nu'|u|^{p-1}u,\; \mbox{ with } \nu'>0, \; 3/2<q<2,\; p={q\over 2-q}.$$
\item[(iii)] For the perturbed NHJ equation:
$$\partial_t u  = |\nabla u|^q+|u|^{p-1}u+\nu''\Delta u ,\; \mbox{ with } \nu''>0, \; 3/2<q<2,\; p={q\over 2-q}.$$
\end{itemize}

In the three cases, the solutions and their gradients blow up simultaneously and only at one point. The blow-up profile is given by (\ref{bupprof1}) with appropriate scaling.
\end{coro}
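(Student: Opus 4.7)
The plan is to reduce each of the three target equations to the original equation \eqref{eq:uequation} via the scaling identities already written down in the paragraph preceding the statement, and then transport the conclusions of Theorems \ref{th1} and \ref{th2} through the change of variables.

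For the perturbed VHJ equation of case (i), given $q\in(3/2,2)$ and $\nu>0$, I would set $p=q/(2-q)$ — which automatically satisfies $p>3$ and $q=2p/(p+1)$, matching the hypotheses of Theorem \ref{th1} — and then solve $\mu^{-2/(2-q)}=\nu$ by taking $\mu=\nu^{-(2-q)/2}>0$. With this $\mu$, the transformation $\bar u(x,t)=u(\mu^{-2/(2-q)}t,\mu^{-1/(2-q)}x)$ listed in the excerpt is a bijection between solutions of \eqref{eq:uequation} blowing up at time $T$ and solutions of the VHJ equation blowing up at time $\mu^{2/(2-q)}T$. Applying Theorem \ref{th1} to produce $u$ and pushing the statement forward yields a solution $\bar u$ with the rescaled profile \eqref{bupprof1}, single-point blow-up at the origin, and the final profile estimates \eqref{bupprof2}--\eqref{bupprof2nb}. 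Cases (ii) and (iii) are handled identically using the transformations $\tilde u(x,t)=u(\mu^{-1}t,x)$ (with $\mu=1/\nu'$) and $\underline u(x,t)=\lambda^{2/(p-1)}u(\lambda^\delta x,\lambda^2 t)$ (with $\mu=(\nu'')^{-q/2}$ and any admissible $\delta\neq 1$, the $\delta$-freedom being a residual scaling symmetry that the statement is indifferent to).

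For the stability part, I would observe that each of the three scaling maps is a linear bijection on $W^{1,\infty}(\R^N)$ with continuous inverse, so a $W^{1,\infty}$-neighborhood $V_0$ of the rescaled initial datum $\overline{\hat u_0}$ (respectively $\widetilde{\hat u_0}$, $\underline{\hat u_0}$) pulls back to a $W^{1,\infty}$-neighborhood of $\hat u_0$. Theorem \ref{th2} applied to the base equation therefore transfers: the blow-up time and blow-up point of the perturbed solution depend continuously on the initial datum, and items (i)--(iii) of Theorem \ref{th1} are preserved under perturbation, up to the same scaling.

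I do not anticipate a genuine obstacle, since all the nonlinear analysis is already contained in Theorems \ref{th1} and \ref{th2}. The only computation requiring a bit of care is tracking the reduced similarity variable $z=x/(\sqrt{T-t}|\log(T-t)|^\beta)$ and the parameter $b$ of \eqref{bbupprof1} through each of the three changes of variables: they acquire explicit powers of $\nu$, $\nu'$ or $\nu''$, which is exactly what the phrase \emph{appropriate scaling} in the statement absorbs. A brief bookkeeping of these prefactors in each case — together with a verification that $W^{1,\infty}$-continuity of the scaling maps is preserved uniformly in the small parameter governing the perturbation — completes the argument.
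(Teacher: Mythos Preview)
Your proposal is correct and follows exactly the approach the paper intends: the Corollary is stated without a separate proof, as an immediate consequence of the three scaling transformations written in the paragraph immediately preceding it, combined with Theorems \ref{th1} and \ref{th2}. Your identification of the correct choices of $\mu$ in each case and the observation that $3/2<q<2$ is equivalent to $p>3$ with $q=2p/(p+1)$ are precisely the bookkeeping the paper leaves implicit.
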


 The organization of the rest of this paper is as follows. In Section 2, we explain formally how we obtain the the profile and the exponent $\beta.$
 In Section 3, we give a formulation of the problem in order to justify the formal argument. Section 4 is divided to two subsections: In subsection 4.1 we give the proof of the existence of the profile assuming the technical results.
 In particular, we construct a shrinking set and give an example of initial data giving rise to
 the prescribed blow-up profile. Subsection 4.2 is devoted to the proof of the technical results which are needed in the proof of the existence.  Section 5 is devoted to the proof of the single point blow-up and the determination of the final profile.
 In particular,
a new ``no blow-up under some threshold''
is established for parabolic equations (or inequalities) with nonlinear gradient terms. See Proposition \ref{similar to Giga-Kohn} below. Finally, in Section 6, we prove the stability result, that is Theorem \ref{th2} and give a more general stability statement (see Theorem \ref{th2}' page \pageref{th2'}). In all the paper the notation $A\ll B$ for positive real numbers $A$ and $B$ means that $A$ is very smaller with respect to $B.$

\section{A Formal Approach}\label{secform}
\setcounter{equation}{0}
The aim of this section is to explain formally how we derive the behavior given in Theorem \ref{th1}. In particular, how we obtain the profile $\vpz$ in \eqref{145}, the parameter $b$ and the exponent $\beta=2(p+1)/(p-1)$ in
\eqref{bbupprof1}.
 Consider an  arbitrary $T>0$ and the self-similar transformation of
(\ref{eq:uequation})
\begin{equation}\label{framwork:selfsimilar}
w(y,s)= (T-t)^{\frac{1}{p-1}}u(x,t),\; y=\frac{x}{\sqrt{T-t}},~
  s=-\log{(T-t)}.
  \end{equation}
It follows that if $u(x,t)$ satisfies (\ref{eq:uequation}) for all
$(x,t)\in \R^N\times [0,T),$ then $w (y,s)$ satisfies  the following
equation:
\begin{equation}\label{eq:wequation}
 \partial_s w=\Delta w-\frac{1}{2}y\cdot\nabla
  w-\dfrac{1}{p-1}w+\mu|\nabla
  w|^{q}+|w|^{p-1}w,
\end{equation}
for all $(y,s)\in \R^N\times[-\log T,\infty).$ Thus, constructing a
solution $u(x,t)$ for the equation (\ref{eq:uequation}) that blows
up at $T<\infty$ like $(T-t)^{-{\frac{1}{p-1}}}$ reduces to
constructing  a global solution $w(y,s)$ for equation
(\ref{eq:wequation}) such that
\begin{equation}
\label{225}
 0<\varepsilon\leq
\limsup_{s\to\infty}\|w(s)\|_{L^\infty(\R^N)}\leq {1\over \varepsilon}.
\end{equation}

A first idea to construct a blow-up solution for \eqref{eq:uequation}, would be to find a stationary solution for (\ref{eq:wequation}), yielding a self-similar solution for \eqref{eq:uequation}.
It happens that when $\mu<0$ and $p$ is close to $1,$ the first author together with Souplet and Weissler were able in \cite{STWesbugiumj96}   to construct such a solution. Now, if $\mu>0,$ we know, still from \cite{STWesbugiumj96}  that it is not possible to construct
such a solution in some restrictive class of solutions (see \cite[Remark 2.1, p. 666]{STWesbugiumj96}), of course, apart from the trivial constant solution $w\equiv \kappa$ of (\ref{eq:wequation}), where
\begin{equation} \label{equkappa}\kappa=\Big({1\over p-1}\Big)^{{1\over p-1}}.
\end{equation}

\subsection{Inner expansion}

Following the approach of Bricmont and Kupiainen in \cite{BK}, we may look for a solution $w$ such that $w\to \kappa$ as $s\to \infty.$ Writing $$w=\kappa+\overline{w},$$ we see that
$\overline{w}\to 0$ as $s\to \infty$ and   satisfies the equation:
\begin{equation}
 \label{wbarre}
\partial_s\overline{w} =\mathcal{L}\overline{w}+\overline{B}(\overline{w})+\mu|\nabla \overline{w}|^q,
\end{equation}
 where
\begin{equation}\label{eq:Operator:L}\mathcal{L}=\Delta-\frac{1}{2}y\cdot\nabla+1,~\end{equation}
and
\begin{equation}\label{eqbgare01}
 \overline{B}(\overline{w})= |\overline{w}+\kappa|^{p-1}(\overline{w}+\kappa)-\kappa^p-p\kappa^{p-1}\overline{w}.
\end{equation}
Note that
$$|\overline{B}(\overline{w})-{p\over 2\kappa}\overline{w}^2|\leq C |\overline{w}^3|,$$
where $C$ is a positive constant.

Let us recall some properties of $\mathcal{L}$. The operator $\mathcal{L}$  is  self-adjoint in $D(\mathcal L)
\subset L_\rho^2(\mathbb R^N)$ where
$$L_\rho^2(\mathbb{R^N})=\left\{f\in L_{loc}^2(\mathbb R^N)\; \Big|\; \int_{\mathbb R^N} \left(f(y)\right)^2\rho(y)dy<\infty \right\}$$
and
$$\rho(y)
=\frac{e^{\frac{-|y|^2}{4}}}{(4\pi)^{N/2}},\; y\in \R^N.$$
 The spectrum of
$\mathcal{L}$ is explicitly given by
$$spec(\mathcal L)=\left\{1-\frac{m}{2}\; \Big| \; m\in \mathbb N\right\}.$$
It consists only in  eigenvalues. For $N=1,$ all the eigenvalues are simple, and the eigenfunctions
are dilations of Hermite polynomials: the eigenvalue $
1-\frac{m}{2}$ corresponds  to  the following eigenfunction:
\begin{equation} \label{h_m} h_m(y)=\sum_{n=0}^{[\frac{m}{2}]} \frac{m!}{n!(m-2n)!}(-1)^n
y^{m-2n}.\end{equation} In particular $h_0(y)=1,\; h_1(y)=y$ and
$h_2(y)=y^2-2.$ Notice that $h_m$ satisfies:
\[
\int_{\R} h_n h_m \rho dx=2^nn!\delta_{nm}
\mbox{ and }
\mathcal{L}h_m=\Big(1-{m\over 2}\Big)h_m .
\]
 We also introduce
\begin{equation}\label{defkm}
k_m={h_m\over \|h_m\|_{L_\rho^2(\mathbb{R})}^2} .
\end{equation}
For $N\geq 2$, the eigenspace corresponding to $\lambda=1-\frac m2$ is given by
\[
\{h_{m_1}(y_1)\dots h_{m_N}(y_N)\;|\;m_1+\dots+m_N=m\}.
\]
In particular, for $\lambda=1$, the eigenspace is $\{1\}$, for $\lambda=\frac 12$, it is $\{y_i\;|\;i=1,\dots,N\}$, for $\lambda=0$, it is given by $\{h_2(y_i),\;y_iy_j\;|\;i=1\dots,N,\;i\neq j\}$.\\
%
 In compliance with the spectral properties of $\mathcal{L},$ we may look for a solution expanded as follows:
\[
\overline{w}(y,s)=\sum_{(m_1,\dots,m_N)\in\N^N}\overline{w}_{(m_1,\dots,m_N)} (s) h_{m_1}(y_1)\dots h_{m_N}(y_N).
\]
Since the eigenfunctions for $m_1+\dots+m_N\ge 3$
correspond to negative eigenvalues of $\mathcal{L}$, assuming $\overline{w}$ radial,
we may consider that
\begin{equation}\label{ansatz}
\overline{w}(y,s)=\overline{w}_0(s)+\overline{w}_2(s)\sum_{i=1}^Nh_2(y_i)=\overline{w}_0(s)+\overline{w}_2(s)(|y|^2-2N),
\end{equation}
with $\overline{w}_0,\; \overline{w}_2\;\to 0$ as $s\to \infty.$

Projecting Equation \eqref{wbarre}, and writing $\mu |\nabla \overline{w}|^q=\mu 2^q|y|^q|\overline{w}_2|^q,$ we derive the following ODE system for $\overline{w}_0$ and $\overline{w}_2:$
$$\overline{w}_0'=\overline{w}_0+{p\over 2\kappa}\left(\overline{w}_0^2+8N\overline{w}_2^2\right)+\tilde{c_0}|\overline{w}_2|^q+O\left(|\overline{w}_0|^3+|\overline{w}_2|^3\right),$$
$$\overline{w}_2'=0+{p\over \kappa}\left(\overline{w}_0\overline{w}_2+4\overline{w}_2^2\right)+\tilde{c_2}|\overline{w}_2|^q+O\left(|\overline{w}_0|^3+|\overline{w}_2|^3\right),$$
where
\begin{equation*}
 {\tilde{c}}_0=\mu2^q\int_{\R^N}|y|^q\rho\; \mbox{ and } \; {\tilde{c}}_2=\frac{\mu2^q}{8N}\int_{\R^N}|y|^q(|y|^2-2N)\rho.
\end{equation*}
Note that for this calculation, we need to know the values of
\begin{align*}
\int_{\R^N}(|y|^2-2N)^2\rho(y) dy &=N\int_{\R}(|\xi|^2-2)^2 \rho(\xi) d\xi=
 8N,\\
\int_{\R^N}(|y|^2-2N)^3\rho(y) dy& = N\int_{\R}(|\xi|^2-2)^3 \rho(\xi) d\xi=
 64N.
\end{align*}
Note also that the sign of ${\tilde{c}}_0$ and  ${\tilde{c}}_2$ is the same as for $\mu.$ Indeed, obviously $\int_{\R^N}|y|^q\rho(y)dy>0,$ and for $\int_{\R^N}|y|^q(|y|^2-2N)\rho(y)dy,$ using the radial coordinate $r=|y|$ and an
integration by parts, we write
\begin{align}
\nonumber
\frac{8N\tilde c_2}{2^q\mu}=&\int_{\R^N}|y|^q(|y|^2-2N)\rho(y)dy
 =
\int_{\R^N}|y|^{q+2}\rho(y)dy-2N\int_{\R^N}|y|^q\rho(y)dy\\ 
 =&
2(q+N)\int_{\R^N}|y|^{q}\rho(y)dy-2N\int_{\R^N}|y|^q\rho(y)dy \label{8+losange}
 = 2q\int_{\R^N}|y|^{q}\rho(y)dy>0.
\end{align}
From the equation on $\overline{w}_2',$ we write
$$\overline{w}_2'=\tilde{c_2}|\overline{w}_2|^q\left(1+O\left(|\overline{w}_2|^{2-q}\right)\right)+{p\over \kappa}\overline{w}_0\overline{w}_2+O\left(|\overline{w}_0|^3\right),$$
and assuming that
\begin{equation}
 \label{Hypotesesprofileh1}
|\overline{w}_0\overline{w}_2|\ll |\overline{w}_2|^q,\; |\overline{w}_0|^3\ll |\overline{w}_2|^q,
\end{equation}
we get that
$$\overline{w}_2'\sim sign(\mu)|{\tilde{c}}_2||\overline{w}_2|^q,$$
with $sign(\mu)=1$ if $\mu>0$ and $-1$ if $\mu<0.$\\
In particular, if $\mu>0,$ then $\overline{w}_2$ is increasing tending to $0$ as $s\to \infty$ hence $\overline{w}_2<0$,  while if $\mu<0,$  $\overline{w}_2$ is decreasing tending to $0$ as $s\to \infty$, hence $\overline{w}_2>0.$
Then, since $1<q<2,$
we get
$$\overline{w}_2 \sim -sign(\mu){B\over s^{{1\over q-1}}},$$
with
\begin{equation}
 B  = \left[(q-1)|\tilde{c}_2|\right]^{-{1\over q-1}} \label{5++++triangle}  =  \left[\frac{2^{q-2}}Nq(q-1)|\mu|\int_\R |y|^q\rho\right]^{-{1\over q-1}}
\end{equation}
from
\eqref{8+losange}.

From the equation on $\overline{w}_0',$ we write
$$\overline{w}_0'=\overline{w}_0\left(1+O\left(\overline{w}_0\right)\right)+\tilde{c_0}|\overline{w}_2|^q\left(1+O\left(|\overline{w}_2|^{2-q}\right)\right),$$
and assuming that
\begin{equation}
 \label{Hypotesesprofileh2}
|\overline{w}_0'|\ll \overline{w}_0,\;  |\overline{w}_0'|\ll |\overline{w}_2|^q,
\end{equation}
we derive that
$$\overline{w}_0 \sim -{\tilde{c}}_0|\overline{w}_2|^q \sim  {-{\tilde{c}}_0B^q\over s^{{q\over q-1}}}\ll |\overline{w}_2|.$$
Such $\overline{w}_0$ and $\overline{w}_2$ are compatible with the hypotheses \eqref{Hypotesesprofileh1} and \eqref{Hypotesesprofileh2}.

Therefore, since $w=\kappa+\overline{w}$,
it follows from \eqref{ansatz} that
\begin{eqnarray}
\nonumber
w(y,s) & = &\kappa+ \overline{w}_2(s)(|y|^2-2N)+o\left(\overline{w}_2\right)\\ \nonumber
&=&\kappa-\frac{sign(\mu)}{s^{\frac 1{q-1}}}B(|y|^2-2N)+o\left({1\over s^{{1\over q-1}}}\right)\\ \label{9++losange}
&=&\kappa
-sign(\mu)B{|y|^2\over s^{\frac 1{q-1}}}+2N\frac{sign(\mu)}{s^{\frac 1{q-1}}}B+o\left(\frac 1{s^{\frac 1{q-1}}}\right),
\end{eqnarray}
in $L^2_\rho(\R^N),$ and also uniformly on compact sets by standard parabolic regularity.

\subsection{Outer expansion} From \eqref{9++losange}, we see that the variable
$$z={y\over s^\beta},\; \mbox{ with } \beta={1\over 2(q-1)}={p+1\over 2(p-1)},$$
as given in \eqref{bbupprof1}, is perhaps the relevant variable for
blow-up. Unfortunately, \eqref{9++losange} provides no shape, since
it is valid only on compact sets (note that $z\to 0$ as $s\to
\infty$ in this case). In order to see  some shape, we may need to
go further in space, to the ``outer region'', namely when $z\not=
0.$ In view of \eqref{9++losange}, we may try to find an expression
of $w$ of the form
\begin{equation}
 \label{1++++triangle}
 w(y,s)= \vpz(z)+{a\over s^{2\beta}}+O\left({1\over s^\nu}\right),
\end{equation}
for some $\nu>2\beta.$ Plugging this ansatz in equation \eqref{eq:wequation}, keeping only the main order, we end-up with the following equation on $\vpz:$
\begin{equation}
 \label{ordre0profil}
-{1\over 2}z\cdot \nabla\vpz(z)-{1\over
p-1}\vpz(z_0)+[{\vpz}(z)]^p=0,\; z={y\over
s^\beta}.\end{equation} Recalling that our aim is to find $w$ a
solution of \eqref{eq:wequation} such that $w\to \kappa$ as $s\to
\infty$ (in $ L^2_\rho$, hence uniformly on every compact set), we
derive from \eqref{1++++triangle} (with $y=z=0$) the natural
condition
$$\vpz(0)=\kappa.$$
Recalling also that we already adopted radial symmetry for the inner equation, we do the same here. Therefore, integrating equation \eqref{ordre0profil}, we see that
\begin{equation}
 \label{2++++triangle}
 \vpz(z)=\Big(p-1+b|z|^2\Big)^{-{1\over
p-1}},
\end{equation}
for some $b\in \R.$ Recalling also that we want a solution $w\in
L^\infty(\R^N),$ (see \eqref{225}),  we see that $b\geq 0$ and for a
nontrivial solution, we should have
\begin{equation}
 \label{4++++triangle}
 b>0.
\end{equation}
Thus, we have just obtained from \eqref{1++++triangle} that
\begin{equation}
 \label{3++++triangle}
 w(y,s) =
 \Big(p-1+b|z|^2\Big)^{-{1\over
p-1}}+\frac a{s^{2\beta}}+O\left(\frac 1{s^\nu}\right),\; \mbox{ with }\; z={y\over s^\beta}\mbox{ and }\nu>2\beta.
\end{equation}
We should understand this expansion to be valid at least on compact
sets in $z,$ that is for $|y|< R s^\beta,$ for any $R>0.$

\subsection{Matching asymptotics} Since \eqref{3++++triangle} holds for $|y|< R s^\beta,$ for any $R>0$, it holds also uniformly on compact sets, leading to the following expansion for $y$ bounded:
\begin{equation*}
 w(y,s)
 = \kappa-{\kappa b \over (p-1)^2}{|y|^2\over s^{2\beta}}+{a\over s^{2\beta}}+O\left({1\over s^\nu}\right).
\end{equation*}
Comparing with \eqref{9++losange}, we find the following values for $b$ and $a:$
$$b=sign(\mu){B(p-1)^2\over \kappa } \; \mbox{ and }\; a=2N sign(\mu)B.$$
In particular, from \eqref{4++++triangle} we see that
$$ \mu>0.$$

In conclusion, using  \eqref{5++++triangle}, we see that we have just derived the following profile for $w(y,s):$
$$w(y,s) \sim \varphi (y,s)$$
with
\begin{equation}\label{feq}\varphi(y,s)=\vpz\Big({y\over s^\beta}\Big)+{a\over
s^{2\beta}}:=\Big(p-1+b{|y|^2\over s^{2\beta}}\Big)^{-{1\over
p-1}}+{a\over s^{2\beta}}.
\end{equation}
\begin{equation} \label{equbeta}
\beta={p+1\over 2(p-1) },
\end{equation}
\begin{equation}\label{aeq} a={2Nb\kappa\over (p-1)^2},\end{equation}
\begin{equation} \label{equb}b={1\over 2}(p-1)^{{p-2\over p-1}}\left((4\pi)^{N\over 2}(p+1)^2N\over p
\int_{\R^N}|y|^qe^{-{|y|^2/4}}dy\right)^{{p+1\over p-1}}\mu^{-(p+1)/(p-1)},
\end{equation}
\section{Formulation of the problem}
\setcounter{equation}{0}
In this section we formulate the problem in order to justify the formal approach given in the previous section. Very soon, actually starting from \eqref{def:q_b} given below, we will only focus on the case
\[
N=1
\]
for simplicity. The proof in higher dimensions is no more difficult.\\
 Let $w,\; y$ and $s$ be as in (\ref{framwork:selfsimilar}). Let us
introduce $\vq(y,s)$ such that
\begin{equation}\label{eq:q+phi+def} w(y,s)=\varphi(y,s)+\vq(y,s),\end{equation}
where $\varphi$ is given by \eqref{feq}.
If $w$ satisfies the equation (\ref{eq:wequation}), then $\vq$
satisfies the following equation:
\begin{equation}\label{qequation}
\partial_s \vq=(\mathcal{L}+V)\vq+B(\vq)+G(\vq)+{R}(y,s),
\end{equation}
where $\mathcal{L}$ is defined by \eqref{eq:Operator:L} and
\begin{equation}\label{eq:V}
 V(y,s)=p~\varphi(y,s)^{p-1}
-\frac{p}{p-1},
\end{equation}
\begin{equation}\label{eq:B}
B(\vq)=|\varphi+\vq|^{p-1}(\varphi+\vq)-\varphi^p-p\varphi^{p-1}\vq,\end{equation}
\begin{equation}\label{RNeq}
\begin{split}
&R(y,s)=\Delta\varphi-\frac{1}{2}y\cdot\nabla\varphi-\frac{\varphi}{p-1}+
\varphi^p-\frac{\partial \varphi}{\partial s}+\mu |\nabla \varphi|^q
\end{split}\end{equation}
and
 \begin{equation}\label{G} G(v)=\mu|\nabla\varphi+\nabla \vq|^q-\mu|\nabla \varphi|^q.
 \end{equation}
 Our aim is to construct  initial data $v(s_0)$ such that the equation
(\ref{qequation}) has a solution $\vq(y,s)$ defined for all
$(y,s)\in \R^N\times[-\log T,\infty),$ and satisfies:
\begin{equation}\label{eq:qt0} \lim_{s\rightarrow \infty
}\|\vq(s)\|_{W^{1,\infty}(\R^N)}=0.\end{equation} From Equation
(\ref{feq}), one sees that the variable  $z={y\over s^\beta}$ plays
a fundamental role. Thus we will consider the dynamics for $|z|>K$
and $|z|<2K$ separately for some $K>0$ to be fixed large.
Since
 \begin{equation}\label{estim:B-R}
|B(\vq)|\leq C|\vq|^2,\;  \|R(.,s)\|_{L^\infty}\leq \f{C}{s}, \|G(\vq)\|_{L^\infty(\R)}\leq {C\over \sqrt{s}} \|\vq\|_{L^\infty(\R)},\end{equation}  for $s$ large enough, (see \eqref{e20151}, \eqref{e20152} and \eqref{e20153} below),
it is then reasonable to think that the
dynamics of equation (\ref{qequation}) are influenced by the linear
part, namely $\mathcal{L}+V$.

\bigskip
The properties of the operator $\mathcal{L}$ were given in Section 2. In particular, $\mathcal{L}$ is predominant on all the modes, except on the null modes where the terms $V\vq $ and $G(\vq)$ will play a crucial role
(see item (ii) in Proposition \ref{prop:projet:q} below).

As for the potential  $V,$ it has two
fundamental properties  which will strongly  influence our strategy:
\begin{itemize}
 \item[(i)] we have  $ V(.,s)\rightarrow 0$ in
$L_\rho^2(\mathbb{R})$ when $s\rightarrow \infty.$ In practice,  the
effect of $V$ in the blow-up area $(|y|\leq Cs^\beta)$ is regarded
as a perturbation of the effect of  $\mathcal{L}$ (except on the null mode).
 \item[(ii)] outside of the blow-up area, we have the following property:
for all $\epsilon >0$, there exists $C_{\epsilon}>0$ and
$s_{\epsilon}$ such that $$ \sup_{s\geq
s_{\epsilon},~\frac{|y|}{s^{\beta}}\geq
C_{\epsilon}}\left|V(y,s)-(-\frac{p}{p-1})\right|\leq \epsilon,$$
with $-\frac{p}{p-1}<-1.$ As $1$ is the largest eigenvalue of the
operator $\mathcal{L}$, outside the blow-up area  we can consider
that  the operator $ \mathcal {L}+V$ is  an operator with negative
eigenvalues, hence, easily controlled.
\end{itemize}

Considering the fact that the behavior of $V$ is not the same inside
and outside  the blow-up area, we decompose  $ \vq$ as follows. Let
us consider a non-increasing cut-off function  $\chi_0\in
C^\infty\big([0,\infty),[0,1]\big)$ such that
$\mbox{supp}(\chi_0)\subset[0,2]$ and $\chi_0 \equiv 1 $ in $[0,1]$,
and introduce
\begin{equation}\label{def:chi}
\chi(y,s)=\chi_0\left(\frac{|y|}{K~s^{\beta}}\right)\end{equation}
with $K=\max(6,K_5)$ and $K_5=K_5(N,p,\mu)$ is some large enough constant introduced below in Lemma \ref{lem:A_5}.\\
Then, we write
\begin{equation}\label{def:q:proj}
\vq(y,s)=\vq_b(y,s)+\vq_e(y,s),
\end{equation}
with
\begin{equation}\label{def:q:projbis}
\vq_b(y,s)=\vq(y,s)\chi(y,s) \mbox{ and } \vq_e(y,s)=
\vq(y,s)\big(1-\chi(y,s)\big).\end{equation} We remark that $$
\mbox{supp }\vq_b(s)\subset B(0,2K{s^{\beta}}),\; \mbox{supp}~
\vq_e(s)\subset \mathbb{R}^N\setminus B(0,Ks^{\beta}).$$
As for $\vq_b,$ we will decompose it according to the sign of the eigenvalues of $\mathcal{L}$, by writing
\begin{equation}
\label{def:q_b}
\vq_b(y,s)= \sum_{m=0}^2
\vq_m(s)h_{m}(y)+\vq_{-}(y,s),
\end{equation}
where for $0\le m\le 2$, $v_m=P_m(v_b)$ and $\vq_{-}(y,s)=
P_{-}(\vq_{b})$, with $P_m$ the $L^2_\rho$ projector on $h_m$, the eigenfunction corresponding to $\lambda=1-\frac m2\ge 0$, and $P_{-}$  the projector on $\{h_i,\;|\;i\ge 3\}$, the negative
subspace of the  operator $\mathcal{L}$
(as announced in the beginning of the section, hereafter, we assume that $N=1$ for simplicity).\\
 Thus, we can decompose $\vq$ in  five components  as follows:
\begin{equation}\label{qprojection}
\vq(y,s)=\sum_{m=0}^2
\vq_m(s)h_{m}(y)+\vq_{-}(y,s)+\vq_e(y,s).
\end{equation}
Here and
throughout the paper, we call $\vq_{-} $ the negative mode of $\vq$,
$\vq_2 $ the null mode of $\vq$, and the subspace spanned by
$\left\{h_m\; |\; m\geq 3\right\}$ will be referred to as the negative
subspace.
\section{Existence}\label{secexis}
\setcounter{equation}{0}
In this section, we prove the existence of a solution $\vq$ of  (\ref{qequation}) such that
\begin{equation}
 \label{but1}
\lim_{s\to \infty}\|\vq(s)\|_{W^{1,\infty}(\R)}=0.
\end{equation}
Hereafter, we denote by $C$ a generic positive constant, depending only on $p$, $\mu$ and $K$ introduced in \eqref{def:chi}, itself depending on $p$ and $\mu$. In particular, $C$ does not depend on $A$ and $s_0$, the constants that will appear shortly and throughout the paper, and need to be adjusted for the proof.\\
We proceed in two subsections. In the first subsection, we give the
proof assuming the technical details. In the second subsection we
give the proofs of the technical details.
\subsection{Proof of the existence assuming technical results} Since $p>3,$ we see that, by definition of $\beta$ given by \eqref{equbeta}, $\beta\in ({1\over 2},1).$
Our construction is build on a careful choice of the initial data for $\vq$ at a time $s_0.$ We will choose it in the following form:
\begin{defi}[Choice of the initial data]\label{initial data}
Let us define, for $A\geq 1,$ $s_0=-\log T>1$ and $d_0,\;d_1\in \R,$
the function
\begin{equation}\label{initial-data}
 \psi_{s_0,d_0,d_1}(y)={A\over s_0^{2\beta+1}}\Big(d_0h_0(y)+d_1h_1(y)\Big)\chi(2y,s_0),
\end{equation}
where $h_i,\; i=0,\,1$ are defined by (\ref{h_m}) and $\chi$ is
defined by (\ref{def:chi}).
\end{defi}

The solution of equation \eqref{qequation} will be denoted by
$\vq_{s_0,d_0,d_1}$ or $\vq$ when there is no ambiguity. We will
show that if $A$ is fixed large enough, then, $s_0$ is fixed large
enough depending on $A,$  we can fix the parameters $(d_0,d_1)\in
[-2,2]^2,$ so that the solution $\vq_{s_0,d_0,d_1}(s)\to 0$ as $s\to
\infty$ in $W^{1,\infty}(\R),$ that is, \eqref{but1} holds. Owing to
the decomposition given in \eqref{initial-data}, it is enough to
control the solution in a shrinking set  defined as follows:
\begin{defi}[A set shrinking to zero]\label{prop:V-def} Let $\gamma$ be any real number  such that
\begin{equation}\label{gamma}
3\beta<\gamma<\min(5\beta-1,2\beta+1).
\end{equation}
 For all  $ A\geq 1$ and $
s\geq 1$, we define $\vartheta_A(s)$ as the set of all functions $r
\in L^\infty(\R)$ such that

\[||r_e||_{L^\infty(\R)}~\leq~ {A^2\over
s^{\gamma-3\beta}},\; \;\;\;\Big\|{r_-(y)\over 1+|y|^3}
\Big\|_{L^\infty(\R)}~\leq~ {A\over s^{\gamma}},\; ~\]

\[|r_0|,\; |r_1|\leq
~{A\over s^{2\beta+1}},\;\;\;~|r_2|~\leq~{\sqrt{A}\over
s^{4\beta-1}} ,\]
\\
where $r_-$, $r_e$ and  $r_m$ are defined in (\ref{qprojection}).
\end{defi}
\begin{rem}
{\rm Since $p>3,$ it follows that $\frac 12<\beta<1,$ in particular the range for $\gamma$ in \eqref{gamma} is not empty. Of course, the set $\vartheta_A(s)$ depends also on the choice of $\gamma$ satisfying \eqref{gamma}.
However, while $A$ will be chosen large enough so that various estimates hold, $\gamma$
will be fixed once for all throughout the proof.}
\end{rem}

Since $A\geq1,$  then the sets $\vartheta_A(s)$ are increasing (for
fixed $s$) with respect to $A$ in the sense of inclusion.  We also
show  the following property of elements of $\vartheta_A(s):$

\medskip
{\it
For all $A\geq 1$, there exists $s_{01}(A) \geq 1$ such that, for
all $s\geq s_{01}$  and $r\in \vartheta_A(s),$ we have
\begin{equation}
 \label{prop:V-defbis}
 \|r\|_{L^\infty(\R)}\leq C\frac{A^2}{s^{\gamma-3\beta}},
\end{equation}
where  $C$ is a positive constant (see Proposition \ref{prop:V-def-prop} below for the proof).
}

\medskip

By \eqref{prop:V-defbis}, if a solution $v$ stays in $\vartheta_A(s)$ for $s\geq s_0,$ then it converges to $0$ in $L^{\infty}(\R)$ (the convergence of the gradient will follow from parabolic regularity). Reasonably, our aim is then reduced to prove the following proposition:

\begin{prop}[{\it Existence of solutions trapped in  $\vartheta_A(s)$}]\label{dans vs} There exists $A_2\ge 1$ such that for $A\ge A_2$ there exists $s_{02}(A)$ such that for all $s_0\ge s_{02}(A)$, there exists  $(d_0,d_1)$ such that if
$v$ is the solution of (\ref{qprojection}) with initial data at $s_0,$  
given by  (\ref{initial-data}), then
 $\vq(s)\in \vartheta_A(s)$, for all $s\geq s_0.$
\end{prop}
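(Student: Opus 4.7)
The plan is to perform a topological shooting argument in the spirit of Merle--Zaag \cite{MZsbupdmj97}. The two parameters $(d_0,d_1)$ in the initial data \eqref{initial-data} are designed precisely to control the projection of $\vq(s_0)$ on the two unstable modes of the linearized operator $\mathcal{L}+V$, namely $h_0$ (eigenvalue $1$) and $h_1$ (eigenvalue $1/2$). Since $\mathcal{L}+V$ should be contracting on the negative modes, on the outer region, and (thanks to the structure of the nonlinear gradient term $G$, which is the new feature here) even on the null mode $\vq_2$, one expects that all components except $(\vq_0,\vq_1)$ can be controlled automatically, and that the remaining finite-dimensional problem can be solved by a non-retraction argument in $\mathbb{R}^2$.

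\textbf{Initialization.} I would first verify that for every $(d_0,d_1)\in \mathcal{D}_{s_0}:=[-1,1]^2$ and $s_0$ large enough (depending on $A$), the initial datum $\psi_{s_0,d_0,d_1}$ belongs to $\vartheta_A(s_0)$; moreover, thanks to the cut-off $\chi(2y,s_0)$, the negative mode, outer part and null mode components of $\vq(s_0)$ are strictly smaller than the thresholds defining $\vartheta_A(s_0)$, while $\vq_0(s_0)=d_0 A/s_0^{2\beta+1}$ and $\vq_1(s_0)=d_1 A/s_0^{2\beta+1}$ up to negligible tails, so that the boundary $\partial \vartheta_A(s_0)$ is attained exactly when $(d_0,d_1)\in\partial\mathcal{D}_{s_0}$.

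\textbf{Reduction and exit time.} For each $(d_0,d_1)\in\mathcal{D}_{s_0}$, set
\[
s^*(d_0,d_1)=\sup\bigl\{s\geq s_0:\vq_{s_0,d_0,d_1}(s')\in\vartheta_A(s')\text{ for all }s'\in[s_0,s]\bigr\},
\]
and argue by contradiction assuming $s^*(d_0,d_1)<\infty$ for every $(d_0,d_1)$. The a priori estimates to be established in the technical subsection (exploiting the spectral properties of $\mathcal{L}$, the pointwise control \eqref{estim:B-R} of $B$, $R$ and $G$, and the crucial cancellations on the $h_2$-mode coming from the projection of $G(\vq)$, which precisely fixes $\beta$ by \eqref{equbeta}) should yield, at $s=s^*$, the strict inequalities
\[
|\vq_2(s^*)|<\tfrac{\sqrt A}{2(s^*)^{4\beta-1}},\quad \Bigl\|\tfrac{\vq_-(\cdot,s^*)}{1+|y|^3}\Bigr\|_\infty<\tfrac{A}{2(s^*)^\gamma},\quad \|\vq_e(\cdot,s^*)\|_\infty<\tfrac{A^2}{2(s^*)^{\gamma-3\beta}},
\]
provided $A$ is chosen large enough and $s_0\geq s_{02}(A)$. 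Consequently, $\vq(s^*)\in\partial\vartheta_A(s^*)$ forces either $|\vq_0(s^*)|=A/(s^*)^{2\beta+1}$ or $|\vq_1(s^*)|=A/(s^*)^{2\beta+1}$.

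\textbf{Topological conclusion.} Since the linear ODEs on the unstable modes read $\vq_0'=\vq_0+O(s^{-\gamma})$ and $\vq_1'=\tfrac12 \vq_1+O(s^{-\gamma})$, with forcing much smaller than the boundary value $A/s^{2\beta+1}$, the flow crosses the corresponding faces of the box transversally outward. This transversality gives continuity of $(d_0,d_1)\mapsto s^*(d_0,d_1)$ and hence continuity of the rescaled exit map
\[
\Phi:\mathcal{D}_{s_0}\to\partial[-1,1]^2,\qquad (d_0,d_1)\mapsto \frac{(s^*)^{2\beta+1}}{A}\bigl(\vq_0(s^*),\vq_1(s^*)\bigr),
\]
while the initialization step ensures $\Phi|_{\partial\mathcal{D}_{s_0}}=\mathrm{Id}$. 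Thus $\Phi$ would be a continuous retraction of the square onto its boundary, contradicting a standard non-retraction (or Brouwer degree) argument. Therefore some $(d_0,d_1)\in\mathcal{D}_{s_0}$ gives $s^*=+\infty$, proving $\vq(s)\in\vartheta_A(s)$ for all $s\geq s_0$.

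\textbf{Main obstacle.} The heart of the matter lies in the reduction step: establishing the strict improvements for $\vq_2$, $\vq_-$ and $\vq_e$ inside $\vartheta_A(s)$. Unlike the standard case $\mu=0$, the null mode $\vq_2$ is now driven by the nonlinear gradient term $G(\vq)$ rather than by a logarithmic correction, which accounts for both the polynomial rate $s^{-(4\beta-1)}$ in the definition of $\vartheta_A(s)$ and the algebraic identities (cf.\ Remark \ref{4losange} and Proposition \ref{prop:projet:q}) that pin down the parameter $b$ in \eqref{equb}. Closing the estimates on $\vq_e$ with only polynomial gain, rather than exponential, is what forces the range \eqref{gamma} for $\gamma$ and ultimately the restriction $p>3$.
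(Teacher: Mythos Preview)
Your proof follows essentially the same route as the paper: argue by contradiction, invoke Proposition~\ref{prop:rt2dimen} to reduce exit from $\vartheta_A$ to the two positive modes, use transversal outward crossing to make the exit map $\Phi$ continuous, and conclude by a degree/non-retraction argument. One slip worth correcting: the forcing in the ODEs for $\vq_0,\vq_1$ must be $O(s^{-(2\beta+1)})$ (as in Proposition~\ref{prop:projet:q}(i)), not $O(s^{-\gamma})$; since $\gamma<2\beta+1$ by \eqref{gamma}, the bound you wrote is strictly weaker and would not beat the boundary size $A/s^{2\beta+1}$, so the transversality step would fail as stated. A smaller point: the paper does not take $\mathcal{D}_{s_0}=[-1,1]^2$ but rather the preimage of $[-A/s_0^{2\beta+1},A/s_0^{2\beta+1}]^2$ under the (nearly identity) map $(d_0,d_1)\mapsto(\psi_0,\psi_1)$, so that on the boundary $\Phi$ has degree one exactly rather than only approximately.
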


This proposition gives the stronger convergence to $0$ in
$L^{\infty}(\R)$ thanks to \eqref{prop:V-defbis}, and the
convergence in $W^{1,{\infty}}(\R)$ will follow from parabolic
regularity as we explain below.

Let us first be sure that we can choose the initial data such that it starts in  $\vartheta_A(s_0).$ In other words, we will define a set where we will at the end select the good parameter $(d_0, d_1)$ that will give the conclusion of Proposition \ref{dans vs}. More precisely,
we have the following result:

\begin{prop}[Properties of initial data]\label{prop:diddc} For each $A\geq 1$, there
exists $s_{03}(A)>1$ such that for all $s_0\geq s_{03}$:
\begin{itemize}
\item[(i)] There exists a rectangle
\begin{equation}\label{D}
\mathcal{D}_{s_0}\subset [-2,2]^2
 \end{equation}
 such that the mapping
\begin{eqnarray*}
\Phi :\R^2&\rightarrow & \R^2,\\(d_0,d_1) &\mapsto&
(\psi_0,\psi_1).
\end{eqnarray*}
(where $\psi$ stands for $\psi_{s_0,d_0,d_1}$) is linear, one to
one from $\mathcal{D}_{s_0}$ onto $[-{A\over s_0^{2\beta+1}},{A\over
s_0^{2\beta+1}}]^2$ and maps $
\partial \mathcal{D}_{s_0} $ into $\partial\Big(
[-{A\over s_0^{2\beta+1}},{A\over s_0^{2\beta+1}}]^2\Big)$. Moreover, it has degree one on the
boundary.
\item[(ii)]  For all $(d_0,d_1) \in \mathcal{D}_{s_0},$
$\psi:=\psi_{s_0, d_0, d_1} \in \vartheta_A(s_0)$ with strict
inequalities except for $(\psi_0,\psi_1)$, in the sense that
\begin{equation}
\label{e54}
  \;\psi_e\equiv 0~,\;\; ~ |\psi_-(y)|<~
{1\over s_0^{\gamma}} (1+|y|^3),\;\forall\; y\in\; \R,
\end{equation}

\begin{equation}
\label{e54bis}|\psi_0|\leq ~{A\over s_0^{2\beta+1}},~\; |\psi_1|\leq
~{A\over s_0^{2\beta+1}},~|\psi_2|<~{1\over
s_0^{4\beta-1}} .
\end{equation}

\item[(iii)] Moreover, for all $(d_0,d_1)\in \mathcal{D}_{s_0},$ we
have
\begin{equation}
\label{qrdienq_s0}
 \|\nabla\psi\|_{L^\infty(\R)}\leq {CA\over s_0^{2\beta+1}} \leq \frac{1}{s^{\gamma-3\beta}_0},
\end{equation}

\begin{equation}
\label{qrdienq_s0point2} |\nabla \psi_-(y)|\leq
\frac{1}{s_0^{\gamma}}(1+|y|^3),
 \; \forall \; y\in \R.
 \end{equation}
\end{itemize}
\end{prop}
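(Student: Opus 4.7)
The plan is to exploit the fact that the initial data is essentially the low-order polynomial $\frac{A}{s_0^{2\beta+1}}(d_0 + d_1 y)$, which lies in $\mathrm{span}\{h_0, h_1\}$, truncated by a cutoff $\chi(2y, s_0)$ supported in $\{|y| \leq K s_0^\beta\}$. Because the Gaussian weight $\rho$ decays like $e^{-|y|^2/4}$, every correction introduced by the truncation is exponentially small in $s_0$, either in $L^2_\rho$ or pointwise up to any polynomial prefactor. This single observation drives all three parts of the proposition.

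First, since $\chi(\cdot, s_0) \equiv 1$ on $\mathrm{supp}\,\chi(2\cdot, s_0)$ (the support of $\chi_0(2|\cdot|/K)$ is strictly inside the region where $\chi_0(|\cdot|/K) = 1$), definition \eqref{def:q:projbis} gives immediately $\psi_e \equiv 0$ and $\psi_b = \psi$. I would then write
$$\psi(y) = \frac{A}{s_0^{2\beta+1}}\bigl(d_0 h_0(y) + d_1 h_1(y)\bigr) + \mathcal R(y), \quad \mathcal R(y) = \frac{A}{s_0^{2\beta+1}}(d_0 + d_1 y)\bigl(\chi(2y, s_0) - 1\bigr),$$
where $\mathcal R$ is supported in $\{|y| \geq K s_0^\beta/2\}$. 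Using the orthogonality of the $h_m$'s in $L^2_\rho$, one finds
$$\psi_0 = \frac{A d_0}{s_0^{2\beta+1}} + \varepsilon_0, \qquad \psi_1 = \frac{A d_1}{s_0^{2\beta+1}} + \varepsilon_1, \qquad \psi_2 = \varepsilon_2,$$
with each $\varepsilon_i$ a linear form in $(d_0, d_1)$ whose coefficients are bounded by $C e^{-c s_0^{2\beta}}$ coming from the Gaussian tail beyond $|y| \geq K s_0^\beta/2$; and $\psi_-(y) = \mathcal R(y) - \varepsilon_2 h_2(y)$ is pointwise exponentially small relative to any polynomial weight.

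For (i), the map $\Phi$ is thus represented by a $2 \times 2$ matrix of the form $\frac{A}{s_0^{2\beta+1}}I + \mathcal O(e^{-c s_0^{2\beta}})$, which is invertible for $s_0$ large. The preimage $\mathcal D_{s_0}$ of the square $[-A/s_0^{2\beta+1}, A/s_0^{2\beta+1}]^2$ is then a small linear perturbation of $[-1,1]^2 \subset [-2,2]^2$; boundaries map to boundaries by linearity, and degree $+1$ follows from the matrix being a small perturbation of a positive multiple of the identity. For (ii), the strict inequalities in \eqref{e54}--\eqref{e54bis} are immediate: $|\varepsilon_2|$ and $|\psi_-(y)|/(1 + |y|^3)$ are $\mathcal O(e^{-c s_0^{2\beta}})$, far smaller than the polynomial thresholds $s_0^{-(4\beta-1)}$ and $s_0^{-\gamma}$ respectively.

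The main bookkeeping is in (iii). Direct differentiation yields
$$\nabla \psi = \frac{A d_1}{s_0^{2\beta+1}} \chi(2y, s_0) + \frac{2A}{K s_0^{3\beta+1}}(d_0 + d_1 y)\,\chi_0'\!\left(\frac{2|y|}{K s_0^\beta}\right)\mathrm{sign}(y),$$
whose second summand is supported on $K s_0^\beta / 2 \leq |y| \leq K s_0^\beta$ and is bounded by $CA/s_0^{2\beta+1}$; hence $\|\nabla \psi\|_\infty \leq CA/s_0^{2\beta+1}$. Since $\gamma < 5\beta - 1 < 5\beta + 1$, for $s_0$ large this bound sits below $s_0^{-(\gamma - 3\beta)}$, giving \eqref{qrdienq_s0}. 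For \eqref{qrdienq_s0point2} I would write $\nabla \psi_- = \nabla \psi - \psi_1 - 2\psi_2 y$: the contributions $\frac{A d_1}{s_0^{2\beta+1}}(\chi(2y, s_0) - 1)$ and the cutoff-derivative term are both supported in $|y| \geq K s_0^\beta / 2$, so dividing by $1 + |y|^3$ gains a factor $s_0^{-3\beta}$ and yields an overall bound $s_0^{-(5\beta+1)} \ll s_0^{-\gamma}$; the contributions from $\varepsilon_1$ and $\varepsilon_2 y$ are exponentially small after the same division. The only delicate point is to split the $y$-range so that the polynomial weight $1 + |y|^3$ is exploited precisely where the cutoff produces a potentially large term; no other obstacle is expected.
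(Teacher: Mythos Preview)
Your approach is essentially the same as the paper's: write $\psi$ as the polynomial $\frac{A}{s_0^{2\beta+1}}(d_0h_0+d_1h_1)$ plus a truncation remainder $\mathcal R$ supported in $\{|y|\ge Ks_0^\beta/2\}$, use the Gaussian weight to show the $L^2_\rho$-projections $\varepsilon_m$ of $\mathcal R$ are $O(e^{-cs_0^{2\beta}})$, and handle (iii) by direct differentiation exactly as you do.

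One correction: your claim in (ii) that $|\psi_-(y)|/(1+|y|^3)$ is $O(e^{-cs_0^{2\beta}})$ is an overstatement. The projections $\varepsilon_0,\varepsilon_1,\varepsilon_2$ are indeed exponentially small (the Gaussian weight $\rho$ enters there), but $\psi_- = \mathcal R - \varepsilon_0 - \varepsilon_1 h_1 - \varepsilon_2 h_2$ (you dropped the $\varepsilon_0,\varepsilon_1 h_1$ terms, though these are harmless) is a \emph{pointwise} object: $\mathcal R$ itself has size $\sim \frac{A}{s_0^{2\beta+1}}(1+|y|)$ on its support, so dividing by $1+|y|^3$ and using $|y|\ge Ks_0^\beta/2$ only gains $s_0^{-2\beta}$, giving the \emph{polynomial} bound $\frac{CA}{s_0^{4\beta+1}}$. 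This is precisely what the paper obtains (via its Lemma~\ref{l41}, which records $\|g_-/(1+|y|^3)\|_\infty\le C/s_0^{2\beta}$ rather than an exponential bound), and it still suffices since $\gamma<4\beta+1$ by \eqref{gamma}. You already use this correct polynomial mechanism in your treatment of $\nabla\psi_-$ in (iii), so the fix is just to apply the same reasoning in (ii).
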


The proof of the previous proposition is postponed to Subsection
4.2. Let us now give the proof of Proposition \ref{dans vs}.

\begin{proof}[Proof of Proposition \ref{dans vs}] Let us consider $A\geq 1,$ $s_0\geq s_{03},$ $(d_0,d_1)\in \mathcal{D}_{s_0},$ where $s_{03}$ is given by Proposition \ref{prop:diddc}. From the existence theory (which follows from the Cauchy problem
for equation \eqref{eq:uequation} in $W^{1,\infty}(\R))$ mentioned in the introduction), starting in $\vartheta_A(s_0)$ which is in $\vartheta_{A+1}(s_0),$ the solution stays in $\vartheta_A(s)$ until some maximal time $s_*=s_*(d_0,d_1).$ If
 $s_*(d_0,d_1)=\infty$ for some $(d_0,d_1)\in \mathcal{D}_{s_0}$, then  the proof is complete. Otherwise, we argue by contradiction and suppose that $s_*(d_0,d_1)<\infty$ for any $(d_0,d_1)\in \mathcal{D}_{s_0}.$
 By continuity and the definition of $s_*$,  the solution at the  point $s_*$, is on the boundary of
 $\vartheta_{A}(s_*).$ Then, by definition of $\vartheta_{A}(s_*),$ one at least of the inequalities in that definition  is an equality. Owing to the following proposition, this can happen only
 for the first two components. Precisely, we have the following result:
 \begin{prop}[{\it Control of  $\vq(s)$ by $(\vq_0(s),\vq_1(s))$ in $ \vartheta_A(s) $}] \label{prop:rt2dimen} There
exists $A_4\geq 1$ such that  for each $A\geq~A_4,$
there exists
$s_{04}(A)\in \R$
such that for all $s_0\geq s_{04}(A)$,
 the following holds:

If $\vq$ is a solution of (\ref{qequation}) with initial data at $s=s_0$ given by
(\ref{initial-data}) with $(d_0,d_1)~\in~\mathcal{D}_{s_0} $, and $\vq(s)~\in~\vartheta_A(s)$  for all  $ s \in~[s_0,s_1],~$
with $\vq(s_1)\in \partial \vartheta_A(s_1)$ for some $s_1\geq s_0$, then:
\begin{itemize}
\item[(i)] (Reduction to a finite dimensional problem) We have: $$ \left(\vq_0(s_1),\vq_1(s_1)\right)\in \partial\left( \left[-\f{A}{s_1^{2\beta+1}},\f{A}{s_1^{2\beta+1}}\right]^2\right).$$
\item[(ii)] (Transverse crossing) There exist $m\in \{0,1\}$ and $\omega \in \{-1,1\}$ such that
\begin{equation*}
 \omega \vq_m(s_1)=\f{A}{s_1^{2\beta+1}}\mbox{ and }  \omega\vq_m'(s_1)>0.
\end{equation*}
\end{itemize}
\end{prop}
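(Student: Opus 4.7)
The plan is to project equation \eqref{qequation} onto the five components of the decomposition \eqref{qprojection} and to establish that, under the trapping hypothesis $v(s)\in\vartheta_A(s)$ on $[s_0,s_1]$, all three of the \emph{stable} bounds in Definition \ref{prop:V-def} (those controlling $v_-,v_e$ and $v_2$) are in fact \emph{strict} at $s=s_1$. Since by continuity $v(s_1)\in\partial\vartheta_A(s_1)$ must saturate at least one defining inequality, the saturating coordinate can then only be $v_0$ or $v_1$, which is item (i). The central input will be the projection lemma, stated earlier as Proposition \ref{prop:projet:q}, providing
\[
v_m'=\Bigl(1-\tfrac{m}{2}\Bigr)v_m+F_m(s),\qquad m\in\{0,1,2\},
\]
together with Duhamel or pointwise inequalities for $v_-$ and $v_e$; the sources $F_m$ and linear remainders will be evaluated using the hypothesis $v\in\vartheta_A$ and the a priori bounds \eqref{estim:B-R} on $B(v)$, $R$ and $G(v)$.

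I would dispatch $v_-$ first: a Duhamel formula based on the semigroup of $\mathcal L$ restricted to $\{h_m\mid m\geq 3\}$, whose spectrum is bounded above by $-\tfrac12$, together with the $O(1/s)$ forcing from $R$ and the smallness of $Vv$, $B(v)$, $G(v)$ on $\vartheta_A$, should yield $|v_-(y,s_1)|\leq (C/s_1^\gamma)(1+|y|^3)$ with $C$ independent of $A$, which is strictly below $A/s_1^\gamma$ once $A$ is large. For $v_e$ the key input is the second fundamental property of $V$ recalled in Section~3, namely $V\to -p/(p-1)<-1$ uniformly in the outer region $|y|\geq Ks^{\beta}$; the linearized flow restricted to that region is contractive at a strictly negative rate, and a Gronwall-type argument exploiting $\psi_e(s_0)\equiv 0$ (see \eqref{e54}) together with the cut-off coupling to $v_b$ and the bounds \eqref{estim:B-R} should improve $\|v_e(s_1)\|_\infty$ to be strictly below $A^2/s_1^{\gamma-3\beta}$.

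The hard part will be the null mode $v_2$, because its linear eigenvalue vanishes and all decay must come from the potential $Vv$ and the gradient nonlinearity $G(v)$. The rigorous counterpart of the formal computation of Section~\ref{secform} should produce an ODE of the form
\[
v_2'(s)=-\tfrac{c}{s}v_2(s)+\mathcal S(s),
\]
with $c>0$ coming from the $h_2$-projection of the leading-order part of $Vv$, and $\mathcal S(s)$ gathering the contributions of $G(v)$, $B(v)$ and $R$, whose dominant piece is the genuine gradient source responsible for the new scaling $\beta$. Integrating this ODE from $s_0$ with the strict initial bound $|\psi_2|<s_0^{-(4\beta-1)}$ provided by \eqref{e54bis} should yield $|v_2(s_1)|<\sqrt{A}/s_1^{4\beta-1}$ once $A$ is large. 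This is precisely the step where the specific values of $\beta$ and $b$, the restriction $p>3$, and the choice \eqref{gamma} of $\gamma$ are all used: the strict inequalities $3\beta<\gamma<\min(5\beta-1,2\beta+1)$ are exactly what guarantees that every remainder term is strictly subdominant at the final time $s_1$.

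Item (ii) is then comparatively direct. On the unstable modes, Proposition \ref{prop:projet:q} provides
\[
v_m'(s)=\bigl(1-\tfrac{m}{2}\bigr)v_m(s)+O\bigl(s^{-2\beta-1-\eta}\bigr),\qquad m\in\{0,1\},
\]
for some $\eta>0$, on $\vartheta_A$. So if $\omega v_m(s_1)=A/s_1^{2\beta+1}$ with $\omega\in\{\pm 1\}$, then
\[
\omega v_m'(s_1)=\bigl(1-\tfrac{m}{2}\bigr)\tfrac{A}{s_1^{2\beta+1}}+O\bigl(s_1^{-2\beta-1-\eta}\bigr),
\]
which is strictly positive for $s_1\geq s_{04}(A)$ large enough, yielding transverse crossing and completing the proof.
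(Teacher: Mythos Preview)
Your overall architecture is right and matches the paper: project onto the decomposition, show the three ``stable'' constraints are strict at $s_1$, and for the positive modes use the ODE from Proposition~\ref{prop:projet:q}. But there is one genuine misattribution and one incorrect claim.

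\medskip

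\textbf{The null mode.} You write that the damping coefficient $c$ in $v_2'=-\tfrac{c}{s}v_2+\mathcal S(s)$ comes from the $h_2$-projection of $Vv$, with $G(v)$ lumped into the source $\mathcal S$. This is backwards here. By Lemma~\ref{l52}(i), $(Vv)_2=O(s^{-4\beta})$: the potential is too small in the inner region to generate any $O(1/s)$ damping on $v_2$. The entire $-\tfrac{2\beta+1}{s}v_2$ term in Proposition~\ref{prop:projet:q}(ii) comes from $(G(v))_2$, i.e.\ from the nonlinear gradient term $\mu|\nabla w|^q$; see the explicit computation in Step~2 of the proof of Proposition~\ref{prop:projet:q} and the remark following its statement. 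This is precisely the new mechanism distinguishing the present equation from the $\mu=0$ case, and it is where the specific value of $b$ in \eqref{bbupprof1} is used algebraically. If you treat $G(v)$ as a pure source independent of $v_2$, you will not recover any decay for $v_2$ at all, since $\mathcal L h_2=0$ and the other terms are $O(s^{-4\beta})$.

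\medskip

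\textbf{Transverse crossing.} You claim the remainder in the ODE for $v_0,v_1$ is $O(s^{-2\beta-1-\eta})$ for some $\eta>0$, and conclude positivity of $\omega v_m'(s_1)$ by taking $s_1$ large. But Proposition~\ref{prop:projet:q}(i) only gives $|v_m'-(1-\tfrac m2)v_m|\le C s^{-2\beta-1}$ with no extra gain; the remainder has the \emph{same} power as the main term. The correct argument (as in the paper) is
\[
\omega v_m'(s_1)\ge\bigl[(1-\tfrac m2)A-C\bigr]s_1^{-2\beta-1}>0
\]
for $A$ large, not for $s_1$ large. This is why the statement requires $A\ge A_4$.
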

Assume the result of the previous proposition, for which the proof is given in Subsection 4.2 below, and continue the proof of Proposition \ref{dans vs} . Let $A\ge A_4$ and $s_0\geq s_{04}(A).$
It follows from Proposition \ref{prop:rt2dimen},
part (i), that $\left(\vq_0(s_*),\vq_1(s_*)\right)\in \partial\left( \left[-\f{A}{s_*^{2\beta+1}},\f{A}{s_*^{2\beta+1}}\right]^2\right),$
and the following function \begin{align*}\Phi&: \mathcal{D}_{s_0} \rightarrow \partial\left([-1,1]^2\right)\\
&(d_0,d_1)\mapsto
\dfrac{s_*^{2\beta+1}}{A}(\vq_0,\vq_1)_{(d_0,d_1)}(s_*),\; \mbox{ with } s_*=s_*(d_0,d_1),\end{align*}
is well defined. Then, it follows from Proposition \ref{prop:rt2dimen}, part (ii) that  $\Phi$ is continuous. On the other hand, using Proposition \ref{prop:diddc}, parts (i) and (ii)  together with  the fact that
$\vq(s_0)= \psi_{s_0,d_0,d_1}$, we see that when $(d_0,d_1)$ is on the boundary of the rectangle $\mathcal{D}_{s_0},$ we have strict
inequalities for the other components. Applying the transverse crossing property  given in Proposition \ref{prop:rt2dimen}, part (ii), we see that
$\vq(s)$  leaves  $\vartheta_A(s)$ at $s=s_0$,  hence $s_*(d_0,d_1)=s_0$. Using Proposition \ref{prop:diddc}, part (i), we see that the  restriction of $\Phi$
to the boundary is of degree 1. A contradiction then follows from the index theory. Thus, there exists a value $(d_0,d_1)\in \mathcal{D}_{s_0}$
such that for all $s\geq s_0,~ \vq_{s_0,d_0, d_1}(s)\in \vartheta_A(s)$.
This concludes the proof of  Proposition \ref{dans vs}.

\bigskip
{\it Completion of the proof of \eqref{but1}.} By Proposition
\ref{dans vs} and \eqref{prop:V-defbis}, it remain only to show that
\newline $\|\nabla \vq(s)\|_{L^\infty(\R)}\to 0$ as $s\to \infty.$
We will prove the following parabolic regularity for equation
(\ref{qequation}):

\medskip

{\it
For all $A\geq 1$, there exists
$s_{05}(A)\geq s_{04}(A)$ such that for all $s_0\geq s_{05}(A)$ the
following holds: If $\vq(s)$ is a solution of equation
(\ref{qequation}) on $[s_0,s_1]$ where $s_1\geq s_0$ with initial
data at $s=s_0,$ $\vq(s_0)=\psi_{s_0,d_0,d_1},$ $(d_0,d_1)\in
\mathcal D_{s_0},$
 $\vq(s)\in \vartheta_A(s) \mbox{ for all } s\in [s_0,s_1]$,
 then, for all $s\in[s_0,s_1],$ we have
\begin{equation}
\label{prop:regu-parab-q-equationbis} \|\nabla \vq(s)\|_{L^\infty(\R)}\leq \frac{C_1A^2}{s^{\gamma-3\beta}},
\end{equation}
where $C_1$ is a positive constant.}

\medskip

 This will be proved in Subsection \ref{4.2} below. Then, from \eqref{prop:V-defbis}, Proposition \ref{dans vs} and \eqref{prop:regu-parab-q-equationbis}, we have
$$\|\vq(s)\|_{W^{1,\infty}(\R)}\leq {C(A)\over s^{\gamma-3\beta}},$$
hence, by \eqref{gamma}, \eqref{but1} follows by taking $s_{02}\geq \max\left(s_{01},s_{03},s_{04},s_{05}\right)$.

\end{proof}

\subsection{Proof of the technical results}\label{4.2} In this section, we prove all the technical results used without proof in the previous one, thus, finishing the argument for the proof of the existence of a solution of \eqref{qequation}
satisfying \eqref{but1}. More precisely, we proceed in 4 steps, each given in a separate section.
\begin{itemize}
 \item[-] We first establish the needed properties on initial data and stated in Proposition \ref{prop:diddc}. In particular, we show that initial data is trapped in $\vartheta_A(s_0),$ provided that $s_0$ is large enough, and the parameters
  $(d_0,d_1)$ are in a suitable set.
 \item[-]Then, we show that the rest and the nonlinear terms of equation \eqref{qequation} are trapped in $\vartheta_C(s)$ for some positive $C,$ assuming $\vq\in  \vartheta_A(s)$ if necessary. For the potential term, we show that it is in $\vartheta_{CA}(s),$
 assuming $\vq\in \vartheta_A(s).$
 \item[-] In the third step, we give parabolic regularity estimates, proving in particular estimate \eqref{prop:regu-parab-q-equationbis}.
 \item[-] Finally, we prove Proposition \ref{prop:rt2dimen}, concerning the reduction of the problem to a two-dimensional one.
\end{itemize}

\subsubsection{Preparation of the initial data}
In this subsection, we give some properties
of the set $\vartheta_A(s)$  introduced in Definition \ref{prop:V-def} and prove Proposition \ref{prop:diddc} concerning initial data. We first claim the following:
\begin{prop}[{\it Properties of elements of $\vartheta_A(s)$}]\label{prop:V-def-prop} For all $A\geq 1,$ there exists $s_{10} \geq 1$
such that, for all $s\geq s_{10}$  and $r\in \vartheta_A(s),$ we
have

\begin{itemize}
\item[(i)] $\|r\|_{L^\infty(|y|\leq 2Ks^\beta)}\leq  C\frac{A}{s^{\gamma-3\beta}}$
and  $\|r\|_{L^\infty(\R)}\leq C\frac{A^2}{s^{\gamma-3\beta}}.$
\vspace{0,2cm}
\item[(ii)]$|r_b(y)|\leq C\frac{A}{s^{4\beta-1}}(1+|y|^3),\; |r_e(y)|\leq
C\frac{A^2}{s^{\gamma}}(1+|y|^3),$
and $|r(y)|\leq
C\frac{A^2}{s^{4\beta-1}}(1+|y|^3),\;\forall\; y\in \R.$
\vspace{0,2cm}
\item[(iii)] $|r(y)|\leq C\Big[\frac{A}{s^{2\beta+1}}(1+|y|)+\frac{\sqrt{A}}{s^{4\beta-1}}(1+|y|^2)+
\frac{A^2}{s^{\gamma}}(1+|y|^3)\Big],\;\forall\;y \in \R,$
\end{itemize}
where $C$ is a positive constant.

\end{prop}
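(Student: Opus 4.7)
The plan is to derive all three items by direct substitution of the scalar and functional bounds from Definition \ref{prop:V-def} into the five-piece decomposition
$r = r_0 h_0 + r_1 h_1 + r_2 h_2 + r_- + r_e$
with $h_0 = 1$, $h_1 = y$, $h_2 = y^2-2$, while carefully exploiting the support property $\mathrm{supp}\, r_e \subset \{|y|\geq K s^\beta\}$ built into \eqref{def:q:projbis}.

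I would begin with (iii), which is the master pointwise estimate and from which the other two items drop out. Four of the five modal terms obey the target bound by inspection: $|r_0 h_0| \leq A/s^{2\beta+1}$, $|r_1 h_1| \leq A(1+|y|)/s^{2\beta+1}$, $|r_2 h_2| \leq C\sqrt{A}(1+|y|^2)/s^{4\beta-1}$, and $|r_-(y)|\leq A(1+|y|^3)/s^\gamma$. The only term requiring a trick is $r_e$: its definitional bound $A^2/s^{\gamma-3\beta}$ carries no spatial decay, but since $r_e = r(1-\chi)$ vanishes on $\{|y|\leq K s^\beta\}$, any point with $r_e(y) \neq 0$ satisfies $s^{3\beta} \leq K^{-3} |y|^3$, which upgrades the $L^\infty$ bound to $|r_e(y)| \leq C A^2(1+|y|^3)/s^\gamma$. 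Summing the five contributions and absorbing the $r_-$ term into the $A^2$-term (valid since $A \geq 1$) gives (iii).

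For (ii) I would reuse the same modal bounds, now checking that each contribution to $r_b$ is dominated by $C A(1+|y|^3)/s^{4\beta-1}$; this rests on the elementary estimates $|y|\leq 1+|y|^3$ and $1+|y|^2 \leq 2(1+|y|^3)$ together with two exponent comparisons that follow from $\beta \in (1/2, 1)$ (equivalently $p>3$) and the window \eqref{gamma}, namely $2\beta+1 > 4\beta - 1$ and $\gamma > 3\beta > 4\beta - 1$. The bound on $r_e$ is the one already obtained during the proof of (iii), and the bound on $r$ is their sum, using $\gamma > 4\beta-1$ once more. Finally, (i) follows by evaluating (iii) on $\{|y|\leq 2Ks^\beta\}$, replacing $|y|^k$ by $(2K s^\beta)^k$: the window \eqref{gamma} forces $\gamma - 3\beta < \min(\beta+1, 2\beta-1)$ via $\gamma < 2\beta+1$ and $\gamma < 5\beta-1$, so the $r_e$ term dominates. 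I do not foresee any serious obstacle here; the only point to keep track of is that the window \eqref{gamma} for $\gamma$, non-empty precisely when $p>3$, is exactly what renders every exponent inequality above simultaneously compatible.
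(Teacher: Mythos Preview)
Your proposal is correct and follows essentially the same argument as the paper: both rely on the five-term decomposition, the support trick for $r_e$ (converting $A^2/s^{\gamma-3\beta}$ into $CA^2(1+|y|^3)/s^\gamma$ via $|y|\ge Ks^\beta$), and the exponent comparisons coming from \eqref{gamma} and $\beta\in(\tfrac12,1)$. The only difference is organizational: the paper first records the intermediate bound \eqref{e41} on $r_b$ and builds (i)$\to$(ii)$\to$(iii), whereas you prove (iii) as the master estimate and read off (ii) and (i) from it. One small caveat: deriving the \emph{first} inequality of (i) from (iii) yields $CA^2/s^{\gamma-3\beta}$ rather than $CA/s^{\gamma-3\beta}$, because (iii) already absorbs the $A^2$ from $r_e$; to get the sharper constant $CA$ one must bound $r_b$ alone (as the paper does via \eqref{e41}, and as you effectively do in your treatment of (ii)) --- but this refinement is never used downstream.
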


\begin{proof} Take $A\geq 1,\; s\geq 1$, $r\in \vartheta_A(s)$ and
$y\in \R.$ Recall that $r(y)=r_b(y)+ r_e(y)$ where
$$r_b(y)=\sum_{m=0}^2 r_mh_m(y)+r_-(y),\; r_e=r(1-\chi)$$
with $\chi$ defined by (\ref{def:chi}). In particular, $\mbox{supp }
r_b\subset \{|y|\leq 2Ks^\beta\}$ and $\mbox{supp } r_e\subset
\{|y|\geq Ks^\beta\}.$

(i) If $|y|\leq 2Ks^\beta ,$ using  the definition (\ref{h_m}) of
$h_m$  and that of $\vartheta_A(s)$ we get:
\begin{equation}
\label{e41} |r_b(y)|\leq (1+|y|)\frac{A}{s^{2\beta+1}}+
C(1+|y|^2)\frac{\sqrt{A}}{{s^{4\beta-1}}}+
(1+|y|^3)\frac{A}{s^{\gamma}}.
\end{equation}
It follows, for $s$ sufficiently large, that
\begin{eqnarray*}
|r_b(y)|&\leq& (1+2Ks^\beta)\frac{A}{s^{2\beta+1}}+
C\big(1+(2Ks^\beta)^2\big)\frac{\sqrt{A}}{{s^{4\beta-1}}}+
\big(1+(2Ks^\beta)^3\big)\frac{A}{s^{\gamma}}\\
&\leq& C\frac{A}{s^{\beta+1}}+
C\frac{\sqrt{A}}{{s^{2\beta-1}}}+ C\frac{A}{s^{\gamma-3\beta}}\\
&\leq & C_1\frac{A}{s^{\gamma-3\beta}},
\end{eqnarray*}
since $\gamma<5\beta-1$ and $A\geq 1.$ Moreover,
\begin{eqnarray*}
\|r\|_{L^\infty(\R)}&\leq&
\|r_b\|_{L^\infty(\R)}+\|r_e\|_{L^\infty(\R)} \leq
C\frac{A}{s^{\gamma-3\beta}}+\frac{A^2}{s^{\gamma-3\beta}} \\
&\leq & C\frac{A^2}{s^{\gamma-3\beta}},
\end{eqnarray*}
which gives (i).

(ii) Since  $A\geq 1,\; s\geq 1$ and $4\beta-1<\min(\gamma,2\beta+1)$ (by definition \eqref{gamma} of $\gamma$), if  $|y|\leq 2Ks^\beta ,$ we write from (\ref{e41}):
$$|r_b(y)|\leq C \frac{A}{s^{4\beta-1}}(1+|y|^3).$$
Since $r_b(y,s)\equiv 0$ when $|y|\leq 2Ks^\beta ,$ the last inequality is obviously true also.

If $|y|\geq Ks^\beta,$ we have, the definition of $\vartheta_A(s)$,
\begin{equation}
\label{27012015-1}
|r_e(y)|\leq \|r_e\|_{L^\infty(\R)}\leq \frac{A^2}{s^{\gamma-3\beta}}\leq
\frac{A^2}{s^{\gamma-3\beta}}{|y|^3\over (Ks^\beta)^3}\leq
C\frac{A^2}{s^{{\gamma}}}(1+|y|^3).
\end{equation}
The inequality  is verified for all $y\in \R$,
since $r_e(y,s)\equiv 0$ when $|y|\le K s^\beta$.

On the other hand, by the conditions on $A$ and $\beta,$ we have
$$|r(y)|\leq |r_b(y)|+|r_e(y)|\leq C\frac{A^2}{s^{4\beta-1}}(1+|y|^3),\;\forall\; y\in
\R,$$ which gives (ii).

(iii) Using (\ref{e41}) and \eqref{27012015-1}, we get
\begin{eqnarray*}
|r(y)|&\leq& |r_b(y)|+|r_e(y)|\\ &\leq &
(1+|y|)\frac{A}{s^{2\beta+1}}+
C(1+|y|^2)\frac{\sqrt{A}}{{s^{4\beta-1}}}+
(1+|y|^3)\frac{A}{s^{\gamma}}+C\frac{A^2}{s^{\gamma}}(1+|y|^3)\\
&\leq & (1+|y|)\frac{A}{s^{2\beta+1}}+
C(1+|y|^2)\frac{\sqrt{A}}{{s^{4\beta-1}}}+C\frac{A^2}{s^{\gamma}}(1+|y|^3).
\end{eqnarray*}

This finishes the proof of Proposition \ref{prop:V-def-prop}.

\end{proof}

We need a second technical estimate before proving  Proposition \ref{prop:diddc}:
%
%
\begin{lem}
\label{l41}
There exists $s_{10}'$ such that for all $s_0\ge s_{10}'$,
if $g$ is given by $\chi(2y,s_0)$ or
$y\chi(2y,s_0) $, then
$$g_e(y) \equiv 0,\;\;\Big\|{g_-(y)\over 1+|y|^3}\Big\|_{L^\infty(\R)}\leq {C\over s_0^{2\beta}},$$
and all $g_m$ are less then $C\mbox{e}^{- s_0^{2\beta}}$,
except:
\begin{itemize}
\item[(i)] $|g_0-1|\leq C\mbox{e}^{-s_0^{2\beta}}$ when $g(y)=\chi(2y,s_0),$
\item[(ii)]$|g_1-1|\leq C\mbox{e}^{-s_0^{2\beta}}$ when $g(y)=y\chi(2y,s_0)$.
\end{itemize}
\end{lem}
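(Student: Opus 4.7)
The plan is to exploit three key facts: (a) the rescaled cutoff $\chi(2y,s_0)$ is supported on $|y|\le Ks_0^\beta/2$, strictly inside the region where $\chi(y,s_0)\equiv 1$ (since $K\ge 6$); (b) Gaussian tail integrals against polynomial weights are of order $e^{-cs_0^{2\beta}}$ because the cutoff only fails on $|y|\gtrsim s_0^\beta$; and (c) parity of $g$ forces half of the low-order modes to vanish.

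First I would deal with the exterior part. Since $\mathrm{supp}\,g\subset\{|y|\le Ks_0^\beta/2\}$ and $\chi(y,s_0)\equiv 1$ on $\{|y|\le Ks_0^\beta\}$, we have $g(y)(1-\chi(y,s_0))\equiv 0$, giving $g_e\equiv 0$ and $g_b=g$. Next I would compute the low modes by integration against $k_m$. For $g(y)=\chi(2y,s_0)$, parity gives $g_1=0$; writing $g_0=\int \rho\,dy-\int(1-\chi(2y,s_0))\rho\,dy$ and using that $1-\chi(2y,s_0)$ is supported in $|y|\ge Ks_0^\beta/4$, the Gaussian estimate $\int_{|y|\ge R}|y|^j\rho\,dy\le Ce^{-R^2/8}$ yields $|g_0-1|\le Ce^{-cs_0^{2\beta}}$, and similarly for $g_2$. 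For $g(y)=y\chi(2y,s_0)$, parity gives $g_0=g_2=0$ and the same tail estimate applied to $g_1=\tfrac12\int y^2\chi(2y,s_0)\rho\,dy$ yields $|g_1-1|\le Ce^{-cs_0^{2\beta}}$, using $\int y^2\rho\,dy=2$.

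The key step is controlling $g_-(y)=g(y)-\sum_{m=0}^{2}g_m h_m(y)$ weighted by $(1+|y|^3)^{-1}$, which I would do by splitting $\mathbb R$ into three regions. On $|y|\le Ks_0^\beta/4$ we have $g=\chi(2y,s_0)\equiv 1$ (or $g\equiv y$ in the second case), so $g_-$ reduces to a linear combination of the already-estimated exponentially small errors $g_m-\delta$, times polynomials of degree $\le 2$; since $|y|^2\le Cs_0^{2\beta}$ on this region, the resulting bound is $Ce^{-cs_0^{2\beta}}s_0^{2\beta}$, much smaller than $s_0^{-2\beta}$. On the transition region $Ks_0^\beta/4\le|y|\le Ks_0^\beta/2$, I use the crude pointwise bounds $|g(y)|\le 1$ or $|g(y)|\le|y|$, together with the fact that $1+|y|^3\ge cs_0^{3\beta}$, which yields $|g_-|/(1+|y|^3)\le Cs_0^{-2\beta}$. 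On the far region $|y|\ge Ks_0^\beta/2$, the function $g$ itself vanishes and $g_-(y)=-\sum_{m=0}^2 g_m h_m(y)$; the dominant contribution comes from $g_0$ or $g_1 y$ (of order $1$ or $|y|$), and dividing by $|y|^3$ gives precisely the $s_0^{-2\beta}$ bound announced.

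The only mild subtlety I would watch for is in the last region, where although $g_2$ is exponentially small, the polynomial $h_2(y)=y^2-2$ could in principle grow; but since $|g_2|e^{cs_0^{2\beta}}\to 0$ dominates any polynomial factor, this term is harmless. There is no real obstacle beyond careful bookkeeping of the Gaussian tail estimate and the three-region split; all constants $c>0$ depend only on $K$ and are absorbed into the stated $e^{-s_0^{2\beta}}$ (for $s_0$ large enough).
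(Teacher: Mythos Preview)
Your argument is essentially correct, but there is a slip in the support bookkeeping: since $\mathrm{supp}\,\chi_0\subset[0,2]$, the function $\chi(2y,s_0)=\chi_0(2|y|/(Ks_0^\beta))$ vanishes only for $|y|\ge Ks_0^\beta$, not for $|y|\ge Ks_0^\beta/2$. The set $\{|y|\le Ks_0^\beta/2\}$ is where $\chi(2y,s_0)\equiv 1$. This does not affect your claim $g_e\equiv 0$ (since $\chi(y,s_0)\equiv 1$ on all of $|y|\le Ks_0^\beta$), but it does invalidate your Region~3 statement that $g$ vanishes on $|y|\ge Ks_0^\beta/2$. The repair is immediate: the crude bound $|g(y)|\le 1$ or $|g(y)|\le|y|$, divided by $1+|y|^3\ge cs_0^{3\beta}$, works just as well on the true transition interval $Ks_0^\beta/2\le|y|\le Ks_0^\beta$, and the far-region argument begins at $|y|\ge Ks_0^\beta$.

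The paper organizes the same computation more compactly by writing $g=\bar g+r$ with $\bar g=1$ or $y$ and $r=\bar g\,(\chi(2y,s_0)-1)$, and then exploiting the single global pointwise inequality
\[
0\le 1-\chi(2y,s_0)\le\Big(\frac{2|y|}{Ks_0^\beta}\Big)^2,
\]
valid for all $y$ (the left side vanishes on $|y|\le Ks_0^\beta/2$, and the right side is $\ge 1$ outside). This gives $|r(y)|\le Cs_0^{-2\beta}(1+|y|^3)$ in one line, with no region splitting; the bound on $r_-$ then follows by subtracting the exponentially small modes $r_mh_m$. Your three-region estimate encodes exactly the same information, so the two routes are equivalent in content; the paper's version just trades the case analysis for a single weighted inequality.
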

\begin{rem}{\rm Here in this lemma, we need the fact that $K\ge 6$.}
\end{rem}
\begin{proof}
Since $(1-\chi(y,s_0))\chi(2y, s_0)\equiv 0$ and $\chi(y,s_0))\chi(2y, s_0\equiv 1$ , it follows that  $g_e(y)\equiv 0$ and $g_b(y)=\chi(y,s_0)g(y) = g(y)$. In particular, $g_m=P_m(g)$ and $g_-=P_-(g)$, where $P_m$ and $P_-$ are the $L^2_\rho$ projectors on $h_m$ and $\{h_i\;|\;i\ge 3\}$, respectively.
Writing
$g(y)=\bar g(y)+r(y)$ where $\bar g(y)=1$ or $y$  and $r(y)=\bar g(y)\big(\chi(2y,s_0)-1\big),$ the result will follow by linearity.\\
Starting first with $\bar g$, we see that $P_-(\bar g)\equiv 0$ and all $P_m(\bar g)$ are zero except
$$P_0(\bar g)=1, \; \mbox{ when } \bar g(y)=1,\mbox{ and }
P_1(\bar g)=1,\; \mbox{ when } \bar g(y)=y.$$
It remains then to handle $r$.
Since $1-\chi(y,s_0)=0$ for $|y|\le K s_0^\beta$, we see that
\begin{equation}\label{e56}
0\le 1-\chi(y,s_0)\leq
\Big(\f{|y|}{Ks_0^\beta}\Big)^i,\;\;0\le i\le 3,\mbox{ hence }
0\leq 1-\chi(2y,s_0)\leq \left({2|y|\over Ks_0^\beta}\right)^2,
\end{equation}
and since $K\ge 6$ (see \eqref{def:chi}),
\begin{equation}\label{rhoext}
\rho(y)\big(1-\chi(y,s_0)\big)\leq \sqrt{\rho(y)}\sqrt{\rho\Big(Ks_0^\beta\Big)}\leq C\mbox{e}^{-{K^2s_0^{2\beta}\over
8}}\sqrt{\rho(y)} \leq C\mbox{e}^{-s_0^{2\beta}}\sqrt{\rho(y)},
\end{equation}
and similarly,
\[
\rho(y)\big(1-\chi(y,s_0)\big)\leq C\mbox{e}^{-s_0^{2\beta}}\sqrt{\rho(y)}.
\]
Therefore,
$$|r(y)|\leq C(1+|y|)\Big({2|y|\over Ks_0^\beta}\Big)^2\leq
{C\over s_0^{2\beta}}(1+|y|^3)\mbox{ and }
|r_m|\leq C\mbox{e}^{-s_0^{2\beta}},\; m=0,1,2.$$ Hence, using the fact that $|h_m(y)|\leq
C(1+|y|),\; m=0,\,1,$ we get also
$$|r_-(y)|\leq {C\over s_0^{2\beta}}(1+|y|^3).$$ This concludes the proof of Lemma \ref{l41}.
\end{proof}

With Lemma  \ref{l41} at hand, we are ready to give the proof of Proposition \ref{prop:diddc}.

\begin{proof}[Proof of Proposition \ref{prop:diddc}] For simplicity we write $\psi$ instead of $\psi_{s_0,d_0,d_1}.$\\
(i) Using Lemma \ref{l41}, we see that
\begin{equation}\label{p0p1}
\psi_0=d_0\left({A\over s_0^{2\beta+1}}+O\left(\mbox{e}^{- s_0^{2\beta}}\right)\right)
\mbox{ and }
\psi_1=d_1\left({A\over s_0^{2\beta+1}}+O\left(\mbox{e}^{- s_0^{2\beta}}\right)\right),
\end{equation}
and the conclusion of item (i) follows directly.\\
(ii) The fact that $|\psi_m|\le \frac A{s_0^{2\beta}}$ for $m=0,1$ follows from item (i). Then, using Lemma \ref{l41} and linearity, we see that
\begin{equation}\label{p2}
\psi_e(y) \equiv 0,\;\;
\left\|\frac{\psi_-(y)}{1+|y|^3}\right\|_{L^\infty(\R)}\le \frac {CA}{s_0^{4\beta+1}}(|d_0|+|d_1|),\;\;
|\psi_2|\le C(|d_0|+|d_1|)e^{-s_0^{2\beta}}.
\end{equation}
Since
\begin{equation}\label{dm}
|d_m|\leq 2\mbox{ for }m=0,\,1
\end{equation}
 from item (i), recalling that $\gamma<4\beta+1$ from \eqref{gamma}, we get the conclusion of item (ii).\\
%
%
%
 (iii)  From (\ref{initial-data}), we have that
\begin{equation}
\label{e58}
\partial_y\psi(y)= d_1\f{A}{s_0^{2\beta+1}}\chi(2y,s_0)+\f{A}{s_0^{2\beta+1}}(d_0+d_1y)\chi_0'\Big(\f{2y}{Ks_0^\beta}\Big)\f{2}{Ks_0^\beta},
\end{equation}
where $\chi_0$ is defined by (\ref{def:chi}). Since
$\|z\chi_0'(z)\|_{L^\infty(\R)}$ and $\f{2}{Ks_0^\beta}$ are bounded,  then for
$s_0$ sufficiently large we have, from \eqref{dm} and the definition \eqref{gamma} of $\gamma,$
$$\|\partial_y\psi\|_{L^\infty(\R)}\leq C\frac{A}{s^{2\beta+1}_0}\leq \frac{1}{s^{\gamma-3\beta}_0}.$$
As for the estimate on $\partial_y \psi_-$, since  $\psi_e\equiv 0,$ we write from \eqref{qprojection}
\[
\psi_-(y) = \psi(y) - (\psi_0+\psi_1 y + \psi_2(y^2-1)),
\]
hence
\[
\partial_y \psi_-(y) =\partial_y \psi(y) - (\psi_1 + 2\psi_2y).
\]
Using \eqref{e58}, \eqref{p0p1} and \eqref{p2}, we see that
\begin{equation}\label{e59}
\partial_y\psi_-(y)
=d_1\f{A}{s_0^{2\beta+1}}
\Big(\chi(2y,s_0)-1\Big)+O\Big(\mbox{e}^{-s_0^{2\beta}}\Big)|y|
+\f{A}{s_0^{2\beta+1}}(d_0+d_1y)\chi_0'\Big(\f{2y}{Ks_0^\beta}\Big)\f{2}{Ks_0^\beta}.
\end{equation}
We remark now that
\begin{equation} \label{e57}|\chi_0'(z)|\leq C|z|^i,\;\; i=0,\;1,\;2,\;3
\end{equation}
(in fact, $\chi_0'(z)=0$ for $|z|\leq 1$ and it is bounded for $z\in \R$).
Using \eqref{dm}, (\ref{e56}), (\ref{e57}) and (\ref{e59}), we obtain
\begin{equation*}
|\partial_y\psi_-(y)|\leq
C\Big[\f{A}{s_0^{5\beta+1}}+\f{A}{s_0^{6\beta+1}}+\mbox{e}^{-s_0^{2\beta}}\Big](1+|y|^3)
\leq  \f{CA}{s_0^{5\beta+1}}(1+|y|^3)
\leq  \f{1}{s_0^{\gamma}}(1+|y|^3),
\end{equation*}
since $0<\gamma<5\beta+1.$ Thus, the last inequality in item (iii) follows. This concludes the proof of Proposition \ref{prop:diddc}.




\end{proof}

\subsubsection{Preliminary estimates on various terms of equation \eqref{qequation}}
In this step, we show that the rest term is trapped in $\vartheta_C(s)$ for some $C>0,$ provided that
$s$ is large. Then, assuming in addition that $v(s) \in \vartheta_A(s)$, we show that the nonlinear term in also trapped  in $\vartheta_C(s)$, and the potential term $V\vq,$ in $\vartheta_{CA}(s).$

This is our first statement.
\begin{lem}[Estimates on the rest term and the potential]
\label{p52}
There exits $s_{11}$ sufficiently
large such that for $s\geq s_{11},$ we have the following
\begin{itemize}
\item[(i)] $R\in \vartheta_C(s)$ and $|R_2(s)|\leq {C\over s^{4\beta}}$,
\item[(ii)] $\|V(s)\|_{L^\infty(\R)}\leq C,\; |V(y,s)|\leq C  {(1+|y|^2)\over s^{2\beta}},\; \forall\; y\in \R$.
\end{itemize}
where $C$ is a positive constant,   $V$ and $R$ are given by (\ref{eq:V}) and (\ref{RNeq}).
\end{lem}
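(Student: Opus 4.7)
The plan is to prove both estimates by direct computation from the definitions \eqref{eq:V} and \eqref{RNeq} of $V$ and $R$, using the structure of $\varphi$ in \eqref{feq}. Three algebraic facts will drive the argument: (a) the ODE \eqref{ordre0profil} yields $-\frac{1}{2}y\cdot\nabla\varphi_0 - \varphi_0/(p-1) + \varphi_0^p = 0$ where $\varphi_0(y,s)=\vpz(y/s^\beta)$; (b) the value $a = 2Nb\kappa/(p-1)^2$ from \eqref{aeq} cancels the $1/s^{2\beta}$-order remainder pointwise in the inner region; and (c) the value of $b$ in \eqref{equb} is designed to cancel the $h_2$-projection of the next-order remainder.

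For part (ii), the bound $\|V\|_{L^\infty}\le C$ is immediate from $0\le \varphi\le \kappa + a/s^{2\beta}$ together with continuity of $t\mapsto p t^{p-1}-p/(p-1)$ on $[0,2\kappa]$. For the pointwise estimate I would split at $|y|=s^\beta$. For $|y|\le s^\beta$ a second-order Taylor expansion of $\vpz(z)^{p-1}$ at $z=0$ gives $|\vpz(z)^{p-1}-1/(p-1)|\le C|z|^2$, and the mean value theorem absorbs the $a/s^{2\beta}$ perturbation, yielding $|V|\le C(1+|y|^2)/s^{2\beta}$. For $|y|\ge s^\beta$, $(1+|y|^2)/s^{2\beta}\ge 1$, so the estimate follows from $\|V\|_{L^\infty}\le C$.

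For part (i), I would analyze $R$ region by region. In the inner region $|y|\le 2Ks^\beta$, I Taylor-expand $\vpz$ at $z=0$ up to order four. By (a) and (b) the contributions of order $1/s^{2\beta}$ from $\Delta\varphi$, $-\frac{1}{2}y\cdot\nabla\varphi$, $-\varphi/(p-1)$ and $\varphi^p$ cancel identically, and the surviving $1/s^{2\beta+1}$-order part of $R$ reads
\[
\frac{1}{s^{2\beta+1}}\Bigl[-\tfrac{2\beta b\kappa(y^2-2)}{(p-1)^2} + \mu\bigl(\tfrac{2b\kappa}{(p-1)^2}\bigr)^{q}\, |y|^q\Bigr],
\]
the first term coming from $-\partial_s\varphi$ (after Taylor-expanding $z\cdot\nabla\vpz(z)$ and combining with the time derivative of $a/s^{2\beta}$) and the second from $\mu|\nabla\varphi|^q$ (using $2\beta q=2\beta+1$), with higher-order contributions bounded by $C(1+|y|^4)/s^{4\beta}$. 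In the outer region $|y|\ge Ks^\beta$ the ODE cancellation still holds and the dominant residual is $O(1/s)$ from $-\partial_s\varphi$, with a smaller $O(1/s^{\beta q})$ piece from $\mu|\nabla\varphi|^q$ (since $\beta q>1$).

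The projection bounds then follow. Radial symmetry of $\varphi$ forces $R$ to be even in $y$, hence $R_1\equiv 0$; the constant mode gives $|R_0|\le C/s^{2\beta+1}$ since $\int h_2\rho=0$ while $\int|y|^q\rho<\infty$; the bound on $R_e$ is immediate from the outer estimate combined with $\gamma-3\beta<1$, a consequence of \eqref{gamma} for $p>3$; and $\|R_-(y)/(1+|y|^3)\|_{L^\infty}\le C/s^{2\beta+1}\le C/s^\gamma$ follows from the inner estimate together with $\gamma<2\beta+1$. The sharp bound $|R_2|\le C/s^{4\beta}$ is the crux: applying $P_2$ to the displayed leading term and using $\int|y|^q(y^2-2)\rho=2q\int|y|^q\rho$ from \eqref{8+losange}, the coefficient of $1/s^{2\beta+1}$ in $R_2$ reduces to
\[
-\tfrac{2\beta b\kappa}{(p-1)^2}+\mu\bigl(\tfrac{2b\kappa}{(p-1)^2}\bigr)^q\cdot\tfrac{2q\int|y|^q\rho}{\|h_2\|_{L^2_\rho}^2},
\]
which vanishes exactly by the definition \eqref{equb} of $b$ (this is the algebraic identity flagged in Remark~\ref{4losange}), leaving a residual $O(1/s^{4\beta})$ coming from the fourth-order Taylor corrections. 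The main obstacle is the careful bookkeeping of these successive cancellations and the sharpness required to obtain the $1/s^{4\beta}$ refinement on $R_2$.
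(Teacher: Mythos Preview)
Your approach matches the paper's: the ODE identity for $\varphi^0$, the choice of $a$ in \eqref{aeq} killing the $1/s^{2\beta}$ constant, and the choice of $b$ in \eqref{equb} killing the $1/s^{2\beta+1}$ coefficient in $R_2$ are exactly the three cancellations the paper uses. Part (ii) and the bounds on $R_0$, $R_1$, $R_2$, $R_e$ are correct.

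There is, however, a bookkeeping slip that breaks the $R_-$ estimate as you wrote it. Your stated remainder $C(1+|y|^4)/s^{4\beta}$ is a valid upper bound, but it is too coarse: dividing $|y|^4/s^{4\beta}$ by $1+|y|^3$ and using $|y|\le 2Ks^\beta$ on the support of $\chi$ gives only $C/s^{3\beta}$, which is \emph{larger} than $C/s^\gamma$ since $\gamma>3\beta$ by \eqref{gamma}. So the conclusion $\|R_-/(1+|y|^3)\|_{L^\infty}\le C/s^{2\beta+1}$ does not follow from the error bound you wrote. The fix is to retain the extra factor of $s$ that the quartic-in-$y$ pieces actually carry: the remainder splits as
\[
O\!\left(\frac{1+|y|^2}{s^{4\beta}}\right)+O\!\left(\frac{1+|y|^4}{s^{4\beta+1}}\right),
\]
the first piece coming from the $z^2/s^{2\beta}$ contributions of $\Delta\varphi$ and $\varphi^p-(\varphi^0)^p$ (this is the explicit $y^2/s^{4\beta}$ term the paper keeps), the second from the $O(z^4/s)$ correction in $-\partial_s\varphi$ and the $O(|z|^{q+2}/s^{q\beta})$ correction in $\mu|\nabla\varphi|^q$. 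After division by $1+|y|^3$ the quadratic piece gives $C/s^{4\beta}$ and the quartic piece gives $C|y|/s^{4\beta+1}\le C/s^{3\beta+1}$; since $\gamma<2\beta+1<\min(4\beta,3\beta+1)$ for $\beta\in(\tfrac12,1)$, you now recover $\|R_-/(1+|y|^3)\|_{L^\infty}\le C/s^{2\beta+1}\le C/s^\gamma$ as claimed.
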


\begin{rem}
 \label{4losange} {\rm As we stated in a remark following Theorem \ref{th1}, the particular value of $b$ we fixed in \eqref{bbupprof1} is natural from the formal approach in Section 2 above. In fact, it is crucial in the algebraic
 identity leading from \eqref{6losange} to \eqref{triangle2}. Indeed, with a different $b,$ we would have a larger $R_2 \sim {C\over s^{2\beta+1}}\gg {1\over s^{4\beta}},$ making the convergence of $\vq$ to zero in \eqref{eq:qt0}
 more difficult
 (and probably impossible) to obtain.}
 \end{rem}

\begin{rem}
 \label{imposiblepinf3} {\rm With some more work, we may show that:
 $$\left\|{R_-(y,s)\over 1+|y|^3}\right\|_{L^\infty(\R)}\sim {C\over s^{2\beta+1}},\; \mbox{ as }\; s\to \infty.$$
 This implies, in particular, that any attempt to adapt the powers of $1/s$ in the definition of $\vartheta_A(s)$ should respect
 \begin{equation}
  \label{necessaire1}
  \gamma \leq 2\beta+1
 \end{equation}
 where $\gamma$ is such that
 $$\left\|{\vq_-(y,s)\over 1+|y|^3}\right\|_{L^\infty(\R)}\leq {C\over s^{\gamma}},$$
on the one hand. On the other hand, bearing in mind that we need to evaluate $|\vq_-(y,s)|$ on the support of $\vq_b,$ that is, when $|y|\leq 2Ks^\beta,$ we see that
$$\mbox{ for } |y|\leq 2Ks^\beta,\; \mbox{ we have}\; |\vq_-(y,s)|\leq {8AK^3\over s^{\gamma-3\beta}},$$
and the right hand side of the last inequality goes to zero if and only if
\begin{equation}
 \label{necessaire2}
 \gamma>3\beta.
\end{equation}
From \eqref{necessaire1} and \eqref{necessaire2}, we see that $3\beta<\gamma\leq 2\beta+1,$ which yields the natural condition
$$\beta<1,\; \mbox{ i.e.}\; p>3.$$
Presumably, our strategy based on the shrinking set
$\vartheta_A(s),$ in the same style as \cite{BK} and
\cite{MZsbupdmj97} breaks down when $\beta\geq 1.$ However, we are
not saying that Theorem \ref{th1} is not true for $1<p\leq 3.$
Perhaps a substantial adaptation of the method of \cite{BK} and
\cite{MZsbupdmj97}, or some other strategy, may give the result. The
question remain open when
$$1<p\leq 3.$$
 }\end{rem}

Before proving Lemma \ref{p52}, let us state and prove the following lemma, where we make an expansion of $R(y,s)$:
\begin{lem}[Bound and expansion of the rest term]
\label{devr1}
For $s$ large enough, we have
\begin{equation}
\label{e20152}
\|R(s)\|_{L^\infty(\R)}\leq {C\over s},
\end{equation}
and
\begin{eqnarray}
\nonumber
R(y,s)&=&{a\over s^{2\beta}}-{{2b\kappa\over (p-1)^2}{1\over s^{2\beta}}+{2\beta a\over s^{2\beta+1}}+{2b^2p\kappa\over (p-1)^4}{1\over s^{2\beta}}z^2+{(2b)^2p\kappa\over (p-1)^4}}{1\over s^{2\beta}}z^2\\ \label{developmentR} &&-
{2\beta b\kappa\over (p-1)^2}{1\over s}z^2+\mu \left({2b\kappa\over (p-1)^2}\right)^q{1\over s^{q\beta}}|z|^q+O({z^4\over s})+O({1\over s^{4\beta}})+O({|z|^{q+2}\over s^{q\beta}}),
\end{eqnarray}
where $z={y\over s^\beta}\in \R.$
\end{lem}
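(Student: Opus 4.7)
The plan is to substitute the ansatz $\varphi(y,s)=\vpz(z)+a/s^{2\beta}$ with $z=y/s^\beta$ into the definition \eqref{RNeq} of $R$ and use the ODE \eqref{ordre0profil} satisfied by $\vpz$ to cancel the dominant contributions. By the chain rule, $\nabla_y\varphi=s^{-\beta}\vpz'(z)$, $\Delta_y\varphi=s^{-2\beta}\vpz''(z)$, and $\partial_s\varphi=-\f{\beta z}{s}\vpz'(z)-\f{2\beta a}{s^{2\beta+1}}$. Plugging into \eqref{RNeq} and applying the identity $-\tfrac12 z\vpz'(z)-\vpz(z)/(p-1)+\vpz(z)^p=0$ to eliminate the $O(1)$ pieces leaves
\[
R=\f{\vpz''(z)}{s^{2\beta}}+\f{\beta z\,\vpz'(z)}{s}+\f{2\beta a}{s^{2\beta+1}}+\bigl(\varphi^p-\vpz^p\bigr)-\f{a}{(p-1)s^{2\beta}}+\f{\mu\,|\vpz'(z)|^q}{s^{q\beta}}.
\]

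For the uniform bound \eqref{e20152}, I would use that $\vpz$ is smooth and $\vpz(z)\sim C|z|^{-2/(p-1)}$ at infinity, so $\vpz$, $\vpz'$, $\vpz''$, $z\vpz'(z)$, and $|\vpz'|^q$ are all bounded on $\R$; and $\varphi^p-\vpz^p=p\vpz^{p-1}a/s^{2\beta}+O(s^{-4\beta})$ is uniformly $O(s^{-2\beta})$. Each of the six remaining terms is then $O(s^{-\alpha})$ with $\alpha\in\{2\beta,\,1,\,2\beta+1,\,q\beta\}$. Since $\beta=(p+1)/(2(p-1))\in(\tfrac12,1)$ for $p>3$ and $q\beta=p/(p-1)>1$, the slowest decay is $s^{-1}$, coming from the $\f{\beta z\vpz'(z)}{s}$ piece; hence $\|R(s)\|_{L^\infty(\R)}\le C/s$.

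For the pointwise expansion \eqref{developmentR}, I would Taylor expand each remaining term around $z=0$, using the binomial series for $\vpz(z)=\kappa(1+bz^2/(p-1))^{-1/(p-1)}$. This yields polynomial expansions of $\vpz'$, $\vpz''$, $\vpz^{p-1}$ modulo $O(z^4)$, together with $|\vpz'(z)|^q=(2b\kappa/(p-1)^2)^q|z|^q+O(|z|^{q+2})$ for the gradient piece, all with explicit coefficients in $b$, $\kappa$ and $p$. Multiplying by the appropriate $s$-weights and inserting $a=2b\kappa/(p-1)^2$ from \eqref{aeq} (with $N=1$) to rewrite the constant and $z^2$ coefficients, and absorbing $O(z^4/s^{2\beta})$ into $O(z^4/s)$ thanks to $2\beta>1$, gives precisely \eqref{developmentR}.

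The main obstacle is just bookkeeping: ensuring every contribution at each order in $s$ is accounted for and that the leftover terms fit into the three displayed error classes $O(z^4/s)$, $O(s^{-4\beta})$, $O(|z|^{q+2}/s^{q\beta})$. The particular value of $b$ in \eqref{bbupprof1} plays no role at this stage; as emphasized in Remark \ref{4losange}, it becomes crucial only in the subsequent Lemma \ref{p52}, where the projection of $R$ onto $h_2$ requires the precise cancellation that pins down $b$.
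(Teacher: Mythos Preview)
Your proposal is correct and follows essentially the same approach as the paper: compute the derivatives of $\varphi$ via the chain rule, use the ODE \eqref{ordre0profil} to cancel the $O(1)$ contribution, bound the remaining pieces termwise to get \eqref{e20152}, and then Taylor expand each piece in $z$ to obtain \eqref{developmentR}. The paper presents the derivatives in the explicit form \eqref{e20151}--\eqref{devlopderisecond} before expanding, whereas you write down the reduced expression for $R$ first, but the computation is the same.
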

\begin{proof}
By the definitions of $\varphi$ and $\vpz,$ we have
\begin{align}
\label{e20151}
\partial_y\varphi(y,s)&=-{2b\over
(p-1)s^{\beta}}\left({y\over
s^{\beta}}\right)\left({\vpz}({y\over s^{\beta}})\right)^p, \mbox{ hence } \|\partial_y\varphi(s)\|_\infty\leq {C\over s^{\beta}},\\
\label{e20154}\partial_s\varphi(y,s)&={2\beta b\over
(p-1)s}\left({y\over
s^{\beta}}\right)^2\left(\vpz({y\over s^{\beta}})\right)^p-{2\beta a\over
s^{2\beta+1}}, \mbox{ hence } \|\partial_s\varphi(s)\|_\infty\leq {C\over s},\\
\label{devlopderisecond}
\partial^2_y\varphi(y,s)&={2b\over (p-1)s^{2\beta}}\left[-\left(\vpz(z)\right)^p+{2bp\over
p-1}z^2\vpz^{2p-1}(z)\right],
\mbox{ hence }  \|\partial^2_y\varphi(s)\|_\infty\leq {C\over
s^{2\beta}}.
\end{align}
 On the other hand, since we have from \eqref{ordre0profil}, $$-{1\over 2}{y\over
s^\beta}\vpz'({y\over s^\beta})+\left(\vpz({y\over
s^\beta})\right)^p-{1\over p-1}\vpz({y\over s^\beta})=0,$$ we write
\begin{eqnarray}\nonumber-{1\over 2}y\partial_y\varphi-{1\over p-1}\varphi+\varphi^p& = & -{1\over 2}{y\over
s^\beta}\left[\vpz\right]'({y\over
s^\beta})-{1\over p-1}\vpz({y\over
s^\beta})-{a\over (p-1)s^{2\beta}}\\ \nonumber  & &+\varphi^p-\left[\vpz({y\over s^\beta})\right]^p+\left[\vpz({y\over s^\beta})\right]^p\\ \label{196}& = & \varphi^p-\left[\vpz({y\over s^\beta})\right]^p-{a\over (p-1)s^{2\beta}}.
\end{eqnarray}
By Lipschitz property, we have that
$$|\varphi^p-\left(\vpz\right)^p|\leq {C\over s^{2\beta}}.$$
Hence
$$\|-{1\over 2}y\partial_y\varphi-{1\over p-1}\varphi+\varphi^p\|_{L^\infty(\R)}\leq {C\over s^{2\beta}}.$$
Since $1<{p\over p-1}=q\beta<2\beta,$ we see that by definition \eqref{RNeq} of $R$ that \eqref{e20152} holds.

\medskip

Now, as for the expansion \eqref{developmentR}, it simply follows from Taylor expansions, derived from (\ref{e20151}), (\ref{e20154}), (\ref{devlopderisecond})  and (\ref{196}), for all $s\geq 1$ and $y\in \R$:
\begin{align}
\partial^2_y\varphi(y,s)&={2b\over (p-1)s^{2\beta}}\left[-{\kappa \over p-1}\left(1-{bp\over (p-1)^2}z^2\right)+{2bp\kappa\over
(p-1)^3}z^2+O(z^4)\right],\\
-{1\over 2}y\partial_y\varphi-{1\over p-1}\varphi+\varphi^p&={pa\over (p-1)s^{2\beta}}\left(1-{b\over p-1}z^2+O(z^4)\right)-{a\over (p-1)s^{2\beta}}+O\left({1\over s^{4\beta}}\right),\\
\partial_s\varphi(y,s)&={2\beta b\over (p-1)s}z^2\left({\kappa \over p-1}+O(z^2)\right)-{2\beta a \over s^{2\beta+1}},\\
|\partial_y \varphi|^q&=\left({2b\over (p-1)s^\beta}\right)^ q\left({\kappa \over p-1}\right)^q |z|^q\left(1+O(z^2)\right).
\end{align}
This concludes the proof of Lemma \ref{devr1}.
\end{proof}


With Lemma \ref{devr1}, we are ready to prove Lemma \ref{p52}.
\begin{proof}[Proof of Lemma \ref{p52}] $ $

(i) Following the decomposition \eqref{qprojection}, we write $R$ as
$$R=R\chi+R(1-\chi)=\left(\sum_{m=0}^{2}R_mh_m+R_-\right)+R_e.$$
Since $R$ is symmetric with respect to $y$, we have $R_1=0.$\\
Furthermore, inequality \eqref{e20152} in the previous lemma implies in particular  that
$$\|R_e(s)\|_{L^\infty(\R)}\leq {C\over s}\leq {C\over s^{\gamma-3\beta}},$$ by definition of
 $\gamma$ given by \eqref{gamma}.\\
As for $R_0,$ using \eqref{developmentR} and the fact that $2(q+1)\beta=4\beta+1,$ we write
\begin{equation*}
R_0=\int_{\R} R\chi \rho=\int_{\R} R \rho+\int_{\R} R(\chi-1) \rho
 =  \left[a-{2b\kappa\over (p-1)^2}\right]{1\over s^{2\beta}}+O({1\over s^{2\beta+1}})
 =  O({1\over s^{2\beta+1}}),
\end{equation*}
from the choice of $a$ made in \eqref{aeq}, therefore,
\begin{equation}
\label{triangle1}
|R_0|\leq {C\over s^{2\beta+1}}.
\end{equation}
Now, considering $R_2$ and using the fact that
$$\int h_2\rho=0,\; \int k_2y^2\rho=1,\; 2(q+1)\beta=4\beta+1,$$
we write from \eqref{developmentR} together with \eqref{rhoext},
\begin{eqnarray}
 \nonumber
R_2&=&\int_{\R}R\chi k_2\rho=\int_{\R}R k_2\rho+\int_\R R(\chi-1)k_2\rho\\ \nonumber
 &=&  \int_{\R}R k_2\rho+O\left(e^{-s^{2\beta}}\right)\\ \label{6losange}
&=& \left[-{2b\beta\kappa\over (p-1)^2}+\mu \left({2b\kappa\over (p-1)^2}\right)^q\int|y|^qk_2\rho\right]{1\over s^{2\beta+1}}+O\left({1\over s^{4\beta}}\right).
\end{eqnarray}
Since
$${2\beta b\kappa\over (p-1)^2}\|h_2\|_{L^2_\rho}^2=\mu \Big({2b\kappa\over
(p-1)^2}\Big)^q\int_{\R}|y|^qh_2\rho dy,$$  from the choice of $b$ in \eqref{equb}, we  have that
\begin{equation}
\label{triangle2}
|R_2|\leq
{C\over s^{4\beta}}\leq {\sqrt{C}\over s^{4\beta-1}}.
\end{equation}
It remains now to bound $R_-(y,s)$.
From \eqref{developmentR} in Lemma \ref{devr1}
 and the choice of $a$ in \eqref{aeq} and $b$ in \eqref{equb}, we see that
$$R(y,s)=-{2\beta b\kappa\over (p-1)^2}{h_2(y)\over s^{2\beta+1}}+\mu \left({2b\kappa\over (p-1)^2}\right)^q{|y|^q\over s^{2\beta+1}}+{6b^2p\kappa\over (p-1)^4}{y^2\over s^{4\beta}}+O({y^4\over s^{4\beta+1}})+O({1\over s^{4\beta}}).$$
Using that $|y|\leq 2Ks^\beta$ on the support of $\chi,$ we see that
$$|R\chi |(y,s)\leq C{1\over s^{2\beta+1}}(1+|y|^3).$$
Using \eqref{triangle1} and \eqref{triangle2}, we see that
\begin{eqnarray*}
|R_-|& =& |R\chi-\left(R_0+R_2h_2\right)|\\ &\leq & |\chi R|+|R_0| +C|R_2|(1+|y|^2)\\ &\leq &  {C\over s^{2\beta+1}}(1+|y|^3)\\ &\leq &  {C\over s^{\gamma}}(1+|y|^3),
\end{eqnarray*}
by the hypotheses \eqref{gamma} on $\gamma.$ This concludes the proof of item (i) of the lemma.

\medskip

(ii) Since $b>0$, the first statement follows By \eqref{eq:V} and \eqref{feq}. For the second statement, we write, by definition (\ref{eq:V}) of $V$ and
a Taylor expansion, (remark that $p>3$):
\begin{eqnarray*}
 V(y,s)& = & p\left(\vpz\left({y\over s^\beta}\right)+{a\over s^{2\beta}}\right)^{p-1}-{p\over p-1}\\
 & = & p\left(\vpz\left({y\over s^\beta}\right)\right)^{p-1}-{p\over p-1}+O\left({1\over s^{2\beta}}\right)\\
 & = & p[\vpz(0)]^{p-1}-{p\over p-1}+O\left({1+|y|^2\over s^{2\beta}}\right).
\end{eqnarray*}

Since $[\vpz(0)]^{p-1}=1/(p-1),$ the second statement follows. This concludes the proof of Lemma \ref{p52}.

\end{proof}
\begin{lem}
\label{l52} Let $V,\; B$ and $G$ be given by \eqref{eq:V}, (\ref{eq:B}),  (\ref{RNeq}) and \eqref{G}.  Then, for all $A\geq 1,$ there exits
$s_{12},$ sufficiently large, such that for all $s\geq s_{12},$ if $\vq \in \vartheta_A(s)$, then we have
the following:
\begin{itemize}
\item[(i)] $\|V\vq(s)\|_{L^\infty(\R)}\leq
{CA^2\over s^{\gamma-3\beta}},$ $|V\vq(y,s)|\leq {CA^2\over
s^{\gamma-\beta}}(1+|y|^2),\, \forall\; y\in \R,$
$$|(V\vq)_m|\leq {1\over s^{2\beta+1}},\; \mbox{ for }\; m=0,\;1,\; |(V\vq)_2|\leq {1\over s^{4\beta}},$$
$$|(V\vq)_-(y,s)|\leq {CA\over s^\gamma}(1+|y|^3)\; \mbox{ and }\; \|(V\vq)_e\|_{L^\infty(\R)}\leq {CA^2\over s^{\gamma-3\beta}}.$$
In particular, $V\vq\in \vartheta_{CA}(s).$
\item[(ii)]  $B(\vq)\in \vartheta_C(s)$ and $|(Bv)_2(s)|\leq {C\over s^{4\beta}}.$
\item[(iii)]Furthermore, if $\|\partial_y\vq(s)\|_\infty \leq  {CA^2\over
s^{\gamma-3\beta}}$ and $$|\partial_y\vq(y,s)|\leq
\frac{CA}{s^{2\beta+1}}(1+|y|)+\frac{C\sqrt{A}}{s^{4\beta-1}}(1+|y|^2)+
\frac{CA^2}{s^{\gamma}}(1+|y|^3),\;\forall\;y \in \R,$$ then
 ${G}\in \vartheta_{C}(s),$ for some positive constant $C.$
\end{itemize}
\end{lem}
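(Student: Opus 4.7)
All three items follow the same scheme: first estimate the quantity in question pointwise using the hypotheses, then verify the five inequalities defining the shrinking set by projecting $f_b:=f\chi$ onto the Hermite modes $h_0,h_1,h_2$ and onto the negative subspace $P_-$, and controlling $f_e:=f(1-\chi)$ in $L^\infty$. Contributions from $\vq_e$ (supported in $|y|\ge Ks^\beta$) to the Hermite projections of $V\vq$, $B(\vq)$ and $G(\vq)$ are exponentially small thanks to the Gaussian weight $\rho$, exactly as in \eqref{rhoext}, so the analysis reduces to $\vq_b$, where Proposition \ref{prop:V-def-prop} furnishes the required polynomial-in-$y$ pointwise bound.

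For item (i), I will use the two complementary bounds from Lemma \ref{p52}(ii): the uniform one $\|V\|_\infty\le C$ for the $L^\infty$ and outer estimates on $V\vq$, and the pointwise decay $|V(y,s)|\le C(1+|y|^2)/s^{2\beta}$ for the Hermite projections. Combining the latter with Proposition \ref{prop:V-def-prop}(iii) yields
\[
|(V\vq)_m|\le \frac{C}{s^{2\beta}}\int(1+|y|^{2+m})|\vq_b|(y)\rho(y)\,dy\le \frac{C}{s^{2\beta}}\left[\frac{A}{s^{2\beta+1}}+\frac{\sqrt{A}}{s^{4\beta-1}}+\frac{A^2}{s^{\gamma}}\right]
\]
for $m=0,1,2$, and an analogous bound for $(V\vq)_-$ and $(V\vq)_e$. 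The three inequalities $\gamma>3\beta$, $\gamma<5\beta-1$, $\gamma<2\beta+1$, together with $\beta>1/2$ (all from \eqref{gamma}), guarantee that each of the five bounds defining $\vartheta_{CA}(s)$ is met provided $s$ is large enough depending on $A$.

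For item (ii), the quadratic estimate $|B(\vq)|\le C\vq^2$ from \eqref{estim:B-R} combined with Proposition \ref{prop:V-def-prop}(iii) gives a pointwise polynomial bound on $|B(\vq)|$ whose dominant terms decay like $A^2/s^{4\beta+2}$, $A/s^{8\beta-2}$ and $A^4/s^{2\gamma}$ with polynomial weights of degrees $2$, $4$ and $6$ respectively. Projecting and using the Gaussian moments, the five defining conditions of $\vartheta_C(s)$ reduce to elementary comparisons of powers of $s$; the sharper bound $|(B(\vq))_2|\le C/s^{4\beta}$ uses $8\beta-2\ge 4\beta$, equivalent to $\beta\ge 1/2$, which holds by \eqref{equbeta} since $p>3$.

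Item (iii) is the genuine novelty and the main obstacle. Since $1<q<2$, a $C^1$ expansion of $z\mapsto|z|^q$ yields
\[
|G(\vq)|\le C\mu\bigl(|\nabla\varphi|^{q-1}+|\nabla\vq|^{q-1}\bigr)|\nabla\vq|.
\]
The algebraic identity $\beta(q-1)=1/2$, built into the very choice \eqref{equbeta} of $\beta$, converts $\|\nabla\varphi\|_\infty\le C/s^\beta$ from \eqref{e20151} into a gain of $1/\sqrt{s}$, which is exactly the mechanism behind the third estimate in \eqref{estim:B-R}. Feeding in the hypothesized pointwise and $L^\infty$ bounds on $\nabla\vq$ then produces a polynomial-in-$y$ pointwise bound on $G(\vq)$ playing the role of Proposition \ref{prop:V-def-prop}(iii), but with an extra $1/\sqrt{s}$ factor. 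The five conditions defining $\vartheta_C(s)$ are then verified by the same projection arguments as in items (i) and (ii); the main technical burden is the bookkeeping of the exponents, but every bound goes through thanks to $\gamma>3\beta$, $\gamma<2\beta+1$ and $\beta>1/2$ from \eqref{gamma}, together with the crucial relation $\beta(q-1)=1/2$.
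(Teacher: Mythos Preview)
Your plan for items (i) and (ii) is correct and matches the paper's proof essentially line for line.

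For item (iii) there is a genuine gap. You invoke only the $L^\infty$ bound $\|\nabla\varphi(s)\|_\infty\le C/s^\beta$, which via $\beta(q-1)=\tfrac12$ gives $|\nabla\varphi|^{q-1}\le C/\sqrt{s}$ and hence an ``extra $1/\sqrt{s}$ factor'' on the pointwise bound for $\nabla\vq$. This suffices for $\|G\|_{L^\infty}$ and therefore for $G_e$, but \emph{not} for the projections $G_0,G_1,G_-$. Indeed, the middle term of the hypothesized pointwise bound on $\partial_y\vq$ contributes
\[
\frac{1}{\sqrt{s}}\cdot\frac{\sqrt{A}}{s^{4\beta-1}}(1+|y|^2),
\]
which projects to $C\sqrt{A}/s^{4\beta-\frac12}$. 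When $\beta<\tfrac34$ (equivalently $p>5$) one has $4\beta-\tfrac12<2\beta+1$, so this term cannot be bounded by $C/s^{2\beta+1}$ for any constant independent of $A$, and the bounds on $|G_0|,|G_1|$ fail. Since $\gamma$ may be chosen anywhere in the interval $(3\beta,\min(5\beta-1,2\beta+1))$, and $4\beta-\tfrac12$ lies strictly inside this interval when $\beta<\tfrac34$, the bound $|G_-(y,s)|\le Cs^{-\gamma}(1+|y|^3)$ can also fail.

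The paper's fix is to use the sharper pointwise estimate $|\partial_y\varphi(y,s)|\le C|y|/s^{2\beta}$ (visible in \eqref{e20151}), which yields $|\partial_y\varphi|^{q-1}\le C|y|^{q-1}/s$ via $2\beta(q-1)=1$. The extra factor $|y|^{q-1}$ is absorbed by the Gaussian moments after projection, while the gain improves from $1/\sqrt{s}$ to $1/s$; the critical term above becomes $C\sqrt{A}/s^{4\beta}$, and since $4\beta>2\beta+1$ and $4\beta>\gamma$ (both consequences of $\tfrac12<\beta<1$ and \eqref{gamma}), all five conditions defining $\vartheta_C(s)$ then hold for $s$ large depending on $A$.
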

\begin{proof}
(i) Since $\vq \in \vartheta_A(s), $ we have from Proposition
\ref{prop:V-def-prop}:
$$\|\vq (s)\|_{L^\infty(\R)}\leq C{A^2\over s^{\gamma-3\beta}}.$$
Then, using item  (ii) of Lemma \ref{p52}, we get,
\begin{equation}
 \label{280120153}
\|V\vq (s)\|_{L^\infty(\R)}\leq
\|V(s)\|_{L^\infty(\R)}\|\vq (s)\|_{L^\infty(\R)}\leq C{A^2\over
s^{\gamma-3\beta}},\end{equation}
and
$$|V\vq|(y,s)\leq  \|\vq
(s)\|_{L^\infty(\R)}|V|(y,s)\leq C{A^2\over
s^{\gamma-\beta}}(1+|y|^2).$$
Furthermore, using item (iii) of Proposition \ref{prop:V-def-prop}  and Definition  \ref{prop:V-def} of $\vartheta_{A}(s),$ we see that
\begin{eqnarray}
\nonumber
 \hspace{-2cm} |(V\vq)_b(y,s)|=|V\vq_b(y,s)|\hspace{-0,2cm} &\leq & \hspace{-0,2cm} {C(1+|y|^2)\over s^{2\beta}}\left(\sum_{m=0}^2|\vq_m(s)|(1+|y|)^m+\left\|{\vq_-(y,s)\over 1+|y|^3}\right\|_{L^\infty(\R)}\hspace{-1cm}(1+|y|^3)\right)\\ &\leq & \label{etoile1}
 {CA\over s^{4\beta+1}}(1+|y|^3)+{C\sqrt{A}\over s^{6\beta-1}}(1+|y|^4)+{CA\over s^{\gamma+2\beta}}(1+|y|^5).
\end{eqnarray}
By definition of $(V\vq)_m$ we see that
\begin{eqnarray*}
|(V\vq)_m(s)|& = & \left|\int_R k_m(y)(V\vq)_b\rho(y)dy\right|\\
 & \leq & {CA\over s^{4\beta+1}}+{C\sqrt{A}\over s^{6\beta-1}}+{CA\over s^{\gamma+2\beta}}\\
& \leq & {1\over s^{4\beta}},
\end{eqnarray*}
since ${1\over 2}<\beta<1$ and $\gamma$ satisfies \eqref{gamma}.\\
As for $(V\vq)_-,$ noting that $|y|\leq 2Ks^\beta$ on the support of $(V\vq)_b,$ we write from \eqref{etoile1}:
$${|(V\vq)_b|(y,s)\over 1+|y|^3}\leq {CA\over s^{4\beta+1}}+{C\sqrt{A}\over s^{5\beta-1}}+{CA\over s^{\gamma}}\leq {C_1A\over s^\gamma},$$
since $\gamma < \min(5\beta-1,2\beta+1).$
Therefore,
${|(V\vq)_-|(y,s)\over 1+|y|^3}\leq {CA\over s^\gamma},$ which is the desired estimate for $(V\vq)_-.$ As for the estimate on $\|(V\vq)_e\|_{L^\infty(\R)},$
it follows from \eqref{280120153}.

(ii) From a Taylor expansion,  we have
\begin{equation}
 \label{e20153}
|B(\vq)|\leq C|\vq|^2.\end{equation}
Since $\vq \in \vartheta_A(s)$, from item (i) in Proposition \ref{prop:V-def-prop},
 we have
$$\|B(\vq)_e\|_\infty\leq \|B(\vq)\|_{L^\infty(R)}\leq C\|\vq\|_{L^\infty(\R)}^2\leq
C{A^4\over s^{2(\gamma-3\beta)}}\leq {1\over s^{\gamma-3\beta}},$$
for sufficiently large $s.$ Moreover, we have that,
\begin{eqnarray}
\nonumber
|(B(\vq))_b(y,s)|=|\chi(y,s)B(\vq)(y,s)|&\leq & C|\vq(y,s)|^2 \\ \nonumber
&\leq & C\Big( \sum_{m=0}^{2}|\vq_m|^2h_m^2+|\vq_-(y,s)|^2+|\vq_e(y,s)|^2\Big)\\ \nonumber
&\leq & C\Big({A^2\over s^{4\beta+2}}(1+|y|^2)+{A\over
s^{8\beta-2}}(1+|y|^4)+\\ \label{4275}
&& {A^2\over s^{2\gamma}}(1+|y|^6)\Big)\mathbf{1}_{|y|\leq 2Ks^\beta}+\mathbf{1}_{|y|>Ks^\beta}{A^4\over
s^{2\gamma-6\beta}} \Big)
\\ \nonumber
&\leq &C\Big({A^2\over s^{4\beta+2}}+{A\over
s^{7\beta-2}}+{A^4\over s^{2\gamma-3\beta}}\Big)(1+|y|^3),
\end{eqnarray}
where $\mathbf{1}_X$ is the   characteristic function of a set $X.$\\
Hence, using \eqref{4275}, we write by definition of $(B(v))_m,$  $$|B(\vq)_m|\leq C\Big({A^2\over s^{4\beta+2}}+{A\over
s^{8\beta-2}}+{A^2\over s^{2\gamma}}\Big).$$
Therefore, by the conditions on $\gamma$ given by (\ref{gamma}) and
since $\beta>1/2,$ we see that $$|B(\vq)_m|\leq {1\over s^{2\beta+1}},\;
m=0,\;1\;\mbox{ and }\; |B(\vq)_2|\leq  {C\over s^{4\beta}}\leq {1\over s^{4\beta-1}}.$$
Furthermore, by the expression of $\gamma$ given by (\ref{gamma}), and
since $\beta>1/2,$ we have
\begin{eqnarray*}
|B(\vq)_-(y,s)|& = &|B(\vq)_b(y,s)-\sum_{m=0}^{2}(B\vq)_mh_m|\\
&\leq & C\Big({A^2\over s^{4\beta+2}}+{A\over
s^{7\beta-2}}+{A^4\over s^{2\gamma-3\beta}}\Big)(1+|y|^3)\\
&\leq & {1\over s^{\gamma}}(1+|y|^3),
\end{eqnarray*}
for $s$ sufficiently large. This finishes the proof of item (ii).

\medskip

(iii) Using the inequality
$$\left||x+x'|^q-|x|^q\right|\leq C\left(|x|^{q-1}|x'|+|x'|^q\right),\; 
\forall x\in
\R,\; x'\in \R,$$ we deduce that
\begin{equation}
\label{estG} |G(y,s)|\leq C \left(|\partial_y
\varphi|^{q-1}|\partial_y\vq|+|\partial_y\vq|^q\right).
\end{equation}
Since
$$\|\partial_y\vq(s)\|_\infty \leq {CA^2\over
s^{\gamma-3\beta}}\; \mbox{ and }\;
 \|\partial_y\varphi(s)\|_\infty \leq {C\over s^{\beta}},$$
 it follows that
 \begin{eqnarray*}
 \|G(s)\|_\infty &\leq & {1\over
s^{\beta(q-1)}} {CA^2\over s^{\gamma-3\beta}}+{CA^{2q}\over
s^{q(\gamma-3\beta)}}\\
&\leq  & {1\over s^{\gamma-3\beta}},
\end{eqnarray*}
because $q>1.$ Hence
\begin{equation}
\label{estGe} \|G_e(s)\|_\infty \leq   {1\over
s^{\gamma-3\beta}}.
\end{equation}
On the other hand, from the fact that
$$|\partial_y\varphi(y,s)|\leq C{|y|\over s^{2\beta}},\;\forall\;y \in \R,$$
and $$|\partial_y\vq(y,s)|\leq
\frac{CA}{s^{2\beta+1}}(1+|y|)+\frac{C\sqrt{A}}{s^{4\beta-1}}(1+|y|^2)+
\frac{CA^2}{s^{\gamma}}(1+|y|^3),\;\forall\;y \in \R,$$ using
\eqref{estG} and the identity
\begin{equation}
 \label{4295}
2\beta(q-1)=1,
\end{equation}
we deduce that
$$|G(y,s)|\leq C\left({|y|^{q-1}\over
s}|\partial_y\vq|+|\partial_y\vq|^q\right),$$
hence,
\begin{eqnarray}
\nonumber
|G(y,s)|&\leq & {CA\over
s^{2\beta+2}}|y|^{q-1}(1+|y|)+{C\sqrt{A}\over
s^{4\beta}}|y|^{q-1}(1+|y|^2)\\ \nonumber &&+{CA^2\over
s^{\gamma+1}}|y|^{q-1}(1+|y|^3)+{CA^q\over s^{q(2\beta+1)}}(1+|y|^q)\\ \label{21A}
&&+{CA^{q/2}\over s^{q(4\beta-1)}}(1+|y|^{2q})+{CA^{2q}\over
s^{q\gamma}}(1+|y|^{3q}).
\end{eqnarray}
Therefore, from the conditions on
$\gamma$ given by (\ref{gamma}), we have
$$
|G_m(s)|\leq  {CA\over s^{2\beta+2}}+{C\sqrt{A}\over
s^{4\beta}}+{CA^2\over s^{\gamma+1}}+{CA^q\over
s^{q(2\beta+1)}}+{CA^{q/2}\over s^{q(4\beta-1)}}+{CA^{2q}\over
s^{q\gamma}}.$$
Noting from \eqref{4295} that $2\beta+1=2\beta q<q(4\beta-1)$ and $2\beta+1=2\beta q<3\beta q<\gamma q,$ and using the fact that $\beta>1/2,\, \gamma>3\beta,\, 1<q<2,$ we get
$$|G_m(s)|\leq  {1\over s^{2\beta+1}}\; \mbox{ for }\; m=0, 1.$$
Since $\beta<3/2$ and $\gamma>4\beta-1$, we get
$$|G_2(s)|\leq {1\over s^{4\beta-1}}.$$
We now turn to the estimate on $G_-.$\\
Using \eqref{21A} and the fact that $|y|\leq 2Ks^\beta$on the support of $G_-,$ for $G_b$ we write from \eqref{4295} and the fact that $q>3/2$ (which follows from the fact that $p>3$ and $q=2p/(p+1))\!:$
\begin{eqnarray*}
 {|G_b(y,s)|\over 1+|y|^3} & \leq &  {CA\over s^{2\beta+2}}+{C\sqrt{A}\over s^{4\beta}}+{CA^2\over s^{\gamma+{1\over 2}}} \\ && + {CA^q\over s^{q(2\beta+1)}}+{CA^{q/2}\over s^{q(4\beta-1)-(2q-3)\beta}}+
 {CA^{2q}\over s^{q\gamma-3(q-1)\beta}}.
\end{eqnarray*}
Noting from the conditions \eqref{gamma} on $\gamma$ and \eqref{4295} that
\begin{equation}
 \label{4345}
2\beta+2>2\beta+1>\gamma;\; 4\beta>5\beta-1>\gamma;\; q(2\beta+1)>2\beta+1>\gamma;
\end{equation}
$$q(4\beta-1)-(2q-3)\beta=2q\beta-q+3\beta=5\beta+1-q>5\beta-1>\gamma;$$
$$q\gamma-3(q-1)\beta=q\gamma-{3\over 2}=(q-1)\gamma-{3\over 2}+\gamma={\gamma\over 2\beta}-{3\over 2}+\gamma={\gamma-3\beta\over 2\beta}+\gamma>\gamma,$$
we see that
$${|G_b(y,s)|\over 1+|y|^3} \leq {1\over s^\gamma},$$
hence,
$$|G_m(s)|\leq {C\over s^\gamma},\; m=0,\; 1,\; 2,$$ and
$$|G_-(y,s)|\leq {C\over
s^{\gamma}}(1+|y|^3),\; \forall \; y\in \R.$$
 This finishes the proof of Lemma \ref{l52}.
\end{proof}

\subsubsection{Parabolic regularity}\label{subsecpr}
In this subsection, we prove the parabolic regularity results and prove \eqref{prop:regu-parab-q-equationbis}. To do so:
\begin{itemize}
 \item[-] We first give some linear parabolic regularity estimates on the linear operator $\mathcal{L}$ defined in \ref{eq:Operator:L}. See Lemma \ref{l53} below.
 \item[-] Then, since we aim at obtaining some fine estimates on $\partial_y\vq_-$ if $\vq(s)\in \vartheta_A(s),$ we will write down the equation satisfied by $\vq_-$ and bound its source term. See Lemma \ref{l54} below.
 \item[-] Finally, using the linear estimate along with the preliminary estimates given in the previous subsection, we derive the intended regularity estimates for the full equation \eqref{qequation}.
\end{itemize}

Let us first give some linear regularity estimates:
\begin{lem}[Properties of the semigroups $e^{\theta \mathcal{L}}$] \label{l53} The Kernel $ e^{\theta \mathcal{L}}(y,x)$ of the semi-group $e^{\theta \mathcal{L}}$ is given by:
  \begin{equation}\label{eq:semigroup:theta-L-case1}
   e^{\theta \mathcal{L}}(y,x)=
\frac{e^\theta}{\sqrt{4\pi(1-e^{-\theta})}}\exp{[-\frac{(ye^{-\theta/2}-x)^2}{
4(1-e^{-\theta})}]}.
\end{equation}
for all  $\theta >0$, and $e^{\theta \mathcal{L}}$ is defined by
\begin{equation}\label{eq:semigroup:theta-L-case1-def}
   e^{\theta \mathcal{L}}r(y)=\int_\R
e^{\theta \mathcal{L}}(y,x)r(x)dx.
\end{equation}
We also have the following:
\begin{itemize}
\item[(i)] If $r_1\leq r_2$
then $e^{\theta \mathcal{L}}r_1\leq e^{\theta \mathcal{L}}r_2.$
\item[(ii)]$\|\nabla (e^{\theta \mathcal{L}}r)\|_{L^{\infty}(\R)}\leq
Ce^{\frac{\theta }{2}} \|\nabla r\|_{L^\infty(\R)},\; r\in
W^{1,\infty}(\R).$
\item[(iii)] $\|\nabla (e^{\theta \mathcal{L}}r)\|_{L^{\infty}(\R)}\leq
   \frac{Ce^{\frac{\theta
   }{2}}}{\sqrt{1-e^{-\theta}}}\|r\|_{L^\infty(\R)},\, r\in L^{\infty}(\R).$
\item[(iv)] If $|r(x)|\leq \eta (1+|x|^m),\; \forall \; x\in \R,$ then
  $|e^{\theta \mathcal{L}}r(y)|\leq C\eta e^{\theta} (1+|y|^m),\; \forall \; y\in
  \R.$
\item[(v)] If $|\nabla r(x)|\leq \eta (1+|x|^m),\; \forall \; x\in
\R,$
  then
   $|\nabla (e^{\theta \mathcal{L}}r)(y)|\leq C\eta e^{\frac{\theta }{2}} (1+|y|^m),\; \forall \; y\in
   \R,$
\item[(vi)] If $|r(x)|\leq  \eta (1+|x|^m),\; \forall \; x\in
\R,$
  then
   $|\nabla (e^{\theta \mathcal{L}}r)(y)|\leq C\eta {e^{\frac{\theta }{2}}\over \sqrt{1-e^{-\theta}}}  (1+|y|^m),\; \forall \; y\in
   \R$,
 \end{itemize}
where $C$ is a positive constant and $m\geq 0.$
\end{lem}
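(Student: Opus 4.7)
\emph{Proof plan for Lemma \ref{l53}.}

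The plan is to first derive the kernel formula \eqref{eq:semigroup:theta-L-case1} by reducing the evolution equation $\partial_\theta U = \mathcal{L}U$ to the standard heat equation. Writing $\mathcal{L}=\mathcal{L}_0+\mathrm{Id}$ with $\mathcal{L}_0=\Delta-\frac12 y\cdot\nabla$, the factor of identity just yields the multiplicative $e^\theta$ in front. For $\mathcal{L}_0$, I would use the substitution $U(y,\theta)=V(y e^{-\theta/2},\tau(\theta))$ with $\tau(\theta)=1-e^{-\theta}$, which turns the equation into $\partial_\tau V=\Delta V$. Inserting the heat kernel for $V$ and changing variables back gives the explicit Mehler-type formula stated in the lemma. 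Property (i) is then immediate since the kernel is nonnegative.

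For the remaining estimates, the main tool will be the change of variables $u=(x-ye^{-\theta/2})/(2\sqrt{1-e^{-\theta}})$, which turns the integral representation into
\[
(e^{\theta\mathcal{L}}r)(y)=\frac{e^\theta}{\sqrt{\pi}}\int_{\R}e^{-u^2}\,r\!\left(ye^{-\theta/2}+2\sqrt{1-e^{-\theta}}\,u\right)du.
\]
For (iv), I would bound $|r|\le \eta(1+|\cdot|^m)$ under the integral and use the elementary inequality $|ye^{-\theta/2}+2\sqrt{1-e^{-\theta}}\,u|^m\le C_m(|y|^m+|u|^m)$ together with the finiteness of Gaussian moments. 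For (ii) and (v), I would differentiate the above representation in $y$, which produces a factor $e^{-\theta/2}$ from the chain rule and leaves a Gaussian average of $\nabla r$; combining with the $e^\theta$ prefactor gives the announced $e^{\theta/2}$. When $\nabla r$ is bounded pointwise by $\eta(1+|x|^m)$, the same polynomial bound as in (iv) finishes (v).

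Properties (iii) and (vi), where only $L^\infty$-control (or polynomial control) on $r$ is available, require differentiating the \emph{kernel} rather than $r$. A direct computation gives
\[
\nabla_y e^{\theta\mathcal{L}}(y,x)=-\frac{e^{-\theta/2}(ye^{-\theta/2}-x)}{2(1-e^{-\theta})}\,e^{\theta\mathcal{L}}(y,x),
\]
so the same change of variables as above yields
\[
\int_{\R}|\nabla_y e^{\theta\mathcal{L}}(y,x)|\,dx=\frac{e^{\theta/2}}{\sqrt{1-e^{-\theta}}}\cdot\frac{1}{\sqrt{\pi}}\int_{\R}e^{-u^2}|u|\,du=\frac{Ce^{\theta/2}}{\sqrt{1-e^{-\theta}}},
\]
which is exactly the factor appearing in (iii). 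For (vi), I would run the same argument but weight the integrand by $\eta(1+|x|^m)$; bounding $|x|^m\le C_m(|y|^m+(1-e^{-\theta})^{m/2}|u|^m)$ and using that Gaussian moments of any order are finite isolates the $(1+|y|^m)$ factor, while the leftover singular factor $(1-e^{-\theta})^{-1/2}$ remains unchanged.

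The only mildly delicate point is tracking the exponents in $e^\theta$ carefully: the $e^\theta$ coming from the eigenvalue $1$ of $\mathcal L$ combines with the $e^{-\theta/2}$ from the chain rule (in (ii),(v)) or with the $e^{-\theta/2}/(1-e^{-\theta})$ coming from differentiating the Gaussian argument (in (iii),(vi)). No fundamental obstacle is expected; the statement is essentially Mehler's formula plus elementary Gaussian bookkeeping, and all constants $C$ depend only on $m$.
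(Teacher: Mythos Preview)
Your proposal is correct and takes essentially the same approach as the paper: the paper simply cites \cite{BK} for the kernel formula, notes (i) follows from positivity, and asserts that (ii)--(vi) ``follow by simple calculations'' from \eqref{eq:semigroup:theta-L-case1}--\eqref{eq:semigroup:theta-L-case1-def}, whereas you actually carry out those calculations. Your derivation of the Mehler kernel via the substitution reducing $\mathcal L_0$ to the heat equation, and your Gaussian change of variables for the estimates (differentiating $r$ for (ii),(v) and the kernel for (iii),(vi)), are exactly the ``simple calculations'' the paper omits.
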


\begin{proof} The expressions of $e^{\theta \mathcal{L}}(y,x)$
and $e^{\theta \mathcal{L}}$ are given in \cite[Formula (44), p.
554]{BK}. See also \cite{S}.

(i) Follows by the positivity of the kernel. (ii) and (iii) Follow by simple calculations using
(\ref{eq:semigroup:theta-L-case1}) and
(\ref{eq:semigroup:theta-L-case1-def}) so we omit the proof. (iv) Follows from (\ref{eq:semigroup:theta-L-case1}) and (\ref{eq:semigroup:theta-L-case1-def}). See also \cite[Lemma 4, p.555]{BK}. (v)-(vi) follow also by simple calculations.
\end{proof}

Now, we write down in the following, the equation satisfied by $\vq_-$ and estimate its source term:
\begin{lem}[Equation satisfied by $v_-$]
\label{l54}
For all $A\geq 1,$ there exits
$s_{13}(A)$ sufficiently large such that for $s\geq s_{13},$ if $\vq \in \vartheta_A(s)$,
\begin{equation}
\label{4triangle}
 \|\partial_y\vq(s)\|_\infty \leq {CA^2\over
s^{\gamma-3\beta}}
\end{equation}
 and
 \begin{equation}
 \label{6triangle}
|\partial_y\vq(y,s)|\leq
\frac{CA}{s^{2\beta+1}}(1+|y|)+\frac{C\sqrt{A}}{s^{4\beta-1}}(1+|y|^2)+
\frac{CA^2}{s^{\gamma}}(1+|y|^3),\;\forall\;y \in \R,
\end{equation}
then we have
\begin{equation}
\label{fbare}
\partial_s\vq_-=\mathcal{L}\vq_-+\overline{F}(y,s),
\end{equation}
with
\begin{equation}
\label{estimfbare}|\overline{F}(y,s)|\leq C{A^2\over s^\gamma}\left(1+|y|^3\right),\; \forall\; y\in \R.\end{equation}
\end{lem}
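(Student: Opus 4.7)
\textbf{Proof proposal for Lemma \ref{l54}.} My plan is to derive the equation for $v_-$ by first writing the equation for $v_b=\chi v$, computing the commutator $[\chi,\mathcal{L}]$, and then applying the projector $P_-$. Starting from \eqref{qequation}, I would write
\[
\partial_s v_b = \chi\,\partial_s v + v\,\partial_s\chi = \chi\bigl(\mathcal{L}v+Vv+B(v)+G(v)+R\bigr)+v\,\partial_s\chi.
\]
A direct computation in one space dimension gives the commutator identity
\[
\chi\,\mathcal{L}v = \mathcal{L}(\chi v)-2(\partial_y\chi)(\partial_y v)-v\Bigl(\partial_y^2\chi-\tfrac12 y\,\partial_y\chi\Bigr),
\]
so that $\partial_s v_b = \mathcal{L}v_b + F(y,s)$ with
\[
F=\chi Vv+\chi B(v)+\chi G(v)+\chi R+v\,\partial_s\chi-2(\partial_y\chi)(\partial_y v)-v\Bigl(\partial_y^2\chi-\tfrac12 y\,\partial_y\chi\Bigr).
\]
Applying $P_-$ (which commutes with $\mathcal{L}$ on its domain) then yields \eqref{fbare} with $\overline{F}=P_-F$.

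For the four ``bulk'' terms, $\chi Vv=Vv_b$, $\chi B(v)=B(v)_b$, $\chi G(v)=G_b$ and $\chi R=R_b$, so their $P_-$ projections are respectively $(Vv)_-$, $B(v)_-$, $G_-$ and $R_-$. By Lemmas \ref{p52} and \ref{l52} (whose hypotheses are satisfied since $v\in\vartheta_A(s)$ and \eqref{4triangle}--\eqref{6triangle} hold), each of these is bounded by $\frac{CA}{s^\gamma}(1+|y|^3)$. Collecting them accounts for the bulk part of \eqref{estimfbare}.

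The remaining task, which is the only genuinely new calculation, is to control the three ``cut-off'' contributions. All three are supported in the annulus $Ks^\beta\le |y|\le 2Ks^\beta$, where $\partial_s\chi=O(1/s)$, $\partial_y\chi=O(s^{-\beta})$ and $\partial_y^2\chi-\tfrac12 y\partial_y\chi=O(1)$. On this annulus, Proposition \ref{prop:V-def-prop}(i) gives $|v|\le CA^2/s^{\gamma-3\beta}$, and the hypothesis \eqref{4triangle} yields $|\partial_y v|\le CA^2/s^{\gamma-3\beta}$. Therefore, on the support of these cut-off terms, each of $|v\,\partial_s\chi|$, $|(\partial_y\chi)(\partial_y v)|$ and $|v(\partial_y^2\chi-\tfrac12 y\partial_y\chi)|$ is bounded by $\frac{CA^2}{s^{\gamma-3\beta}}$; multiplying and dividing by $(1+|y|^3)\sim s^{3\beta}$ on this annulus converts this precisely into a bound $\frac{CA^2}{s^\gamma}(1+|y|^3)$. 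To pass from these pointwise bounds to bounds on their $P_-$-projection, I would use the argument already exploited in Lemma \ref{l41}: for a function $f$ supported where $|y|\ge Ks^\beta$, the Gaussian factor satisfies $\rho(y)\le Ce^{-K^2 s^{2\beta}/4}\sqrt{\rho(y)}$, so each $P_m f$ is of order $e^{-s^{2\beta}}\|f\|_\infty$, which is negligible against $1/s^\gamma$. Consequently $P_-f$ differs from $f$ by exponentially small corrections, and the pointwise bounds above transfer to $\overline{F}$.

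Putting these two sets of estimates together yields $|\overline{F}(y,s)|\le \frac{CA^2}{s^\gamma}(1+|y|^3)$, which is \eqref{estimfbare}. The main technical point is the cut-off part: one has to simultaneously exploit the pointwise bound on $v$ and $\partial_y v$ valid on the annulus, the sizes of $\partial_y\chi$, $\partial_s\chi$ and $\partial_y^2\chi-\tfrac12 y\partial_y\chi$, and the exponential smallness of $\rho$ on the annulus to discard the $P_m$ corrections; the arithmetic of the exponents (using $\beta\in(1/2,1)$ and $3\beta<\gamma$) matches the target weight $(1+|y|^3)/s^\gamma$ exactly, with no room for a stricter choice.
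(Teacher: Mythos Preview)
Your proposal is correct and uses the same ingredients as the paper: Lemmas \ref{p52} and \ref{l52} for the bulk terms $(Vv)_-$, $B(v)_-$, $G_-$, $R_-$, the commutator $[\chi,\mathcal{L}]$ and the $\partial_s\chi$ term for the cut-off contributions, and the exponential smallness of $\rho$ on the annulus $\{Ks^\beta\le |y|\le 2Ks^\beta\}$ to discard the low-mode corrections. The only organizational difference is that you write the equation for $v_b=\chi v$ first and then apply $P_-$, exploiting that $P_-$ commutes exactly with both $\partial_s$ and $\mathcal{L}$; the paper instead introduces the intermediate objects $(\partial_s v)_-=P_-(\chi\partial_s v)$ and $(\mathcal{L}v)_-=P_-(\chi\mathcal{L}v)$ and separately bounds $|\partial_s v_--(\partial_s v)_-|$ and $|\mathcal{L}v_--(\mathcal{L}v)_-|$ (see \eqref{(2)} and \eqref{(3)}). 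Your route is slightly more direct and avoids tracking the auxiliary differences $v_m'-(\partial_s v)_m$ and $(1-\tfrac m2)v_m-(\mathcal{L}v)_m$, but the two arguments are equivalent and rely on exactly the same estimates.
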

\begin{proof}
From equation \eqref{qequation}, we have
$$\left(\partial_s \vq\right)_-=\left(\mathcal{L}\vq\right)_-+\left(V\vq\right)_- +\left(B(\vq)\right)_-+\left(G(\vq)\right)_-+\left({R}(y,s)\right)_-.$$
Using the hypotheses of the Lemma, the results obtained in Lemmas \ref{p52} and \ref{l52}, we deduce that, for $s$ sufficiently large,
\begin{equation}
\label{(1)}
\left|\left(\partial_s \vq\right)_--\left(\mathcal{L}\vq\right)_-\right|\leq {CA\over s^\gamma}\left(1+|y|^3\right),\; \forall\; y\in \R.\end{equation}
Furthermore,
we have from \eqref{def:q:projbis} and \eqref{def:q_b}
\begin{equation}
\label{4323}
\vq_b=\chi \vq=\sum_{m=0}^2\vq_mh_m+\vq_-,
\end{equation}
hence
$$(\partial_s\chi)\vq+\chi\partial_s\vq=\sum_{m=0}^2\vq_m'h_m+\partial_s\vq_-,$$
on the one hand. On the other hand, applying \eqref{def:q_b} to $\partial_s\vq $, we have
$$\chi \partial_s\vq=\sum_{m=0}^2(\partial_s\vq)_mh_m+(\partial_s\vq)_-.$$
Then, by taking the difference of the last two
identities,
we get
\begin{equation}
 \label{1triangle}
\left|\partial_s\vq_--\left(\partial_s \vq\right)_-\right|\leq |\vq||\partial_s\chi|+C\left(\sum_{m=0}^2|\vq_m'-(\partial_s\vq)_m|\right)(1+|y|^2).
\end{equation}
But we have by definition
$$\vq_m'={d\over ds}\left(\int_\R\chi k_m\vq\rho\right)=\int_\R\chi k_m\partial_s\vq\rho+\int_\R\partial_s\chi k_m\vq\rho$$
and
$$\left(\partial_s\vq\right)_m=\int_\R\chi k_m\partial_s\vq\rho.$$
Then, since $\|\vq(s)\|_{L^\infty(\R)}\leq {CA^2\over
s^{\gamma-3\beta}}$ by item (i) of Proposition
\ref{prop:V-def-prop}, and
\begin{equation}
 \label{4325}
|\partial_s\chi|\leq {C\over s}\mathbf{1}_{\{Ks^\beta<|y|<2Ks^\beta\}},
\end{equation}
where $\mathbf{1}_{\{Ks^\beta<|y|<2Ks^\beta\}}$ is the  characteristic function of the set $\{Ks^\beta<|y|<2Ks^\beta\}$, we deduce that
\begin{equation}
 \label{2triangle}
\left|\vq_m'-\left(\partial_s\vq\right)_m\right|=\left|\int_\R\partial_s\chi h_m\vq\rho\right|\leq {CA^2\over s^{\gamma-3\beta+1}}e^{- s^{2\beta}}\leq {1\over s^{4\beta}}\leq {1\over s^{2\beta+1}} \leq {1\over s^\gamma},\end{equation}
for $s$ large enough. Now, using \eqref{4325},
we see that
$$|\partial_s\chi|\leq C{|y|^i\over s^{1+i\beta}},\; i=0,\;1,\;2.$$
Therefore, since $\vq\in \vartheta_A(s),$ by Proposition \ref{prop:V-def-prop}, part (iii) and by using the conditions on $\gamma$ given by (\ref{gamma}), we get
\begin{eqnarray}
 \nonumber
|\vq||\partial_s\chi|& \leq & C\left({A\over s^{4\beta+2}}+{\sqrt{A}\over s^{5\beta}}+{A^2\over s^{\gamma+1}}\right)(1+|y|^3)\\ \label{3triangle}
&\leq & C{A^2\over s^{\gamma+1}}(1+|y|^3),
\end{eqnarray}
and we deduce, from \eqref{1triangle}, \eqref{2triangle} and \eqref{3triangle} that
\begin{equation}
 \label{(2)}
\left|\partial_s\vq_--\left(\partial_s \vq\right)_-\right|\leq C{A^2\over s^{\gamma+1}}(1+|y|^3),\; \forall \; y\in \R.
\end{equation}
Since $\mathcal{L}(h_m)=\left(1-{m\over 2}\right)h_m,$ we write from \eqref{4323} and \eqref{def:q_b} applied to $\mathcal{L}\vq,$
$$\mathcal{L}(\chi\vq)=\sum_{m=0}^2 \vq_m \left(1-{m\over 2}\right)h_m+\mathcal{L}\vq_-,$$
$$\chi\mathcal{L}\vq=\sum_{m=0}^2 \left(\mathcal{L}\vq\right)_mh_m+\left(\mathcal{L}\vq\right)_-.$$
Therefore, we deduce
\begin{equation}
 \label{7triangle}
\left|\mathcal{L}\vq_--\left(\mathcal{L}\vq\right)_-\right|\leq \left|\mathcal{L}(\chi\vq)-\chi\mathcal{L}\vq\right|+\left(\sum_{m=0}^2\left|\vq_m \left(1-{m\over 2}\right)-\left(\mathcal{L}\vq\right)_m\right|\right)(1+|y|^2).\end{equation}
Since $\mathcal{L}$ is self-adjoint, we have
$$\left(\mathcal{L}\vq\right)_m=\int_\R\chi k_m \mathcal{L}\vq \rho= \int_\R \mathcal{L}(\chi k_m) \vq \rho,$$
then
\begin{eqnarray*}
 \left|\vq_m \left(1-{m\over 2}\right)-\left(\mathcal{L}\vq\right)_m\right|& = &\left|\int_\R\chi (\mathcal{L}k_m)\vq\rho-\int_\R \mathcal{L}(\chi k_m)\vq \rho\right|\\
& = & \left|\int_\R\left(\chi\left(\mathcal{L}k_m\right)-\mathcal{L}\left(\chi k_m\right)\right)\vq\rho\right|.
\end{eqnarray*}
Now, using the expression of $\mathcal{L}$ given by (\ref{eq:Operator:L}), we have for any regular function $r$,
\begin{equation}
 \label{4335}
\left|\mathcal{L}(\chi r)-\chi\mathcal{L}r\right|\leq |r|\left({1\over 2}|y||\partial_y\chi|+|\partial_y^2\chi|\right)+2|\partial_y\chi||\partial_yr|.
\end{equation}
Since
\begin{equation}
 \label{5triangle}
s^\beta|\partial_y\chi|+s^{2\beta}|\partial_y^2\chi|\leq \mathbf{1}_{\{Ks^\beta<|y|<2Ks^\beta\}},
\end{equation} using item (i) of Proposition \ref{prop:V-def-prop}, and \eqref{4triangle}  with $r=\chi k_m,$ we get,
\begin{equation}
 \label{8triangle}
 \left|\vq_m \left(1-{m\over 2}\right)-\left(\mathcal{L}\vq\right)_m\right|\leq {CA^2\over s^{\gamma-3\beta}}e^{-s^{2\beta}}
\leq {1\over s^{4\beta}}\leq {1\over s^{2\beta+1}}\leq {1\over s^\gamma},
 \end{equation}
for $s$ sufficiently large.
On the other hand, since
$$|y||\partial_y\chi|+|\partial_y\chi|+|\partial_y^2\chi|\leq C{|y|^i\over s^{i\beta}},\; i=0,\; 1,\; 2,$$
from \eqref{5triangle}, using the hypotheses on $\vq$ and $\partial_y\vq,$ (namely item (iii) of Proposition \ref{prop:V-def-prop} and \eqref{6triangle}), then by applying the inequality \eqref{4335} with $r=\vq,$ we get
\begin{equation}
\label{9triangle}
\left|\mathcal{L}(\chi\vq)-\chi\mathcal{L}\vq\right|\leq \left( {CA\over s^{4\beta+1}}+{C\sqrt{A}\over s^{5\beta-1}}+{CA^2\over s^{\gamma}}\right)(1+|y|^3)\leq C{A^2\over s^{\gamma}}(1+|y|^3),
\end{equation}
since $\gamma\leq \min(5\beta-1,4\beta+1).$
Then, we deduce from \eqref{7triangle}, \eqref{8triangle} and \eqref{9triangle},
\begin{equation}
\label{(3)}\left|\mathcal{L}\vq_--\left(\mathcal{L}\vq\right)_-\right|\leq C{A^2\over s^{\gamma}}(1+|y|^3).
\end{equation}
Now using (\ref{(1)}), (\ref{(2)}) and (\ref{(3)}) we conclude the proof of the lemma.
\end{proof}

Now, we are in a position to give our parabolic regularity
statement:
\begin{prop}[Parabolic regularity for equation (\ref{qequation})]
\label{prop:regu-parab-q-equation} For all $A\geq 1$, there exists
$s_{14}(A)$ such that for all $s_0\geq s_{14}(A)$ the following
holds:

Consider $\vq(s)$  a solution of equation (\ref{qequation}) on
$[s_0,s_1]$ where $s_1\geq s_0$ with initial data at $s=s_0$
$$\vq(y,s_0)=\psi_{s_0,d_0,d_1}(y)$$
defined in \eqref{initial-data} with $(d_0,d_1)\in \mathcal D_{s_0},$ and
\begin{equation}\label{eq:q-in-VA-bis}
 \vq(s)\in \vartheta_A(s) \mbox{ for all } s\in [s_0,s_1].\end{equation}
  Then, for all $s\in[s_0,s_1],$ we have
\begin{itemize}
\item[(i)] $ \|\nabla \vq(s)\|_{L^\infty(\R)}\leq C\frac{A^2}{s^{\gamma-3\beta}}.$
\item[(ii)]$|\nabla \vq(y,s)|\leq C\frac{A}{s^{2\beta+1}}(1+|y|)+
C\frac{\sqrt{A}}{{s^{4\beta-1}}}(1+|y|^2)+C
\frac{A^2}{s^{\gamma}}(1+|y|^3),\; \forall\; y\in \R.$
\item[(iii)] $|\nabla \vq_-(y,s)|\leq C{A^2\over s^{\gamma}}(1+|y|^3),\; \forall \; y\in \R,$
\end{itemize}
where $C$ is a positive constant.
\end{prop}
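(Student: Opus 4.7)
The plan is to use Duhamel's formula for equation \eqref{qequation} over a time window of length at most one, combined with a bootstrap argument to close the estimate on $\nabla \vq$ against the nonlinear gradient term $G(\vq)$. For $s\in[s_0,s_1]$, set $\sigma = \min(1, s-s_0)$, and write
$$\vq(s) = e^{\sigma \mathcal{L}}\vq(s-\sigma) + \int_{s-\sigma}^{s} e^{(s-\tau)\mathcal{L}} F(\tau)\,d\tau, \qquad F = V\vq + B(\vq) + G(\vq) + R.$$
Applying $\partial_y$ and invoking the semigroup bounds of Lemma \ref{l53}, together with the a priori estimates of Lemmas \ref{p52} and \ref{l52} on $V\vq$, $B(\vq)$ and $R$, the only term whose control requires prior knowledge of $\nabla \vq$ is $G(\vq)$.

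For item (i), I would bootstrap. Assume $\|\nabla \vq(\tau)\|_\infty \leq K A^2/\tau^{\gamma-3\beta}$ for all $\tau \in [s_0, s]$, with $K$ to be chosen. Lemma \ref{l52}(iii) then applies and gives $\|G(\tau)\|_\infty \leq CK A^2/(\sqrt{\tau}\,\tau^{\gamma-3\beta})$, where the decisive factor $1/\sqrt{\tau}$ comes from $\|\partial_y\varphi\|_\infty\leq C/s^\beta$ in \eqref{e20151} and the algebraic identity $2\beta(q-1)=1$ in \eqref{4295}. Hence $\|F(\tau)\|_\infty \leq CA^2(1 + K/\sqrt{\tau})/\tau^{\gamma-3\beta}$. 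The starting term is controlled by Lemma \ref{l53}(ii) applied to the initial data (using \eqref{qrdienq_s0} in Proposition \ref{prop:diddc}(iii)) when $s-s_0<1$, and by Lemma \ref{l53}(iii) together with Proposition \ref{prop:V-def-prop}(i) when $s-s_0\geq 1$. The Duhamel integral converges thanks to $\int_0^1 d\theta/\sqrt{1-e^{-\theta}} < \infty$, yielding
$$\|\nabla \vq(s)\|_\infty \leq \Big(C_\star + \frac{C_\star K}{\sqrt{s_0}}\Big)\frac{A^2}{s^{\gamma-3\beta}}.$$
Choosing $K=2C_\star$ and $s_0$ large enough so that $C_\star/\sqrt{s_0}\leq 1/2$ closes the bootstrap and yields (i).

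For items (ii) and (iii), I would run the same Duhamel scheme but with the pointwise semigroup bounds of Lemma \ref{l53}(v)-(vi), which preserve polynomial growth in $|y|$. For (ii), the pointwise bounds on $V\vq$ and $B(\vq)$ come from Lemma \ref{l52}, the pointwise bound on $R$ is extracted from the expansion in Lemma \ref{devr1}, and the pointwise bound on $G$ follows from \eqref{estG} combined with item (i) just established and the bootstrap hypothesis on $\nabla\vq(s-\sigma)$. For (iii), the situation is cleaner: Lemma \ref{l54} provides the equation $\partial_s \vq_- = \mathcal{L}\vq_- + \overline{F}$ with $|\overline{F}(y,\tau)|\leq CA^2(1+|y|^3)/\tau^\gamma$, so Duhamel applied to $\vq_-$ with Lemma \ref{l53}(vi) (using the bound on $\vq_-(s-\sigma)$ from the definition of $\vartheta_A$) immediately produces $|\nabla \vq_-(y,s)| \leq CA^2(1+|y|^3)/s^\gamma$, with no further bootstrap.

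The main obstacle is the nonlinear gradient term $G(\vq)$: its estimate requires $\nabla \vq$ itself, creating a circular dependency that must be broken. The bootstrap is only viable because $q<2$ and, precisely, $2\beta(q-1)=1$, so that $|\partial_y\varphi|^{q-1}$ decays like $1/\sqrt{s}$; this gain in $s$ is exactly what allows a sufficiently large $s_0$ to absorb the bootstrap constant $K$.
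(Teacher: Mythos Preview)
Your proposal is correct and follows essentially the same route as the paper: Duhamel on a window of length at most one with the case split $s-s_0<1$ versus $s-s_0\ge 1$, a bootstrap/Gronwall closure on $\nabla\vq$ to absorb $G(\vq)$ via the identity $2\beta(q-1)=1$, and Lemma~\ref{l54} for part (iii). One small imprecision: your bound $\|G(\tau)\|_\infty \le CKA^2/(\sqrt{\tau}\,\tau^{\gamma-3\beta})$ drops the $|\nabla\vq|^q$ contribution from \eqref{estG}, which does \emph{not} carry the extra $1/\sqrt{\tau}$ (since $(q-1)(\gamma-3\beta)<1/2$) and must instead be absorbed separately using $q>1$ and largeness of $s_0$, exactly as the paper does in deriving \eqref{4carre}.
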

\begin{proof}   We consider $A\geq 1$, $s_0\geq 1$ and $\vq(s)$ a solution of equation (\ref{qequation})
defined on $[s_0,s_1]$ where $s_1\geq s_0\geq 1$ and
$\vq(s_0)$ given by (\ref{initial-data}) with $(d_0,d_1)\in
\mathcal D_{s_0}$. We also assume that $\vq(s)\in \vartheta_A(s)$ for
all $s\in[s_0,s_1]$.  For each of the items (i), (ii) and (iii), we consider two cases in the proof: $s\leq
s_0+1$ and $s> s_0+1.$

\bigskip
{\bf Proof of Part (i)}\\ {\it Case 1:} ${s\leq s_0+1.}$ Let
$s_1'=\min (s_0+1,s_1)$ and take $s\in [s_0,s_1']$. Then, since $s_0\geq 1$, we have for
any $t\in[s_0,s]$,
\begin{equation}\label{canada}
 s_0\leq t\leq s\leq s_0+1\leq 2s_0, \mbox{ hence } \frac{1}{s}\leq \frac{1}{t}\leq
 \frac{1}{s_0}\leq {2\over s}
 .\end{equation}
 From equation (\ref{qequation}), we write for any   $s\in[s_0,s_1'],$
  \begin{equation}\label{eq:q:int:operator:L-case1-001}
  \vq(s)= e^{(s-s_0)\mathcal{L}} \vq(s_0)+ \int_{s_0}^{s} e^{(s-t)\mathcal{L}} F(t)dt,
  \end{equation}
where \begin{equation}\label{def:F}
  F(x,t)=V\vq(x,t)+{G}(x,t)+B(\vq)+R(x,t).\end{equation}
Hence
  \begin{equation}\label{eq:q:int:operator:L-case1-N}
  \nabla \vq(s)= \nabla e^{(s-s_0)\mathcal{L}} \vq(s_0)+ \int_{s_0}^{s} \nabla e^{(s-t)\mathcal{L}} F(t)dt,
  \end{equation}
and
   $$|\nabla \vq(y,s)|\leq  |\nabla e^{(s-s_0)\mathcal{L}} \vq(s_0)|+ \int_{s_0}^{s} |\nabla e^{(s-t)\mathcal{L}}
   F(t)|dt.$$
Then, we write from (\ref{eq:q:int:operator:L-case1-N}) and Lemma \ref{l53},
for all $s\in [s_0,s_1'],$
   \begin{align}
   \|\nabla \vq(s)\|_{L^\infty(\R)}&\leq \|\nabla e^{(s-s_0)\mathcal{L}}
   \vq(s_0)\|_{L^\infty}+\int_{s_0}^{s}\|\nabla e^{(s-t)\mathcal{L}}F(t)\|_{L^{\infty}}dt \nonumber\\
   &\leq C\|\nabla \vq(s_0)\|_{L^\infty}+C\int_{s_0}^{s}
   \frac{\|F(t)\|_{L^{\infty}}}{\sqrt{1-e^{-(s-t)}}}dt. \label{ineg:grad-q-case1}
   \end{align}
    Using
(\ref{qrdienq_s0}) and (\ref{canada}), we write
    \begin{equation}\label{ineg:regular-parabo-q0}
    \|\nabla \vq(s_0)\|_{L^\infty(\R)}\leq \frac{1}{s_0^{\gamma-3\beta}}
    \leq \frac{C}{s^{\gamma-3\beta}}.\end{equation}

    Furthermore, using \eqref{e20151}, \eqref{estG} and \eqref{4295}, we write
  \begin{eqnarray*}
  |G(x,t)|& \leq & C|\nabla\varphi|^{q-1}|\nabla \vq|+C|\nabla \vq|^q\\
   & \leq & {C\over t^{\beta(q-1)}}\|\nabla \vq\|_\infty+C\|\nabla \vq\|_\infty^q\\
   &\leq & {C\over \sqrt{t}}\|\nabla \vq\|_\infty+C\|\nabla \vq\|_\infty^q.
   \end{eqnarray*}

Using (\ref{eq:q-in-VA-bis}),
    (ii) of Proposition
\ref{prop:V-def-prop},  Lemmas \ref{p52} and \ref{l52}, together with (\ref{canada}), we write
   for all $t\in[s_0,s]$ and $x\in \mathbb{R}$,
  \begin{equation}
  \label{ineg:estim1:F}
  |F(x,t)|\leq \frac{CA^2}{s^{\gamma-3\beta}}+\frac{C}{\sqrt{t}}\|\nabla \vq(t)\|_{L^\infty(\R)}
  +C\|\nabla \vq(t)\|_{L^\infty(\R)}^{q}.
  \end{equation}
 Therefore, from  (\ref{ineg:regular-parabo-q0}) and (\ref{ineg:estim1:F})
  we write with  $g(s)=\|\nabla \vq(s)\|_{L^\infty(\R)},$
  \begin{equation}
  \label{e514}
  g(s)\leq \frac{C_1A^2}{s^{\gamma-3\beta}}+C\int_{s_0}^{s}\frac{t^{-{1\over 2}}g(t)+ g(t)^q}{\sqrt{1-e^{-(s-t)}}}dt,\end{equation}
 for some universal constant $C_1>0.$ Using a Gronwall's argument, we claim that
\begin{equation}
 \label{4carre}
 \forall  s\in [s_0,s_1'],~ g(s)\leq \frac{2C_1A^2}{s^{\gamma-3\beta}},
 \end{equation}
for $s_0$ large enough. Indeed, let
$$s_\star=\sup\left\{s\in [s_0,s_1'] \, |\, \forall\, s'\in [s_0,s],\, g(s')\leq
\frac{2C_1}{s'^{\gamma-3\beta}}\right\}.$$  From \eqref{e514}, $s_\star$ is well defined and $s_\star>s_0.$ Suppose, by contradiction, that
$s_\star<s_1'.$  In this case, by continuity, we have:
\begin{equation}
 \label{3carre}
 g(s_\star)= \frac{2C_1}{s_\star^{\gamma-3\beta}}
\end{equation}
By \eqref{e514}, \eqref{canada}  and the definition of $s_\star,$ we
have, for all $s\in [s_0,s_\star],$
\begin{eqnarray*}
g(s)&\leq & \frac{C_1A^2}{s^{\gamma-3\beta}}+C\int_{s_0}^{s}
\frac{\left(\frac{2C_1A^2}{t^{\gamma-3\beta}}\right)^q+2C_1A^2t^{3\beta-\gamma-{1\over 2}}}{\sqrt{1-e^{-(s-t)}}}dt\\
&\leq & \frac{C_1A^2}{s^{\gamma-3\beta}}+C\left[\left(2C_1A^2\right)^q{1\over
s^{q(\gamma-3\beta)}}+2C_1A^2s^{3\beta-\gamma-{1\over 2}}\right]\int_{s_0}^{s} {1\over \sqrt{1-e^{-(s-t)}}}dt
\\ & < & \frac{C_1}{s^{\gamma-3\beta}}+\left[{(2C_1A^2)^q\over s^{q(\gamma-3\beta)}}+{2C_1A^2\over s^{\gamma-3\beta+{1\over 2}}}\right]\\ &\leq & \frac{3C_1}{2s^{\gamma-3\beta}},
\end{eqnarray*}
for $s_0$ sufficiently large, since $q>1.$ This is a contradiction
by \eqref{3carre}. Hence $s_1'=s_\star,$ and \eqref{4carre} holds.
This concludes the proof of Part (i) when $s\leq s_0+1.$

\bigskip
{\it  Case 2:} ${s>s_0+1.}$ (Note that this case does
not occur when $s_1\leq
 s_0+1$). Take $s\in (s_0+1,s_1]$. Then, we have for any
$s'\in(s-1,s]$ and $t\in [s-1,s'],$  $s\geq s_0+1\geq 2,$ hence $s=s-1+1\leq 2(s-1)$. Therefore,
\begin{equation}\label{canadacase2}
 s-1\leq t\leq s'\leq s\leq 2(s-1) \mbox{ hence }
 \frac{1}{s}\leq \frac{1}{s'}\leq \frac{1}{t}\leq
 \frac{1}{s-1}\leq {2\over s}
 .\end{equation}
   From equation (\ref{qequation}), we write for any $s'\in[s-1,s],$
  \begin{equation}\label{eq:q:int:operator:L-case2-001}
  \vq(s')= e^{(s'-s+1)\mathcal{L}} \vq(s-1)+ \int_{s-1}^{s'} e^{(s'-t)\mathcal{L}} F(t)dt,
  \end{equation}
where $F(x,t) $ and $e^{\theta \mathcal L}$ are given in
(\ref{def:F}) and (\ref{eq:semigroup:theta-L-case1}). Using Lemma \ref{l53}, we see  that
for all $s'\in [s-1,s]$
   \begin{align}
   \|\nabla \vq(s')\|_{L^\infty(\R)}&\leq \|\nabla e^{(s'-s+1)\mathcal{L}}\vq(s-1)\|_{L^\infty(\R)}+
   \int_{s-1}^{s'}\|\nabla e^{(s'-t)\mathcal{L}}F(t)\|_{L^{\infty}(\R)}dt \nonumber\\
   &\leq \frac{C}{\sqrt{1-e^{-(s'-s+1)}}}\|\vq(s-1)\|_{L^\infty(\R)}+C\int_{s-1}^{s'}
   \frac{\|F(t)\|_{L^{\infty}(\R)}}{\sqrt{1-e^{(s'-t)}}}dt. \label{ineg:grad-q-case2}
   \end{align}
  Recall from \eqref{eq:q-in-VA-bis}, Proposition \ref{prop:V-def-prop} and \eqref{canadacase2} that
$$\| \vq(s-1)\|_{L^\infty(\R)}\leq
  \frac{CA^2}{(s-1)^{\gamma-3\beta}}\leq
  \frac{CA^2}{{s'}^{\gamma-3\beta}}$$
  for $s_0$ sufficiently large.
Therefore, using \eqref{ineg:estim1:F}, \eqref{canadacase2}, we write with  $g(s')=\|\nabla \vq(s')\|_{L^\infty(\R)}$,
  \begin{equation*}
  g(s')\leq \frac{C_1A^2}{s'^{\gamma-3\beta}\sqrt{1-e^{-(s'-s+1)}}}+C\int_{s-1}^{s'}\frac{ {t}^{-1/2}g(t) +g(t)^q}{\sqrt{1-e^{-(s'-t)}}}dt.
  \end{equation*}
   Using a Gronwall's argument, as for the previous case,
  we see that for $s$ large enough,
  \[ \forall\;  s'\in [s-1,s],~ g(s')\leq \frac{2C_1A^2}{s'^{\gamma-3\beta}\sqrt {1-e^{-(s'-s+1)}}}.\]
 Taking $s'=s$ concludes the proof of Proposition
 \ref{prop:regu-parab-q-equation}, part (i), when $s>s_0+1$.

\bigskip

 {\bf Proof of Part (ii)} Let us define the function
 $$H(y,s)=\frac{A}{s^{2\beta+1}}(1+|y|)+
\frac{\sqrt{A}}{{s^{4\beta-1}}}(1+|y|^2)+
\frac{A^2}{s^{\gamma}}(1+|y|^3),\; \forall\; s>0,\; \forall\;  y\in \R.$$
We consider $s\in [s_0,s_1]$. Since $\vq \in \vartheta_A(s),$ using Proposition \ref{prop:V-def-prop}, part (iii), we see that
$$|\vq(y,s)|\leq C H(y,s),\; \mbox{ for all }\;  y\in \R,$$
provided that $s_0 \mbox{ is sufficiently large.}$ Similarly, since $V\vq(s)\in \vartheta_{CA}(s) ,\; B(\vq)\in \vartheta_C(s)$ and $R\in \vartheta_C(s)$ by Lemma \ref{p52} and \ref{l52}, it follows that
$$|V\vq(y,s)|+|B(v)|+|R(y,s)|\leq C H(y,s).$$
Using \eqref{e20151} and \eqref{estG}, we see by definition \eqref{def:F} of $F$ that
$$|F(y,s)|\leq C H(y,s)+{C\over \sqrt{s}}|\nabla\vq(y,s)|+|\nabla\vq(y,s)|^q.$$
Using the fact that $\vq \in \vartheta_A(s),$ it follows by the previous Part (i) that
$$\|\nabla\vq(s)\|_{L^\infty(\R)}^{q-1}\leq C{A^{2(q-1)}\over s^{(q-1)(\gamma-3\beta)}},$$
which is sufficiently small for $s_0$ large. Therefore, noting from \eqref{4295} and \eqref{4345} that $$(q-1)(\gamma-3\beta)={\gamma-3\beta\over 2\beta}<{1\over 2},$$ we see that
\begin{equation}
\label{estimF}
|F(y,s)|\leq C H(y,s)+{CA^{2(q-1)}\over s^{(q-1)(\gamma-3\beta)}}|\nabla\vq(y,s)|.
\end{equation}
Furthermore,
we have by Proposition \ref{prop:diddc}, Part (iii) that
$$|\nabla\vq(y,s_0)|\leq C_0 H(y,s_0),$$
for some $C_0>0.$ If we introduce the norm (depending on $s$)
$$\mathcal{N}(r)=\left\|{r\over H(\cdot,s)}\right\|_{L^\infty(\R)},$$
then we see that
\begin{equation}
 \label{4695}
\mathcal{N}(\nabla\vq(s_0))\leq C_0.\end{equation}
We should show that
\begin{equation}
\label{3+triangle}
\sup_{s\in [s_0,s_1]}\mathcal{N}(\nabla\vq(s))\leq M,
\end{equation}
where $M$ is a fixed constant to be determined later, in particular $M>2C_0.$ We argue by contradiction. From \eqref{4695}, we consider  $s_*\in (s_0,s_1)$ the largest one such that the previous inequality is satisfied.
Using item (i) of Proposition \ref{prop:regu-parab-q-equation} (which is already proved),
we can restrict $y$ to some compact interval depending on $s.$ Then, using the uniform continuity, we deduce that
\begin{equation}
 \label{4705}
 \mathcal{N}(\nabla\vq(s_*))=M,\; \mbox{ and }\; \mathcal{N}(\nabla\vq(s))\leq M,\; \mbox{for all}\;  s\in [s_0,s_*].
\end{equation}
We handle two cases:

\medskip

{\it Case 1:} ${s_*\leq s_0+1.}$ Recalling the integral equation \eqref{eq:q:int:operator:L-case1-N}, we have that
\begin{equation}\label{eq:q:int:operator:L-case1}
 \forall \; s\in [s_0,s_\star],\;  \nabla \vq(s)= \nabla e^{(s-s_0)\mathcal{L}} \vq(s_0)+ \int_{s_0}^{s} \nabla\Big(e^{(s-t)\mathcal{L}} F(t)\Big)dt.
  \end{equation}
 Then, by Lemma \ref{l53}, Part (ii) we have that
\begin{equation}
 \label{estimvq}
\mathcal{N}\Big(\nabla e^{(s-s_0)\mathcal{L}}\vq(s_0)\Big)\leq C_1C_0,\end{equation}
where $C_0$ is used in \eqref{4695}, and  $C_1>0$ is a constant. On the other hand, using \eqref{estimF} and \eqref{4705}, we see that
\begin{eqnarray*}
|F(x,t)|&\leq & C H(x,t)+{CA^{2(q-1)}\over t^{(q-1)(\gamma-3\beta)}}|\nabla\vq(x,t)|\\
&\leq & C\left(1+{A^{2(q-1)}\over t^{(q-1)(\gamma-3\beta)}}M\right)H(x,t),\; \forall \; t\in [s_0,s_*].
\end{eqnarray*}
Using Lemma \ref{l53}, Part (vi) and \eqref{canada} we deduce that for $s_0$ sufficiently large,
\begin{eqnarray}
\left|\int_{s_0}^{s} \nabla e^{(s-t)\mathcal{L}}F(t)dt\right| &\leq &
 \int_{s_0}^{s} \left|\nabla e^{(s-t)\mathcal{L}}
   F(t)\right|dt \nonumber\\  \nonumber
   &\leq & C\left(1+{A^{2(q-1)}\over s_0^{(q-1)(\gamma-3\beta)}}M\right)\int_{s_0}^{s}\frac{H(y,t)}{\sqrt{1-e^{-(s-t)}}}dt\\  \nonumber
   &\leq & C\left(1+{A^{2(q-1)}\over s_0^{(q-1)(\gamma-3\beta)}}M\right)\left(\int_{s_0}^{s}\frac{1}{\sqrt{1-e^{-(s-t)}}}dt\right)H(y,s)
   \\
   &\leq & C_2\left(1+{A^{2(q-1)}\over s_0^{(q-1)(\gamma-3\beta)}}M\right)H(y,s),\label{1+triangle}
\end{eqnarray}
 for some $C_2>0.$ Hence, using also \eqref{estimvq}, we get from \eqref{eq:q:int:operator:L-case1},
 \begin{equation}
  \label{4725}
|\nabla\vq(y,s)|\leq \left(C_1C_0+C_2\left(1+{A^{2(q-1)}\over s_0^{(q-1)(\gamma-3\beta)}}M\right)\right) H(y,s), \; \forall \; s\in [s_0,s_*].\end{equation}
 Assuming that
 $$M\geq \max\left(2C_0,2C_1C_0+4C_2\right),$$
then taking $s_0$ large so that
\begin{equation}
 \label{2+triangle}
 {A^{2(q-1)}\over s_0^{(q-1)(\gamma-3\beta)}}M\leq 1,
\end{equation}
we see from \eqref{4725} that
$$\forall\; s\in [s_0,s_\star],\; \forall\; y\in \R,\; |\nabla\vq (y,s)|\leq (C_0C_1+2C_2)H(y,s)\leq {M\over 2}H(y,s),$$
which is a contradiction by \eqref{4705}.

 \medskip
{\it  Case 2:} ${s_*>s_0+1.}$  From \eqref{eq:q:int:operator:L-case2-001}, we
write for any $s'\in[s_*-1,s_*],$
  \begin{equation}\label{eq:q:int:operator:L-case2new01}
  \nabla \vq(s')= \nabla e^{(s'-s_*+1)\mathcal{L}} \vq(s_*-1)+ \int_{s_*-1}^{s'} \nabla e^{(s'-t)\mathcal{L}} F(t)dt.
  \end{equation}
Since $\vq(s_*-1)\in \vartheta_A(s_*-1),$  by Proposition \ref{prop:V-def-prop} Part (iii) we have $|\vq(s_*-1)|\leq C H(y,s_*),$ for $s_0$ sufficiently large. By Lemma \ref{l53}, Part (vi), we have for all $s'\in (s_*-1,s_*]$
 $$\left|\nabla e^{(s'-s_*+1)\mathcal{L}}\vq(s_*-1)\right|\leq  C{e^{{(s'-s_*+1)\over 2}}\over \sqrt{1-e^{-(s'-s_*+1)}}}H(y,s_*).
$$
Hence at $s'=s_*$, we have
$$
 \left|\nabla e^\mathcal{L}\vq(s_*-1)\right|\leq C_3 H(y,s_*),$$
 for some constant $C_3>0.$ Proceeding as for \eqref{1+triangle}, we write
\begin{eqnarray*}
\left|\int_{s_*-1}^{s_*} \nabla e^{(s_*-t)\mathcal{L}}F(t)dt\right| &\leq &
 C_2\left(1+{A^{2(q-1)}\over s_0^{(q-1)(\gamma-3\beta)}}M\right)H(y,s_*).
\end{eqnarray*}
Then, from \eqref{eq:q:int:operator:L-case2new01}, we derive that
\begin{eqnarray*}
 | \nabla \vq(y,s_*)| & \leq & \left|\nabla e^{\mathcal{L}} \vq(s_*-1)\right|+ \left|\int_{s_*-1}^{s_*} \nabla e^{(s_*-t)\mathcal{L}} F(t)dt\right|,\\
&\leq & \left[C_3+C_2\left(1+{A^{2(q-1)}\over s_0^{(q-1)(\gamma-3\beta)}}M\right)\right]H(y,s_*).
  \end{eqnarray*}

  Fixing
 $$M= 2\max\left(C_0,C_1C_0+2C_2,C_3+2C_2\right),$$
and taking $s_0$ large enough so that \eqref{2+triangle} holds, we see that
$$| \nabla \vq(y,s_*)|\leq (C_3+2C_2)H(y,s_*)\leq {M\over 2}H(y,s_*),$$
and we get a contradiction with \eqref{4705}.

Since the value we have just fixed for $M$ also leads to a contradiction in Case 1, we have just proved the validity of \eqref{3+triangle}. This finishes the proof of Part (ii).

\bigskip

{\bf Proof of Part (iii)} By Lemma \ref{l54}, we have that
\begin{equation}
 \label{4733}
\partial_s\vq_-=\mathcal{L}\vq_-+\overline{F}(y,s),\end{equation}
with
\begin{equation}
 \label{4734}
 |\overline{F}(y,s)|\leq {CA^2\over s^\gamma}\left(1+|y|^3\right),\; \forall\; y\in \R.\end{equation}
By Proposition \ref{prop:diddc}, Part (iii), we have that
\begin{equation}
 \label{4735}|\nabla \vq_-(y,s_0)|\leq {1\over s_0^{\gamma}}(1+|y|^3),\; \forall\; y\in \R.
 \end{equation}
We should show that
\begin{equation}\label{aim}
 |\nabla \vq_-(y,s)|\leq {\overline{M}\over s_0^{\gamma}}(1+|y|^3),\; \forall\; y\in \R,\; \forall\; s\in [s_0,s_1],
\end{equation}
 for some $\overline{M}\geq 2$ that will be fixed later, provided that $s_0$ is large enough.\\
 We then distinguish two cases.

 \medskip

 {\it Case 1:}  $s_*\leq s_0+1.$ From \eqref{4733}, we write
$$\vq_-(s)= e^{(s-s_0)\mathcal{L}} \vq_-(s_0)+ \int_{s_0}^{s} e^{(s-t)\mathcal{L}} \overline{F}(t)dt,$$
and
$$|\nabla\vq_-(s)|\leq  |\nabla e^{(s-s_0)\mathcal{L}} \vq_-(s_0)|+ \int_{s_0}^{s} |\nabla e^{(s-t)\mathcal{L}} \overline{F}(t)|dt.$$
Then, by \eqref{canada}, Lemma \ref{l53}, Parts (v) and (vi) together with \eqref{4735} and \eqref{4734}, we see that
 \begin{eqnarray*}
 \forall\; y\in \R,\; |\nabla\vq_-(y,s)|&\leq &  {C\over s_0^{\gamma}}(1+|y|^3)+C{A^2\over s^\gamma}\left(\int_{s_0}^{s}{1\over\sqrt{1-e^{-(s-t)}}}dt\right)(1+|y|^3)\\
 & \leq &  {C_4A^2\over s^{\gamma}}(1+|y|^3),
 \end{eqnarray*}
 for $s_0$ sufficiently large and some constant $C_4>0.$

 \medskip

  {\it Case 2:} $s_*>s_0+1.$ Using \eqref{4733}, we write
  $$\nabla\vq_-(s)= \nabla e^{(s-s_*+1)\mathcal{L}} \vq_-(s_*-1)+ \int_{s_*-1}^{s} \nabla e^{(s-t)\mathcal{L}} \overline{F}(t)dt,\, s\in (s_*-1,s_*].$$
  Since
  $$|\vq_-(y,s_*-1)|\leq {A\over s^\gamma}(1+|y|^3),$$
  from the fact that $\vq\in \vartheta_A(s_*-1),$ using Lemma \ref{l53} Part (vi), together with \eqref{4734} we get
 \begin{eqnarray*}
 |\nabla\vq_-(y,s_*)|&\leq & {CA\over (s_*-1)^{\gamma}\sqrt{1-e^{-1}}}(1+|y|^3)+{CA^2\over s_*^\gamma}\left(1+|y|^3\right)\left(\int_{s_*-1}^{s_*}{1\over \sqrt{1-e^{-(s-t)}}}dt\right)\\
 & \leq &  {C_5A^2\over s_*^{\gamma}}(1+|y|^3).
 \end{eqnarray*}
 Fixing
 $$\overline{M} = 2\max(2,C_4A^2, C_5A^2),$$
we see that \eqref{aim} follows.
This finishes the proof of Proposition \ref{prop:regu-parab-q-equation}.
 \end{proof}

\subsubsection{Reduction to a finite dimensional problem}
In the following, we reduce the problem to a finite-dimensional one.
Namely, we prove Proposition \ref{prop:rt2dimen}. To do so, we
project Equation (\ref{qequation}) on the different components of
the decomposition \eqref{qprojection}. Let us first give those
projections in the following proposition, then use it to derive
Proposition \ref{prop:rt2dimen}. This is the statement of the
following proposition.

\begin{prop}[Dynamics of the different components]\label{prop:projet:q}
There exists $A_6\geq 1$ such that for all $A\geq A_6$ there exists $s_{06}(A)$ large enough such that the following holds for all $s_0\geq s_{06}(A)$:

Assume that  for some
$s_1\geq\tau\geq s_0,$ we have
\[\vq(s)\in \vartheta_A(s),\; \mbox{ for all }\;s\in [\tau,s_1],\] and that $\nabla\vq (s)$ satisfies the estimates stated in Parts (i)-(ii)-(iii) of Proposition \ref{prop:regu-parab-q-equation}.  Then, the following holds for all $s\in[\tau,s_1]$:
\begin{itemize}
\item[(i)] (ODE satisfied by the positive modes) For $m=0$ and $m=1$, we have
$$ \left|\vq_m'(s) -(1-\frac{m}{2})\vq_m(s)\right|\leq \frac{C}{s^{2\beta+1}}.$$
\item[(ii)] (ODE satisfied by the null mode) For $m=2$, we have
$$ \left|\vq_2'(s)+{2\beta+1\over s}\vq_2(s)\right|\leq \frac{C}{s^{4\beta}}.$$
\item[(iii)] (Control of the negative and outer modes) We have
 \begin{align*}
\left\| \frac{\vq_{-}(y,s)}{1+|y|^3}\right\|_{L^{\infty}}&\leq C e^{-\frac{(s-\tau)}{2}}\left\|\frac{\vq_{-}(y,\tau)}{1+|y|^3}\right\|_{L^\infty}
+C\frac{e^{-(s-\tau)^2}\| \vq_{e}(\tau)\|_{L^\infty}}{s^{{3\beta}}}+\frac{C(1+s-\tau)}{s^\gamma},\\
\|\vq_{e}(s)\|_{L^\infty}&\leq Ce^{-\frac{(s-\tau)}{p}}\|\vq_{e}(\tau)\|_{L^\infty}+Ce^{s-\tau} s^{3\beta}\left\|\frac{\vq_{-}(y,\tau)}{1+|y|^3}\right\|_{L^\infty}+C\frac{\left(1+(s-\tau)e^{s-\tau}\right)}{s^{\gamma-3\beta}}.
\end{align*}
\end{itemize}
\end{prop}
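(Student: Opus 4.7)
\medskip

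\noindent\textbf{Proof plan.} The overall strategy is to apply the projectors $P_0$, $P_1$, $P_2$, $P_-$ (the $L^2_\rho$ projections associated with $h_0,h_1,h_2$ and the negative subspace) and the outer multiplication $(1-\chi)$ to equation \eqref{qequation}, thereby obtaining a scalar ODE for each $v_m$ and an integral/PDE identity for $v_-$ and $v_e$. For each component we combine: (a) the action of $\mathcal L$ given by its spectral decomposition, (b) the bounds on $V\vq$, $B(\vq)$, $G(\vq)$, $R$ proved in Lemmas \ref{p52} and \ref{l52}, (c) the fine commutator estimates between the projectors, the cut-off $\chi$ and $\partial_s$, $\mathcal L$, which were essentially already done in the proof of Lemma \ref{l54}, and (d) the gradient bounds from Proposition \ref{prop:regu-parab-q-equation} (needed to invoke part (iii) of Lemma \ref{l52}). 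The commutator terms produced by $\chi$ are exponentially small in $s^{2\beta}$ on the support of $\vq_b$, hence harmless at all orders we require.

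For part (i), with $m\in\{0,1\}$ one projects \eqref{qequation} on $h_m$, uses $P_m(\mathcal L \vq)=(1-m/2)v_m+O(\mathrm{e}^{-s^{2\beta}})$, and collects the bounds $|R_m|,|(V\vq)_m|,|B(\vq)_m|,|G(\vq)_m|\leq C/s^{2\beta+1}$ from Lemmas \ref{p52} and \ref{l52}, which directly yield the stated ODE. For part (ii) the eigenvalue contribution $(1-m/2)v_2$ vanishes, so the dominant linear drift of $v_2$ must be extracted from the remaining terms. This is the main technical obstacle: one must Taylor-expand $V$ around the inner profile to produce
\[
V(y,s)=-\frac{pb\,h_2(y)}{(p-1)^2\,s^{2\beta}}+O\Bigl(\frac{1+|y|^4}{s^{4\beta}}\Bigr)+\text{$h_2$-orthogonal remainder},
\]
and then combine the projection of $(V\vq)$ on $h_2$ (which gives $v_2\int V h_2 k_2 \rho$ up to negligible cross terms controlled by Proposition \ref{prop:V-def-prop}) with the $h_2$-projection of the gradient term $G$, whose leading contribution is of the form $q\mu|\partial_y\varphi|^{q-2}\partial_y\varphi\,\partial_y(v_2 h_2)$. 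Using the explicit value of $b$ prescribed by \eqref{equb}, together with the Hermite moment identities recorded in Section~2, one must show that these two contributions sum exactly to $-\frac{2\beta+1}{s}v_2$, with everything else absorbed into an $O(s^{-4\beta})$ remainder. The algebraic matching of constants here is the analogue of identity \eqref{6losange}--\eqref{triangle2} for the $R_2$ computation, and is the delicate step that justifies the ``crucial'' role of $b$ mentioned in Remark \ref{4losange}.

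For part (iii) we rely on Duhamel's formula. For $\vq_-$, Lemma \ref{l54} already gives the PDE $\partial_s \vq_-=\mathcal L \vq_-+\overline F$ with $|\overline F(y,s)|\leq CA^2 s^{-\gamma}(1+|y|^3)$; integrating against the semigroup $e^{(s-\tau)\mathcal L}$ restricted to the negative subspace yields the gain factor $e^{-(s-\tau)/2}$, the forcing term $C(1+s-\tau)/s^\gamma$, and the coupling term $e^{-(s-\tau)^2}\|\vq_e(\tau)\|_\infty/s^{3\beta}$, which encodes the small leakage from the outer region into $v_-$ through the cut-off (the Gaussian weight $e^{-y^2/4}$ evaluated at $|y|\geq K s^\beta$ produces the super-exponential factor $e^{-(s-\tau)^2}$ after a short-time estimate). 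For $\vq_e$, multiplying \eqref{qequation} by $1-\chi$ gives a PDE in the outer region where, by property (ii) of $V$ recalled in Section 3, the operator $\mathcal L+V$ has spectrum effectively bounded above by $-1/p$; a Feynman--Kac / Duhamel estimate on $(1-\chi)\vq$ then produces the contraction factor $e^{-(s-\tau)/p}$ together with the forcing contribution $(1+(s-\tau)e^{s-\tau})/s^{\gamma-3\beta}$. The coupling term $e^{s-\tau}s^{3\beta}\|\vq_-/(1+|y|^3)\|_\infty$ in the bound for $\vq_e$ accounts for mass carried from $\vq_-$ to the outer region by $\mathcal L$; since on $\text{supp}(1-\chi)$ we have $|y|^3\leq Cs^{3\beta}|y|^3/(Ks^\beta)^3\leq C|y|^3$ is replaced by the worst-case polynomial growth, and $\mathcal L$ has top eigenvalue $1$, the factor $e^{s-\tau}$ is the unavoidable bound. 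All the tail and commutator contributions are controlled exactly as in the proof of Lemma \ref{l54}, ensuring they can be absorbed in the constants $C$ appearing in the statement.
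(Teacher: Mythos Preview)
Your overall architecture for parts (i) and (iii) is essentially that of the paper, but part (ii) contains a genuine misconception and a real technical gap.

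\medskip

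\textbf{On part (ii).} The drift $-\frac{2\beta+1}{s}\vq_2$ does \emph{not} arise from a combination of the $V\vq$ and $G(\vq)$ projections. Lemma~\ref{l52}\,(i) already gives $|(V\vq)_2|\le \frac 1{s^{4\beta}}$: since $(V\vq)_2\sim c\,\vq_2/s^{2\beta}$ and $|\vq_2|\le \sqrt A/s^{4\beta-1}$, this contribution is $O(s^{-(6\beta-1)})=o(s^{-4\beta})$ because $\beta>\tfrac12$. The entire coefficient $2\beta+1$ must be extracted from $G$ alone; the algebraic identity you allude to lives inside $(G(\vq))_2$, not in a cancellation between $V$ and $G$.

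More seriously, your proposed linearization
\[
G(\vq)\approx q\mu|\partial_y\varphi|^{q-2}\partial_y\varphi\,\partial_y(\vq_2 h_2)
\]
is only formal: since $1<q<2$, the factor $|\partial_y\varphi|^{q-2}$ is singular at $y=0$ (where $\partial_y\varphi$ vanishes), and the map $x\mapsto |x|^q$ is not $C^2$, so the second-order Taylor remainder cannot be controlled uniformly. The paper does \emph{not} linearize around $\partial_y\varphi$. Instead it writes $G=G^1+G^2$ with
\[
G^2(\vq)=\mu\bigl(|\partial_y\varphi+\vq_1+2y\vq_2|^q-|\partial_y\varphi|^q\bigr),
\]
bounds $(G^1)_2$ directly by $O(s^{-\gamma-1})$ using the pointwise gradient estimates on $\partial_y\tilde\vq$, and computes $(G^2)_2$ by expanding $|\partial_y\varphi+\vq_1+2y\vq_2|^q$ on the region $I=\{|y|\ge \frac{2}{a}|\vq_1|s^{2\beta}\}$ (where the argument stays away from zero and the expansion is legitimate), while bounding the contribution of the complementary set $J$ crudely using $|J|\le C A/s$. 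This splitting is the key device you are missing; without it the singularity at $y=0$ is not handled.

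\medskip

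\textbf{On part (iii).} Your plan to use Lemma~\ref{l54} and the semigroup $e^{(s-\tau)\mathcal L}$ for $\vq_-$ differs from the paper, which instead writes the full Duhamel formula with the fundamental solution $S(s,\tau)$ of $\mathcal L+V$ and invokes the Bricmont--Kupiainen/Merle--Zaag kernel estimates (Lemma~\ref{lem:A_5}). The specific cross-coupling term $e^{-(s-\tau)^2}\|\vq_e(\tau)\|_{L^\infty}/s^{3\beta}$ in the statement is an artifact of that approach; your route via Lemma~\ref{l54} would produce an estimate without this term (the $\vq_e$ dependence being absorbed in $\overline F$), which is adequate for the downstream application but is not literally the inequality stated. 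Either approach works, but be aware that you are proving a variant.
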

\begin{rem}
{\rm In item (ii), the value of the coefficient $2\beta+1$ in front of ${\vq_2\over s}$ crucially comes from an algebraic identity at the end of the proof of this item, involving the parameter $b$ defined in
\eqref{bbupprof1}.}
\end{rem}

\begin{proof}
The proof will be carried-out in 3 steps:
\begin{itemize}
\item[-] In the first step, we write equations satisfied by $\vq_0,\; \vq_1.$ Then, we prove (i) of Proposition \ref{prop:projet:q}.
\item[-] In the second step, we write an equation satisfied by $\vq_2.$ Then, we prove (ii) of Proposition \ref{prop:projet:q}.
\item[-] In the third step, we write integral equations satisfied by $\vq_-$ and $\vq_e.$ Then, we prove  Part (iii) of Proposition \ref{prop:projet:q}.
\end{itemize}

\bigskip

{\bf Step 1: The  positive modes.} As in the proof of Lemma \ref{l54}, we project equation \eqref{qequation}, on the modes $m=0,\; 1$ defined in \eqref{qprojection}:
\begin{equation}
 \label{5+carre}
(\partial_s\vq)_m= (\mathcal{L}\vq)_m+(V\vq)_m+\left(B(\vq)\right)_m+\left(G(\vq)\right)_m+R_m,\; m=0,\;1,\; 2.
\end{equation}
Recall from \eqref{2triangle} and \eqref{8triangle} that
\begin{equation}
 \label{6+carre}
 \left|\vq_m'-\left(\partial_s\vq\right)_m\right|+\left|\left(1-{m\over 2}\right)\vq_m-\left(\mathcal{L}\vq\right)_m\right|\leq {2\over s^{4\beta}}\leq {2\over s^{2\beta+1}},\; m=0,\; 1,\; 2.
\end{equation}
On the other hand,
for $s_0$ sufficiently large, by Lemmas \ref{p52} and \ref{l52} together with the hypotheses, we have
$$\left|(V\vq)_m+\left(B(\vq)\right)_m+\left(G(\vq)\right)_m+R_m\right|\leq {C\over s^{2\beta+1}},\; m=0,\; 1.$$
This proves (i) of the proposition.

\bigskip

{\bf Step 2: The  null mode.} From \eqref{5+carre}, we have
$$(\partial_s\vq)_2=(\mathcal{L}\vq)_2+(V\vq)_2+\left(B(\vq)\right)_2+R_2+\left(G(\vq)\right)_2.$$
Note that the first terms are estimated in \eqref{6+carre}. Using Lemmas \ref{p52} and \ref{l52}, we write
$$\left|(V\vq)_2+\left(B(\vq)\right)_2+R_2\right|\leq {C\over s^{4\beta}}.$$
Thus, it remains to prove that
\begin{equation}
 \label{5losange}
\left|\left(G(\vq)\right)_2+{2\beta+1\over s}\vq_2(s)\right|\leq {C\over s^{4\beta}}
\end{equation}
in order to conclude. Let us prove that. Write
\begin{equation}
 \label{7+carre}
\vq=\vq_1h_1+\vq_2h_2+\tilde{\vq}.
\end{equation}
Then
$$\partial_y\vq=\vq_1+2y\vq_2+\partial_y\tilde{\vq}.$$ Write also
\begin{eqnarray}
 \nonumber
 G(\vq) & = & \mu\Big[|\partial_y\varphi+\partial_y\vq|^q-|\partial_y\varphi|^q\Big]\\ \nonumber &=& \mu \Big[|\partial_y\varphi+\partial_y\vq|^q-|\partial_y\varphi+\vq_1+2y\vq_2|^q\Big]\\ \nonumber & & \hspace{-0.3cm}+
 \mu \Big[|\partial_y\varphi+\vq_1+2y\vq_2|^q-|\partial_y\varphi|^q\Big]\\ \label{4855}
& := & G^1(\vq)+G^2(\vq).
\end{eqnarray}
We begin by estimating $G^1(\vq).$ If $\chi$ is defined in \eqref{def:chi}, we have
\begin{eqnarray*}
|\chi G^1(\vq)| & = & \mu \chi\Big[|\partial_y\varphi+\partial_y\vq|^q-|\partial_y\varphi+\vq_1+2y\vq_2|^q\Big]\\
&=& \mu\chi \Big||\partial_y\varphi+\vq_1+2y\vq_2+\partial_y\tilde{\vq}|^q-|\partial_y\varphi+\vq_1+2y\vq_2|^q\Big|\\ & \leq &
C |\partial_y\varphi+\vq_1+2y\vq_2+\theta \partial_y\tilde{\vq}|^{q-1}|\partial_y\tilde{\vq}|\chi,\;
\end{eqnarray*}
for some $\theta\in [0,1].$ Note from \eqref{7+carre} and
\eqref{qprojection}  that $\tilde{\vq}=\vq_0+\vq_-+\vq_e.$ Then,
$\partial_y\tilde{\vq}=\partial_y\vq_-+\partial_y\vq_e
=\partial_y\vq_-+(1-\chi)\partial_y\vq-\partial_y\chi \vq.$ Since we
assumed that $\vq(s)\in \vartheta_A(s)$ and  $\partial_y\vq$
satisfies the identities stated in Parts (i), (ii) and (iii) of
Proposition \ref{prop:regu-parab-q-equation}, it follows that
$$\forall\; |y|\leq 2Ks^\beta,\; |\partial_y\tilde{\vq}|\leq {CA^2\over s^\gamma}(1+|y|^3),\; \mbox{ for }\; s_0\; \mbox{ large enough.}$$
Then, we write from \eqref{e20151}, \eqref{4295} and the fact that $\vq(s)\in \vartheta_A(s),$ for all $|y|\leq 2Ks^\beta,$
\begin{eqnarray*}
 |\partial_y\varphi+\vq_1+2y\vq_2+\theta \partial_y\tilde{\vq}|^{q-1}&\leq & C\left[
 |\partial_y\varphi|^{q-1}+|\vq_1|^{q-1}+|y|^{q-1}|\vq_2|^{q-1}+|\partial_y\tilde{\vq}|^{q-1}\right]\\ &\leq & {C|y|^{q-1}\over s}+{CA^{q-1}\over s^{(2\beta+1)(q-1)}}+
 {CA^{{q-1\over 2}}\over s^{(4\beta-1)(q-1)}}|y|^{q-1}\\ && +{CA^{2(q-1)}\over s^{\gamma(q-1)}}\left(1+|y|^{3(q-1)}\right)\\ &\leq &
{C\over s}(1+|y|^{q-1})+ {CA^{2(q-1)}\over s^{\gamma(q-1)}}\left(1+|y|^{3(q-1)}\right),
\end{eqnarray*}
for $s_0$ large enough, where we need the fact, that $\gamma<2\beta+1,$ and $ (4\beta-1)(q-1)=(4\beta-1)/(2\beta)=2-1/(2\beta)>1,$ because $\beta>1/2.$
Hence, since $\gamma q>\gamma+1$ from \eqref{gamma} and \eqref{4295}, we get
\begin{eqnarray*} |\chi G^1(\vq)|\leq  {CA^2\over s^{\gamma+1}}(1+|y|^{q+2})+ {A^{2q}\over s^{\gamma q}}(1+|y|^{3q})
 \leq   {CA^{2q}\over s^{\gamma +1}}(1+|y|^{3q}).
\end{eqnarray*}
Then, since $\gamma >4\beta-1,$ we get for $s_0$ sufficiently large,
\begin{equation}
 \label{triangle+6}
\left|\int \chi G^1(\vq)k_2\rho\right|\leq  {CA^{2q}\over s^{\gamma +1}}<{1\over s^{4\beta}}.
\end{equation}

Let us now consider $G^2(\vq).$ Using \eqref{e20151} we write
$$\partial_y\varphi=-{2b\kappa\over (p-1)^2}{y\over s^{2\beta}}\left[1+O\left({|y|^2\over s^{2\beta}}\right)\right]$$ hence
\begin{equation}
 \label{4856}
|\partial_y\varphi|^q=\left({2b\kappa\over (p-1)^2}\right)^q{|y|^q\over s^{2\beta+1}}+ O\left({|y|^{q+2}\over s^{4\beta+1}}\right).
\end{equation}
On the other hand, since
\begin{equation}
 \label{4857}
 |\vq_2s^{2\beta}|\leq {\sqrt{A}\over s^{2\beta-1}}\to 0 \; \mbox{as }\; s\to \infty \; \mbox{ and }\; |\vq_1s^{2\beta}|\leq {A\over s}\to 0 \; \mbox{ as } s\to \infty,
 \end{equation}
 we write
$$\partial_y\varphi+\vq_1+2y\vq_2= -{2b\kappa\over (p-1)^2}{y\over s^{2\beta}}\left[1-\vq_1 {s^{2\beta}\over y}{1\over a}-2\vq_2s^{2\beta} {1\over a}+O\left({|y|^2\over s^{2\beta}}\right)\right],$$
for $|y|\leq 2Ks^\beta, \; y\not=0$ and where $a$ is given in \eqref{aeq}. Let us consider two subsets:
\begin{equation}
 \label{4858}
I=\left\{y\; |\; |y|\geq {2\over a}|\vq_1|s^{2\beta}\right\},\; J=\left\{y\; |\; |y|< {2\over a}|\vq_1|s^{2\beta}\right\}.
\end{equation}
Let $\varepsilon_0>0.$ If $|y|<\varepsilon_0s^\beta$ and $y\in I,$  since $2\beta q=2\beta+1,$ we have
\begin{eqnarray*}
|\partial_y\varphi+\vq_1+2y\vq_2|^q&=&\left({2b\kappa\over (p-1)^2}\right)^q{|y|^q\over s^{2\beta+1}}\left[1-q\vq_1 {s^{2\beta}\over y}{1\over a}-2q\vq_2s^{2\beta} {1\over a}\right] \\ & &
+O\left({|y|^{q+2}\over s^{4\beta+1}}\right)+O(\vq_2^2s^{2\beta-1}|y|^q)+O\left(\vq_1^2s^{2\beta-1} |y|^{q-2}\right),
\end{eqnarray*}
hence, from \eqref{4855} and \eqref{4856},
\begin{eqnarray*}
G^2(\vq)&=&-q\mu a^{q-1}\vq_1{|y|^q\over y}{1\over s}-2q\mu a^{q-1}\vq_2{|y|^q\over s}  +O\left({|y|^{q+2}\over s^{4\beta+1}}\right)\\ & & +O(\vq_2^2s^{2\beta-1}|y|^q)+O\left(\vq_1^2s^{2\beta-1} |y|^{q-2}\right).
\end{eqnarray*}
Then, since $1<q<2,$  using the fact that function ${|y|^q\over y}$ is odd, we get from \eqref{4857}, for $s_0$ large enough,
\begin{eqnarray*}\int_{I\cap \{|y|<\varepsilon_0s^\beta\}}\chi G^2(\vq)k_2\rho & = & -2q\mu a^{q-1}\frac{\vq_2(s)}s\int_{{2\over a}|v_1(s)|s^{2\beta}\leq |y|\leq \varepsilon_0s^\beta}|y|^qk_2\rho
\\ && +O({1\over s^{4\beta+1}})+O({A\over s^{6\beta-1}})+O({A^2\over s^{2\beta+3}}).\end{eqnarray*}
Then we write
$$\int_{{2\over a}|v_1|s^{2\beta}\leq |y|\leq \varepsilon_0s^\beta}|y|^qk_2\rho=\int_\R |y|^qk_2\rho-I_{ext}-I_{int}$$
with
$$I_{ext}=\int_{|y|>\varepsilon_0s^\beta}|y|^qk_2\rho\; \mbox{and }\; I_{int}=\int_{|y|<{2\over a}|v_1|s^{2\beta}}|y|^qk_2\rho.$$
Using \eqref{4857}, we write
$$I_{int}\leq \int_{|y|<{2A\over as}}|y|^qk_2\rho\leq {CA^{q+1}\over s^{q+1}}$$
and $$ I_{ext}\leq Ce^{-s^{2\beta}}.$$
Therefore, using again \eqref{4857} we see that
\begin{eqnarray}
\nonumber
 \int_{I\cap \{|y|<\varepsilon_0s^\beta\}}\chi G^2(\vq)k_2\rho & = & -2q\mu a^{q-1}\frac{\vq_2(s)}s\int_\R|y|^qk_2\rho+O\left({A^{q+1}\over s^{4\beta+q-1}}\right)\\ \nonumber & &
+O({1\over s^{4\beta+1}})+O({A\over s^{6\beta-1}})+O({A^2\over s^{2\beta+3}}) \\ \label{1losange} &=& -2q\mu a^{q-1}{\vq_2(s)\over s}\int_\R|y|^qk_2\rho+O({1\over s^{4\beta}}),
\end{eqnarray}
since $p>3,$ $\beta<{3\over 2}.$\\
Since $(4\beta-1)q>2\beta q=2\beta+1,$ using \eqref{e20151} and \eqref{4857}, we see by definition \eqref{4855} that
\begin{eqnarray*}
|G^2(\vq)| &\leq & C|\partial_y\varphi|^q+C|\vq_1|^q+C|\vq_2|^q|y|^q\\ &\leq & {CA^{q}\over s^{q(2\beta+1)}}+ {CA^{q/2}|y|^q\over s^{2\beta+1}}.
\end{eqnarray*}
In particular,
\begin{equation}
 \label{2losange}
\left|\int_{|y|>\varepsilon_0s^\beta}\chi G^2(\vq)k_2\rho\right|\leq {CA^qe^{-s^{2\beta}}\over s^{2\beta+1}}\leq {1\over s^{4\beta}},
\end{equation}
and, using \eqref{4857} again, we see that $J\subset \{|y|<{2A\over as}\},$ hence
\begin{eqnarray}
 \nonumber
\left|\int_J\chi G^2(\vq)k_2\rho\right|&\leq & {CA^q\over s^{q(2\beta+1)}}\int_{|y|<{2A\over as}}\rho+{CA^{q/2}\over s^{2\beta+1}}\int_{|y|<{2A\over as}}|y|^q\rho\\ \label{3losange} & \leq & {CA^{q+1}\over s^{q(2\beta+1)+1}}
+{CA^{(q/2)+q+1}\over s^{q(2\beta+1)+q+1}}\leq {1\over s^{4\beta}},
\end{eqnarray}
from the fact that
$q(2\beta+1)+q=1>q(2\beta+1)+1=2\beta+q+2>4\beta$,
by \eqref{4295}
and the fact that $q>1>\beta.$ Finally,
by definition \eqref{4858} of the sets $I$ and $J,$ and estimates \eqref{1losange}, \eqref{2losange} and \eqref{3losange}, we see that
\begin{equation}
 \label{triangle+7}
 \int\chi G^2(\vq)k_2\rho= -2q\mu a^{q-1}{\vq_2(s)\over s}\int_\R|y|^qk_2\rho+O({1\over s^{4\beta}}).
\end{equation}
In conclusion, from \eqref{triangle+6} and \eqref{triangle+7}, we have
$$(G(\vq))_2=-{\tilde{c}\over s}\vq_2+O\left({1\over s^{4\beta}}\right),$$
with
$$\tilde{c}=2q\mu\left({2b\kappa\over (p-1)^2}\right)^{q-1}\int_\R|y|^qk_2\rho.$$
From the particular choice of $b$ given in \eqref{bbupprof1}, together with the definition \eqref{defkm} of $k_2$ and identity \eqref{8+losange},
 it appears that
$$\tilde{c}=2\beta+1$$
and \eqref{5losange} follows. Since \eqref{5losange} was the last identity to check for item (ii) of Proposition \ref{prop:projet:q}, we are done.

\bigskip

{\bf Step 3: The  infinite-dimensional part $\vq_-$ and $\vq_e$.} Let us write the equation \eqref{qequation} on $\vq$ in the following integral form:
\begin{equation}
\label{qinteg}
\vq(s)=S(s,\tau )\vq(\tau)+\int_{\tau}^{s}
S(s,\sigma)B(\vq(\sigma))d\sigma+
 \int_\tau ^s S(s,\sigma)R(\sigma)d\sigma+\int_\tau ^s S(s,\sigma)
G(\sigma)d\sigma,\end{equation}
where $S$ is the fundamental solution  of the operator  $\mathcal{L }+V.$ We
write
   $$ \vq=\mathcal{A}+\mathcal{B}+\mathcal{C}+\mathcal{D} $$ where
\begin{align} \mathcal{A}(s)&=S(s,\tau )\vq(\tau),~\hspace{3cm} \mathcal{B}(s)=\int_{\tau}^{s}S(s,\sigma)B(\vq(\sigma))d\sigma,\\
\mathcal{C}(s)&=\int_\tau ^s S(s,\sigma)R(\sigma)d\sigma, \;\hspace{1.8cm}
\mathcal{D}(s)=\int_\tau ^s S(s,\sigma) G(\sigma)d\sigma.\label{eq:delta:eq} \end{align}
 We assume that
$\vq(s)$ is in
$\vartheta_A(s)$ for each
 $s\in [\tau,\tau+\rho]$, where $\rho>0$. Clearly, from the choice we made for $K$ right after \eqref{def:chi}, the proof of Proposition \ref{prop:projet:q}, Part (iii) follows from the following:
\begin{lem}[Estimates of the different terms of the Duhamel formulation \eqref{qinteg}]\label{lem:A_5}
There exists some $K_5=K_5(N,p,\mu)>0$ such that for whenever $K\ge K_5$, there exists $A_5>0$ such that for all $A\geq A_5
,$ and $\rho>0$
there exists $s_{05}(A,\rho)$, such that
for all $s_0\geq s_{05}(A,\rho),$  if we
assume that
for all $s\in[\tau, \tau+\rho]$, $ \vq(s)$ satisfies (\ref{qequation}), $\vq(s)\in \vartheta_A(s)$ and $\partial_y\vq$ satisfies the estimates given in  Parts (i)-(ii)-(iii) of Proposition \ref{prop:regu-parab-q-equation}, with $\tau\geq s_0,$
then, we have the following results for all $s\in[\tau, \tau+\rho]$:
\begin{itemize}
\item[(i)] {\bf(Linear term)}
$$\left\|\frac{\mathcal{A}_{-}(y,s)}{1+|y|^3}\right\|_{L^{\infty}}\leq Ce^{-\frac{1}{2}(s-\tau)}\left\|\frac{\vq_-(y,\tau)}{1+|y|^3}\right\|_{L^{\infty}}+C{e^{-(s-\tau)^2}\over s^{3\beta}}\|\vq_e(\tau)\|_{L^\infty}+{C\over s^\gamma},$$
$$\|\mathcal{A}_e(s)\|_{L^\infty}\leq Ce^{-\frac{(s-\tau)}{p}}\|\vq_e(\tau)\|_{L^\infty}+Ce^{s-\tau}s^{3\beta}\left\|\frac{\vq_{-}(y,\tau)}{1+|y|^3}\right\|_{L^{\infty}}+{C\over s^{\gamma-3\beta}}.$$
\item[(ii)]{\bf (Nonlinear source term)}
\begin{equation*}
|\mathcal{B}_-(y,s)|\leq\frac C{s^{\gamma}}(s-\tau)(1+|y|^3),~
\|\mathcal{B}_{e}(s)\|_{L^\infty} \leq {C\over s^{\gamma-3\beta}}(s-\tau)e^{s-\tau},\end{equation*}
\item[(iii)]{\bf (Corrective term)}
$$|\mathcal{C}_-(y,s)| \leq \frac C{s^{\gamma}}(s-\tau)(1+|y|^3),\;
\|\mathcal{C}_e(s)\|_{L^\infty}\leq \frac C{s^{\gamma-3\beta}}(s-\tau)e^{s-\tau}.$$
\item[(iv)]{\bf(Nonlinear gradient term)}
$$|\mathcal{D}_{-}(y,s)|\leq \frac C{s^{\gamma}}(s-\tau)(1+|y|^3),\;
\|\mathcal{D}_e(s)\|_{L^\infty} \leq \frac C{s^{\gamma-3\beta}}(s-\tau)e^{s-\tau}.$$
\end{itemize}
\end{lem}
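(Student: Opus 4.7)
The plan is to estimate each of the four pieces $\mathcal A,\mathcal B,\mathcal C,\mathcal D$ of the Duhamel decomposition \eqref{qinteg} by means of a careful analysis of the fundamental solution $S(s,\tau)$ of $\mathcal L+V$. The key feature driving the proof is the dichotomy in $V$ established in Lemma~\ref{p52}(ii): inside the blow-up region $\{|y|\le 2Ks^\beta\}$ one has $|V(y,s)|\le C(1+|y|^2)/s^{2\beta}$ (small for large $s$), whereas outside $\{|y|\ge Ks^\beta\}$ the potential converges to $-p/(p-1)<-1$. Consequently $\mathcal L+V$ behaves essentially like $\mathcal L$ on the interior (with spectral gap $-1/2$ on the negative subspace) and like an operator with strictly negative spectrum on the exterior (giving decay rate $1/p$). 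Combined with Lemma~\ref{l53} on the free semigroup $e^{\theta\mathcal L}$, this is what produces the two diagonal rates $e^{-(s-\tau)/2}$ (on the $-$-component) and $e^{-(s-\tau)/p}$ (on the $e$-component) appearing in item~(i).

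For the linear piece $\mathcal A$, I would follow step by step the strategy originating in \cite{BK} and refined in \cite{MZsbupdmj97}. Writing $\vq(\tau)=\vq_b(\tau)+\vq_e(\tau)$, propagate each piece separately by $S(s,\tau)$ and then extract the $-$ and $e$ components at time $s$. The \emph{diagonal} contributions yield the decay rates above, up to an additive $O(1/s^\gamma)$ loss coming from the $s$-dependence of $\chi$ and from the perturbation of $\mathcal L$ by the small interior potential (handled by a standard Duhamel iteration on $S(s,\tau)-e^{(s-\tau)\mathcal L}$). The two \emph{cross} (off-diagonal) contributions require pointwise kernel estimates based on \eqref{eq:semigroup:theta-L-case1}: the $e\to-$ term demands that mass travel from $\{|y|\ge Ks^\beta\}$ into the support of $\chi$, a passage which costs a Gaussian factor $\exp(-cK^2 s^{2\beta})$; choosing $K\ge K_5$ large enough absorbs this into the stated $e^{-(s-\tau)^2}/s^{3\beta}$. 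Conversely, the $-\to e$ contribution is bounded by $Ce^{s-\tau}s^{3\beta}\|\vq_-/(1+|y|^3)\|_\infty$, where $s^{3\beta}$ comes from evaluating the weight $(1+|y|^3)$ at the extreme point $|y|\sim Ks^\beta$ and $e^{s-\tau}$ is the trivial upper bound afforded by the top eigenvalue $1$ of $\mathcal L$.

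For the three remaining pieces $\mathcal B,\mathcal C,\mathcal D$, the plan is simply to bound the integrand in $\vartheta_C(s)$ for some universal $C$, and then apply the estimate of item~(i) with source evolving in $\sigma\in[\tau,s]$: Lemma~\ref{p52}(i) gives $R\in\vartheta_C(s)$; Lemma~\ref{l52}(ii) gives $B(\vq)\in\vartheta_C(s)$ once $\vq(s)\in\vartheta_A(s)$; and Lemma~\ref{l52}(iii) gives $G(\vq)\in\vartheta_C(s)$ under the gradient bounds on $\vq$ assumed from Proposition~\ref{prop:regu-parab-q-equation}. By definition of $\vartheta_C$, each such source $f$ satisfies $|f_-(y,\sigma)|\le C(1+|y|^3)/\sigma^\gamma$ and $\|f_e(\sigma)\|_\infty\le C/\sigma^{\gamma-3\beta}$. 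Substituting these into the linear bounds of item~(i) with the source placed at time $\sigma$ and integrating over $[\tau,s]$ then reproduces exactly the announced factors $(s-\tau)$ on the $-$-part and $(s-\tau)e^{s-\tau}$ on the $e$-part; the cross terms from (i) contribute only lower-order corrections.

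The main obstacle I expect is the delicate bookkeeping required for item~(i): the simultaneous spectral decomposition of $S(s,\tau)$, the interaction between the time-dependent cutoff $\chi(\cdot,s)$ and the evolution, and the precise tracking of the $\exp(-cK^2s^{2\beta})$ Gaussian cost in the $e\to-$ coupling all demand careful pointwise kernel estimates and the right choice of $K=K_5$. Once those are in place, items (ii)--(iv) follow mechanically from the preliminary inclusions $R,B(\vq),G(\vq)\in\vartheta_C(s)$ combined with (i).
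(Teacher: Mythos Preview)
Your proposal is correct and follows essentially the same route as the paper: item~(i) is obtained by the Bricmont--Kupiainen/Merle--Zaag kernel analysis you outline, and items~(ii)--(iv) follow mechanically from the inclusions $R,B(\vq),G(\vq)\in\vartheta_C(s)$ together with the kernel/linear estimates and integration in $\sigma$. The only detail you did not anticipate is that the paper streamlines the proof of~(i) by the change of variable $\sqrt{\bar s}=s^\beta$ in the potential, which reduces the argument verbatim to the standard case $\beta=\tfrac12$ already handled in \cite[Lemma~3.5]{MZsbupdmj97}; this is a shortcut rather than a different idea, and your more explicit description of the diagonal/off-diagonal mechanism is the content of that cited lemma.
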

 \emph{ {Proof of Lemma \ref{lem:A_5} }}:
 We consider $A\geq 1$, $\rho>0$ and
$s_0\geq \rho$. We then consider $\vq(s)$ a solution of (\ref{qequation}) satisfying $\vq(s)\in \vartheta_A(s)$
such that $\partial_y\vq$ satisfies the estimates given in Parts (i)-(ii)-(iii) of Proposition \ref{prop:regu-parab-q-equation} for all  $s\in[\tau,\tau+\rho]$, for some $\tau\geq s_0.$

\bigskip

(i) The proof of (i) follows by simple modifications of the argument for \cite[Lemma 3.5]{MZsbupdmj97}, see also \cite{BK}; in fact, making the change of variable in the potential
$$ \overline{V}(y,\overline{s})=V(y,s),\; \mbox{ with }\; \sqrt{\overline{s}}=s^\beta,$$
we reduce to the case of the standard nonlinear heat equation
treated in \cite{MZsbupdmj97}. For that reason, we omit the proof.

\bigskip

(ii)-(iv) Consider  $s_0\geq \rho$. If  we take $\tau\geq s_0$, then $\tau+\rho\leq 2\tau$ and if  $\tau\leq \sigma\leq s\leq \tau+\rho$ then
  \begin{equation}\label{order:inverse:tau-t:s}
  \frac{1}{2\tau}\leq\frac{1}{s}\leq\frac{1}{\sigma}\leq \frac{1}{\tau}.\end{equation}
Let us recall from Bricmont and Kupiainen \cite{BK} that for all $y,x \in \mathbb{R}$
\begin{equation}\label{ineq:B-K} |K(s,\sigma,y,x)|~\leq Ce^{(s-\sigma)\mathcal{L}}(y,x),\; 1\leq \sigma\leq s\leq 2\sigma,\end{equation}
\begin{equation}\label{ineq:B-K2} \left|\int|K(s,\sigma,y,x)|(1+|x|^m)dx\right|\leq Ce^{(s-\sigma)}(1+|y|^m),\; m\geq 0,\; 1\leq \sigma\leq s\leq 2\sigma,
\end{equation}
where $e^{\theta \mathcal{L}}$ is given in (\ref{eq:semigroup:theta-L-case1}) (see  \cite[p. 545]{BK}). The proof of (ii)-(iv) follows by using the fact that $R,\; B,\; G$ are in $\vartheta_C(s),$ the linear part estimates (i),
and by integration. This concludes the proof of Lemma \ref{lem:A_5}  and Proposition \ref{prop:projet:q} too.
\end{proof}

Now, with Proposition \ref{prop:projet:q}, which gives the
projection of the equation \eqref{qequation} on the different
components of the decomposition \eqref{qprojection}, we are ready to
prove Proposition \ref{prop:rt2dimen}.

\bigskip

\begin{proof}[Proof of Proposition \ref{prop:rt2dimen}] Let us consider $A\geq \max(A_6,1),$ and
$s_0\geq \max(s_{14}(A), s_{06}(A)).$ Later in the proof we will fix
$A$ larger,
and $s_0$ larger.
From our conditions on $A$ and
$s_0$,
we already see that the conclusion of Proposition \ref{prop:regu-parab-q-equation} holds, and so does the conclusion of Proposition \ref{prop:projet:q}.
 We then consider $\vq$ a solution of \eqref{qequation} with initial data $\psi_{s_0,d_0,d_1}$ given by (\ref{initial-data}) with $(d_0,d_1)\in \mathcal{D}_T$,
such that
$\vq(s)\in \vartheta_A(s),$ for all $ s\in [s_0,s_1]$
 with $\vq(s_1)\in \partial\vartheta_A(s_1).$

 \bigskip

By definition \ref{prop:V-def} of $\vartheta(s_1)$, it is enough to prove that
\begin{equation}
 \label{1++carre}
|\vq_2(s_1)|<{\sqrt{A}\over s_1^{4\beta-1}},\;\quad  \quad \left\|{\vq_-(y,s_1)\over 1+|y|^3}\right\|_{L^\infty(\R)}<{A\over s_1^\gamma}\; \quad \mbox{and }\;\quad \|\vq_e(s_1)\|_{L^\infty(\R)}<{A^2\over s_1^{\gamma-3\beta}}.
\end{equation}
%
%
%
We begin with $\vq_2.$ Assume by contradiction that
$$\vq_2(s_1)={\sqrt{A}\over s_1^{4\beta-1}}.$$
Since for all $s\in [s_0,s_1],$ $|\vq_2(s)|\leq {\sqrt{A}\over s_1^{4\beta-1}},$ it follows by differentiation that
$$\vq_2'(s_1)\geq -(4\beta-1){\sqrt{A}\over s_1^{4\beta}},$$
on the one hand. On the other hand, since we have
from Proposition \ref{prop:projet:q},
$$\vq_2'(s)=-{2\beta+1\over s}\vq_2+O\left({1\over s^{4\beta}}\right),$$
we see that
$$\vq_2'(s_1) \leq  -{2\beta+1\over s_1}\vq_2(s_1)+{C\over s_1^{4\beta}}\\
=  {C-\sqrt{A}(2\beta+1)\over s_1^{4\beta}}.
$$
This leads to a contradiction with the previous inequality, for $A$ sufficiently large, since $\beta<1$, hence $4\beta-1<2\beta+1$. Similarly, we get a contradiction if $\vq_2(s_1)=-{\sqrt{A}\over s_1^{4\beta-1}}.$
Thus, $|\vq_2(s_1)|<{\sqrt{A}\over s_1^{4\beta-1}}.$

\medskip

To prove that \eqref{1++carre} holds for $\vq_-$ and $ \vq_e,$
it is enough to prove that, for all $s\in [s_0,s_1],$
\begin{equation}
\label{eextintin}
\|\vq_e(s)\|_{L^\infty(\R)}~\leq~ {1\over 2}{A^2\over
s^{\gamma-3\beta}},\; \;\;\;\left\|{\vq_-(y,s)\over 1+|y|^3}
\right\|_{L^\infty(\R)}~\leq~ {1\over 2}{A\over s^{\gamma}},
\end{equation}
provided that $A$ is large enough, and $s_0$ is large enough. Define
$\sigma=\log(A/C_0),$ with $C_0>0$  an appropriate constant to be
chosen later, and take $s_0\geq \sigma,$
so that for all $\tau\geq s_0$ and $s\in
[\tau,\tau+\sigma],$ we have
\begin{equation}
\label{inqss0}
\tau\leq s\leq \tau+\sigma\leq \tau+s_0\leq 2\tau,\; \mbox{ hence }\; {1\over 2\tau}\leq {1\over s}\leq {1\over \tau}\leq {2\over s}.
\end{equation}
We consider two cases in the proof of \eqref{eextintin}.

\medskip

{\bf Case 1: $s\leq s_0+\sigma.$} From our estimates on initial data stated in item (ii) of Proposition \ref{prop:diddc}, it is clear that \eqref{eextintin} is satisfied with $\tau=s_0$, provided that $A\ge 2$. Using Proposition \ref{prop:projet:q} Part (iii), with  $\tau=s_0,$ as well as \eqref{inqss0} and estimate \eqref{e54}, we get
\begin{eqnarray*}
\left\|{\vq_{-}(y,s)\over 1+|y|^3}\right\|_{L^\infty(\R)}
&\leq & C e^{-\frac{(s-s_0)}{2}}\left\|{\vq_{-}(y,s_0)\over 1+|y|^3}\right\|_{L^\infty(\R)}
+\frac C{s^{{3\beta}}}e^{-(s-s_0)^2}\| \vq_{e}(s_0)\|_{L^\infty}+\frac C{s^\gamma}(1+s-s_0)\\
&\leq & \frac C{s^\gamma} e^{-\frac{(s-s_0)}{2}}+\frac C{s^\gamma}(1+s-s_0)\\
&\leq & \frac C {s^\gamma} \left(1+\sigma\right),
\end{eqnarray*}
and
\begin{eqnarray*}
\|\vq_{e}(s)\|_{L^\infty}&\leq & Ce^{-\frac{(s-s_0)}{p}}\|\vq_{e}(s_0)\|_{L^\infty}+Ce^{s-s_0} s^{3\beta}\left\|\frac{\vq_{-}(y,s_0)}{1+|y|^3}\right\|_{L^\infty}+\frac C{s^{\gamma-3\beta}}\left(1+(s-s_0)e^{s-s_0}\right)\\
&\leq & \frac C{s^{\gamma-3\beta}}e^{s-s_0}+\frac C{s^{\gamma-3\beta}}\left(1+(s-s_0)e^{s-s_0}\right)\\
&\leq & \frac C{s^{\gamma-3\beta}}\left(e^{\sigma}+1+\sigma e^\sigma\right).
\end{eqnarray*}
Since $\sigma = \log \frac A{C_0}$, taking $A$ sufficiently large, we get \eqref{eextintin}.

\medskip

{\bf Case 2: $s>s_0+\sigma.$} Let $\tau=s-\sigma>s_0.$ Applying Part (iii) of Proposition \ref{prop:projet:q} and using the fact that $\vq(\tau) \in \vartheta_A(\tau),$ we have
\begin{eqnarray*}
\left\|{\vq_{-}(y,s)\over 1+|y|^3}\right\|_{L^\infty(\R)} &\leq & C e^{-\frac{(s-\tau)}{2}}\left\|{\vq_{-}(y,\tau)\over 1+|y|^3}\right\|_{L^\infty(\R)}
+\frac C{s^{{3\beta}}}e^{-(s-\tau)^2}\| \vq_{e}(\tau)\|_{L^\infty}+\frac C{s^\gamma}(1+s-\tau),\\
&\leq & \frac C{s^\gamma}\left( e^{-\frac{\sigma}{2}}A+e^{-\sigma^2}A^2+1+\sigma\right)
\end{eqnarray*}
and
\begin{eqnarray*}
\|\vq_{e}(s)\|_{L^\infty}&\leq & Ce^{-\frac{(s-\tau)}{p}}\|\vq_{e}(\tau)\|_{L^\infty}+Ce^{s-\tau} s^{3\beta}\left\|\frac{\vq_{-}(\tau)}{1+|y|^3}\right\|_{L^\infty}+\frac C{s^{\gamma-3\beta}}\left(1+(s-\tau)e^{s-\tau}\right)\\
&\leq & \frac{C_0'}{s^{\gamma-3\beta}}\left(e^{-{\sigma\over p}}A^2+e^{\sigma}A+1+\sigma e^\sigma\right),
\end{eqnarray*}
for some $C_0'>0$.\\
Fixing $C_0=10 C_0'$ and $\sigma = \log \frac A{C_0}$, then taking $A$ sufficiently large we see that \eqref{eextintin} follows.

\medskip

In conclusion, we have just proved that \eqref{eextintin} follows in both cases, hence \eqref{1++carre} follows too.
Since $\vq(s_1)\in \partial \vartheta_A(s_1),$ we see that $\left(\vq_0(s_1),\vq_1(s_1)\right)\in
\partial\left( \left[-\f{A}{s_1^{2\beta+1}},
\f{A}{s_1^{2\beta+1}}\right]^2\right)$, by definition \ref{prop:V-def} of $\vartheta_A(s)$.
This concludes the proof of Part (i) of Proposition
\ref{prop:rt2dimen}.

\bigskip

(ii) From Part (i), there exists $m=0,\; 1$ and $\omega=\pm 1$ such that
$$\vq_m(s_1)=\omega{A\over s_1^{2\beta+1}}.$$
Using Part (i) of Proposition \ref{prop:projet:q}, we have that
$$\omega\vq_m'(s_1)\geq \left(1-{m\over 2}\right)\omega\vq_m(s_1)-{C\over s_1^{2\beta+1}}\geq \left[\left(1-{m\over 2}\right)A-C\right]{1\over s_1^{2\beta+1}}.$$
Hence, for $A$ large enough, $\omega\vq_m'(s_1)>0.$ This concludes the proof of Proposition \ref{prop:rt2dimen}.
\end{proof}
\section{Single point blow-up  and final profile}\label{secsing}
\setcounter{equation}{0} In this section we prove Theorem \ref{th1}.
To do so, we first establish a result of no blow-up under some
threshold. This is done in a separate  subsection. In the second
subsection, we give the proof of Theorem \ref{th1}.
\subsection{No blow-up under some threshold}
 In this section, we prove the following result which is similar in the spirit to the result of Giga and Kohn in \cite[Theorem 2.1, p. 850]{GKcpam89}, though we need genuine new parabolic regularity estimates to control the nonlinear gradient term, and this makes the originality of our contribution. This is our statement:
\begin{prop}[No blow-up under some threshold]
\label{similar to Giga-Kohn}
For all $C_0>0,$ there exists $\varepsilon_0>0$ such that if $u(\xi,\tau)$ satisfies for all $|\xi|<1$ and $\tau \in [0,1)$,
\begin{align*}
|\partial_t u-\Delta u|&\leq C_0\left(1+|u|^p+|\nabla u|^q\right),\\
|\partial_t \nabla u-\Delta (\nabla u)-\mu \nabla|\nabla u|^q|&\leq C_0\left(1+|\nabla u||u|^{p-1}\right),
\end{align*}
with $q=\frac{2p}{p+1}$, $p>3$ and $\mu \in \R$, together with the bound
\begin{equation}\label{petit}
|u(\xi,\tau)|+\sqrt{1-\tau}|\nabla u(\xi,\tau)|\leq  \varepsilon_0(1-\tau)^{-{1\over p-1}},
\end{equation}
then $$\forall\;  |\xi|\leq {1/16},\; \forall \; \tau\in [0,1),\;
|u(\xi,\tau)|+|\partial_\xi u(\xi,\tau)|\leq M\varepsilon_0,$$
for some $M=M(C_0, p, \mu)>0$.
In particular, $u$ and $|\nabla u|$ do not blow up at $\xi=0$ and
$\tau=1.$
\end{prop}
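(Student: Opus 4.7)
The plan is to adapt the Giga--Kohn strategy from \cite{GKcpam89}, exploiting the critical scaling $q = 2p/(p+1)$ which makes the two nonlinearities $|u|^p$ and $|\nabla u|^q$ scale identically under the natural parabolic rescaling. First, for an arbitrary point $\xi_0$ with $|\xi_0| \leq 1/16$, I would introduce the similarity variables
\[
w(y,s) = (1-\tau)^{1/(p-1)} u(\xi_0 + y\sqrt{1-\tau}, \tau), \qquad s = -\log(1-\tau),
\]
in which assumption \eqref{petit} translates into the uniform bound $\|w(s)\|_{L^\infty} + \|\nabla w(s)\|_{L^\infty} \leq \varepsilon_0$ on the expanding ball $|y| \leq \tfrac{15}{16} e^{s/2}$. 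The first differential inequality of the proposition becomes
\[
\left|\partial_s w - \Delta w + \tfrac12\, y\cdot\nabla w + \tfrac{w}{p-1}\right|
\leq C_0\bigl(|w|^p + |\nabla w|^q\bigr) + C_0\, e^{-sp/(p-1)},
\]
and analogously for $\nabla w$ starting from the second differential inequality (which is supplied to us directly, since $|\nabla u|^q$ is not $C^1$ and cannot be differentiated from the equation for $u$).

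Second, I would carry out a weighted $L^2_\rho$ energy estimate with the Gaussian $\rho(y) = (4\pi)^{-N/2} e^{-|y|^2/4}$, truncated by a smooth cut-off supported in $|y|\leq \tfrac12 e^{s/2}$. The linear operator $\Delta - \tfrac12 y\cdot\nabla - \tfrac{1}{p-1}$ is self-adjoint in $L^2_\rho$ with top eigenvalue $-1/(p-1)$, while the analogous operator obtained by differentiating, namely $\Delta - \tfrac12 y\cdot\nabla - \tfrac12 - \tfrac{1}{p-1}$, has top eigenvalue $-\tfrac12-\tfrac{1}{p-1}$. Since the pointwise bounds yield $|w|^p \leq \varepsilon_0^{p-1}|w|$ and $|\nabla w|^q \leq \varepsilon_0^{q-1}|\nabla w|$, for $\varepsilon_0$ small enough depending only on $C_0$ and $p$, these nonlinearities are dominated by the linear damping, giving the exponential decays
\[
\|w(s)\|_{L^2_\rho} \leq C\varepsilon_0 e^{-s/(p-1)},
\qquad
\|\nabla w(s)\|_{L^2_\rho} \leq C\varepsilon_0 e^{-\bigl(\tfrac12+\tfrac{1}{p-1}\bigr)s},
\]
where the cut-off error terms and the source $C_0 e^{-sp/(p-1)}$ are absorbed thanks to the Gaussian weight.

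Third, I would upgrade these $L^2_\rho$ bounds into pointwise bounds on, say, $|y| \leq 1$ with the same decay rates, via parabolic regularity arguments in the spirit of Lemma \ref{l53} and the Duhamel estimates developed in Section~\ref{subsecpr}. Finally, evaluating at $y=0$ (which corresponds to $\xi = \xi_0$) and undoing the rescaling yields
\[
|u(\xi_0,\tau)| \leq C\varepsilon_0, \qquad |\nabla u(\xi_0,\tau)| \leq C\varepsilon_0,
\]
uniformly in $\tau\in[0,1)$: the decay rate of $w$ is exactly $1/(p-1)$, matching the blow-up rate of $(1-\tau)^{-1/(p-1)}$, while the decay rate of $\nabla w$ is exactly $\tfrac{1}{2}+\tfrac{1}{p-1}$, matching the blow-up rate of $(1-\tau)^{-1/(p-1)-1/2}$ for $\nabla u$. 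Since $\xi_0$ is arbitrary in $|\xi_0|\leq 1/16$, this proves the proposition with $M = C$.

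The main obstacle is the control of the critical gradient nonlinearity $|\nabla w|^q$: being scale-invariant under the parabolic rescaling, it cannot be absorbed by the rescaling alone and becomes subdominant only through the smallness factor $\varepsilon_0^{q-1}$ obtained from the pointwise bound on $|\nabla w|$. The parabolic regularity step for $\nabla w$ will also require some care because, unlike in a classical Giga--Kohn setting, we must keep track of the precise decay rate of the gradient; however, this can be carried out using the semigroup kernel and Duhamel estimates already used in Section~\ref{subsecpr} for the related linearized operator.
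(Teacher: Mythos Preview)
Your route is genuinely different from the paper's. The paper never passes to similarity variables and never uses the $L^2_\rho$ spectral gap for this proposition; it stays in the physical variables $(\xi,\tau)$ and runs a four-step $L^\infty$ Duhamel/Gronwall iteration on truncations $\phi_r u$ and $\phi_r\nabla u$ (Lemma~\ref{utile}), shrinking the radius $r$ from $1$ to $\tfrac12,\tfrac14,\tfrac18$. The novelty over the original Giga--Kohn argument is Step~1: writing the divergence term $\mu\nabla(|\nabla u|^{q-2}\nabla u\,w_1)$ in Duhamel form, one derivative falls on the heat kernel and one exploits the critical identity $(q-1)(\tfrac1{p-1}+\tfrac12)=\tfrac12$, so the kernel factor $\varepsilon_0^{q-1}(\tau-s)^{-1/2}(1-s)^{-1/2}$ is Gronwall-absorbable. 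This yields the improved bound $|\nabla u|\le C\varepsilon_0(1-\tau)^{-1/(p-1)}$ on $|\xi|\le\tfrac12$, after which Steps~2--4 follow the classical pattern.

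Your sketch has a real gap at stage~(3), the $L^2_\rho\to L^\infty$ upgrade. Even granting the $L^2_\rho$ decays $\|w(s)\|_{L^2_\rho}\lesssim\varepsilon_0 e^{-s/(p-1)}$ and $\|\nabla w(s)\|_{L^2_\rho}\lesssim\varepsilon_0 e^{-(\frac12+\frac1{p-1})s}$, the Duhamel upgrade you invoke requires $L^\infty$ control of the nonlinearity on the short-time window $[s-1,s]$, and there you only have the original hypothesis $|\nabla w|\le\varepsilon_0$. The term $\mu\partial_y|\nabla w|^q$ therefore contributes a non-decaying source of size $C\varepsilon_0^q$ to $|\nabla w(0,s)|$, and $|\nabla w|^q\le\varepsilon_0^q$ contributes the same to $|w(0,s)|$. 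Undoing the similarity scaling, this $O(\varepsilon_0^q)$ tail becomes $O(\varepsilon_0^q)(1-\tau)^{-1/(p-1)}$ for $u$ and $O(\varepsilon_0^q)(1-\tau)^{-1/(p-1)-1/2}$ for $\nabla u$: both blow up. A Sobolev embedding $H^1_\rho\hookrightarrow L^\infty_{\rm loc}$ would rescue $|w(0,s)|$ but not $|\nabla w(0,s)|$, since you have no control on $\|\nabla^2 w\|_{L^2_\rho}$ (the second differential inequality cannot be differentiated again, as $|\nabla u|^q$ is not $C^2$). To close, you would be forced into an $L^\infty$ Gronwall bootstrap for $\nabla w$ anyway --- which is exactly the paper's Step~1, making the $L^2_\rho$ detour unnecessary. (A secondary point: in the $L^2_\rho$ energy for $\nabla w$, after integrating $\mu\int(\partial_y|\nabla w|^q)\nabla w\,\rho$ by parts you get a term carrying a weight $|y|$; this \emph{can} be absorbed via the weighted inequality $\int v^2 y^2\rho\le C(\|v_y\|_\rho^2+\|v\|_\rho^2)$ and the dissipation, but this step is not as automatic as your phrase ``dominated by the linear damping'' suggests.)
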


We proceed in 2 parts in order to prove this proposition:\\
- In Part 1, we write Duhamel forumlations satisfied by truncations of the solution and its gradient. We also recall from \cite{GKcpam89} an integral computation table.\\
- In Part 2, using the previous estimates and an iteration process, we give the proof of Proposition \ref{similar to Giga-Kohn}.

\bigskip

{\bf Part 1:
 A toolbox for the proof}

In this part, we give a Duhamel formulation for the equations satisfied by $u$ and its gradient.

\medskip

Denoting by $B_r$ the set $\{x\in \R^N\; |\; |x|<r\}$, where $r>0$, we 
establish the following Lemma:
\begin{lem}[Truncation and Duhamel Formulations of the equations]
 \label{utile}
 Let $r\in(0,1]$ and $\phi_r$ be a smooth function supported on $B_r$ such that $\phi_r\equiv 1$ on
$B_{r/2}$ and $0\leq \phi_r\leq 1.$ Let $w= \phi_r u,$ $w_1=\phi_r v,\; v=\nabla u,$ where $u$ is as in Proposition
\ref{similar to Giga-Kohn}.
Then we have the following for all $\xi \in \R$ and $0\le \tau<1$,
\begin{itemize}
\item[(i)] $\left|-\partial_\tau w+\Delta w+u\Delta \phi_{r}-2\nabla\cdot(u\nabla\phi_{r})\right|\le C_0\left(1+|w||u|^{p-1}+
|\nabla u|^q\right)$.
 \item[(ii)]$\left|-\partial_\tau w_1+\Delta w_1+v\Delta \phi_r-2\nabla\cdot(v\nabla \phi_r)
 +\mu \nabla\cdot\left(|v|^{q-2}v w_1\right)-\mu |v|^q\nabla\phi_r\right|\le C_0(1+|w_1||u|^{p-1})$.
 \item[(iii)]
\begin{eqnarray*}
 \|w(\tau)\|_{L^\infty} &\leq & \|u(0)\|_{L^\infty(B_r)}+C\int_0^\tau (\tau-s)^{-1/2}\|u(s)\|_{L^\infty(B_r)}ds \\ &&
 +C\int_0^\tau \|u(s)\|_{L^\infty(B_r)}^{p-1}\|w(s)\|_{L^\infty}ds +C\int_0^\tau \|\nabla u(s)\|_{L^\infty(B_r)}^qds.
\end{eqnarray*}
 \item[(iv)]
 \begin{eqnarray*}
\|w_1(\tau)\|_{L^\infty} &\leq & \|\nabla u(0)\|_{L^\infty(B_r)}+C\int_0^\tau (\tau-s)^{-1/2}\|\nabla u(s)\|_{L^\infty(B_r)}ds
+C\int_0^\tau \|u\|_{L^\infty(B_r)}^{p-1}\|w_1(s)\|_{L^\infty}ds\\ & & + C\int_0^\tau (\tau-s)^{-1/2}
\|w_1(s)\|_{L^\infty}\|\nabla u(s)\|_{L^\infty(B_r)}^{q-1}ds+C\int_0^\tau\|\nabla u(s)\|_{L^\infty(B_r)}^{q}ds.
\end{eqnarray*}
\end{itemize}
\end{lem}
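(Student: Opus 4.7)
Items (i) and (ii) are algebraic identities obtained by a Leibniz calculation combined with the pointwise hypotheses on $u$ and $\nabla u$; the only novelty is an integration-by-parts trick in (ii) which puts the nonlinear gradient contribution in divergence form. Items (iii) and (iv) then follow by applying Duhamel's formula against the free heat semigroup $e^{\tau\Delta}$ on $\R^N$ to the identities of (i) and (ii), using the two standard estimates
\[
\|e^{t\Delta}f\|_{L^\infty(\R^N)}\leq\|f\|_{L^\infty(\R^N)}\qquad\text{and}\qquad\|e^{t\Delta}\nabla\!\cdot\!g\|_{L^\infty(\R^N)}\leq Ct^{-1/2}\|g\|_{L^\infty(\R^N)}.
\]

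\textbf{Part (i).} Applying the Leibniz rule gives $\Delta w=\phi_r\Delta u+2\nabla\phi_r\cdot\nabla u+u\Delta\phi_r$, and rewriting the cross-term as $2\nabla\phi_r\cdot\nabla u=2\nabla\!\cdot\!(u\nabla\phi_r)-2u\Delta\phi_r$ produces the identity $\phi_r\Delta u=\Delta w-2\nabla\!\cdot\!(u\nabla\phi_r)+u\Delta\phi_r$. Since $\partial_\tau w=\phi_r\partial_\tau u$, the full expression in (i) equals exactly $-\phi_r(\partial_\tau u-\Delta u)$, and the claimed bound then follows from the hypothesis $|\partial_\tau u-\Delta u|\leq C_0(1+|u|^p+|\nabla u|^q)$ together with $|\phi_r u|=|w|$.

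\textbf{Part (ii).} The same Leibniz step on $w_1=\phi_rv$ gives $-\partial_\tau w_1+\Delta w_1+v\Delta\phi_r-2\nabla\!\cdot\!(v\nabla\phi_r)=-\phi_r(\partial_\tau v-\Delta v)$. By hypothesis $\phi_r(\partial_\tau v-\Delta v)=\mu\phi_r\nabla|v|^q+E$ with $|E|\leq C_0\phi_r(1+|v||u|^{p-1})$, so the issue is to rewrite $\phi_r\nabla|v|^q$ in a form that allows a gain of half a derivative in the Duhamel step. The key observation is the algebraic identity
\[
\nabla\!\cdot\!\bigl(|v|^{q-2}v\,w_1\bigr)=\nabla\!\cdot\!\bigl(\phi_r|v|^{q-2}v\,v\bigr)=|v|^q\nabla\phi_r+\phi_r\nabla|v|^q,
\]
i.e.\ $\mu\phi_r\nabla|v|^q=\mu\nabla\!\cdot\!(|v|^{q-2}v\,w_1)-\mu|v|^q\nabla\phi_r$. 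Substituting gives exactly (ii).

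\textbf{Parts (iii) and (iv).} Since $w$ and $w_1$ are compactly supported in $B_r$, we may view them as functions on $\R^N$ and apply Duhamel. From (i),
\[
w(\tau)=e^{\tau\Delta}w(0)+\int_0^\tau e^{(\tau-s)\Delta}\bigl[u\Delta\phi_r-2\nabla\!\cdot\!(u\nabla\phi_r)-F(s)\bigr]\,ds,
\]
with $|F|\leq C_0(1+|w||u|^{p-1}+|\nabla u|^q)$. Using the two semigroup bounds, the fact that $\nabla\phi_r$ and $\Delta\phi_r$ are bounded and supported in $B_r$ (so that $\|u\nabla\phi_r\|_\infty,\|u\Delta\phi_r\|_\infty\leq C\|u\|_{L^\infty(B_r)}$), and absorbing the harmless constant $1$ from $F$ together with the non-divergence contribution $u\Delta\phi_r$ into the $(\tau-s)^{-1/2}$ integral (valid since $\tau\leq 1$), one obtains (iii). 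The argument for (iv) is identical, with the extra terms from (ii) contributing $C\int_0^\tau(\tau-s)^{-1/2}\|\nabla u\|_{L^\infty(B_r)}^{q-1}\|w_1\|_\infty\,ds$ (from the divergence-form piece, via the half-derivative gain) and $C\int_0^\tau\|\nabla u\|_{L^\infty(B_r)}^q\,ds$ (from the boundary-type piece $\mu|v|^q\nabla\phi_r$).

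\textbf{Main obstacle.} The only delicate point is the divergence-form rewriting of $\phi_r\nabla|v|^q$ in (ii). Without it, Duhamel in (iv) would force us to estimate $e^{(\tau-s)\Delta}(\phi_r\nabla|v|^q)$ without the $(\tau-s)^{-1/2}$ factor, which is insufficient to close the bootstrap argument for Proposition \ref{similar to Giga-Kohn}. Everything else is a routine application of heat-semigroup $L^\infty$ estimates and cutoff bookkeeping.
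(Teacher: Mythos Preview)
Your proposal is correct and follows exactly the same approach as the paper: the paper dismisses (i)--(ii) as ``trivial'' and proves (iii) by writing (i) in Duhamel form against $e^{\tau\Delta}$, invoking the two semigroup bounds $\|e^{\tau\Delta}f\|_{L^\infty}\le\|f\|_{L^\infty}$ and $\|\nabla e^{\tau\Delta}f\|_{L^\infty}\le C\tau^{-1/2}\|f\|_{L^\infty}$, and using that $\phi_r$ localizes all norms to $B_r$. Your write-up is in fact more explicit than the paper's, particularly the divergence-form rewriting $\phi_r\nabla|v|^q=\nabla\!\cdot(|v|^{q-2}v\,w_1)-|v|^q\nabla\phi_r$ in (ii), which the paper leaves implicit.
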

\begin{rem} {\rm The truncated functions $w$ and $w_1$ do depend on $r$, but we omit that dependence in order to avoid complicated notations.}
\end{rem}
\begin{proof}$ $\\
(i)-(ii) The proof is trivial.\\
(iii)-(iv) We only prove item (iii) since item (iv) follows similarly.\\
 Writing the equation given in item (i) in its Duhamel formulation, we see that
\begin{align*}
 &\left|w(\tau) - e^{\tau\Delta}w(0)-\int_0^\tau e^{(\tau-s)\Delta}\left[u\Delta \phi_r-2\nabla(u\nabla \phi_r)\right]ds\right|\\
\le & C
\int_0^\tau e^{(\tau-s)\Delta}\left[|u|^{p-1}|w|\right]ds
 +C\int_0^\tau e^{(\tau-s)\Delta}|\nabla u|^qds,
\end{align*}
where $e^{\tau\Delta}$ is the heat semigroup. Recall first the following well-known smoothing effect of the heat semigroup:
$$\|e^{\tau\Delta}f\|_{L^\infty}\leq \|f\|_{L^\infty},\;
\|\nabla e^{\tau\Delta}f\|_{L^\infty}\leq \frac C{\sqrt\tau}\|f\|_{L^\infty},\; \forall\; \tau>0,\; f\in W^{1,\infty}(\R).$$
Since the truncation $\phi_r$ is supported in the ball $B_r$, only the $L^\infty(B_{r})$ of $u$ and $\nabla u$ are involved, and item (iii) follows.
This concludes the proof of Lemma \ref{utile}.
\end{proof}
Now, we recall the following integral computation table from Giga and Kohn \cite{GKcpam89}:
\begin{lem}[An integral computation table; see Lemma 2.2 page 851 in Giga and Kohn \cite{GKcpam89}]\label{integral} For $0<\alpha<1$, $\theta>0$ and $0\le \tau <1$, the integral
\[
I(\tau) = \int_0^\tau (\tau -s)^{-\alpha}(1-s)^{-\theta} ds
\]
satisfies
\begin{itemize}
\item[(i)] $I(\tau) \le \left((1-\alpha)^{-1}+(\alpha+\theta-1)^{-1}\right) (1-\tau)^{1-\alpha-\theta}$ if $\alpha+\theta>1$,
 \item[(ii)] $I(\tau) \le (1-\alpha)^{-1}+|\log (1-\tau)|$ if $\alpha+\theta=1$,
 \item[(iii)] $I(\tau) \le (1-\alpha-\theta)^{-1}$ if $\alpha+\theta<1$.
\end{itemize}
\end{lem}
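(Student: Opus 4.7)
The plan is to absorb all the $\tau$-dependence into a clean prefactor by the linear change of variables $s = 1-(1-\tau)u$, so that $ds = -(1-\tau)\,du$, $1-s = (1-\tau)u$, and $\tau-s = (1-\tau)(u-1)$. The endpoints become $s=0 \leftrightarrow u = 1/(1-\tau)$ and $s=\tau \leftrightarrow u = 1$, which gives
\[
I(\tau) = (1-\tau)^{1-\alpha-\theta}\int_1^{1/(1-\tau)}(u-1)^{-\alpha}u^{-\theta}\,du.
\]
The three cases of the lemma now correspond exactly to the integrability of $(u-1)^{-\alpha}u^{-\theta}$ at infinity, so the whole statement reduces to a uniform estimate of the $u$-integral.

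The plan is then to split at $u=2$. On $[1,2]$, use $u^{-\theta}\le 1$ and integrate directly to get $\int_1^2(u-1)^{-\alpha}du = (1-\alpha)^{-1}$; on $[2,\infty)$, use the elementary bound $u-1\ge u/2$, hence $(u-1)^{-\alpha}\le 2^\alpha u^{-\alpha}$, so the tail piece is controlled by $2^\alpha \int_2^{M}u^{-(\alpha+\theta)}\,du$, with $M=1/(1-\tau)$ (or $M=+\infty$ in case (i)).

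In case (i), $\alpha+\theta>1$ makes the tail convergent on all of $[2,\infty)$ with bound $\lesssim(\alpha+\theta-1)^{-1}$, and the global prefactor $(1-\tau)^{1-\alpha-\theta}$ provides exactly the claimed decay. In case (ii), $\alpha+\theta=1$ makes the tail equal to $\int_2^M u^{-1}\,du = \log M - \log 2 \le |\log(1-\tau)|$, and the prefactor $(1-\tau)^0 = 1$; combined with the $[1,2]$ piece this gives the claimed bound. In case (iii), $\alpha+\theta<1$, the tail is $\le 2^\alpha M^{1-\alpha-\theta}/(1-\alpha-\theta)$, and multiplying by the prefactor yields $(1-\tau)^{1-\alpha-\theta}M^{1-\alpha-\theta}=1$, so $I(\tau)$ is bounded uniformly in $\tau$ by a constant of order $(1-\alpha-\theta)^{-1}$.

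This is routine, and I do not anticipate a real obstacle; the only minor point is matching the constants exactly as stated. The numerical factors $2^\alpha,\ 2^{1-\theta}$ produced by the cutoff at $u=2$ can either be absorbed into the stated constants or killed by a sharper split (e.g. the trivial bound $u^{-\theta}\le 1$ everywhere in case (i) when $\theta\le 0$ is not needed since $\theta>0$, but one can simply replace the coefficient $(\alpha+\theta-1)^{-1}$ by a slightly larger constant if desired). The essential content is the change of variables together with the three-regime dichotomy above.
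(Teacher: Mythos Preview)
The paper does not prove this lemma at all; it merely recalls it from Giga and Kohn \cite{GKcpam89}. Your argument is correct in substance: the change of variables $s=1-(1-\tau)u$ cleanly factors out $(1-\tau)^{1-\alpha-\theta}$ and reduces the three cases to the integrability of $(u-1)^{-\alpha}u^{-\theta}$ at $u=1$ and $u=\infty$, and the split at $u=2$ handles both endpoints. As you already flag, the cutoff at $u=2$ produces harmless numerical factors (of the form $2^\alpha$ or $2^{1-\theta}$), so you do not recover the \emph{exact} constants stated; for instance in (iii) your bound is of order $(1-\alpha)^{-1}+2^\alpha(1-\alpha-\theta)^{-1}$ rather than $(1-\alpha-\theta)^{-1}$, and in (ii) you get $2^\alpha|\log(1-\tau)|$ rather than $|\log(1-\tau)|$. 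For every use of this lemma in the paper only the power of $(1-\tau)$ matters, so this is immaterial.

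If you want the sharp constant in (iii) with no extra work, bypass the substitution: since $1-s\ge \tau-s$ and $\theta>0$, one has $(1-s)^{-\theta}\le(\tau-s)^{-\theta}$, hence $I(\tau)\le\int_0^\tau(\tau-s)^{-\alpha-\theta}\,ds=\tau^{1-\alpha-\theta}/(1-\alpha-\theta)\le(1-\alpha-\theta)^{-1}$.
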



{\bf Part 2: The proof of Proposition \ref{similar to Giga-Kohn}}

Using the various estimates of Part 1, we are ready to proceed by iteration to prove Proposition \ref{similar to Giga-Kohn}.

\begin{proof}[Proof of Proposition \ref{similar to Giga-Kohn}]
We follow the iteration method
of Giga and Kohn \cite[Theorem 2.1, p. 850]{GKcpam89}, based on the Duhamel formulations given in Lemma \ref{utile}, though we need here new ideas coming from sharp parabolic estimates.
 We give the proof in 4 steps, improving \eqref{petit} step by step, in order to prove the boundedness of $u$ and $\nabla u$ at the end of the 4th step.

\medskip

{\bf Step 1:} We apply Lemma \ref{utile} with $r=1$. If $w_1=\phi_1 v$ with $v=\nabla u$, then we have from item (iv) in Lemma \ref{utile} together with the bound \eqref{petit}:
%
%
\begin{eqnarray*}
\|w_1(\tau)\|_{L^\infty} &\leq & \|\nabla u(0)\|_{L^\infty(B_1)}+C\int_0^\tau (\tau-s)^{-\frac 12}
 \|v(s)\|_{L^\infty(B_1)}ds
+C\int_0^\tau \|u(s)\|_{L^\infty(B_1)}^{p-1}\|w_1(s)\|_{L^\infty}ds\\ & & + C\int_0^\tau (\tau-s)^{-\frac 12}
\|w_1(s)\|_{L^\infty}\|\nabla u(s)\|_{L^\infty(B_1)}^{q-1}ds+C\int_0^\tau\|\nabla u(s)\|_{L^\infty(B_1)}^{q}ds\\
&\leq & \varepsilon_0+
C\varepsilon_0\int_0^\tau (\tau-s)^{-\frac 12}(1-s)^{-{1\over p-1}-{1\over 2}}ds
+C\varepsilon_0^{p-1}\int_0^\tau (1-s)^{-1}\|w_1(s)\|_{L^\infty}ds\\
 &&+C\varepsilon_0^{q-1}\int_0^\tau
(\tau-s)^{-\frac 12}(1-s)^{-\frac 12}
\|w_1(s)\|_{L^\infty}ds+C\varepsilon_0^{q}\int_0^\tau (1-s)^{-{p\over p-1}}ds.
\end{eqnarray*}
Since $q<p$,
using Lemma \ref{integral},
we see that for $\epsilon_0$ small enough, we have
\[
\|w_1(\tau)\|_{L^\infty} \le C\varepsilon_0(1-\tau)^{-\frac 1{p-1}}+C\varepsilon_0^{q-1}\int_0^\tau (\tau-s)^{-\frac 12}(1-s)^{-\frac 12}\|w_1(s)\|_{L^\infty}ds.
\]
Using a Gronwall's argument together with Lemma \ref{integral},
we see that
\[
\forall \tau\in[0,1),\;\; \|w_1\|_{L^\infty}\leq 2C\varepsilon_0(1-\tau)^{-{1\over p-1}},
\]
for $\varepsilon_0$ sufficiently small.
In particular, since $w_1=\nabla u$ when $|\xi|\le \frac 12$, it follows that
\begin{equation}\label{petit1}
\forall \tau \in [0,1),\;\;\forall |\xi|\le \frac 12,\;\;|\nabla u(\xi,\tau)|\le 2C\varepsilon_0(1-\tau)^{-\frac 1{p-1}},
\end{equation}
which improves the bound on $\nabla u$ in \eqref{petit}, when $|\xi|\le \frac 12$.

\medskip

{\bf Step 2:} Now, we take $r=\frac 12$ and focus on $w=\phi_{1/2 }u$. Using item (iii) in Lemma \ref{utile}, together with \eqref{petit}, \eqref{petit1} and Lemma \ref{integral}, we write
\begin{eqnarray*}
 \|w(\tau)\|_{L^\infty}
& \leq & \|u(0)\|_{L^\infty(B_{1/2})}+C\int_0^\tau
(\tau-s)^{-1/2}\|u(s)\|_{L^\infty(B_{1/2})}ds
\\ &&
+C\int_0^\tau \|u(s)\|_{L^\infty(B_{1/2})}^{p-1}\|w(s)\|_{L^\infty}ds
+C\int_0^\tau \|\nabla u(s)\|_{L^\infty(B_{1/2})}^qds\\ & \leq & \varepsilon_0+C\varepsilon_0\int_0^\tau
(\tau-s)^{-1/2}(1-s)^{-{1\over p-1}}ds
\\ &&
+C\varepsilon_0^{p-1}\int_0^\tau (1-s)^{-1}\|w(s)\|_{L^\infty}ds
+C\varepsilon_0^{q}\int_0^\tau (1-s)^{-q/(p-1)}ds\\ & \leq & C\varepsilon_0
+C\varepsilon_0^{p-1}\int_0^\tau (1-s)^{-1}\|w(s)\|_{L^\infty}ds,
\end{eqnarray*}
(remember that $p>3$).
By Gronwall's inequality, we deduce that
$$\|w(\tau)\|_{L^\infty}\leq C(1-\tau)^{-c\varepsilon_0^{p-1}},\; \forall\tau\in [0,1).$$
Since $w(\xi,\tau)=u(\xi,\tau)$, for all $|\xi|\le \frac 14$, it follows that
\begin{equation}\label{petit2}
\forall \tau \in [0, 1),\;\;\forall |\xi|\le \frac 14,\;\;|u(\xi,\tau)|\le C(1-\tau)^{-c\varepsilon_0^{p-1}}.
\end{equation}

\medskip

{\bf Step 3:} For the step 3 we take $r=1/4.$ We now consider $\bar{w}_1=\phi_{1/4}v:=\phi_{1/4}\nabla u.$ Applying item (iv) in Lemma \ref{utile} and using the bounds \eqref{petit1} and \eqref{petit2}, together with Lemma \ref{integral}, we see that
\begin{eqnarray*}
\|\bar{w}_1(\tau)\|_{L^\infty} &\leq &
\|\nabla u(0)\|_{L^\infty(B_{1/4})}
+C\int_0^\tau (\tau-s)^{-\frac 12}\|\nabla u(s)\|_{L^\infty(B_{1/4})}ds\\
&&+C\int_0^\tau \|u(s)\|_{L^\infty(B_{1/4})}^{p-1}\|\bar w_1(s)\|_{L^\infty}ds +C\int_0^\tau (\tau-s)^{-\frac 12}
\|\bar w_1(s)\|_{L^\infty}\|\nabla u(s)\|_{L^\infty(B_{1/2})}^{q-1}ds\\
&&+C\int_0^\tau\|\nabla u(s)\|_{L^\infty(B_{1/4})}^{q}ds \\
&\leq &
\varepsilon_0+C\varepsilon_0\int_0^\tau (\tau-s)^{-1/2}(1-s)^{-1/(p-1)}ds
+C\int_0^\tau (1-s)^{-c(p-1)\varepsilon_0^{p-1}}\|\bar w_1(s)\|_{L^\infty}ds\\ & & +C\varepsilon_0^{q-1}
\int_0^\tau (\tau-s)^{-1/2}(1-s)^{-1/(p+1)}
\|\bar w_1(s)\|_{L^\infty}ds+C\varepsilon_0^{q}\int_0^\tau(1-s)^{-2p/(p^2-1)}ds \\
&\leq &
C\varepsilon_0+C\varepsilon_0^{q-1}
\int_0^\tau (\tau-s)^{-1/2}(1-s)^{-1/(p+1)}
\|\bar w_1(s)\|_{L^\infty}ds
\end{eqnarray*}
for $\epsilon_0$ small enough (remember that $p>3$).
Using again Gronwall's technique, together with Lemma \ref{integral},  we see that
$$\|\bar w_1(\tau)\|_{L^\infty}\leq C\varepsilon_0,\; \forall \; \tau\in [0,1).$$
Since $\bar w_1(\xi,\tau) = \nabla u(\xi,\tau)$ when $|\xi|\le \frac 18$, it follows that
\begin{equation}\label{petit2bis}
\forall \tau \in [0,1),\;\;
\forall |\xi|\le \frac 18,\;\;
|\nabla u(\xi,\tau)|\le C \varepsilon_0.
\end{equation}

\medskip

{\bf Step 4:} For the step 4 we take $r=1/8.$ We now consider $\tilde{w}=\phi_{1/8}u$ and use item (iii) in Lemma \ref{utile}, the bounds in \eqref{petit}, \eqref{petit1} and \eqref{petit2bis}, together with Lemma \ref{integral} to derive that
\begin{eqnarray*}
 \|\tilde{w}(\tau)\|_{L^\infty} & \leq & \|u(0)\|_{L^\infty(B_{1/8})}+C\int_0^\tau
(\tau-s)^{-1/2}\|u(s)\|_{L^\infty(B_{1/8})}ds \\ &&
+C\int_0^\tau \|u(s)\|_{L^\infty(B_{1/8})}^{p-1}\|\tilde{w}(s)\|_{L^\infty}ds
+C\int_0^\tau \|\nabla u(s)\|_{L^\infty(B_{1/8})}^qds\\ & \leq & C\varepsilon_0+
C\int_0^\tau (1-s)^{-c(p-1)\varepsilon_0^{p-1}}\|\tilde{w}(s)\|_{L^\infty}ds
\end{eqnarray*}
(use the fact that $p>3$).
Again, by Gronwall's argument, we see that
$$\|\tilde{w}(\tau)\|_{L^\infty}\leq M\varepsilon_0,\; \forall \; \tau\in [0,1).$$
Since $\tilde w(\xi,\tau) = u(\xi, \tau)$, for all $|\xi|\le \frac 1{16}$ and $\tau \in [0,1)$, it follows that
\begin{equation}\label{petit3}
\forall \tau \in [0,1),\;\;\forall |\xi|\le \frac 1{16},\;\;
|u(\xi,\tau)|\le C \varepsilon_0.
\end{equation}
Using \eqref{petit2bis} and \eqref{petit3}, we get the conclusion of the proof of Proposition \ref{similar to Giga-Kohn}.
\end{proof}

\subsection{Proof of Theorem \ref{th1}}
This section is dedicated to the proof of Theorem \ref{th1}. We will present the proofs of items (i), (ii) and (iii) separately.
\begin{proof}[Proof of Theorem \ref{th1} Part (i)]
If we introduce for $\epsilon>0$
\begin{equation}\label{defgamma}
\gamma = \min(5\beta-1, 2\beta+1)-\epsilon,
\end{equation}
and recall that $p>3$, then we see that \eqref{gamma} holds, provided $\epsilon$ is small enough.  Therefore, our strategy in Section \ref{secexis} works, and we get from Propositions \ref{dans vs} and \ref{prop:V-def-prop} the existence of a solution $\vq$ to equation \eqref{qequation}, defined for all $y\in \R$ and $s\ge s_0$, for some $s_0\ge 1$, such that
\[
\forall s\ge s_0,\;\;\|\vq(s)\|_{W^{1,\infty}(\R)}\leq {C\over s^{\gamma-3\beta}}.
\]
Then, using \eqref{eq:q+phi+def}, this yields $w(y,s)$, a solution to equation \eqref{eq:wequation}, defined for all $y\in \R$ and $s\ge s_0$, such that
$$\|w(y,s)-\varphi(y,s)\|_{W^{1,\infty}(\R)}\leq {C\over s^{\gamma-3\beta}},$$
where the profile $\varphi$ is introduced in \eqref{feq}.\\
Introducing
\[
T=e^{-s_0}
\]
and the function $u(x,t)$ defined from $w(y,s)$ by \eqref{framwork:selfsimilar}, we see that $u$ is a solution to equation \eqref{eq:uequation} defined for all $x\in \R$ and $t\in [0,T)$, which satisfies
\begin{equation}\label{profile}
\forall t\in [0,T),\;\;\left\|(T-t)^{1\over p-1}u(y\sqrt{T-t},t)-\vpz\left({y\over
|\log(T-t)|^\beta}\right)\right\|_{W^{1,\infty}(\R)}\leq
{C\over |\log(T-t)|^{\gamma-3\beta}}
\end{equation}
(use here the fact that $\gamma-3\beta<2\beta$ which comes from \eqref{gamma}).\\
In particular, $u(t)\in W^{1,\infty}(\R)$, for all $t\in [0, T)$, and
$\lim_{t\to T}(T-t)^{{1\over p-1}}u(0,t)=(p-1)^{{1\over p-1}},$
hence  $\lim_{t\to T}u(0,t)=\infty$, which means that $u$ blows up at time $t=T$, (at least) at the origin.
Moreover,
we have
$$\|u(t)\|_{L^\infty(\R)}\leq C (T-t)^{-{1\over p-1}},\; \forall\; t\in [0,T).$$
We now turn to proving that $|\nabla u(x,t)|$ blows up at the origin.
By Proposition \ref{prop:regu-parab-q-equation}, part (ii), and \eqref{eq:q+phi+def},
we have for all $s\ge -\log T$ and $y\in \R$,
\[
|\nabla w(y,s)-\nabla\varphi(y,s)|=|\nabla v(y,s)|\leq C\frac{A}{s^{2\beta+1}}(1+|y|)+
C\frac{\sqrt{A}}{{s^{4\beta-1}}}(1+|y|^2)+C
\frac{A^2}{s^{\gamma}}(1+|y|^3).
\]
Put $y:=y(s)=s^\alpha,$ with $\; 0<\alpha<\min(2\beta-1,{\gamma-2\beta\over 2}).$ Then, from \eqref{e20151}, we see that
$$\partial_y\varphi(y(s),s)=-{2b\over (p-1)}{y(s)\over s^{2\beta}}\left[\vpz\left({y(s)\over s^\beta}\right)\right]^p\sim {C\over s^{2\beta-\alpha}}\mbox{ as } s\to\; \infty.$$
The conditions on $\alpha$ imply that $|\nabla w(y(s),s)|\sim Cs^{\alpha-2\beta},$ as $\; s\to\; \infty.$ By the relation between
$w,\; u$; $y,\; s,\; t$ and $x$ given in \eqref{framwork:selfsimilar}, we get
$$\left|\nabla u\left(\sqrt{T-t}|\log(T-t)|^\alpha,t\right)\right|\sim C(T-t)^{-{1\over 2}-{1\over p-1}}\left|\log(T-t)\right|^{\alpha-2\beta}\to
\infty \mbox{ as }
t\to T.$$
In particular, $$\|\nabla u(t)\|_{L^\infty(\R)}\geq C \left(T-t\right)^{-{1\over 2}-{1\over p-1}}\left|\log(T-t)\right|^{\alpha-2\beta}.$$
Since $\sqrt{T-t}|\log(T-t)|^\alpha\to 0$ as $t\to T,$ $\nabla u$ blows up at time $T,$ (at least) at the origin.\\
Since \eqref{bupprof1} follows from \eqref{profile} and \eqref{defgamma}, this concludes the proof of item (i) of Theorem \ref{th1}.
\end{proof}

\begin{proof}[Proof of Theorem \ref{th1} Part (ii)] Consider  $x_0\not=0$.   By Part (i), we have that
$$\|w(\cdot,s)-\varphi(\cdot,s)\|_{W^{1,\infty}(\R)}\leq {C\over s^{\gamma-3\beta}},$$
for all $s\geq s_0=-\log T$. Using the relation between $w,$  $u,$ $y,\;
s$ and $x,\; t$ given by \eqref{framwork:selfsimilar}, and by the
definition of $\varphi$ given by \eqref{feq}, we get that
\begin{equation}
\label{11cercle} \sup_{x\in \R}\left(T-t\right)^{{1\over
p-1}+{1\over 2}}|\nabla u(x,t)|\leq {\|\nabla
\vpz\|_{L^\infty(\R)}\over \left|\log(T-t)\right|^{\beta}}+
{C\over \left|\log(T-t)\right|^{\gamma-3\beta}}\to 0, \; \mbox{ as }
t\to T,\end{equation} and
\begin{equation}
\label{12cercle}\sup_{|x-x_0|<{|x_0|\over
2}}\left(T-t\right)^{{1\over p-1}}|u(x,t)|\leq
\left|\vpz\left({|x_0|/2\over
\sqrt{T-t}|\log(T-t)|^\beta}\right)\right|+{C\over
\left|\log(T-t)\right|^{\gamma-3\beta}}\to 0,\; \mbox{ as } t\to T.
\end{equation}
Consider $\delta>0$ to be chosen small enough so various estimates
hold. Then,
for $K_0>0$ to be fixed later,
we define $t_0(x_0)$
by:
\begin{align}
&|x_0|=K_0\sqrt{T-t_0(x_0)}\left|\log\left(T-t_0(x_0)\right)\right|^{\beta},
&\mbox{ if }|x_0|\le \delta, \label{eqt0}\\
&t_0(x_0)=t_0(\delta), &\mbox{ if }|x_0|> \delta.\nonumber
\end{align}
Note that $t_0(x_0)$ is unique if $|x_0|$ is sufficiently
small. Note also that $t_0(x_0)\to T$ as $x_0\to 0.$ Let us
introduce the rescaled functions
\begin{equation}
\label{eqdeU} U(x_0,\xi,\tau)= \left(T-t_0(x_0)\right)^{{1\over
p-1}}u(x,t),
\end{equation}
\begin{equation}
 \label{newV}
 V(x_0,\xi,\tau):=\nabla_\xi U(x_0,\xi,\tau),
\end{equation}
where
\begin{equation}
\label{lexett} x=x_0+\xi \sqrt{T-t_0(x_0)},\; t=t_0(x_0)+\tau
(T-t_0(x_0)),\; \xi\in \R, \tau\in [-{t_0(x_0)\over T-t_0(x_0)}, 1).
\end{equation}
It is easy to see from \eqref{11cercle} and \eqref{12cercle} that
Proposition \ref{similar to Giga-Kohn} applies to
$U(x_0,\cdot,\cdot),$ hence that $x_0$ is not a blow-up point of $u$
and $\nabla u.$  Let us insist on the fact that our argument works
for any $x_0\not=0$ without any smallness assumptions, thanks to the
adapted definition of $t_0(x_0)$ in \eqref{eqt0}. This proves the
single point blow-up result. Thus we deduce that $u$ and $\nabla u$
blow up simultaneously at time $T$ and only at $x=0.$
\end{proof}

\begin{proof}[Proof of Theorem \ref{th1} Part (iii)] We divide the proof into two parts.
In the first part we show the existence of the final profile $u^*.$
In the second Part, we find an equivalent of $u^*$, and bound $|\partial_x u^*|$.

\medskip

{\it {\bf Part 1: Existence of the final profile.}} \label{pagepart1} In this part, we
show the existence of a blow-up final profile $u^*\in
C^1\left(\R\setminus \{0\}\right)$ such that $u(x,t)\to u^*,$ as
$t\to T,$ in $C^1$ of every compact
of $\R\setminus\{0\}$.
The case
$\mu=0$ is treated in \cite{Zihp98}. In comparison, with the case
$\mu=0,$ we need to use advanced parabolic regularity as in
Proposition \ref{similar to Giga-Kohn}, which is the
extended Giga-Kohn no-blow-up result for parabolic equations with a
nonlinear gradient term.

If $v=\nabla u,$ then, we derive from equation \eqref{eq:uequation}
the following system satisfied by $(\partial_t u,\partial_t v):$
\begin{equation}
 \label{eq:u}
\partial_t u  = \Delta u+\mu|v|^q+|u|^{p-1}u
\end{equation}
\begin{equation}
 \label{eq:v}
\partial_t v  = \Delta v+\mu \nabla\left(|v|^q\right)+p|u|^{p-1}v.
\end{equation}
Since we know from Part (ii) that $u$ and $v$ are bounded uniformly
on $\mathcal{K}\times [0,T)$ for any compact set $\mathcal{K}\subset
\R\setminus \{0\},$ using parabolic regularity techniques, similar
to Proposition \ref{similar to Giga-Kohn}, we can show that
$\partial_t u$ and $\partial_t v$ are also bounded on
$\mathcal{K}'\times [T/2,T)$ for any $\mathcal{K}'\subset
\R\setminus \{0\}.$ Therefore as in \cite{MerleCPAM92}, there
exists $u^*$ in $C^1\left(\R\setminus \{0\}\right)$ such that
$u(t,x)\to u^*(x)$ and $\partial_x u(x,t)\to \partial_x u^*(x)$ as $t\to T,$
uniformly on compact sets of $\R\setminus \{0\}.$

\medskip

{\it {\bf Part 2: Equivalent of the final profile.}} Let us now find
an equivalent of $u^*$ as $x\to 0.$
Consider $x_0\neq 0$.
Since $u$ is a solution of the
equation \eqref{eq:uequation} and $q=2p/(p+1)$,
it follows that
$U$ and $V$
defined in \eqref{eqdeU}-\eqref{newV} satisfy the equations:
\begin{equation}
 \label{eq:U}
\partial_\tau U  = \Delta_\xi U+\mu|\nabla_\xi U|^q+|U|^{p-1}U,
\end{equation}
\begin{equation}
 \label{eq:V2}
\partial_\tau V  = \Delta_\xi V+\mu \nabla_\xi\left(|V|^q\right)+p|U|^{p-1}V.
\end{equation}
By Part (i) of Theorem \ref{th1}, we have:
\begin{equation}
\label{6cercle}
\sup_{|\xi|\leq 6\left|\log\left(T-t_0(x_0)\right)\right|^{\beta/2}}\left|U(x_0,\xi,0)-\vpz(K_0)\right|\leq
{C\over \left|\log\left(T-t_0(x_0)\right)\right|^{\gamma'}},
\end{equation}
with
\begin{equation}
\label{7cercle}
\gamma'=\min(\beta/2,\gamma-3\beta),
\end{equation}
 and
\begin{equation}
\label{4cercle}\sup_{|\xi|\leq 6\left|\log\left(T-t_0(x_0)\right)\right|^{\beta/2}}\left|V(x_0,\xi,0)\right|\leq
{C\over \left|\log\left(T-t_0(x_0)\right)\right|^{\gamma-3\beta}}.
\end{equation}
Using \eqref{11cercle} and \eqref{12cercle}, we see that for all
$\tau\in [0,1)$ and $|\xi|\leq 6\left|\log\left(T-t_0(x_0)\right)\right|^{\beta/2}$, we have
\[
(1-\tau)^{{1\over p-1}}\left[|U(x_0,\xi,\tau)|+\sqrt{1-\tau}|V(x_0,\xi,\tau)|\right]\leq
 \varphi^0\left(\frac{K_0}2\right)+\frac C{|\log(T-t_0(x_0))|^{\gamma-3\beta}}\equiv \bar \varepsilon_0(K_0,x_0),
\]
with
 \begin{equation}
 \label{cercle}
 \overline{\varepsilon_0}\to 0\;  \mbox{ as }\; |x_0|\to 0 \mbox{ and }\; K_0\to \infty.
 \end{equation}
Fixing $K_0$ large enough and $|x_0|$ small enough, we can make $\bar \epsilon_0(K_0, x_0)\le \epsilon_0$, where $\epsilon_0$ is defined in Proposition \ref{similar to Giga-Kohn}. Applying
%
%
Proposition \ref{similar to Giga-Kohn}, we deduce
 that for all  $\tau\in [0,1],$
\begin{equation}
 \label{2cercle}\sup_{|\xi|\leq 5\left|\log(T-t_0(x_0))\right|^{\beta/2}}\left[|U(x_0,\xi,\tau)|+|V(x_0,\xi,\tau)|\right]\leq M_0\equiv M\overline{\varepsilon_0},
 \end{equation}
for some $M>0$.
With these estimates, we proceed in three steps to finish the proof of item (iii) in Theorem \ref{th1}, using a truncation argument, as for Proposition \ref{similar to Giga-Kohn}, and recalling the definition of $\gamma'$ given in \eqref{7cercle}:

- In Step 1, we show that
\begin{equation}
\label{8cercle}
\forall \; \tau\in [0,1),\;\forall \; |\xi|\leq 2\left|\log\left(T-t_0(x_0)\right)\right|^{\beta/2},\; |V(x_0,\xi,\tau)|\leq {C\over
\left|\log(T-t_0(x_0))\right|^{\gamma'}}.
\end{equation}

- In Step 2, we show that
\begin{equation}
\label{9cercle}
\forall \; \tau\in [0,1),\; \forall \; |\xi|\leq \left|\log\left(T-t_0(x_0)\right)\right|^{\beta/2},\; |U(x_0,\xi,\tau)-\overline{U}_{K_0}|\leq {C\over
\left|\log(T-t_0(x_0))\right|^{\gamma'}},
\end{equation}
where $\overline{U}_{K_0}$ is the solution of the ordinary differential equation
$$ \overline{U}_{K_0}'(\tau)=\overline{U}_{K_0}^p(\tau),$$
with initial data
$$\overline{U}_{K_0}(0)= \vpz(K_0),$$ given by
\begin{equation}
\label{3cercle}
\overline{U}_{K_0}(\tau)=\left((p-1)(1-\tau)+bK_0^2\right)^{-{1\over p-1}}.
\end{equation}

- In Step 3, we justify that
\begin{equation}
\label{10cercle}
\forall,\; \tau\in [{1\over 2},1),\; \forall\; |\xi|\leq {1\over 2}\left|\log\left(T-t_0(x_0)\right)\right|^{\beta/2},\; |\partial_\tau U(x_0,\xi,\tau)|+|\partial_\tau V(x_0,\xi,\tau)|\leq C,
\end{equation}
which yields limits for $U$ and $V$ as $\tau\to 1,$ hence for $u$ and $\partial_x u$ as $t\to T,$ by definitions \eqref{eqdeU} and \eqref{newV} of $U$ and $V.$

Let $\phi\in C^{\infty}(\R)$ be a cut-off function, with $\mbox{supp}\;\phi\subset B(0,1),$ $0\leq \phi\leq 1$ and $\phi\equiv 1$ on $B(0,1/2).$
Introducing
$$\phi_r(\xi)=\phi\left({\xi\over r \left|\log(T-t_0(x_0))\right|^{\beta/2}}\right),$$
we see that
\begin{equation}\label{der}
\|\nabla \phi_r\|_{L^\infty(\R)}\le \frac C{|\log(T-t_0(x_0))|^{\frac \beta 2}}
\mbox{ and }
\|\Delta \phi_r\|_{L^\infty(\R)}\le \frac C{|\log(T-t_0(x_0))|^{\beta}}.
\end{equation}

\bigskip

{\it Step 1: Proof of \eqref{8cercle}.} Let us consider $r=2$ and
introduce
\begin{equation}
\label{5cercle}
V_2=\phi_2 V.
\end{equation}
Then,
arguing as for
Lemma \ref{utile}, we have
for all $\xi \in \R$ and $\tau \in [0,1)$,
$$\partial_\tau V_2=\Delta_\xi V_2+V\Delta_\xi \phi_2-2\nabla_\xi(V\nabla_\xi \phi_2)+p|U|^{p-1}V_2+
\mu\nabla_\xi\left(|V|^{q-2}VV_2\right)
-\mu|V|^q\nabla_\xi \phi_2.$$
 Taking the $L^\infty$-norm on  the Duhamel equation satisfied by $V_2$, then using \eqref{der}, \eqref{2cercle} and \eqref{4cercle}, we get for all $\tau \in [0,1)$,
\begin{eqnarray*}
 \|V_2(\tau)\|_{L^\infty(\R)} & \leq  &\|V_2(0)\|_{L^\infty(\R)}+\frac{CM_0}{ \left|\log(T-t_0(x_0))\right|^{\beta}}
 +\frac{CM_0}{\left|\log(T-t_0(x_0))\right|^{\beta/2}} \\ &&
 +M_0^{p-1}\int_0^\tau \|V_2(s)\|_{L^\infty(\R)} ds
 +C M_0^{q-1} \int_0^\tau \frac{\|V_2(s)\|_{L^\infty(\R)}}{\sqrt{\tau-s}}  ds
+\frac{CM_0^q}{\left|\log(T-t_0(x_0))\right|^{\beta/2}} \\ & \leq  &
 {C\over \left|\log(T-t_0(x_0))\right|^{\gamma'}} +C\eta_0 \int_0^\tau  \frac{\|V_2(s)\|_{L^\infty(\R)}}{\sqrt{\tau-s}} ds,
\end{eqnarray*}
where $\eta_0=\max(M_0^{p-1},\; M_0^{q-1}, |\log(T-t_0(x_0))|^{-\frac \beta 2}).$
Since $\eta_0$ can be made sufficiently small by taking $|x_0|$ small enough and $K_0$ large enough (see \eqref{cercle} and \eqref{2cercle}), using Gronwall's inequality, we deduce
that
$$\|V_2(\tau)\|_{L^\infty(\R)} \leq  {2C\over \left|\log(T-t_0(x_0))\right|^{\gamma'}},\; \forall \tau\in [0,1),$$
and \eqref{8cercle} follows.

\bigskip

{\it Step 2: Proof of \eqref{9cercle}.} Let us consider $r=1$ and
introduce
 $$\mathcal{U}=U-\overline{U}_{K_0},\; \mathcal{U}_1=\phi_1 \mathcal{U}.$$
Then $\mathcal{U}_1$ satisfies the equation
$$\partial_\tau\mathcal{U}_1=\Delta \mathcal{U}_1+\mathcal{U}\Delta \phi_1-2\nabla(\mathcal{U}\nabla\phi_1)+a\mathcal{U}_1+
\mu |\nabla U|^{q}\phi_1,$$
where
$$a(x_0,\xi,\tau)={|U(x_0,\xi,\tau)|^{p-1}U(x_0,\xi,\tau)-\overline{U}_{K_0}^p(\tau)\over U(x_0,\xi,\tau)-\overline{U}_{K_0}},\; \mbox{ if }\; U(x_0,\xi,\tau)\not=\overline{U}_{K_0},$$
$$a(x_0,\xi,\tau)= p\overline{U}_{K_0}^{p-1}(\tau),\;\; \mbox{ Otherwise},$$
and satisfies
$$|a(x_0,\xi,\tau)|\leq C\eta_1$$
where $\eta_1=\max(K_0^{-2},M_0^{p-1})\to 0$ as $|x_0|\to 0$ and $K_0\to \infty$, by \eqref{3cercle} and \eqref{2cercle}. Using \eqref{6cercle}, \eqref{8cercle}, \eqref{der} and the Duhamel equation on $\mathcal{U}_1,$ we get
\begin{eqnarray*}
 \|\mathcal{U}_1(\tau)\|_{L^\infty(\R)} & \leq  &\|\mathcal{U}_1(0)\|_{L^\infty(\R)}+\frac{CM_0}{  \left|\log(T-t_0(x_0))\right|^{\beta}}
 +
\frac{CM_0}{\left|\log(T-t_0(x_0))\right|^{\beta/2}}  \\ && +
 C\eta_1\int_0^\tau \|\mathcal{U}_1(s)\|_{L^\infty(\R)} ds
 + {C \over \left|\log(T-t_0(x_0))\right|^{q\gamma'}} \\ & \leq  &
 {C\over \left|\log(T-t_0(x_0))\right|^{\gamma'}} +C \eta_2\int_0^\tau \|\mathcal{U}_1(s)\|_{L^\infty(\R)} ds,
\end{eqnarray*}
where $\eta_2=\min(\eta_1,1/\left|\log(T-t_0(x_0))\right|^{\beta/2}) .$ Since $\eta_2\to 0$ as $|x_0|\to 0$ and $K_0\to \infty,$ using Gronwall's inequality, we deduce  that
$$\|\mathcal{U}_1(\tau)\|_\infty \leq  {C\over \left|\log(T-t_0(x_0))\right|^{\gamma'}},\; \forall \tau\in [0,1),$$
and \eqref{9cercle} follows.

\bigskip

{\it Step 3: Proof of \eqref{10cercle} and conclusion of the Proof of
Theorem \ref{th1}.} From \eqref{eq:U} and \eqref{eq:V2}, we write
the following system verified by $(\partial_\tau U,\partial_\tau
V),$ for all $\tau\in [0,1)$ for all $\xi\in \R,$
$$\partial_\tau(\partial_\tau U)  = \Delta_\xi (\partial_\tau U)+q\mu (\partial_\tau V) |V|^{q-2}V+p|U|^{p-1}(\partial_\tau U)$$
$$\partial_\tau(\partial_\tau V)  = \Delta_\xi (\partial_\tau V) +\mu q\nabla_\xi\left((\partial_\tau V)|V|^{q-2}V\right)+p\nabla\left(|U|^{p-1}\partial_\tau U\right).$$
Using \eqref{8cercle} and \eqref{9cercle}, and classical parabolic regularity, as in Steps 1 and 2,
we see that \eqref{10cercle} follows.
 Then, as in \cite{MerleCPAM92}, we
have that $\lim_{\tau\to 1}U(x_0,0,\tau)$ and $\lim_{\tau\to
1}V(x_0,0,\tau)$ exist for $x_0$ sufficiently small. Moreover, using
\eqref{8cercle} and \eqref{9cercle}, we see that
\begin{equation}
\label{11cercleenrouge} \lim_{\tau \to 1}U(x_0,0,\tau)\sim
\overline{U}_{K_0}(1)= \left(bK_0^2\right)^{-{1\over p-1}},\;
\mbox{ as } x_0\to 0,
\end{equation}
and
\begin{equation}
\label{12cercleenrouge} |\lim_{\tau \to 1}V(x_0,0,\tau)|\leq {C\over
\left|\log(T-t_0(x_0))\right|^{\gamma'}},
\end{equation}
for $|x_0|$ small enough, where $\gamma'$ is defined in
\eqref{7cercle}. Recall from the beginning of the proof Part (iii)
of Theorem \ref{th1} (see Part 1 page \pageref{pagepart1}), that  $\lim_{t\to T}u(x_0,t):=u^*(x_0)$ and
$\lim_{t\to T}\nabla u(x_0,t):=
 \partial_xu^*(x_0).$ Therefore,
from the definitions \eqref{eqdeU} and \eqref{newV} of $U$ and $V$,
we write
 \begin{equation*}
 u^*(x_0)=\lim_{t\to T}u(x_0,t) =  \lim_{\tau\to 1}{U(x_0,0,\tau)\over
 \left(T-t_0(x_0)\right)^{{1\over p-1}}}
 \sim  \left(bK_0^2\right)^{-{1\over p-1}}\left(T-t_0(x_0)\right)^{-{1\over p-1}},\;
 \mbox{ as } x_0\to 0,
 \end{equation*}
 and
\begin{equation*}
|\partial_xu^*(x_0)| = \left|\lim_{\tau \to 1}\partial_x u(x_0,t)\right|
 =  \left|\lim_{\tau \to 1}{V(x_0,0,\tau)\over
\left(T-t_0(x_0)\right)^{{1\over p-1}+{1\over 2}}} \right| \leq
 {C\over \left(T-t_0(x_0)\right)^{{1\over p-1}+{1\over
2}}\left|\log(T-t_0(x_0))\right|^{\gamma'}}.
\end{equation*}
Using  \eqref{eqt0}, we have
\[
 \log|x_0|\sim {1\over 2}\log(T-t_0(x_0))
 \mbox{ and }
 T-t_0(x_0) \sim {|x_0|^2\over 2^{2\beta}K_0^2|\log|x_0||^{2\beta}},\; \mbox{ as } x_0\to 0.
 \]
Hence,
 $$u^*(x_0) \sim \left(b|x_0|^2\over \big[2\left|\log|x_0|\right|\big]^{2\beta}
 \right)^{-{1\over p-1}},\; \mbox{ as } x_0\to 0,$$
and
$$|\partial_xu^*(x_0)|\leq {C |x_0|^{-{p+1\over p-1}} \over \left|\log|x_0|\right|^{\gamma'-
2\beta^2}}.$$
Since $\gamma=\min\left(2\beta+1,5\beta-1\right)-\varepsilon$ with $\varepsilon>0$ by \eqref{defgamma} and $\gamma'=\min(\gamma-3\beta, \frac \beta 2)$ by \eqref{7cercle},
we easily see that

$$\gamma'-2\beta^2=
\left\{
  \begin{array}{ll}
    {1-3p\over (p-1)^2}-\varepsilon,\; \mbox{ if }\; 3<p\leq 7 & \\
    {-p^2+2p-5\over 2(p-1)^2}-\varepsilon,\; \mbox{ if }\; p> 7. &\\
  \end{array}
\right.
$$
This concludes the proof of Theorem \ref{th1}.
 \end{proof}

 \section{Stability} In this section we will prove Theorem \ref{th2}. In fact, the stability is a
 natural by-product of the existence proof, thanks to a geometrical interpretation
 of the parameters of the finite-dimensional problem (i.e.
 $(d_0,d_1)$ in \eqref{initial-data}) in terms of the blow-up time and the
 blow-up point.

\bigskip



Let us first explain the strategy of the proof, and leave the
technical details for the following subsection. Finally, in the third section, we briefly conclude the proof of Theorem \ref{th2}, then state a stronger version, valid for blow-up solutions having the profile \eqref{145} only for a subsequence (see Theorem \ref{th2}' page \pageref{th2'} below).

\subsection{Strategy of the proof}\label{substrat}

 Let us consider $\hat{u}$ the
constructed solution of equation \eqref{eq:uequation} in Theorem
\ref{th1}, and call $\hat{u}_0$ its initial data in
$W^{1,\infty}(\R),$ and $\hat{T}$ its blow-up time.
From the
construction method in Section 4 (see Proposition \ref{dans vs}),
consider $\hat{A}\geq 1$ such that
\begin{equation}\label{vva}
\forall\; s\geq -\log
\hat{T},\; \hat{\vq}(s)\in \vartheta_{\hat{A}}(s),
\end{equation}
 where
\begin{equation}\label{defvc}
\hat{\vq}(y,s)=\hat{w}(y,s)-\varphi(y,s),\; \hat{w}(y,s)=e^{-{s\over p-1}}
\hat{u}\left(ye^{-{s\over 2}},\hat{T}-e^{-s}\right)
\end{equation}
 and $\varphi$
is defined in \eqref{feq} (here and throughout this section, we consider the constant $K$ defining the truncation in \eqref{def:chi} as fixed). Now, we consider $u_0\in
W^{1,\infty}(\R)$ such that $\|\varepsilon_0\|_{W^{1,\infty}(\R)}$
is small, where
\begin{equation}
\label{tunis4triangle} \varepsilon_0=u_0-\hat{u_0}.
\end{equation}
We denote by $u_{u_0}$ the solution of equation \eqref{eq:uequation}
with initial data $u_0$ and $T(u_0)\le +\infty$ its maximal time of existence,
from the Cauchy theory in $W^{1,\infty}(\R).$

\medskip

Our aim is to show that, for some $A_0>0$ and $\sigma_0\geq -\log(\hat{T}),$ large enough, if $\varepsilon_0$ is small enough, then
$T(u_0)$ is finite, and $u_{u_0}$ blows up at time $T(u_0)$ only at
one blow-up point $a(u_0),$ with
\begin{equation}
\label{tunis2triangle} T(u_0)\to \hat{T},\; a(u_0)\to 0\; \mbox{ as
}\;
\varepsilon_0=u_0-\hat{u_0}\to 0\mbox{ in }W^{1,\infty}(\R),
\end{equation}
and
$$\vq_{T(u_0),\,a(u_0),\,u_0}(s)\in \vartheta_{A_0}(s)$$
for $s\geq s_0$ large enough,
where, for any $(T,a)\in \R^2,$ we introduce
\begin{equation}
\label{tunistriangle3}
\vq_{T,\,a,\,u_0}(y,s)=w_{T,\,a,\,u_0}(y,s)-\varphi(y,s),
\end{equation}
and $w_{T,\,a,\,u_0}$ is the similarity variable version centered
at $(T,a)$ of $u_{u_0}(x,t),$ the solution of equation
\eqref{eq:uequation} with initial data $u_0.$ More precisely, we have
\begin{equation}
\label{tunistriangle1}
w_{T,\,a,\,u_0}(y,s)=(T-t)^{\frac{1}{p-1}}u(x,t),\; y=(x-a)/{\sqrt{T-t}},~
  s=-\log{(T-t)}.
  \end{equation}
Indeed, with the estimates of Section 5,
 we deduce that $u_{u_0}$ satisfies the same estimates as
 ${\hat{u}},$ given in Theorem \ref{th1}, which is the desired
 conclusion of Theorem \ref{th2}. Note that for the moment, we don't even
 know that $T(u_0)$ is finite, hence asking the question of the existence of $a(u_0)$
 is non relevant and talking about $\vq_{T(u_0),\,a(u_0),\,u_0}$ is meaningless at this stage.

\medskip

 Anticipating our aim in \eqref{tunis2triangle}, we will study $\vq_{T,\,a,\,u_0},$ where
 $(T,a)$ is arbitrary in a small neighborhood of $(\hat{T}, 0),$ hoping to have some hint
 that some particular value $(\bt(u_0),\ba(u_0))$ close to $(\hat{T}, 0),$ will correspond
 to the aimed $(T(u_0),a(u_0)).$
Note that all the $\vq_{T,\,a,\,u_0}$ satisfy the same equation,
 namely \eqref{qequation}, for all $(y,s)\in \R\times [-\log T, -\log(T-T(u_0))_+)$
 (by convention, we note $-\log(0_+)=\infty$).
Introducing
\[
\epsilon(x,t) = u(x,t) - \hat u(x,t),\;\; \mbox{ for all } x\in \R\mbox{ and }0\le t <\min(T(u_0),\hat T),
\]
we see from \eqref{tunistriangle3}, \eqref{tunistriangle1} and \eqref{tunis4triangle}
 that for any $\sigma_0\in[ -\log \hat T, -\log(\hat T-T(u_0))_+)$, we have
\begin{equation}\label{defpb}
v_{T,\,a,\,u_0}(y,s_0)=\bar \psi(\sigma_0, u_0, T,a,y)
\equiv
(1+\tau)^{{1\over p-1}}
\left[\hat v(z,\sigma_0)+\varphi(z,\sigma_0)+e^{-\frac{\sigma_0}{p-1}} \epsilon\left(e^{-\frac{\sigma_0}2}z, \hat T - e^{-\sigma_0}\right)\right]-\varphi(y,s_0)\\
\end{equation}
with
\begin{equation}\label{defs0}
\tau = (T-\hat T)e^{\sigma_0},\;\alpha = a e^{\frac{\sigma_0}2},\;
s_0=s_0(\sigma_0, \tau)=\sigma_0-\log(1+\tau)\mbox{ and }z=y\sqrt{1+\tau}+\alpha
\end{equation}
(note that we used the fact that
 $$\hat{\vq}=\vq_{\hat{T},0,\hat{u}_0}= w_{\hat{T},0,\hat{u}_0}-\varphi= \hat{w}-\varphi,$$
which follows from \eqref{defvc} and \eqref{tunistriangle1}).



\medskip

In this context, $\bpsi(\sigma_0, u_0,T,a,y)$ appears as initial data
for equation \eqref{qequation} at initial time $s=s_0$.
Though the initial time $s_0=s_0(\sigma_0, \tau)$ is changing with $T$,
this reminds us of an analogous situation: in
the constructing procedure in Section 4, we had initial data
$\psi_{s_0,d_0,d_1}$ \eqref{initial-data} at $s=-\log \hat T,$ for the
same equation \eqref{qequation}, depending on two parameters
$(d_0,d_1).$

\medskip

What if by chance, {\it the application $(T,a)\mapsto
\bpsi(\sigma_0, u_0, T,\,a)$ satisfies the same initialization estimates as
$(d_0,d_1)\mapsto \psi_{s_0,d_0,d_1}$ (See Proposition
\ref{prop:diddc})?}
\medskip

In that case, the construction procedure would work, starting from
$\bpsi(\sigma_0, u_0, T,\,a,y)$ at time
$s=s_0,$
including the
reduction to a finite dimensional problem, and the topological
argument involving the two parameters $T$ and $a,$ resulting in the existence of
$(\bt(u_0),\ba(u_0)),$ such that equation \eqref{qequation} with
initial data at time
$s=s_0,$
$\bpsi(\sigma_0, u_0, \bt(u_0),\,\ba(u_0))$, has a solution $\bv_{\sigma_0,u_0}$ such
that
\begin{equation}
\label{rectangletunis} \forall \;
s\geq s_0,\;
\bv_{\sigma_0,u_0}(s)\in \vartheta_{A_0}(s).
\end{equation}
But then, remember that by definition,
$\bpsi_{\bt(u_0),\,\ba(u_0),\,u_0}$ is the initial data
also
 at
time $s=s_0$ defined in \eqref{defs0}, for $\vq_{\bt(u_0),\,\ba(u_0),\,u_0}(y,s),$
another solution of the same equation \eqref{qequation}. From
uniqueness in the Cauchy problem,
both solutions are equal, and have the same domain of
definition, and the same trapping property in
$\vartheta_{A_0}(s).$ In particular, recalling that
$\vq_{\bt(u_0),\,\ba(u_0),\,u_0}(y,s)$ is defined for all $(y,s) \in
\R\times [-\log \bt(u_0), -\log\left((\bt(u_0)-T(u_0))_+\right))$,
this implies that
$$\bt(u_0)=T(u_0),$$ and
$$\forall \; s\geq s_0,\;
\vq_{\bt(u_0),\,\ba(u_0),\,u_0}(y,s)=\bv_{\sigma_0,u_0}\left(y,s\right)$$
and from \eqref{rectangletunis}, we
have
\begin{equation}\label{trap}
\forall \; s\geq s_0,\; \vq_{T(u_0),\,a(u_0),\,u_0}(s)\in
\vartheta_{A_0}\left(s\right).
\end{equation}

\medskip

Using our technique in Section \ref{secsing}, we see that our original function
blows up in finite time $T(u_0)$ only at one blow-up point,
$\ba(u_0),$ and that $u(x-\ba(u_0))$ satisfies the profile estimates
given in Theorem \ref{th1}.

Of course, all this holds, provided that we check that the application $(T,a)\mapsto
\bpsi(\sigma_0, u_0,T,\,a)$ satisfies a statement analogous to Proposition
\ref{prop:diddc}, and that we show that
$$\bt(u_0)\to \hat{T},\; \ba(u_0)\to 0\; \mbox{ as }\;
\varepsilon_0=u_0-\hat{u_0}\to 0\mbox{ in }W^{1,\infty}(\R).
$$
Let us do that in the following subsections.


\subsection{Behavior of ``initial data'' $\bpsi$ for $(T,a)$ near $(\hat T, 0)$}

As explained in the previous subsection, here, we are left with the proof of an analogous statement to Proposition \ref{prop:diddc} for  initial data $\bar \psi(\sigma_0,u_0, T,a)$ \eqref{defpb} as a function of $(T,a)$. This is the new statement:

\bigskip

\noindent{\bf Proposition \ref{prop:diddc}' (Properties of initial data  $\bar \psi(\sigma_0,u_0,T,a)$ \eqref{defpb})}
{\it There exists $\bar C>0$ such that for any $A_0\ge \bar C\hat A$, there exists $\hat \sigma_0(A_0)>0$ large enough, such that for any $\sigma_0\ge \hat \sigma_0(A_0)$,
 there exists $\hat \epsilon_0(\sigma_0)>0$ small enough such that for all $u_0\in B_{W^{1,\infty}}(\hat u_0, \hat \epsilon_0(\sigma_0)):=\{ u\in W^{1,\infty}\; |\; \|u-\hat u_0\|_{W^{1,\infty}}\leq \hat \epsilon_0(\sigma_0)\}$,
the following holds:
\begin{itemize}
\item[(i)] There exists a set
\begin{equation}\label{bD}
\mathcal{\bar D}_{A_0,\sigma_0,u_0}\subset \{(T,a)\;|\;|T-\hat T|\le \frac{2e^{-\sigma_0}A_0(p-1)}{\kappa \sigma_0^{2\beta+1}},\;|a|\le  \frac{e^{-\frac{\sigma_0}2} A_0(p-1)^2}{b\kappa \sigma_0}\},
 \end{equation}
whose boundary is a Jordan curve such that the mapping
\[
(T,a) \mapsto s_0^{2\beta+1}(\bar \psi_0, \bar \psi_1)(\sigma_0, u_0,T,a)
\mbox{ where }s_0=s_0(\sigma_0, \tau) = \sigma_0 -\log(1+\tau)
\]
is one to one from $\mathcal{\bar D}_{A_0,\sigma_0,u_0}$ onto $[- A_0, A_0]^2$.
Moreover, it is of degree $-1$ on the boundary.
\item[(ii)]  For all $(T,a) \in \mathcal{\bar D}_{A_0, \sigma_0, u_0},$
$\bar \psi(\sigma_0, u_0,T,a) \in \vartheta_{A_0}(s_0)$ with strict
inequalities except for the two first, namely $(\bar \psi_0,\bar \psi_1)(\sigma_0, u_0,T,a)$, in the sense that
\begin{equation*}
|\bar \psi_m|\le \frac{A_0}{s_0^{2\beta+1}},\;m=0,1,\;\;
|\bar \psi_2|<\frac {C\sqrt{\hat A}}{s_0^{4\beta-1}},\;\;
|\bar \psi_-(y)|<~
\frac {3\hat A}{s_0^{\gamma}} (1+|y^3|),\;\forall\; y\in\; \R,
  \;\|\bar \psi_e\|_{L^\infty}< \frac {C\hat A^2}{s_0^{\gamma-3\beta}}.
\end{equation*}
\item[(iii)] Moreover, for all $(T,a)\in \mathcal{\bar D}_{u_0},$ we
have
\begin{equation*}
 \|\nabla\bar\psi\|_{L^\infty(\R)} \leq \frac{C{\hat A}^2}{s_0^{\gamma-3\beta}}
\mbox{ and }
|\nabla \bar \psi_-(y)|\leq
\frac{C{\hat A}^2}{s_0^{\gamma}}(1+|y|^3),
 \; \forall \; y\in \R.
 \end{equation*}
\end{itemize}
}
\begin{rem}\label{rems0}
{\rm Since $\bar\psi(\sigma_0, u_0, T,a)$ is the considered initial data for equation \eqref{qequation} at time $s=s_0$, we naturally decompose it according to \eqref{def:q:proj}-\eqref{qprojection} with $s=s_0$ defined in \eqref{defs0}.}
\end{rem}

\bigskip

In fact, this statement follows directly from the following expansion of $\bar \psi(\sigma_0, u_0,T,a)$ \eqref{defpb} for $(T,a)$ close to $(\hat T, 0)$:
\begin{lem}[Expansion of modes for $(T,a)$ close to $(\hat{T},\hat{a})$]\label{lemexp}
 For  $\sigma_0>0$ large enough, there exists
 $C_0(\sigma_0)>0$ such that for any
\begin{equation}\label{condta}
\|\varepsilon_0\|_{W_{1,\infty(\R)}}\le \frac 1{C_0},\;|\tau|\le \frac 12,\;|\alpha|\le 1,
\end{equation}
we have the following expansions:
\begin{itemize}
\item[(i)]
$|(\frac {s_0}{\sigma_0})^{2\beta+1}\bar \psi_0(\sigma_0, u_0,T,a)-\hat v_0(\sigma_0)-\frac{\kappa\tau}{p-1}|\le C(\frac 1{{\sigma_0}^{4\beta}}
+\frac{|\tau|{\hat A}^2}{\sigma_0^{\gamma-3\beta}}
+\frac{|\alpha|\hat A}{\sigma_0^{2\beta+1}}
+\frac{\alpha^2}{\sigma_0^{2\beta}}+\tau^2+|\tau\alpha|+|\alpha|^3)
+ C_0\|\varepsilon_0\|_{W^{1,\infty}(\R)}$,
\item[(ii)]
$|(\frac {s_0}{\sigma_0})^{2\beta+1}\bar \psi_1(\sigma_0, u_0,T,a)-\hat v_1(\sigma_0)
+{2b\kappa \over (p-1)^2}{\alpha\over {\sigma_0}^{2\beta}}|\le
C(e^{-\sigma_0^{2\beta}}+\frac{|\tau|{\hat A}^2}{\sigma_0^{\gamma-3\beta}}+
\frac{|\alpha|\sqrt{\hat A}}{\sigma_0^{4\beta-1}}+\tau^2+|\tau\alpha|+|\alpha|^3
)+
 C_0\|\varepsilon_0\|_{W^{1,\infty}(\R)}$,
    \item[(iii)]
$|\bar \psi_2(\sigma_0, u_0,T,a)|\le
\frac{C\sqrt{\hat A}}{{\sigma_0}^{4\beta-1}}+C(|\tau|+\frac{|\alpha|\hat A}{\sigma_0^\gamma}+|\alpha|^3)+C_0\|\varepsilon_0\|_{W^{1,\infty}(\R)}$,
\item[(iv)]
$|s_0^{-2\beta-1}\partial_\tau[s_0^{2\beta+1}\bar \psi_0](\sigma_0, u_0,T,a)-
{\kappa\over p-1}|\le C({1\over \sigma_0}+|\tau|+|\alpha|)+C_0\|\varepsilon_0\|_{W^{1,\infty}(\R)}$,
\item[(v)]
$|\partial_\tau[(\frac{s_0}{\sigma_0})^{2\beta+1}\bar \psi_1](\sigma_0, u_0,T,a)|\le
C(\frac {\hat A}{\sigma_0}+|\tau|+|\alpha|)+C_0\|\varepsilon_0\|_{W^{1,\infty}(\R)}$,
\item[(vi)]
$|\partial_\alpha\bar \psi_0(\sigma_0, u_0,T,a)|\le C(\frac{\hat A}{\sigma_0^{2\beta+1}}+|\tau|+\frac{|\alpha|}{\sigma_0^{2\beta}}+\alpha^2)
+C_0\|\varepsilon_0\|_{W^{1,\infty}(\R)}$,
\item[(vii)]
$|\partial_\alpha\bar \psi_1(\sigma_0, u_0,T,a)+
{2b\kappa \over (p-1)^2}{1\over {\sigma_0}^{2\beta}}|\le
C(\frac{\sqrt{\hat A}}{\sigma_0^{4\beta-1}}+|\tau|+\frac{\hat A|\alpha|}{\sigma_0^\gamma}+\alpha^2)
+C_0\|\varepsilon_0\|_{W^{1,\infty}(\R)}$,
    \item[(viii)]
$\left\|\frac{\bar \psi_-(\sigma_0, u_0,T,a,y)}{1+|y|^3}\right\|_{L^\infty(\R)}\le
C(\frac {\hat A}{\sigma_0^\gamma}+|\tau|+\frac{|\alpha|}{\sigma_0^{2\beta}}+\frac{{\hat A}^2|\alpha|^3}{\sigma_0^{\gamma-3\beta}})
+C_0\|\varepsilon_0\|_{W^{1,\infty}(\R)}$,
    \item[(ix)]
$\left\|\bar \psi_e(\sigma_0, u_0,T,a)\right\|_{L^\infty(\R)}\le
C(\frac{{\hat A}^2}{\sigma_0^{\gamma-3\beta}}+|\tau|+\frac{|\alpha|}{\sigma_0^\beta})
+C_0\|\varepsilon_0\|_{W^{1,\infty}(\R)},$
\item[(x)] $\|\nabla \bar \psi(\sigma_0, u_0,T,a)\|_{L^\infty(\R)}\le
C(\frac{{\hat A}^2}{\sigma_0^{\gamma-3\beta}}+\frac{|\tau|}{\sigma_0^\beta}+\frac{|\alpha|}{\sigma_0^{2\beta}})
+C_0\|\varepsilon_0\|_{W^{1,\infty}(\R)}$,

\item[(xi)] $\left\|\frac{\nabla \bar \psi_-(\sigma_0, u_0,T,a,y)}{1+|y|^3}\right\|_{L^\infty(\R)}\le
C(\frac{{\hat A}^2}{\sigma_0^\gamma}+\frac{{\hat A}^2(|\tau|+|\alpha|^3)}{\sigma_0^{\gamma-3\beta}}+\frac{|\alpha|}{\sigma_0^{2\beta}})
 +C_0\|\varepsilon_0\|_{W^{1,\infty}(\R)}$.
\end{itemize}
\end{lem}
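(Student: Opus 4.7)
The plan is to exploit a natural decomposition of $\bar\psi$ into three parts, each depending on a different source of perturbation. Writing
\[
\bar\psi(\sigma_0,u_0,T,a,y) = \mathcal{V}(y) + \Phi(y) + \mathcal{E}(y),
\]
where
\[
\mathcal{V}(y) = (1+\tau)^{\frac{1}{p-1}}\hat v(z,\sigma_0),\quad
\Phi(y) = (1+\tau)^{\frac{1}{p-1}}\varphi(z,\sigma_0) - \varphi(y,s_0),
\]
\[
\mathcal{E}(y) = (1+\tau)^{\frac{1}{p-1}}e^{-\frac{\sigma_0}{p-1}}\epsilon\bigl(e^{-\frac{\sigma_0}{2}}z,\hat T-e^{-\sigma_0}\bigr),
\]
with $z = y\sqrt{1+\tau}+\alpha$ and $s_0 = \sigma_0-\log(1+\tau)$, I would check that at $\tau=\alpha=0$ and $\varepsilon_0=0$, we simply recover $\bar\psi = \hat v(\cdot,\sigma_0)$, which already lies in $\vartheta_{\hat A}(\sigma_0)$ by \eqref{vva}. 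Items (i)--(xi) then arise from perturbing around this reference point in three independent directions: $\tau$, $\alpha$, and $\varepsilon_0$.

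The term $\mathcal{E}$ is the easiest: by well-posedness of \eqref{eq:uequation} in $W^{1,\infty}(\R)$ and continuous dependence on initial data on $[0,\hat T-e^{-\sigma_0}]$, we get $\|\epsilon(\cdot,\hat T-e^{-\sigma_0})\|_{W^{1,\infty}} \le C_0(\sigma_0)\|\varepsilon_0\|_{W^{1,\infty}}$, which after multiplication by the harmless factor $(1+\tau)^{\frac{1}{p-1}}e^{-\sigma_0/(p-1)}$ supplies the $C_0\|\varepsilon_0\|_{W^{1,\infty}}$ contribution in every item; its contribution to each projection is then bounded by the $L^\infty$ norm. The term $\Phi$ is fully explicit: using the expression \eqref{feq} for $\varphi$, a Taylor expansion in $\tau$ and $\alpha$ yields
\[
\Phi(y) = \frac{\kappa\tau}{p-1}\left(1+O\tfrac{1}{\sigma_0}\right) -\frac{2b\kappa}{(p-1)^2}\frac{\alpha}{\sigma_0^{2\beta}}\,y + O\bigl(\tau^2+|\tau\alpha|+|\alpha|^3+\tfrac{\alpha^2}{\sigma_0^{2\beta}}+\tfrac{1}{\sigma_0^{4\beta}}\bigr),
\]
which projects directly onto modes $h_0$ and $h_1$ to give the main terms ${\kappa\tau\over p-1}$ in (i) and $-{2b\kappa\over(p-1)^2}{\alpha\over\sigma_0^{2\beta}}$ in (ii); derivatives with respect to $\tau,\alpha$ (items (iv)--(vii)) then follow by the same explicit computation with one extra differentiation.

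For the dominant term $\mathcal{V}(y) = (1+\tau)^{1/(p-1)}\hat v(y\sqrt{1+\tau}+\alpha,\sigma_0)$, I would use that $\hat v(\sigma_0)\in \vartheta_{\hat A}(\sigma_0)$, so its decomposition $\hat v = \sum_{m=0}^{2}\hat v_m h_m + \hat v_- + \hat v_e$ is already controlled. A Taylor expansion in $\sqrt{1+\tau}-1 = O(\tau)$ and $\alpha$ together with the polynomial identities $h_m(y\lambda+\alpha) = \lambda^m h_m(y) + \ldots$ transfers this control to $\mathcal{V}$, while the factor $(1+\tau)^{1/(p-1)}=1+O(\tau)$ produces a correction absorbed in the error. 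Contributions to higher modes are bounded by the $\vartheta_{\hat A}(\sigma_0)$ norms. The gradient bounds (x)--(xi) follow by combining part (i)--(ii) of Proposition \ref{prop:regu-parab-q-equation} applied to $\hat v$ with direct differentiation of $\Phi$.

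The main obstacle is bookkeeping: the cutoff $\chi(y,s_0)$ in the decomposition \eqref{def:q:proj}--\eqref{def:q_b} depends on the moving time $s_0=\sigma_0-\log(1+\tau)$, so the projections $\bar\psi_m$ are not simply $L^2_\rho$ projections of $\bar\psi$ onto $h_m$. To handle this, I would exploit the exponentially small tail estimate \eqref{rhoext}, which shows that replacing $\chi(\cdot,s_0)$ by $\chi(\cdot,\sigma_0)$ (or simply by $1$ on compact sets) costs only an $O(e^{-\sigma_0^{2\beta}})$ error. A second subtlety is that the translation $y\mapsto y\sqrt{1+\tau}+\alpha$ mixes modes (for instance, $h_1(y\sqrt{1+\tau}+\alpha)=\sqrt{1+\tau}\,h_1(y)+\alpha$), so one has to track where each power of $\alpha$ and $\tau$ appears in every mode; this is what produces the $|\tau\alpha|$, $|\alpha|^3$ and $\tau^2$ correction terms visible throughout (i)--(xi). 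Once these bookkeeping issues are managed carefully, items (iii), (viii), (ix) follow by directly bounding the corresponding components of $\mathcal V$, $\Phi$ and $\mathcal E$ using the shrinking set estimates for $\hat v$ and the explicit Taylor remainders for $\Phi$.
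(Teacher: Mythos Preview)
Your approach is correct and essentially matches the paper's proof: the paper uses the same split into the perturbation piece, the $\hat v$ piece, and the profile difference (with your $\Phi$ further separated into $(1+\tau)^{1/(p-1)}\varphi(z,\sigma_0)$ and $-\varphi(y,s_0)$, which are however recombined for items (viii)--(xi)). The only organizational difference is that the paper packages the change-of-variables computation for $\mathcal{V}$ and the $\varphi(z,\sigma_0)$ piece into a reusable lemma (Lemma~\ref{clproj} in Appendix~\ref{appproj}) giving the projections of $(1+\tau)^{1/(p-1)}g(y\sqrt{1+\tau}+\alpha,\sigma_0)$ in terms of those of $g$, which makes systematic exactly the mode-mixing and cutoff bookkeeping you correctly flag as the main obstacle.
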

\begin{rem}
As already stated in Remark \ref{rems0}, we decompose $\bar \psi$ according to \eqref{def:q:proj}-\eqref{qprojection} with $s=s_0$ defined in \eqref{defs0}.
\end{rem}
Indeed, let us first use this lemma to derive Proposition \ref{prop:diddc}', then, we will give its proof.

\medskip

\begin{proof}[Proof of Proposition \ref{prop:diddc}' assuming that Lemma \ref{lemexp} holds]
$ $\\
(i)
Introducing the following change of functions and variables:
\begin{equation}\label{trans}
\tilde \psi_m(\sigma_0, \varepsilon_0,\tilde \tau, \tilde \alpha) = s_0^{2\beta+1}\bar \psi_m(\sigma_0, u_0,T,a)\mbox{ for }m=0,1,\;\;
\varepsilon_0=u_0-\hat u_0,\;\;
\tilde \tau = \sigma_0^{2\beta+1}\tau\mbox{ and }
\tilde \alpha = \sigma_0 \alpha,
\end{equation}
we readily see from the previous lemma that whenever
\[
\|\varepsilon_0\|_{W^{1,\infty}(\R)}\le \frac 1{C_0(\sigma_0)},\;\;
|\tilde \tau|\le \frac{\sigma_0^{2\beta+1}}2,\;\;|\tilde \alpha|\le \sigma_0,
\]
and $\sigma_0$ is large enough, it follows that
\begin{align}
|\tilde \psi_0(\sigma_0, \varepsilon_0,\tilde \tau, \tilde \alpha)-\sigma_0^{2\beta+1}\hat v_0(\sigma_0)-\frac{\kappa\tilde\tau}{p-1}|\le& C(\frac 1{{\sigma_0}^{2\beta-1}}
+\frac{|\tilde \tau|{\hat A}^2}{\sigma_0^{\gamma-3\beta}}
+\frac{|\tilde \alpha|}{\sigma_0}
+\frac{{\tilde\alpha}^2}{\sigma_0}+\frac{{\tilde\tau}^2}{\sigma_0^{2\beta+1}}+\frac{|\tilde \tau\tilde\alpha|}{\sigma_0})\nonumber\\
&+\frac{C|\tilde\alpha|^3}{\sigma_0^{2(1-\beta)}}
+ C_0\sigma_0^{2\beta+1}\|\varepsilon_0\|_{W^{1,\infty}(\R)},\label{psi0}\\
|\tilde \psi_1(\sigma_0, \varepsilon_0,\tilde \tau, \tilde \alpha) -\sigma_0^{2\beta+1}\hat v_1(\sigma_0)
+{2b\kappa \over (p-1)^2}\tilde\alpha|\le &
C(\sigma_0^{2\beta+1}e^{-\sigma_0^{2\beta}}+\frac{|\tilde\tau|{\hat A}^2}{\sigma_0^{\gamma-3\beta}}+
\frac{|\tilde\alpha|\sqrt{\hat A}}{\sigma_0^{2\beta-1}}+\frac{{\tilde\tau}^2}{\sigma_0^{2\beta+1}}+\frac{|\tilde\tau\tilde\alpha|}{\sigma_0})\nonumber\\
&+C\frac{|\tilde\alpha|^3}{\sigma_0^{2(1-\beta)}}
+
 C_0\sigma_0^{2\beta+1}\|\varepsilon_0\|_{W^{1,\infty}(\R)},\label{psi1}
\end{align}
and
\begin{align*}
{\rm Jac}_{\tilde \tau, \tilde \alpha}(\sigma_0, \varepsilon_0,\tilde \psi_0, \tilde \psi_1)(\tilde \tau, \tilde \alpha)=&
\left|
\begin{array}{cc}
\frac \kappa{p-1}&0,\\
0&-\frac{2b\kappa}{(p-1)^2}
\end{array}
\right|
+C(\frac{\sqrt{\hat A}}{\sigma_0^{2\beta-1}}+\frac{|\tilde \tau|}{\sigma_0}+\frac{|\tilde \alpha|}{\sigma_0}+\frac{{\tilde \alpha}^2}{\sigma_0^{2(1-\beta)}})\\
&+C_0\sigma_0^{2\beta}\|\varepsilon_0\|_{W^{1,\infty}(\R)}.
\end{align*}
Consider now $A_0\ge 2 \hat A$, $\sigma_0$ large and $\varepsilon_0$ such that
\begin{equation}\label{conde0}
\|\varepsilon_0\|_{W^{1,\infty}(\R)}\le \hat \varepsilon_0(\sigma_0)\equiv \frac 1{\sigma_0^{2\beta +1}C_0(\sigma_0)}.
\end{equation}
From the above-mentioned expansions, we see that for $\sigma_0$ large enough, the function
\[
(\tilde\tau, \tilde \alpha) \mapsto (\tilde \psi_0, \tilde \psi_1)(\sigma_0,, \varepsilon_0, \tilde \tau, \tilde \alpha)
\]
is a $C^1$ diffeomorphism from the rectangle
\[
{\mathcal R}_{A_0}\equiv
 \left[-\frac{2(p-1)A_0}\kappa, \frac{2(p-1)A_0}\kappa\right]\times \left[-\frac{(p-1)^2A_0}{b\kappa}, \frac{(p-1)^2A_0}{b\kappa}\right]
\]
onto a set ${\mathcal E}_{A_0,\sigma_0,u_0}$ which approaches (in some appropriate sense) from the rectangle
$[\sigma_0^{2\beta+1}\hat v_0(\sigma_0)-2A_0,  \sigma_0^{2\beta+1}\hat v_0(\sigma_0)+2A_0]\times [\sigma_0^{2\beta+1}\hat v_0(\sigma_1)-2A_0,  \sigma_0^{2\beta+1}\hat v_1(\sigma_0)+2A_0]$ as $\sigma_0\to \infty$. Since $\hat v(\sigma_0)\in \vartheta_{\hat A}(\sigma_0)$
by \eqref{vva},
 hence $|\sigma_0^{2\beta+1}\hat v_m(\sigma_0)|\le \hat A$ for $m=0,1$, we clearly see that
\[
[-A_0,A_0]^2 \subset {\mathcal E}_{A_0,\sigma_0,u_0}
\]
for $\sigma_0$ large enough, hence, there exists a set $\tilde{\mathcal D}_{A_0,\sigma_0,u_0}\subset {\mathcal R}_{A_0}$ such that
\[
(\tilde \psi_0, \tilde \psi_1)(\sigma_0, \varepsilon_0,\tilde {\mathcal D}_{A_0,\sigma_0,u_0})=[-A_0, A_0]^2.
\]
Moreover, from \eqref{psi0}-\eqref{psi1}, the function $(\tilde \psi_0, \tilde \psi_1)$ has degree $-1$ on the boundary of $\tilde{\mathcal D}_{A_0,\sigma_0,u_0}$. Using back the transformation \eqref{trans} gives the conclusion of item (i).\\
(ii) Take $(T,a)\in \bar {\mathcal D}_{A_0,\sigma_0, u_0}$ and let us check that
$\bpsi(\sigma_0, u_0, T,a)\in \vartheta_{A_0}(s_0)$.\\
First, from item (i), we know by construction that
$|\bpsi_m(\sigma_0, u_0, T,a)|\le A_0s_0^{-2\beta-1}$
for $m=0,1$. For the other estimates to be checked, note first from \eqref{bD} and the definition \eqref{defs0} of $(\tau, \alpha)$ that we have
\[
|\tau| \le \frac{2A_0(p-1)}{\kappa \sigma_0^{2\beta+1}}
\mbox{ and }|\alpha|\le \frac{A_0(p-1)^2}{b\kappa \sigma_0}.
\]
Therefore, using \eqref{conde0} then items (iii), (viii) and (ix) of Lemma \ref{lemexp}, we see that for $\sigma_0$ large, we have
\begin{equation*}
|\bpsi_2(\sigma_0, u_0, T,a)|\le \frac{C \sqrt{\hat A}}{\sigma_0^{4\beta-1}},\;\;
\left\|\frac{\bar \psi_-(\sigma_0, u_0,T,a,y)}{1+|y|^3}\right\|_{L^\infty(\R)}\le \frac {C \hat A}{\sigma_0^\gamma},\;\;
\left\|\bar \psi_e(\sigma_0, u_0,T,a)\right\|_{L^\infty(\R)}\le
\frac{C{\hat A}^2}{\sigma_0^{\gamma-3\beta}}.
\end{equation*}
If $A_0\ge \bar C \hat A$, for some large $\bar C>0$, then we see that the conclusion follows.\\
(iii) The conclusion follows from  items (x) and (xi) in Lemma \ref{lemexp}, proceeding similarly to what we did for item (ii).\\
This concludes the proof of  Proposition \ref{prop:diddc}', assuming that Lemma \ref{lemexp} holds.
\end{proof}

Now, we are left with the proof of Lemma \ref{lemexp}.

\begin{proof}[Proof of Lemma \ref{lemexp}] The proof is similar in the spirit to the case of the unperturbed semilinear heat equation (with $\mu=0$ in \eqref{eq:uequation}) treated in \cite{MZsbupdmj97} (see Lemma B.2 page 186 in that paper). However, due to the difference of the scaling in the profile $\varphi$ \eqref{feq} (we had $\beta=\frac 12$ in \cite{MZsbupdmj97} whereas we have $\beta=\frac{p+1}{2(p-1)}$ here), we need to carefully give details for all the computations in the proof.

\medskip

From \eqref{defpb}, we write
\[
\bar \psi(\sigma_0, u_0, T,a,y)
=
\bar \psi^1(y,s_0)
+\bar \psi^2(y,s_0)
+\bar \psi^3(y,s_0)
+\bar \psi^4(y,s_0)
\]
where
\begin{align}
\bar \psi^1(y,s_0) &=(1+\tau)^{{1\over p-1}}
e^{-\frac{\sigma_0}{p-1}} \epsilon\left(e^{-\frac{\sigma_0}2}z, \hat T - e^{-\sigma_0}\right),\;\;
&\bar \psi^2(y,s_0)& =(1+\tau)^{{1\over p-1}}\hat v(z,\sigma_0)\nonumber\\
\bar \psi^3(y,s_0)&=(1+\tau)^{{1\over p-1}}\varphi(z,\sigma_0),\;\;
&\bar \psi^4(y,s_0)& =
-\varphi(y,s_0),\label{defbpq}
\end{align}
where $\tau$, $\alpha$ and $s_0$ are given in \eqref{defs0}.
In the following, we proceed in 4 steps, to prove analogous statements to Lemma \ref{lemexp} with $\bar \psi$ replaced by $\bar \psi^i$ for $i=1,\dots,4$. Of course, Lemma \ref{lemexp} then follows by addition. Note that as for $\bar \psi$, we decompose $\bar \psi_i$ according to \eqref{def:q:proj}-\eqref{qprojection} with $s=s_0$ \eqref{defs0}. This also justifies the lighter notation $\bar\psi_i(y,s_0)$, where we insist on the dependence on the space variable $y$ and the time variable $s_0$ corresponding to the initial time where $\bar\psi$ is considered (see Remark \ref{rems0}).

\bigskip

{\bf Step 1: Expansions of $\bar\psi^1$}

Let us first note that since equation \eqref{eq:uequation} is wellposed in $W^{1,\infty}(\R)$ through a simple fixed-point argument (see page \pageref{eq:uequation}), it follows that
\[
\|\varepsilon(\hat T-e^{-\sigma_0})\|_{W_{1,\infty}(\R)}\le C_0\|\varepsilon_0\|_{W_{1,\infty}(\R)},
\]
whenever $\|\varepsilon_0\|_{W_{1,\infty}(\R)}\le \frac 1{C_0}$, for some
$C_0=C_0(\sigma_0)$.\\
Therefore, using our techniques throughout this paper, we clearly see that for $\sigma_0$ large enough and $(\tau, \alpha)$ satisfying \eqref{condta}, we have
\begin{align*}
&|\bar \psi^1_0(s_0)|+
|\bar \psi^1_1(s_0)|+
|\bar \psi^1_2(s_0)|+
|\partial_\tau[(\frac{s_0}{\sigma_0})^{2\beta+1}\bar \psi^1_0](s_0)|+
|\partial_\tau[(\frac{s_0}{\sigma_0})^{2\beta+1}\bar \psi^1_1](s_0)|
+
|\partial_\alpha\bar \psi^1_0(s_0)|+
|\partial_\alpha\bar \psi^1_1(s_0)|\\
+&
\left\|\frac{\bar \psi^1_-(y,s_0)}{1+|y|^3}\right\|_{L^\infty(\R)}+
\left\|\bar \psi^1_e(s_0)\right\|_{L^\infty(\R)}
+\|\nabla \bar \psi^1(s_0)\|_{L^\infty(\R)}
+\left\|\frac{\nabla \bar \psi^1_-(y,s_0)}{1+|y|^3}\right\|_{L^\infty(\R)} \\
\le& C\|\varepsilon(\hat T-e^{-\sigma_0})\|_{W_{1,\infty}(\R)}\le C_0\|\varepsilon_0\|_{W_{1,\infty}(\R)}.
\end{align*}

\bigskip

{\bf Step 2: Expansions of $\bar\psi^2$}

Anticipating Step 3, where we handle $\bar\psi^3$, we note that both functions $\bar\psi^2$ and $\bar\psi^3$ are
of the form $(1+\tau)^{\frac 1{p-1}}g(z,\sigma_0)$, where $g(\sigma_0)\in W^{1,\infty}(\R)$. For that reason, we can handle both functions at the same time, by first expanding in the style of Lemma \ref{lemexp}, a function $\bar g$ defined by
\begin{equation}\label{defhb}
\bar g(y,s_0)=
(1+\tau)^{\frac 1{p-1}}g(z,\sigma_0)\mbox{ where }z=y\sqrt{1+\tau} +\alpha
\mbox{ and }s_0=\sigma_0-\log(1+\tau),
\end{equation}
where $g(\sigma_0)\in W^{1,\infty}(\R)$.
The following statement allows us to conclude for $\bar \psi^2$:
\begin{lem}[Expansions of $\bar g(y,\sigma_0)$]\label{clproj} If $g(\sigma_0)\in W^{1,\infty}(\R)$ and $\bar g$ is defined by \eqref{defhb}, then, the following expansions hold for $\sigma_0$ large enough and $(\tau, \alpha)$ satisfying \eqref{condta}:
\begin{align*}
&\mbox{\rm (i) }&(\frac{s_0}{\sigma_0})^{2\beta+1}\bar g_0(s_0)=& g_0(\sigma_0)\left(1+\frac \tau{p-1}\right)+O(\alpha g_1(\sigma_0))+O((|\tau|+\alpha^2) g_2(\sigma_0))\\
&&&+O((e^{-\sigma_0^{2\beta}}+\tau^2+\frac{|\tau|}{\sigma_0}+|\tau \alpha|+|\alpha|^3)\|g(\sigma_0)\|_{L^\infty}),\\
&\mbox{\rm (ii) }&(\frac{s_0}{\sigma_0})^{2\beta+1}\bar g_1(s_0)=&g_1(\sigma_0)\left(1+\frac {(p+1)\tau}{2(p-1)}\right)+
2\alpha g_2(\sigma_0)+O(g_3(\sigma_0)(|\tau|+\alpha^2))\\
&&&
    +O((e^{-\sigma_0^{2\beta}}+\tau^2+\frac{|\tau|}{\sigma_0}+|\tau \alpha|+|\alpha|^3)\|g(\sigma_0)\|_{L^\infty}),\\
&\mbox{\rm (iii) }&\bar g_2(s_0)=g_2(\sigma_0)&+O(g_3(\sigma_0)\alpha)+O(g_4(\sigma_0)\alpha^2)
+O((e^{-\sigma_0^{2\beta}}+|\tau |+|\alpha|^3)\|g(\sigma_0)\|_{L^\infty}),\\
&\mbox{\rm (iv) }&s_0^{-2\beta-1}\partial_\tau[s_0^{2\beta+1} \bar g_0](s_0)=&\frac {g_0(\sigma_0)}{p-1}+O(g_2(\sigma_0))+ O((\sigma_0^{-1}+|\tau |+|\alpha|)\|g(\sigma_0)\|_{L^\infty}),\\
&\mbox{\rm (v) }&\partial_\tau[(\frac{s_0}{\sigma_0})^{2\beta+1} \bar g_1](s_0)=&O(g_1(\sigma_0))+O(g_3(\sigma_0))+O((\sigma_0^{-1}+|\tau |+|\alpha|)\|g(\sigma_0)\|_{L^\infty}),\\
&\mbox{\rm (vi) }&\partial_\alpha \bar g_0(s_0)=&O(g_1(\sigma_0))+O(\alpha g_2(\sigma_0))+O((e^{-\sigma_0^{2\beta}}+|\tau |+\alpha^2)\|g(\sigma_0)\|_{L^\infty}),\\
&\mbox{\rm (vii) }&\partial_\alpha \bar g_1(s_0)=&2g_2(\sigma_0)+O(\alpha g_3(\sigma_0))+O((e^{-\sigma_0^{2\beta}}+|\tau |+\alpha^2)\|g(\sigma_0)\|_{L^\infty}),\\
&\mbox{\rm (viii) }&\left\|\frac{\bar g_-(y,s_0)}{1+|y|^3}\right\|_{L^\infty(\R)}\le&C \left\|\frac{g_-(z,\sigma_0)}{1+|z|^3}\right\|_{L^\infty(\R)}
+C\left(
\sigma_0^{-4\beta}+|\tau|+|\alpha|^3\right)\|g(\sigma_0)\|_{L^\infty(\R)}\\
&&&+C|\alpha|\left(|g_1(\sigma_0)|+|g_2(\sigma_0)|\right),\\
&\mbox{\rm (ix) }&\left\|\bar g_e(y,s_0)\right\|_{L^\infty(\R)}\le&\left\|g_e(z,\sigma_0)\right\|_{L^\infty(\R)}+C(|\tau|
+|\alpha|)\|g(\sigma_0)\|_{L^\infty(\R)},\\
&\mbox{\rm (x) }&\|\nabla \bar g(s_0)\|_{L^\infty(\R)}\le &C \|\nabla g(\sigma_0)\|_{L^\infty(\R)},\\
&\mbox{\rm (xi) }&\left\|\frac{\nabla \bar g_-(y,s_0)}{1+|y|^3}\right\|_{L^\infty(\R)}\le &C \left\|\frac{\nabla g_-(z,\sigma_0)}{1+|z|^3}\right\|_{L^\infty(\R)}+C\|\nabla g(\sigma_0)\|_{L^\infty(\R)}\left(|\tau|+\frac{|\alpha|}{\sigma_0^{4\beta}}\right)+O(\alpha g_2(\sigma_0))\\
&&&+O(\alpha g_3(\sigma_0))+O(\alpha^2 g_4(\sigma_0))+C\|g(\sigma_0)\|_{L^\infty(\R)}\left(
\sigma_0^{-5\beta}+|\tau|+|\alpha|^3\right).
\end{align*}
\end{lem}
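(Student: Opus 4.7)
The approach is a careful Taylor expansion of $\bar g$ in the parameters $(\tau, \alpha)$, in the spirit of Lemma B.2 of \cite{MZsbupdmj97}, though with attention to the new scaling $\beta = (p+1)/(2(p-1))$ replacing the classical value $1/2$ that appears there. The main mechanism is the change of variable $z = y\sqrt{1+\tau}+\alpha$ in each projection integral $\bar g_m(s_0) = \int \chi(y,s_0) k_m(y) \bar g(y,s_0) \rho(y)\,dy$. Under this substitution, the prefactor $(1+\tau)^{1/(p-1)}$ expands as $1+\tau/(p-1)+O(\tau^2)$; the Hermite polynomial $h_m(y) = h_m((z-\alpha)/\sqrt{1+\tau})$ expands as $\sum_{j\le m} c_{m,j}(\tau,\alpha) h_j(z)$ with $c_{m,m}(0,0)=1$ and the remaining coefficients polynomial in $(\tau,\alpha)$; the Jacobian-weight ratio $\rho(y)/(\sqrt{1+\tau}\,\rho(z))$ expands in $(\tau, \alpha, z^2)$; and the truncation mismatch between $\chi(y,s_0)$ and $\chi(z,\sigma_0)$ is supported on a region where $\rho$ is of order $e^{-c\sigma_0^{2\beta}}$, hence contributes negligibly. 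The ratio $(s_0/\sigma_0)^{2\beta+1}=1+O(|\tau|/\sigma_0)$, coming from $s_0 = \sigma_0-\log(1+\tau)$, compensates for the different normalizations used to define $\bar g_m$ at time $s_0$ versus $g_m$ at time $\sigma_0$.

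For items (i)--(iii), I would collect the coefficients of $g_0(\sigma_0)$, $g_1(\sigma_0)$, $g_2(\sigma_0)$ (and higher modes when necessary) produced by the expansion; the remaining terms are controlled either by $\|g(\sigma_0)\|_{L^\infty}$ times a small factor (coming from higher powers of $\tau, \alpha$ and from $e^{-\sigma_0^{2\beta}}$) or by $\|g_-(\cdot,\sigma_0)/(1+|z|^3)\|_{L^\infty}$. The explicit coefficient $(p+1)/(2(p-1))$ multiplying $\tau$ in item (ii) arises as the sum of $1/(p-1)$ (from the prefactor) and $1/2$ (from the Jacobian combined with the factor $y=z/\sqrt{1+\tau}-\alpha/\sqrt{1+\tau}$ in $h_1$), while the mixing term $2\alpha\, g_2$ follows from the expansion of $h_1((z-\alpha)/\sqrt{1+\tau})$ as a linear combination of $1$ and $z$.

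For items (iv)--(vii), I would differentiate the integral representations of $\bar g_0, \bar g_1$ obtained above under the integral sign with respect to $\tau$ or $\alpha$. For $\partial_\alpha$, I would integrate by parts in $z$, transferring the derivative from $g$ onto $k_m\rho$; since $\partial_z h_2 = 2z$ and the Gaussian weight is invariant under this manipulation, this produces the $2g_2$ term of item (vii). For $\partial_\tau$, the dependence $s_0 = \sigma_0-\log(1+\tau)$ contributes the $O(1/\sigma_0)$ correction, while the $z$-dependence yields the main term $g_0/(p-1)$ of item (iv). For items (viii)--(ix), I would write $\bar g_-(y,s_0) = \chi(y,s_0)\bar g(y,s_0)-\sum_{m=0}^2 \bar g_m(s_0) h_m(y)$ and $\bar g_e(y,s_0) = (1-\chi(y,s_0))\bar g(y,s_0)$; the pointwise bound $|\bar g(y,s_0)|\le (1+|\tau|)^{1/(p-1)}|g(z,\sigma_0)|$, combined with the decomposition $g = g_b+g_e$ and with the estimates on $\bar g_m$ from items (i)--(iii), then yields the desired control. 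For items (x)--(xi) the chain rule gives $\partial_y \bar g(y,s_0) = \sqrt{1+\tau}(1+\tau)^{1/(p-1)}(\partial_z g)(z,\sigma_0)$, and the same strategy applies with $g$ replaced by $\partial_z g$.

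The main obstacle is the careful bookkeeping in item (viii), where the weight $(1+|y|^3)^{-1}$ must be reconciled with the shifted variable $z = y\sqrt{1+\tau}+\alpha$: the term $|\alpha|^3\hat A^2/\sigma_0^{\gamma-3\beta}$ reflects the identity $1+|z|^3\sim 1+|y|^3+|\alpha|^3$ on the support of $\chi$, together with the fact that the outer part of $g$, bounded only in $L^\infty$, must be redistributed against the polynomial weight. A similar difficulty appears in item (xi), where the exponent $\sigma_0^{-5\beta}$ originates from the derivatives $|\partial_y\chi|\lesssim s_0^{-\beta}$ combined with the weight $(1+|y|^3)^{-1}$ evaluated on $|y|\sim s_0^\beta$; the constraints $\gamma < 5\beta-1$ and $\gamma<2\beta+1$ enforced by \eqref{gamma} are precisely what allows these errors to be absorbed into the dominant quantities.
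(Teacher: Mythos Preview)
Your proposal follows essentially the same strategy as the paper: change variables $y\to z$ in the projection integrals, expand the prefactor $(1+\tau)^{1/(p-1)}$, the polynomials $k_m(y)$, and the weight ratio $\rho(y)/\rho(z)$, and handle the truncation mismatch $\chi(y,s_0)-\chi(z,\sigma_0)$ through its support on $\{|z|\gtrsim \sigma_0^\beta\}$. Two small inaccuracies in your sketch are worth noting but do not affect the approach: the mixing term $2\alpha g_2$ in item (ii) does \emph{not} come from expanding $h_1((z-\alpha)/\sqrt{1+\tau})$ alone (that only produces $k_0,k_1$ contributions), but from its product with the $\frac{\alpha}{2}z$ term in the expansion of $\rho(y)/\rho(z)$; and for items (vi)--(vii) no integration by parts is needed, since after the substitution $dz=\sqrt{1+\tau}\,dy$ the factor $g(z,\sigma_0)$ is independent of $\alpha$ and only $k_m(y)\chi(y,s_0)\rho(y)$ carries the $\alpha$-dependence through $y=(z-\alpha)/\sqrt{1+\tau}$.
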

\begin{rem} {\rm We would like to insist on the fact that the notation $O(g)$ stands here for a function bounded by $Cg$, where $C$ is a universal constant, depending only on $p$ and $\mu$, the function $\chi$ and the constant $K>0$ defining the truncation in \eqref{def:chi}.}
\end{rem}
\begin{proof} The proof is straightforward from the definition of the decomposition in \eqref{qprojection}. As it is only technical, we leave it to Appendix \ref{appproj}.
\end{proof}
Indeed, using \eqref{vva},
item (i) of Proposition \ref{prop:rt2dimen}
and parabolic regularity stated in Proposition \ref{prop:regu-parab-q-equation} (using the part of the proof with $s\ge s_0+1$ only), we see that for $\sigma_0$ large enough, we have
\begin{align*}
&\hat v(\sigma_0)\in \vartheta_{\hat A}(\sigma_0),\;
|\hat v_j(\sigma_0)|\le \frac {C\hat A}{\sigma_0^\gamma}\mbox{ for }j=3\mbox{ or }4,\\
&\|\hat v(\sigma_0)\|_{L^\infty(\R)}+
\|\nabla \hat v(\sigma_0)\|_{L^\infty(\R)}\le \frac{C {\hat A}^2}{\sigma_0^{\gamma-3\beta}}
\mbox{ and }
 \left\|\frac{\nabla \hat v_-(z,\sigma_0)}{1+|z|^3}\right\|_{L^\infty(\R)}\le \frac{C {\hat A}^2}{\sigma_0^\gamma}.
\end{align*}
Using the definition of $\vartheta_{\hat A}$ given in Definition \eqref{prop:V-def}, then applying Lemma \ref{clproj} with $g=\hat v$, we see from \eqref{gamma} that for $\sigma_0$ large enough, we have
\begin{align*}
|(\frac{s_0}{\sigma_0})^{2\beta+1}\bpd_0(s_0)-\hat v_0(\sigma_0)|&\le \frac {C {\hat A}^2e^{-\sigma_0^{2\beta}}}{\sigma_0^{\gamma-3\beta}}
+\frac{ C{\hat A}^2}{\sigma_0^{\gamma-3\beta}}\left(|\tau|+|\alpha|^3\right)
+\frac{C\hat A |\alpha|}{\sigma_0^{2\beta+1}}
+\frac {C \sqrt{\hat A}}{\sigma_0^{4\beta-1}}\alpha^2,\\
|(\frac{s_0}{\sigma_0})^{2\beta+1}\bpd_1(s_0)-\hat v_1(\sigma_0)|&\le \frac {C {\hat A}^2e^{-\sigma_0^{2\beta}}}{\sigma_0^{\gamma-3\beta}}
+\frac{ C{\hat A}^2}{\sigma_0^{\gamma-3\beta}}\left(|\tau|+|\alpha|^3\right)
+\frac {C \sqrt{\hat A}}{\sigma_0^{4\beta-1}}|\alpha|,\\
|\bpd_2(s_0)|&\le \frac {C\sqrt{\hat A}}{\sigma_0^{4\beta-1}}
+\frac{ C{\hat A}^2}{\sigma_0^{\gamma-3\beta}}\left(|\tau|+|\alpha|^3\right)
+\frac {C \hat A}{\sigma_0^\gamma}|\alpha|,\\
|s_0^{-2\beta-1}\partial_\tau[s_0^{2\beta+1} \bpd_0](s_0)|&\le  \frac {C \hat A^2}{\sigma_0^{\gamma-3\beta+1}}
+\frac{ C{\hat A}^2}{\sigma_0^{\gamma-3\beta}}\left(|\tau|+|\alpha|\right),\\
|s_0^{-2\beta-1}\partial_\tau[s_0^{2\beta+1} \bpd_1](s_0)|&\le  \frac {C \hat A^2}{\sigma_0^{\gamma-3\beta+1}}
+\frac{ C{\hat A}^2}{\sigma_0^{\gamma-3\beta}}\left(|\tau|+|\alpha|\right),\\
|\partial_\alpha \bpd_0(s_0)|&\le  \frac {C\hat A}{\sigma_0^{2\beta+1}}
+\frac {C \sqrt{\hat A}}{\sigma_0^{4\beta-1}}|\alpha|
+\frac{ C{\hat A}^2}{\sigma_0^{\gamma-3\beta}}\left(|\tau|+\alpha^2\right),\\
|\partial_\alpha \bpd_1(s_0)|&\le  \frac {C\sqrt{\hat A}}{\sigma_0^{4\beta-1}}
+\frac {C\hat A}{\sigma_0^\gamma}|\alpha|
+\frac{ C{\hat A}^2}{\sigma_0^{\gamma-3\beta}}\left(|\tau|+\alpha^2\right),\\
\left\|\frac{\bpd_-(y,s_0)}{1+|y|^3}\right\|_{L^\infty(\R)} &\le \frac {C \hat A}{\sigma_0^\gamma}
+\frac {C\sqrt{\hat A}}{\sigma_0^{4\beta-1}}|\alpha|
+\frac {C {\hat A}^2}{\sigma_0^{\gamma-3\beta}}\left(|\tau|+|\alpha|^3\right),\;\;
\left\|\bpd_e(s_0)\right\|_{L^\infty(\R)}\le \frac {C {\hat A}^2}{\sigma_0^{\gamma-3\beta}},\\
\left\|\nabla \bpd(s_0)\right\|_{L^\infty(\R)}&\le \frac {C {\hat A}^2}{\sigma_0^{\gamma-3\beta}},\;\;
\left\|\frac{\nabla \bpd_-(y,s_0)}{1+|y|^3}\right\|_{L^\infty(\R)} \le \frac {C {\hat A}^2}{\sigma_0^\gamma}
+\frac {C\sqrt{\hat A}}{\sigma_0^{4\beta-1}}|\alpha|
+\frac {C {\hat A}^2}{\sigma_0^{\gamma-3\beta}}\left(|\tau|+|\alpha|^3\right).
\end{align*}

\bigskip

{\bf Step 3: Expansions of $\bar\psi^3$}

The first part of the estimates follows from Lemma \ref{clproj}, as for $\bpd$. The second part needs more refinements, which are eased by the fact that $\bpt$ is explicit.

\medskip

In order to apply Lemma \ref{clproj} with $g=\varphi$, we first introduce the following estimates on $\varphi$ which follow from straightforward computations:\\
For $s$ large enough, we have
\begin{align*}
&\varphi_0(s)=\kappa+ O(s^{-4\beta}),\;\;
\varphi_2(s) = -\frac a2s^{-2\beta}+O(s^{-4\beta}),\;\;
\varphi_1(s)=\varphi_3(s)=0,\;\;
\varphi_4(s) = O(s^{-4\beta}),\\
&\|\varphi(s)\|_{L^\infty(\R)}\le C,\;\;
\end{align*}
where $a$ is introduced in \eqref{aeq}. Applying Lemma \ref{clproj} with $g=\varphi$ and using the value of $a$ in \eqref{aeq}, we see that
\begin{align*}
&\left|(\frac{s_0}{\sigma_0})^{2\beta+1}\bpt_0(s_0)-\kappa-\frac{\kappa\tau}{p-1}\right|\le C(\frac 1{\sigma_0^{4\beta}}+\frac{\alpha^2}{\sigma_0^{2\beta}}+\tau^2+\frac{|\tau|}{\sigma_0}+|\tau \alpha|+|\alpha|^3),\\
&\left|(\frac{s_0}{\sigma_0})^{2\beta+1}\bpt_1(s_0)+\frac{2b\kappa\alpha}{(p-1)^2\sigma_0^{2\beta}}\right|\le C(\frac{|\alpha|}{\sigma_0^{4\beta}}+e^{-\sigma_0^{2\beta}}+\tau^2+\frac{|\tau|}{\sigma_0}+|\tau \alpha|+|\alpha|^3),\\
&\bpt_2(s_0)=-\frac a{2\sigma_0^{2\beta}}
+O\left(\frac 1{\sigma_0^{4\beta}}\right)
+O(|\tau|+|\alpha|^3),\\
&{s_0}^{-2\beta-1}\partial_\tau [{s_0}^{2\beta+1}\bpt_0](s_0) = \frac \kappa{p-1}+O\left(\frac 1{\sigma_0^{2\beta}}\right)+O(|\tau|+|\alpha|),\\
&|{\sigma_0}^{-2\beta-1}\partial_\tau [{s_0}^{2\beta+1}\bpt_1](s_0)|\le C(\sigma_0^{-1}+|\tau|+|\alpha|),\\
&|\partial_\alpha \bpt_0(s_0)|\le C\left(\frac {|\alpha|}{\sigma_0^{2\beta}}+e^{-\sigma_0^{2\beta}}+|\tau|+\alpha^2\right),\;\;
\partial_\alpha \bpt_1(s_0)=-\frac{2b\kappa}{(p-1)^2\sigma_0^{2\beta}}+O\left(\frac 1{\sigma_0^{4\beta}}\right)+O(|\tau|+\alpha^2).
\end{align*}
Now, for the remaining components, we need more refined estimates, based on the explicit formula of $\varphi$ defined in \eqref{feq}, in particular the following, for $\sigma_0$ large enough and for all $y\in \R$,
\begin{align}
&|\bpt(y,s_0) - \varphi(y,\sigma_0)|\le C\left(|\tau|+\frac{|\alpha|}{\sigma_0^{2\beta}}\right)(1+|y|^3),\;\;
|\bpt(y,s_0) - \varphi(y,\sigma_0)|\le C\left(|\tau|+\frac{|\alpha|}{\sigma_0^\beta}\right),\nonumber\\
&|\nabla\bpt(y,s_0) - \nabla\varphi(y,\sigma_0)|\le C\left(\frac{|\tau|}{\sigma_0^\beta}+\frac{|\alpha|}{\sigma_0^{2\beta}}\right),\label{cal3}
\end{align}
(for the second and third estimates, use a first order Taylor expansion, and evaluate the error according to the position of $|y|$ with respect to $1$).
In fact, in Step 4, we will obtain analogous estimates for $\bpq(y,s_0)$. Therefore, as far as the remaining components are concerned, we wait for Step 4, where we will directly obtain the contribution of $\bpt(y,s_0)+\bpq(y,s_0)$ to those components.

\bigskip

{\bf Step 4: Expansions of $\bpq$}

Here, we need to refine the estimates we gave for $\varphi$ in Step 3. From further refinements, we write for $s$ large enough and for all $y\in \R$,
\[
\left|\varphi(y,s) -\left[\kappa-\frac a{2s^{2\beta}}h_2(y)\right]\right|
\le C\frac{|y|^4}{s^{4\beta}},\;\;
|\partial_s \varphi(y,s)|\le C\min\left(\frac 1s,\;\frac{1+y^2}{s^{2\beta+1}}\right),\;\;
|\partial_s \nabla \varphi(y,s)|\le \frac C{s^{\beta+1}}.
\]
Since $\bpq(y) =- \varphi(y,s_0)$ from \eqref{defbpq} with $s_0=\sigma_0-\log(1+\tau)$, it follows that
\begin{align}
&\left|\bpq(y,s_0) +\left[\kappa-\frac a{2\sigma_0^{2\beta}}h_2(y)\right]\right|
\le C\frac{|y|^4}{\sigma_0^{4\beta}}+C\frac{|\tau|(1+y^2)}{\sigma_0^{2\beta+1}},\label{cal4}\\
&\left|\bpq(y,s_0) +\varphi(y,\sigma_0)\right|\le  C|\tau|\min\left(\frac 1{\sigma_0},\;\frac{(1+y^2)}{\sigma_0^{2\beta+1}}\right),\;\;
&\left|\nabla \bpq(y,s_0) +\nabla \varphi(y,\sigma_0)\right|\le C|\tau|\sigma_0^{-1-\beta}. \nonumber
\end{align}
Therefore, by definition of the decomposition \eqref{qprojection}, 
we see that
\begin{align*}
&(\frac{s_0}{\sigma_0})^{2\beta+1}\bpq_0(s_0)=-\kappa+O(\sigma_0^{-4\beta})+O(\tau \sigma_0^{-1}),\;\;
\bpq_1(s_0)=0,\;\;
\bpq_2(s_0)=\frac a{2\sigma_0^{2\beta}}+O(\sigma_0^{-4\beta})+O(\tau \sigma_0^{-2\beta-1}),\\
&|\sigma_0^{-2\beta-1}\partial_\tau [s_0^{2\beta+1}\bpq_0](s_0)|\le \frac C{\sigma_0},\;\;
\partial_\tau [s_0^{2\beta+1}\bpq_1](s_0)=0,\;\;
\partial_\alpha \bpq_0(s_0)=\partial_\alpha \bpq_1(s_0)=0.
\end{align*}
As for the remaining components, we will directly estimate the contribution of $\bpt(y,s_0)+\bpq(y,s_0)$. Using \eqref{cal3} and \eqref{cal4}, we see that
\begin{align*}
|\bpt(y,s_0)+\bpq(y,s_0)|&\le C\min\left\{
\left(|\tau|+\frac{|\alpha|}{\sigma_0^{2\beta}}\right)(1+|y|^3),
\left(|\tau|+\frac{|\alpha|}{\sigma_0^\beta}\right)\right\},\\
|\nabla\bpt(y,s_0)+\nabla\bpq(y,s_0)|&\le C\left(\frac{|\tau|}{\sigma_0^\beta}+\frac{|\alpha|}{\sigma_0^{2\beta}}\right).
\end{align*}
Using these estimates, we see that for all $y\in \R$,
\begin{align*}
&|\bpt_-(y,s_0)+\bpq_-(y,s_0)|\le  C\left(|\tau|+\frac{|\alpha|}{\sigma_0^{2\beta}}\right)(1+|y|^3),\;\;
|\bpt_e(y,s_0)+\bpq_e(y,s_0)|\le  C\left(|\tau|+\frac{|\alpha|}{\sigma_0^\beta}\right),\\
&|\nabla\bpt_-(y,s_0)+\nabla\bpq_-(y,s_0)|\le C\left(\frac{|\tau|}{\sigma_0^\beta}+\frac{|\alpha|}{\sigma_0^{2\beta}}\right)(1+|y|^3)
\end{align*}
(use the fact that
\begin{align*}
\left\|\frac{v_-(y,s_0)}{1+|y|^3}\right\|_{L^\infty(\R)}&\le C\left\|\frac{v(y,s_0))}{1+|y|^3}\right\|_{L^\infty(\R)},\\
|\nabla v_-(y,s_0))|&\le C(\|\nabla v(s_0)\|_{L^\infty(\R)}+ \frac{\|v(s_0)\|_{L^\infty(\R)}}{\sigma_0^{4\beta}})(1+|y|^3),
\end{align*}
which follow from the definition of the decomposition \eqref{qprojection}).

\medskip

{\it Conclusion of the proof of Lemma \ref{lemexp}}: In fact, the conclusion follows by adding the various estimates obtained in Steps 1 through 4.
This concludes the proof of Lemma \ref{lemexp}.\\
 Since we have already showed that Proposition  \ref{prop:diddc}' follows from Lemma \ref{lemexp}, this also concludes the proof of Proposition \ref{prop:diddc}'.
\end{proof}

\subsection{Conclusion of the proof of Theorem \ref{th2} and generalization}

In this subsection, we briefly explain how to conclude the proof of Theorem \ref{th2} from the initialization given in  Proposition \ref{prop:diddc}' and our techniques developed for the existence proof. Then, we give a stronger version, valid for blowing-up solutions having the profile \eqref{145} only for a subsequence.

\bigskip

 \begin{proof}[Conclusion of the proof of Theorem \ref{th2}]
As we will see here, thanks to Proposition \ref{prop:diddc}', the stability proof reduces to an existence proof. Note that having an initial time $s_0$ \eqref{defs0} for equation \eqref{qequation} depending on the parameter $T$ changes nothing to the situation. Note also that thanks to Proposition \ref{prop:diddc}', we may write a statement Proposition \ref{prop:rt2dimen}' analogous to Proposition \ref{prop:rt2dimen}, valid for $A\ge \bar C' \hat A$,  for some large enough constant $\bar C'>0$. Let us now briefly explain the proof of Theorem \ref{th2}.

\medskip

Fix some $A_0=\max(\bar C,\bar C')$ where $\bar C>0$ is introduced in Proposition \ref{prop:diddc}', and $\bar C'>0$ introduced above.
Consider then an arbitrary $\eta>0$, and fix $\sigma_0$ large enough, so that
\[
\frac{2e^{-\sigma_0}A_0(p-1)}{\kappa \sigma_0^{2\beta+1}}+
\frac{e^{-\frac{\sigma_0}2} A_0(p-1)^2}{b\kappa \sigma_0}
\le \eta.
\]
Then, take initial data $u_0\in B_{W^{1,\infty}(\R)}(\hat u_0, \hat \varepsilon_0(\sigma_0))$, where $\hat \varepsilon_0(\sigma_0)$ is defined in Proposition \ref{prop:diddc}'.
As explained above in the strategy of the proof, we find some parameters $(\bar T(u_0), \bar a(u_0))$ satisfying
 \begin{equation}\label{neigh}
|\bar T(u_0)-\hat T|+|\bar a(u_0)|\le \eta,
\end{equation}
such that
\[
\forall \; s\geq \bar s_0\equiv \sigma_0 - \log(1+(\bar T(u_0)-\hat T) e^{\sigma_0}),\; \vq_{\bar T(u_0),\,\bar a(u_0),\,u_0}(s)\in
\vartheta_{A_0}\left(s \right).
\]
In particular, $u(x,t)$ blows up at time $T(u_0)=\bar T(u_0)$, only at one blow-up point $a(u_0)=\bar a(u_0)$, with the profile \eqref{145}. Since $\eta$ is arbitrary in \eqref{neigh}, this means that
\[
(T(u_0), a(u_0))\to (\hat T, \hat a)\mbox{ as }u_0\to \hat u_0.
\]
This concludes the proof of Theorem \ref{th2}.
\end{proof}


\bigskip

{\it Validity of the stability result}: As a concluding remark, we would like to mention that our stability proof works not only around the constructed solution in Theorem \ref{th1}, but also for blow-up solutions having the profile \eqref{145} only for a subsequence.
More precisely, this is our ``twin'' statement for the stability result stated in Theorem \ref{th2}:

\medskip

\label{th2'}
\noindent {\bf Theorem \ref{th2}'} (Stability of blow-up solutions of equation \eqref{eq:uequation} having the profile \eqref{145} only for a subsequence) . {\it Consider $\hat u(x,t)$ a solution to equation \eqref{eq:uequation} with initial data $\hat u_0$, which blows up at time $\hat T$ only at one blow-up point $\hat a$, such that
\begin{equation}\label{48}
v_{\hat T, \hat a, \hat u_0}(s_n)\in \vartheta_{\hat A}(s_n),\;\;
\|\nabla v_{\hat T, \hat a, \hat u_0}(s_n)\|_{L^\infty(\R)}\le \frac {C{\hat A}^2}{s_n^{\gamma-3\beta}}
\mbox{ and }
\left\|\frac{\nabla v_{\hat T, \hat a, \hat u_0,-}(y,s_n)}{1+|y|^3}\right\|_{L^\infty(\R)}\le \frac {C{\hat A}^2}{s_n^\gamma},
\end{equation}
for some $\hat A>0$ and for some sequence $s_n\to \infty$, where $\hat v_{\hat T, \hat v, \hat u_0}$ is defined in \eqref{tunistriangle3}.
Then, there exists a neighborhood $V_0$ of $\hat{u}_0$ in $W^{1,\infty}(\R^N)$ such that for any $\hat{u}_0\in V_0,$
Equation  \eqref{eq:uequation} has a unique solution $u$ with initial data $u_0,$ $u$ blows up in finite time $T(u_0)$ and at a single point $a(u_0).
$
Moreover, \eqref{bupprof1} is satisfied by $u(x-a(u_0),t)$ (with $T$ replaced by $T(u_0)$ and for all $t\in [T(u_0) -e^{-s_{n_0}}, T(u_0))$ for some $n_0\in \N$, not just for a sequence). We also have
$$T(u_0)\to \hat{T},\; a(u_0)\to \hat a,\; \mbox{ as }\; u_0\to \hat{u_0}\; \mbox{ in } W^{1,\infty}(\R^N).$$
}
\begin{rem} {\rm As a consequence of this theorem, we see that $\hat u$ has the profile \eqref{145} not only for a sequence, but for the whole time range $[T(u_0) -e^{-s_{n_0}}, T(u_0))$ (in particular, \eqref{48} holds also for all $s$ large, and not just for a sequence) . In other words, having the profile \eqref{145} only for a sequence (together with some estimates on the gradient) is equivalent to having that profile for the whole range of times.}
\end{rem}

\appendix

\section{A technical result related to the projection \eqref{qprojection}}\label{appproj}

In this section, we prove Lemma \ref{clproj}.

\begin{proof}[Proof of Lemma \ref{clproj}]
Consider $\sigma_0$ to be taken large enough. We assume that $(\tau, \alpha)$ satisfies \eqref{condta}.\\
(i)-(ii) Consider $m=0,1,2$. From \eqref{qprojection}, we see that
\[
\bar g_m(s_0)= \int_{\R}\bar g(y,s_0)k_m(y) \chi(y,s_0)\rho(y) dy,
\]
where $k_m(y)$ and $\chi(y,s_0)$ are given in \eqref{defkm} and \eqref{def:chi}.
Using \eqref{defhb} and making the change of variables $y\to z$, we write
\begin{equation}\label{defbgm}
\bar g_m(s_0)= (1+\tau)^{\frac 1{p-1} - \frac 12}\int_{\R} g(z,\sigma_0)k_m(y)
 \chi(y,s_0)\rho(y) dz.
\end{equation}
First, note that one easily checks that
\begin{align}
|s_0^{2\beta+1}-\sigma_0^{2\beta+1}|&\le C|\tau|\sigma_0^{2\beta},\nonumber\\
|\chi(y,s_0)-\chi(z,\sigma_0)|&\le \frac C{\sigma_0^\beta}\left(|z\tau|+|\alpha|\right)1_{\left\{|z|\ge \frac K2 \sigma_0^\beta\right\}},\label{dlchi}\\
\left|\rho(y) - \rho(z)\left[1-\frac{\alpha^2}4+\frac \alpha 2 z +\frac {2\tau+\alpha^2} 8 z^2\right]\right|&\le C\left(\tau^2+|\tau\alpha|+|\alpha|^3\right)(\rho(z))^{\frac 45},\label{dlro}
\end{align}
whenever $(\tau, \alpha)$ satisfies \eqref{condta}. Then, we give the following conversion table  for the polynomials involved in the expression of $\bar g_m$:
\begin{align*}
&(1+\tau)^{\frac 1{p-1}-\frac 12}k_m(y) \left[1-\frac{\alpha^2}4+\frac \alpha 2 z +\frac {2\tau+\alpha^2}8 z^2\right]\\
=&\left(1+\frac \tau{p-1}\right)k_0(z)+\alpha k_1(z)+(2\tau+\alpha^2) k_2(z)+O((\tau^2+|\tau\alpha|)(1+|z|^2))\mbox{ if }m=0,\\
=&\left(1+\frac {(p+1)\tau}{2(p-1)}\right)k_1(z)+2\alpha k_2(z)+3 (2\tau+\alpha^2) k_3(z) +O((\tau^2+|\tau\alpha|+|\alpha|^3)(1+|z|^3))\mbox{ if }m=1.
\end{align*}
Recalling that
\begin{equation}\label{defgj}
g_j(\sigma_0)=\int_{\R} g(z)k_j(z) \chi(z,\sigma_0)\rho(z) dz,
\end{equation}
the result follows for items (i) and (ii).\\
(iii) Now, we take $m=2$. The proof follows the same pattern as for the previous items, though we need less accuracy in $\tau$. Indeed, we need this less precise version of \eqref{dlro}:
\begin{equation*}
\left|\rho(y) - \rho(z)\left[1-\frac{\alpha^2}4+\frac \alpha 2 z +\frac {\alpha^2} 8 z^2\right]\right|\le C\left(|\tau|+|\alpha|^3\right)(\rho(z))^{\frac 34}.
\end{equation*}
As before, we need the following expansion:
\begin{equation*}
(1+\tau)^{\frac 1{p-1}-\frac 12}k_2(y) \left[1-\frac{\alpha^2}4+\frac \alpha 2 z +\frac {\alpha^2}8 z^2\right]
=k_2(z)+3\alpha k_3(z)+6\alpha^2k_4(z)+O(|\tau|+|\alpha|^3)(1+z^4)).
\end{equation*}
Using \eqref{dlchi} and \eqref{defgj} yields item (ii).\\
(iv)-(v) Take $m=0$ or $1$.
From \eqref{defhb} and \eqref{defbgm}, we write
\begin{align*}
&s_0^{-2\beta-1}\partial_\tau[s_0^{2\beta+1} \bar  g_m](s_0) = \left(\frac 1{p-1} - \frac 12\right) \frac{\bar g_m(s_0)}{1+\tau}-\frac{(2\beta+1)}{s_0(1+\tau)}\bar g_m(s_0)\\
& +(1+\tau)^{\frac 1{p-1} - \frac 12}\int_{\R} g(z,\sigma_0)\partial_\tau y\left(\frac 12 k_{m-1}'(y)
 \chi(y,s_0)+k_m(y) \partial_y \chi(y,s_0)-\frac y2k_m(y) \chi(y,s_0)\right)\rho(y) dz\\
&+(1+\tau)^{\frac 1{p-1}-\frac 12}\int_{\R} g(z,\sigma_0) k_m(y) \partial_\tau s_0\partial_s \chi(y,s_0) \rho(y) dz,
\end{align*}
with the convention that $k_{-1}(y) =0$.
Noting from \eqref{defhb} and the definition \eqref{def:chi} of $\chi$ that
\begin{align}
y=\frac{z-\alpha}{\sqrt{1+\tau}},\;
\partial_\tau y = -\frac{z-\alpha}{2(1+\tau)^{\frac 32}},\;
\left|\partial_y \chi(y,s_0)\right|\le \frac C{\sigma_0^\beta}1_{\left\{|z|\ge \frac K2 \sigma_0^\beta\right\}},\;\label{dty}\\
\partial_\tau s_0 = -\frac 1{1+\tau},\;\;
\left|\partial_s \chi(y,s_0)\right|\le \frac C{\sigma_0}1_{\left\{|z|\ge \frac K2 \sigma_0^\beta\right\}},\nonumber
\end{align}
then, using the various estimates presented for the proof of items (i) to (iii), we directly get the conclusions for items (iv) and (v).\\
(vi)-(vii) Take $m=0$ or $1$. From \eqref{defbgm}, we write
\begin{align*}
&\partial_\alpha \bar  g_m(s_0)\\
=&(1+\tau)^{\frac 1{p-1} - \frac 12}\int_{\R} g(z,\sigma_0)\partial_\alpha y\left(\frac 12 k_{m-1}'(y)
 \chi(y,s_0)+k_m(y) \partial_y \chi(y,s_0)-\frac y2k_m(y) \chi(y,s_0)\right)\rho(y) dz.
\end{align*}
Note that
\[
\partial_\alpha y = -\frac 1{\sqrt{1+\tau}}.
\]
Using this even less precise version of \eqref{dlro}:
\begin{equation*}
\left|\rho(y) - \rho(z)\left[1+\frac \alpha 2 z\right]\right|\le C\left(|\tau|+\alpha^2\right)(\rho(z))^{\frac 34},
\end{equation*}
together with \eqref{dlchi} and \eqref{dty},
we see that
\begin{align*}
\partial_\alpha \bar  g_0(\sigma_0) &= \int_\R g(z,\sigma_0)\frac{z-\alpha}2\left(1+\frac \alpha 2 z\right) \rho (z) dz
 +O((e^{-\sigma_0^{2\beta}}+|\tau |+\alpha^2)\|g(\sigma_0)\|_{L^\infty}),\\
\partial_\alpha \bar  g_1(\sigma_0) &= \int_\R g(z,\sigma_0)\left(\frac{(z-\alpha)^2}4-\frac 12\right)\left(1+\frac \alpha 2 z\right) \rho (z) dz
 +O((e^{-\sigma_0^{2\beta}}+|\tau |+\alpha^2)\|g(\sigma_0)\|_{L^\infty}).
\end{align*}
Since
\begin{align*}
\frac{z-\alpha}2\left(1+\frac \alpha 2 z\right)&=k_1(z)+2\alpha k_2(z)+O(\alpha^2(1+z^2)),\\
\left(\frac{(z-\alpha)^2}4-\frac 12\right)\left(1+\frac \alpha 2 z\right) &= 2 k_2(z) +6 \alpha k_3(z) +O(\alpha^2(1+|z|^3)),
\end{align*}
arguing as for the previous items gives the result.\\
(viii) From \eqref{qprojection}, we write
\begin{align*}
\bar g_-(y,s_0) &= \chi(y,s_0)\bar g(y,s_0)- \sum_{i=0}^2 \bar g_i(s_0) h_i(y),\\
g_-(z,\sigma_0) &= \chi(z,\sigma_0)g(z,\sigma_0)- \sum_{i=0}^2 g_i(\sigma_0) h_i(z).
\end{align*}
Making the difference and using the definition \eqref{defhb} of $\bar g$, we see that
\begin{align}
\bar g_-(y,s_0)-g_-(z,\sigma_0)& =g(z,\sigma_0)\left((1+\tau)^{\frac 1{p-1}-\frac 12}\chi(y,s_0)-\chi(z,\sigma_0)\right)\label{g-g-}\\
&- \sum_{i=0}^2 \left(\bar g_i(s_0)-g_i(\sigma_0))\right) h_i(y)+\sum_{i=0}^2 g_i(\sigma_0) \left(h_i(z)-h_i(y)\right).\nonumber
\end{align}
Since we have from \eqref{dlchi},
\begin{equation}\label{difchi}
\left|(1+\tau)^{\frac 1{p-1}-\frac 12}\chi(y,s_0)-\chi(z,\sigma_0)\right|
\le C|\tau|+ \frac C{\sigma_0^\beta}\left(|\tau y|+|\alpha|\right)1_{\{|y|\ge \frac K4 \sigma_0^{\beta}\}}
\le C\left(|\tau|+\frac{|\alpha|}{\sigma_0^{4\beta}}\right)(1+|y|^3),
\end{equation}
and from the definition \eqref{h_m} of $h_i$,
\[
h_0(y)-h_0(z)=0,\;|h_1(y)-h_1(z)|\le C(|\alpha|+|\tau y|),\;
|h_2(y)-h_2(z)|\le C(\alpha^2+|\alpha y|+|\tau| y^2),
\]
and from \eqref{defhb}
\begin{equation}\label{bzy}
1+|z|^3 \le 2(1+|y|^3),
\end{equation}
we see that the conclusion follows from items (i)-(iii) (use the fact that $|g_j(\sigma_0)|\le C\left\|\frac{g_-(z,\sigma_0)}{1+|z|^3}\right\|_{L^\infty(\R)}$ for $j=3,4$, which follows from \eqref{def:q_b}).\\
(ix) From \eqref{def:q:projbis}, we write
\[
\bar g_e(y,s_0)= (1-\chi(y,s_0))\bar g(y,s_0)
\mbox{ and }
g_e(z,\sigma_0)= (1-\chi(z,\sigma_0))g(z,\sigma_0),
\]
therefore, by definition \eqref{defhb} of $\bar g$, we have
\[
\bar g_e(y,s_0)= g_e(z,\sigma_0)+\left(\left[(1+\tau)^{\frac 1{p-1}-\frac 12}-1\right][1-\chi(y,s_0)]-[\chi(y,\sigma_0)-\chi(z,s_0)]\right)g(z,\sigma_0).
\]
Since we have from the definition \eqref{def:chi} of $\chi$ and the expression \eqref{defhb} of $y$,
\[
|\chi(y,s_0)-\chi(z,\sigma_0)|\le \frac C{\sigma_0^\beta}(|y \tau|+|\alpha|)1_{\{\frac K2\sigma_0^\beta\le |y|\le \frac {3K}2\sigma_0^\beta\}} \le
C(|\tau|+|\alpha|),
\]
the result follows.\\
(x) The result follows from the differentiation of \eqref{defhb}:
\[
\nabla \bar g(y,s_0)= (1+\tau)^{\frac 1{p-1}+\frac 12}\nabla g(z,\sigma_0)\mbox{ with }y=z\sqrt{1+\tau}+\alpha.
\]
(xi) Differentiating \eqref{g-g-}, we see that
\begin{align*}
&\nabla \bar g_-(y,s_0)-\sqrt{1+\tau}\nabla g_-(z,\sigma_0)
=\sqrt{1+\tau}\nabla g(z,\sigma_0)\left((1+\tau)^{\frac 1{p-1}-\frac 12}\chi(y,s_0)-\chi(z,\sigma_0)\right)\\
&+g(z,\sigma_0)\left((1+\tau)^{\frac 1{p-1}-\frac 12}\nabla\chi(y,s_0)-\sqrt{1+\tau}\nabla\chi(z,\sigma_0)\right)\\
&- \sum_{i=0}^2 \left(\bar g_i(s_0)-g_i(\sigma_0))\right)i h_{i-1}(y)+\sum_{i=0}^2 g_i(\sigma_0)i \left(\sqrt{1+\tau}h_{i-1}(z)-h_{i-1}(y)\right).\nonumber
\end{align*}
Since
\begin{align*}
\left|(1+\tau)^{\frac 1{p-1}-\frac 12}\nabla\chi(y,s_0)-\sqrt{1+\tau}\nabla\chi(z,\sigma_0)\right|&\le C(\frac{|\tau|}{\sigma_0^\beta}+\frac C{\sigma_0^{2\beta}}(|\tau y|+|\alpha|)1_{\{\frac K2\le \frac{|y|}{\sigma_0^\beta}\le \frac{3K}2\}}\\
&\le \frac C{\sigma_0^{4\beta}}(|\tau|+\frac{|\alpha|}{\sigma_0^\beta})(1+|y|^3)
\end{align*}
by definition \eqref{def:chi} of $\chi$, using the fact that
\[
\left|\sqrt{1+\tau}h_0(z)-h_0(y)\right|\le C|\tau|\mbox{ and }
 \left|\sqrt{1+\tau}h_1(z)-h_1(y)\right|\le C\left(|\tau y|+|\alpha|\right),
\]
we derive the conclusion from
\eqref{bzy},
\eqref{difchi} together with items (i)-(iii).
\end{proof}

{\bf{Acknowledgements.}} Slim Tayachi would like to thank the Fondation Sciences Math\'ematiques de Paris and Laboratoire \'Equations aux D\'eriv\'ees Partielles LR04ES03 of University  Tunis El Manar for their financial support.
Part of this work was done when S.T. was visiting the Laboratoire Analyse G\'eom\'etrie et Applications (LAGA) of University Paris 13, in particular, on a Visiting Professor position in 2015. He is grateful to LAGA for the hospitality and the stimulating atmosphere.

\appendix

\end{document}